\documentclass[10pt]{article}
\usepackage{geometry, enumerate, amsmath, amssymb, amsthm, amscd, hyperref, graphicx, color, enumerate, mathrsfs}
\usepackage[all]{xy}

\title{Geometric configuration of integrally closed Noetherian domains}

\date{\today}

\author{Gyu Whan Chang\footnote{Department of Mathematics Education, Incheon National University, Incheon 22012, Republic of Korea E-mail: whan@inu.ac.kr} \and Giulio Peruginelli\footnote{Department of Mathematics ``Tullio Levi-Civita'' University of Padova, Via Trieste, 63 35121 Padova, Italy. E-mail: gperugin@math.unipd.it, ORCID: https://orcid.org/0000-0001-7694-8920}}

\numberwithin{equation}{section}

\newtheorem{Prop}[equation]{Proposition}
\newtheorem{Thm}[equation]{Theorem}
\newtheorem{Lem}[equation]{Lemma}
\newtheorem{Cor}[equation]{Corollary}

\theoremstyle{definition}
\newtheorem{Def}[equation]{Definition}
\newtheorem{Ex}[equation]{Example}
\newtheorem{Rem}[equation]{Remark}
\newtheorem{Rems}[equation]{Remarks}
\theoremstyle{definition}

\newcommand{\F}{\mathbb{F}}
\newcommand{\Q}{\mathbb{Q}}
\newcommand{\N}{\mathbb{N}}
\newcommand{\Z}{\mathbb{Z}}
\newcommand{\R}{\mathbb{R}}
\newcommand{\C}{\mathbb{C}}
\renewcommand{\O}{\mathbb{O}}
\newcommand{\PP}{\mathbb{P}}
\newcommand{\Int}{\textnormal{Int}}
\newcommand{\IntQ}{\Int_{\Q}}
\newcommand{\olK}{\overline{K}}
\newcommand{\uE}{\underline{E}}
\newcommand{\oQp}{\overline{\Q_p}}

\newcommand{\Gal}{\text{Gal}}

\newcommand{\K}{\widehat{K}}
\newcommand{\V}{\widehat{V}}
\newcommand{\W}{\widehat{W}}
\newcommand{\M}{\widehat{M}}

\newcommand{\ra}{\text{ra}}
\newcommand{\rt}{\text{rt}}
\renewcommand{\t}{\text{t}}

\newcommand{\wW}{\widetilde{W}}
\newcommand{\wM}{\widetilde{M}}
\newcommand{\ov}{\overline{v}}

\newcommand{\oZp}{\overline{\Z_p}}

\newcommand{\hZ}{\widehat{\mathbb{Z}}}
\newcommand{\ohZ}{\overline{\hZ}}

\newcommand{\br}{\text{br}}

\begin{document}

\maketitle



\begin{abstract}
In this paper, we completely describe the family of integrally closed Noetherian domains between $\Z[X]$ and $\Q[X]$. We accomplish this result by classifying the Krull domains between these two polynomial rings. To this end, we first describe the
DVRs of $\mathbb{Q}(X)$ lying over $\mathbb{Z}_{(p)}$ for some prime $p \in \Z$, by  distinguishing them according to whether the extension of the residue fields is algebraic or transcendental. We unify the known descriptions of such valuations by considering ultrametric balls in $\C_p$, the completion of the algebraic closure of the field $\Q_p$ of $p$-adic numbers.
We then study when the intersection $R$ of such DVRs with $\Q[X]$ is of finite character, so that 
$R$ is a Krull domain, and we finally compute the divisor class group of $R$. It turns out that such a ring is formed by those polynomials which simultaneously  map a finite union of ultrametric balls of $\C_p$ to its valuation domain $\O_p$, as $p\in\Z$ ranges through the set of primes. 
By a result of Heinzer, the Krull domains of this class are precisely the integrally closed Noetherian domains between $\Z[X]$ and $\Q[X]$. This novel approach provides a geometric understanding of this class of integrally closed domains. Furthermore, we also describe the UFDs between $\Z[X]$ and $\Q[X]$.
\end{abstract}
\medskip

{\small \noindent Keywords: Noetherian domain, integrally closed domain, DVR, Krull domain, UFD.\\
\medskip
\noindent MSC Primary  13F05, 13B22, 13E05, 13B25, 13F20\\}

\tableofcontents
\vskip1cm
\section*{Introduction}\label{sec:intro}
\addcontentsline{toc}{section}{\nameref{sec:intro}}

Let $\bf{X}$ be a topological space, $\C$ be the field of complex numbers and $\bf{C}(\bf{X}, \C)$ be the ring of continuous complex-valued functions on $\bf{X}$. Then $\bf{C}(\bf{X}, \C)$ is a commutative ring with identity. It is known that $\bf{C}(\bf{X}, \C)$ is an integral domain if and only if $\bf{X}$ is irreducible, i.e., $\bf{X}$ cannot be expressed as a finite union of proper closed subsets. It is also known that $\bf{C}(\bf{X}, \C)$ is Noetherian if and only if $\bf{X}$ is a finite set, and in this case, if $\bf{X}$ is Hausdorff, then $\bf{C}(\bf{X}, \C) \cong \C^{|\bf{X}|}$ which is a zero-dimensional Noetherian ring. 
 The class of Noetherian domains is of the utmost importance in all areas of mathematics and especially in commutative algebra
 and algebraic geometry.

Let $\overline{\Q}$ be the algebraic closure of $\Q$ and $X$ be an indeterminate over $\C$, then we have the following diagram of subrings of $\bf{C}(\C, \C)$:

$$
\xymatrix{
&&\overline{\Q}[X]\ar[dr]&&\\
\Z[X]\ar[r]&\Q[X]\ar[ur]\ar[dr]&&\C[X]\ar[r]&\bf{C}(\C, \C)\\
&&\R[X]\ar[ur]&&}
$$

Although $\bf{C}(\C, \C)$ is neither an integral domain nor a Noetherian ring, $\bf{C}(\C, \C)$ contains the Noetherian integral domain ${\C}[X]$. Hence, we can ask whether we can classify the Noetherian subrings of $\bf{C}(\C, \C)$. However, it seems to be a very difficult task, so in this paper, as the first step in a long journey towards that end, we will classify all integrally closed Noetherian subrings of  $\Q[X]$ containing $\Z[X]$. For this purpose, we describe the Krull domains between these polynomial rings. We now recall the definition of this class of integral domains.

\medskip
An integral domain is said to be a discrete valuation ring (DVR) if it is a principal ideal domain with exactly one nonzero prime ideal; equivalently, it is a Noetherian valuation domain that is not a field. Let $D$ be an integral domain with quotient field $K$ and $X^1(D)$ be the set of height one prime ideals of $D$.
Then $D$ is a Krull domain if the following three conditions are satisfied:
\begin{enumerate}
\item[{(a)}] $D = \bigcap\limits_{P \in X^1(D)}D_P$,
\item[{(b)}] $D_P$ is a DVR for all $P \in X^1(D)$, and
\item[{(c)}] each nonzero nonunit of $D$ is a unit of $D_P$ for all but a finite number of $P \in X^1(D)$.
\end{enumerate}
The class of Krull domains includes Dedekind domains, unique factorization domains (UFDs),
the integral closure of a Noetherian domain, and polynomial/power series rings over a Krull domain (see, for example, \cite{Fossum} and \cite{Gilm}). In particular, if $D$ is a field, then $X^1(D) = \emptyset$, so it satisfies the three conditions of a Krull domain. Thus, a field is a Krull domain.

Let $\Z$ be the ring of integers, $\Q$ be the field of rational numbers,
and $X$ be an indeterminate over $\Q$. Following \cite{Chang}, we say that
a subring of $\Q[X]$ containing $\Z[X]$ is a polynomial overring of $\Z$.
 In \cite{PerDedekind}, Peruginelli called a polynomial overring of $\mathbb{Z}$
a polynomial Dedekind domain if it is a Dedekind domain. He then characterized
a polynomial Dedekind domain with the property that its residue fields of prime characteristic are
finite fields, and its Picard group. Later, in \cite{PerStacked}, Peruginelli
completely classified the polynomial Dedekind domains $R$ by removing the condition on the cardinality of the residue fields;
in particular, he showed that $R$ is equal to a ring of integer-valued polynomials over an algebra.
In this paper, we say that a polynomial overring of $\Z$ is a polynomial Krull domain if
it is also a Krull domain and we are going to completely characterize the polynomial Krull domains which allows us to give a complete characterization of integrally closed Noetherian domains that are subring $\Q[X]$ containing $\Z[X]$.

It is known that $D$ is a Krull domain if and only if there exists a family $F$ of distinct DVRs with quotient field $K$ such that 
(i) $D = \bigcap\limits_{V \in F}V$ and (ii) every nonzero element of $D$ is a unit in all 
but a finite number of rings in $F$ \cite[Proposition 3]{n55}, and in this case, $F$ is called a defining family for the Krull domain $D$.

 Let $E$ be the ring of entire functions. Then $E = \bigcap\limits_{Q \in X^1(E)}E_Q$,
$E_Q$ is a DVR for all $Q \in X^1(E)$, but the intersection $E = \bigcap\limits_{Q \in X^1(E)}E_Q$
is not of finite character, i.e., $E$ does not satisfy the condition (c) of Krull domains,
so $E$ is not a Krull domain. Hence, in order to characterize the polynomial Krull domains, we
first classify the DVRs of $\Q(X)$, then we have to prove when a family of such DVRs has the finite character property (i.e. property (c) above).

Let $W$ be a DVR of $\mathbb{Q}(X)$ with maximal ideal $M$. Then either $W \cap \mathbb{Q} = (0)$ or $W \cap \mathbb{Q} = \mathbb{Z}_{(p)}$
for some prime number $p \in \mathbb{Z}$. It is well known that $W \cap \mathbb{Q} = (0)$ if and only if
$W = \mathbb{Q}[X]_{f\mathbb{Q}[X]}$ for some irreducible $f \in \mathbb{Q}[X]$ or
$W = \mathbb{Q}[\frac{1}{X}]_{(\frac{1}{X})}$ (see for example \cite{Chevalley}). In this paper, we are
mostly interested in the case $W \cap \mathbb{Q} = \mathbb{Z}_{(p)}$.
In \cite{PerStacked}, the second-named author of this paper
completely classified the DVRs $(W,M)$ such that $W/M$ is algebraic over $\mathbb{Z}/p\mathbb{Z}$.
In this paper, we will recall the classification of the DVRs $(W,M)$ such that 
$W$ is a residually transcendental extension of $\mathbb{Z}_{(p)}$, i.e.,
$W/M$ is transcendental over $\mathbb{Z}/p\mathbb{Z}$, along the line of the work of Alexandru and Popescu (\cite{AlePop}).  
This paper consists of five sections including the introduction. 
In  Sections 1. and 1.2, we first review the notion of 
 the ideal factorization property of Krull domains,
the divisor class group of Krull domains and rings of integer-valued polynomials, which allow us to have a nice representation for these class of polynomial Krull domains.  In Section \ref{res transcendental},  we recall the notion of monomial valuation, which is a standard way to extend a valuation domain $V$ with quotient field $K$ to the field of rational functions $K(X)$ and we use it in order to describe the set $\mathcal W_{\rt}$ of residually transcendental extensions of $V$ to $K(X)$. In Section \ref{section parametrization} we show how it is possible to parametrize the elements of $\mathcal W_{\rt}$  by means of ultrametric balls in $\olK$ with respect to a fixed extension $U$ of $V$. It is well-known that a residually transcendental extension $W$ of $V$ to $K(X)$ is a torsion extension of $V$, i.e., the value group of $W$ is torsion over that of $V$. We  study in Section \ref{torsion extensions} the  ordering of torsion extensions $W_1, W_2$ of $V$ to $K(X)$, that is we say that  $W_1\leq W_2$ if $W_1\cap K[X]\subseteq W_2\cap K[X]$. If this happens, in particular we have $W_1 \cap W_2 \cap K[X] = W_1 \cap K[X]$. In Section \ref{res transc completion} we study residually transcendental extension of the completion $\V$ of $V$ and show that $\mathcal W_{\rt}$ is in bijection with the residually transcendental extensions of $\V$ to $\K(X)$.

In Section 3, we use the results of Section 2 to parametrize the residually transcendental extensions of $\mathbb{Z}_{(p)}$ to $\Q(X)$ for some prime number $p \in \Z$ in terms of ultrametric balls in the algebraic closure $\oQp$ of the $p$-adic field $\Q_p$. We then unify the descriptions of DVRs of $\Q(X)$ which are either residually transcendental or residually algebraic  over $\Z_{(p)}$ by means of ultrametric balls with possibly infinite radius in $\C_p$, the completion of $\oQp$. We show that there is a partial order on this set of DVRs which mirrors the containment between this ultrametric balls, modulo the action of the Galois group $\Gal(\oQp/\Q_p)$ (Theorem \ref{ultrametric balls modulo Galois are DVRs}). Finally, in Section 4, we study first the class of Krull domains between $\Z_{(p)}[X]$ and $\Q[X]$ for a fixed prime $p\in\Z$ (\S \ref{section local case}), and as a special case we characterize the UFDs between these two polynomial rings (\S \ref{section UFD local}). We completely describe the prime spectrum of such a   ring, characterizing when a prime ideal is maximal in geometrical terms. We then globalize these results in Section \ref{section global case}. Namely, we consider a nonempty set $\Lambda$  of integer primes, and, for each $p\in\Lambda$, a finite set of DVRs $W_{p, j}$ of $\mathbb{Q}(X)$ with $X \in W_{p,j}$ and $W_{p,j} \cap \mathbb{Q} = \mathbb{Z}_{(p)}$,
and $R= \bigcap\limits_{p \in \Lambda}(\bigcap\limits_{j=1}^{m_p}(W_{p,j} \cap \mathbb{Q}[X]))$. We study which one of $W_{p,j}$'s is superflous 
and when $R$ is of finite character, so that $R$ is a Krull domain (Theorem \ref{Krull domains finite character}). 
We also compute the divisor class group Cl$(R)$ of the Krull domain $R$, showing that Cl$(R)$ is a direct sum of a countable family of finitely generated abelian groups and, conversely, we show that for any such a group $G$ there exists a Krull non-Dedekind domain between $\Z[X]$ and $\Q[X]$ with divisor class group isomorphic to $G$ (\S \ref{section: construction Krull}). Furthermore, we also characterize UFDs contained between $\Z[X]$ and $\Q[X]$ (Corollary \ref{global UFD}).

\section{Preliminaries}
\subsection{Algebraic properties of Krull domains}

Let $D$ be an integral domain with quotient field $K$ and $F(D)$ be the set of nonzero fractional ideals $I$ of $D$, i.e., $I$ is a $D$-submodule of $K$ such that $dI \subseteq D$ for some $0 \neq d \in D$.
For each $I, J \in F(D)$, let $(I:_KJ) = \{x \in K \mid xJ \subseteq I\}$,
then $(I:_KJ) \in F(D)$. In particular, $I^{-1} = (D:_KI)$ and $I_v = (I^{-1})^{-1}$, so $(I^{-1})_v = (I_v)^{-1} = I^{-1}$. An $I \in F(D)$ is said to be invertible if $II^{-1} = D$. It is knonw tht $I$ is invertible if and only if $I$ is finitely generated and $ID_M$ is principal for all maximal ideals $M$ of $D$, and that if $I$ is invertible, then $(II^{-1})_v = D$ and $I_v = I$.

\subsubsection{Ideal factorization property of Krull domains}

A Dedekind domain $D$ is an integral domain in which each nonzero ideal  can be written as a finite product of prime ideals. Then
$D$ is a Dedekind domain if and only if $D$ is an integrally closed one-dimensional Noetherian domain, if and only if each nonzero ideal of $D$ is invertible. It is known that $D$ is a Dedekind domain if and only if 
$D$ is a Krull domain of (Krull) dimension at most one. Furthermore, 
 the integral closure of a Noetherian domain is a Krull domain 
\cite[Theorem 2]{n55}, so 
an overring of $2$-dimensional Noetherian domain is a Krull domain if and only if it is an integrally closed Noetherian domain \cite[Theorem 9]{Heinzer}. Hence, a Krull domain can be considered as a higher dimensional generalization of Dedekind domains, so it is natural to ask whether a Krull domain can be characterized by an ideal factorization property as in Dedekind domains. 
 
For $I \in F(D)$, let $I_t = \bigcup\{J_v \mid J \in F(D), J \subseteq I$ and $J$ is finitely generated$\}$. Then $I_t \in F(D)$ and the map $t: F(D) \rightarrow F(D)$, given by $I \mapsto I_t$, is well defined. 
The $t$-operation also has the following four properties for all $I, J \in F(D)$ and $0 \neq a \in K$: (i) $D_t = D$, (ii) $(aI)_t = aI_t$, (iii) $I \subseteq I_t$ and if $I \subseteq J$, then $I_t \subseteq J_t$ and (iv) $(I_t)_t = I$. An $I \in F(D)$ 
is said to be $t$-invertible if $(II^{-1})_t = D$. 
 We then use the $t$-operation to characterize Krull domains as follows: $D$ is a Krull domain if and only if, given a nonzero ideal $I$ of $D$, 
$I_t = (P_1^{e_1} \cdots P_n^{e_n})_t$ for some $P_1, \dots , P_n \in X^1(D)$ and positive integers $e_1, \dots, e_n$, 
if and only if, given a nonzero element $a$ of $D$, 
$aD = (P_1^{e_1} \cdots P_n^{e_n})_t$ for some $P_1, \dots , P_n \in X^1(D)$ and positive integers $e_1, \dots, e_n$,
if and only if every nonzero (prime) ideal of $D$ is $t$-invertible,
 if and only if every nonzero prime ideal of $D$ contains a $t$-invertible prime ideal;
in this case, $I_v = I_t$ for all $I \in F(D)$ (see, for example, \cite{k89}).

\subsubsection{Divisor class group of a Krull domain}

Let $\mathcal{D}(D) = \{I_v \mid I \in F(D)\}$. Then 
$\mathcal{C}(D)$ is a commutative semigroup under $I*J = (IJ)_v$,
and $\mathcal{D}(D)$ is a group, i.e., $(II^{-1})_v = D$ for all $I \in \mathcal{D}(D)$, if and only if $D$ is completely integrally closed. Next let Inv$(D)$ be the group of invertible factional ideals of $D$ and Prin$(D) = \{xD \mid x \in K, x \neq 0\}$. 
Then Inv$(D)$ is a subgroup of $\mathcal{D}(D)$ and Prin$(D)$ is a subgroup of Inv$(D)$, so Prin$(D) \subseteq$ Inv$(D) \subseteq \mathcal{D}(D)$. 
The divisor class group of a completely integrally closed domain $D$ is defined by the factor group Cl$(D) = \mathcal{D}(D)/$Prin$(D)$ of $\mathcal{C}(D)$ modulo Prin$(D)$ and the ideal class group or Picard group of an integral domain $D$ is defined by the factor group Inv$(D)/$Prin$(D)$ of Inv$(D)$ modulo Prin$(D)$. Hence, if $D$ is completely integrally closed, then Pic$(D)$ is a subgroup of Cl$(D)$. 

It is known that $x \in K$ is almost integral over $D$ if and only if $xI \subseteq I$ for some nonzero ideal $I$ of $D$, so a valuation domain $V$ is completely integrally closed if and only if the (Krull) dimension of $V$ is at most one; in particular, a DVR is completely integrally closed. A Krull domain $D$ is definited by an intersection of DVRs, so $D$ is completely integrally closed and its divisor class group is also well defined.

A $\pi$-domain $D$ is a Krull domain with Pic$(D) =$ Cl$(D)$;
equivalently, every nonzero principal ideal of $D$ can be written as a finite product of prime ideals \cite[IV. $\pi$-domains]{k89}. Hence, a Dedekind domain $D$ is a $\pi$-domain and Pic$(D) =$ Cl$(D)$.
A UFD is a Krull domain, and Krull domain $D$ is a UFD if and only if Cl$(D) = \{0\}$. A Krull domain $D$ is called an almost factorial domain if Cl$(D)$ is torsion; equivalently, given $a, b \in D$,
there is a positive integer $n$, which depends on $a,b$,
so that $a^nD \cap b^nD$ is principal.
Therefore, the divisor class group of a Krull domain $D$ measures how far $D$ is from a UFD and reflects the ideal factorization property of a Krull domain.

\subsection{Rings of integer-valued polynomials}

Let $D$ be an integral domain with quotient field $K$,
$X$ be an indeterminate over $D$, $D[X]$ be the polynomial ring over $D$,
and $A$ be a torsion-free $D$-algebra. By embedding $A$ and $K$ in the extended $K$-algebra $B=A\otimes_D K$, 
we may evaluate a polynomial $f\in K[X]$ at an element $a\in A$, so that $f(a)\in B$. If $f(a)\in A$, we say that $f(X)$ is integer-valued at $a$. 
Given a subset $S$ of $A$, we define the ring of integer-valued polynomials on $S$ as
$$\Int_K(S,A)=\{f\in K[X] \mid f(s)\in A,\forall s\in S\}$$
Clearly, $D[X]\subseteq\Int_K(S, A)\subseteq K[X]$. If $S=A$, we set $\Int_K(A)=\Int_K(A,A)$, and if $A=D$, then we omit the subscript $K$, i.e., $\Int(S,D)= \Int_K(S,D)$ and $\Int(D) = \Int_K(D,D)$. 

Given a valuation domain $V$ with quotient field $K$, the \emph{polynomial closure} of a subset $S$ of $K$ is defined as the largest subset $\overline{S}$ of $K$ for which
$$\Int(S,V)=\Int(\overline{S},V).$$
A subset $S$ of $K$ is \emph{polynomially closed} if $S=\overline{S}$. If $V$ has rank one, Chabert proves in \cite{ChabPolCloVal}  
that the polynomially closed subsets of $K$ form a topology of closed subsets of $K$. For valuation domains of higher rank, the result is no more true \cite{PSpolclo}.

It is well known that Int$(\Z)$ is a non-Noetherian two-dimensional Pr\"ufer domain \cite[Remarks VI.1.8]{cc97}
and Pic(Int$(\Z))$ is a free abelian group on a countably infinite basis \cite[Corollary 7]{ghls90}.
Let $\PP$ be the set of prime numbers of $\Z$. For $p \in \PP$, let $\Z_p$ be the ring of $p$-adic integers,
$T_p = \{\alpha \in \Z_p \mid \alpha$ is transcendental over $\Q\}$, $\Z_{(p), \alpha} = \{\varphi \in \Q(X) \mid \varphi(\alpha) \in \Z_p\}$,
and $M_{(p), \alpha} = \{\varphi \in \Q(X) \mid \varphi(\alpha) \in p\Z_p\}$ for $\alpha \in T_p$.
Then $\Z_{(p), \alpha}$ is a DVR of $\Q(X)$ with maximal ideal $M_{(p), \alpha}$, 
$$\text{Int}(\Z) = \bigcap_{p \in \PP}(\bigcap_{\alpha \in T_p}(\Z_{(p), \alpha} \cap \Q[X])),$$
ht$(M_{(p), \alpha} \cap \text{Int}(\Z)) = 1$, and $\Z_{(p), \alpha}/M_{(p), \alpha} = \Z/p\Z$.
It is worthwhile to note that if $R =$ Int$(\Z)$, then $R = \bigcap\limits_{P \in X^1(R)} R_P$, $R_P$ is a DVR for all $P \in X^1(R)$,
but $R$ is not a Krull domain.

\subsection{Residually transcendental extensions}\label{res transcendental}

Let $V$ be a valuation domain with quotient field $K$,
$X$ be an indeterminate over $K$, and $K(X)$ be the field of rational function, i.e.,
the quotient field of $K[X]$. We denote by $v$ a valuation on $K$ with associated valuation domain $V$ and by $\Gamma_v$ the value group of $v$.  
We also denote by $\Gamma_{\ov}$ the divisible hull of $\Gamma_v$, namely, $\Gamma_{\ov}=\Gamma_v\otimes_{\Z}\Q$.

In this section we recall some known facts about residually transcendental extensions of $V$ to $K(X)$, that is, valuation domains $W$ of the field $K(X)$ lying above $V$ (i.e., $W\cap K=V$) such that the extension of residue fields is transcendental.  We denote by $\mathcal W_{\rt}$  the set of residually transcendental extensions of $V$ to $K(X)$. In contrast, we say that an extension $W$ of $V$ to $K(X)$ is residually algebraic if the extension of the residue fields is algebraic.

\subsubsection{Monomial valuations}\label{monomial valuations}

In this subsection we review the notion of monomial valuations, which is a particular class of extensions of $V$ to $K(X)$, in order to describe the class of residually transcendental extensions.


\begin{Def}
Let $\alpha\in K$ and $\delta$ be an element of a value group $\Gamma$ which contains $\Gamma_v$. For $f\in K[X]$ such that $f(X)=a_0+a_1(X-\alpha)+\ldots+a_n(X-\alpha)^n$, we set:
$$v_{\alpha,\delta}(f)=\inf\{v(a_i)+i\delta \mid i=0,\ldots,n\}$$
The function $v_{\alpha,\delta}$ naturally extends to a valuation on $K(X)$ (\cite[Chapt. VI, \S. 10, Lemme 1]{Bourb}), 
and it is called a \emph{monomial valuation}. We denote by $V_{\alpha,\delta}$ the valuation domain of $K(X)$ associated to $v_{\alpha,\delta}$. 
Clearly, $V_{\alpha,\delta}$ lies over $V$.
Note that $V_{0,0} = V(X)$, i.e.,
the case $\alpha=0$ and $\delta=0$ corresponds to the trivial Gaussian extension. Note also that $V_{\alpha,\delta}\cap K[X]=V[\frac{X-\alpha}{c}]$, where $c\in K$ has value $\delta$ (see for example \cite[Remark 3.4]{PerPrufer}).
\end{Def}

Let $\overline{K}$ be the algebraic closure of $K$, 
$u$ be an extension of $v$ to $\overline{K}$, and 
$U$ be the valuation domain of $u$. Given 
$(\alpha,\delta)\in\overline{K}\times\Gamma_{\overline{v}}$, we denote by $v_{\alpha,\delta}$ the restriction of $u_{\alpha,\delta}$ to $K(X)$; note that this valuation depends on the chosen extension $u$ of $v$ to $\overline{K}$, see \cite[Remark 3.9]{PerPrufer}. If $V_{\alpha,\delta}$ ($U_{\alpha,\delta}$, resp.) is the valuation domain associated to $v_{\alpha,\delta}$ ($u_{\alpha,\delta}$, resp.) then clearly we have $V_{\alpha,\delta}=U_{\alpha,\delta}\cap K(X)$.
We also say that $(\alpha,\delta)\in\overline{K}\times\Gamma_{\overline{v}}$ is a \emph{minimal pair of definition of $V_{\alpha,\delta}$} (with respect to $K$)  if for each $\beta\in\overline{K}$ such that $u(\beta-\alpha)\geq \delta$, we have $[K(\beta):K]\geq[K(\alpha):K]$. Equivalently, among the set of pairs which define the valuation $u_{\alpha, \delta}$, the degree $[K(\alpha):K]$ is minimal (see \cite[Proposition 3]{AlePop} and \cite{APZTheorem}). We stress that when $\alpha\in\olK$, the coefficients $a_0,\ldots,a_n$ of the $(X-\alpha)$-adic expansion of $f\in K[X]$ lie in $\olK$ and may not be in $K$.

\begin{Rem}\label{ra rt}
Suppose that $\delta>v(a)$ for every $a\in K$. We denote such $\delta$ by $\infty$. Given $\alpha\in \overline{K}$, for every $f\in K[X]$ with $f(X)=a_0+a_1(X-\alpha)+\ldots+a_n(X-\alpha)^n$ where $a_0,\ldots,a_n\in\olK$ we have
$$v_{\alpha,\delta}(f)=u(a_0)=u(f(\alpha))$$
so in this case $V_{\alpha,\infty}=V_{\alpha}=\{\phi\in K(X)\mid \phi(\alpha)\in U\}$.
\end{Rem}

\begin{Rem}\label{remark rt}
(1) Let $U$ be a fixed extension of $V$ to $\overline{K}$. 
Given $\alpha\in\overline{K}$ and an element $\delta$ in a totally ordered group containing $\Gamma_v$, 
it is well-known that $V_{\alpha,\delta}$ is a residually transcendental extension of $V$ 
if and only if $\delta$ belongs to $\Gamma_{\ov}$ (see for example \cite{APZTheorem}  or \cite[Lemma 3.5]{PerPrufer}). 
In this case, by \cite[Theorem 2.1]{APZTheorem}, the value group of $V_{\alpha,\delta}$ has finite index over the value group $\Gamma_v$ of $V$. 
In particular, if $V$ is a DVR, then a residually transcendental extension of $V$ to $K(X)$ is a DVR, also.

(2) Let $V$ be a DVR and $U$ a fixed extension of $V$ to $\olK$. Given a (minimal) pair $(\alpha,\delta)\in\overline{K}\times\Gamma_{\overline v}$,
the intersection $V_{\alpha,\delta}\cap K[X]$ is clearly a Krull domain which is not Pr\"ufer, since the valuation overring $V_{\alpha,\delta}$ is residually transcendental over $V$, see \cite[Theorem 3.15]{PerPrufer}. 

It is known that if we restrict to the family of Krull domains $\{V_{\alpha,\delta}\cap K[X]\mid(\alpha,\delta)\in K\times\Gamma_{\overline v}\}$, this family has a natural ordering, namely:
\begin{equation}\label{inclusion}
V_{\alpha_1,\delta_1}\cap K[X]\subseteq V_{\alpha_2,\delta_2}\cap K[X]\Leftrightarrow \delta_1\leq\delta_2
\text{ and }v(\alpha_1-\alpha_2)\geq\delta_1
\end{equation}
(see \cite[Proposition 1.1]{APZAll}; see also \cite[Remark 3.10]{PerPrufer}), so 
\begin{equation}\label{equality monomials}
V_{\alpha_1,\delta_1}\cap K[X]= V_{\alpha_2,\delta_2}\cap K[X]\Leftrightarrow \delta_1=\delta_2 \text{ and } v(\alpha_1-\alpha_2)\geq \delta_1
\end{equation}
We will see in Lemma \ref{ordering} that the previous conditions are indeed equivalent to $V_{\alpha_1,\delta_1}= V_{\alpha_2,\delta_2}$.

 If $(\alpha,\delta)\in K\times \Gamma_{\overline{v}}$ and we consider the ball $B(\alpha,\delta)=\{x\in K\mid v(x-\alpha)\geq \delta\}$, then condition \eqref{inclusion} holds if and only if $B(\alpha_2,\delta_2)\subseteq B(\alpha_1,\delta_1)$.
In the case $V=\Z_{(p)}$ for some prime $p$, we generalize this ordering to the case of pairs $(\alpha,\delta)\in\C_p\times\Q$ in Section \ref{Polordering monomial}.
\end{Rem}

The main result of Alexandru, Popescu and Zaharescu is the following characterization of the  residually transcendental extensions of $V$ to $K(X)$ (see \cite{AlePop,APZTheorem,APZAll}).

\begin{Thm}\label{description rt Popescu}
Let $V$ be a valuation domain with quotient field $K$ and let $U$ be a fixed extension of $V$ to $\overline{K}$. 
Let $W$ be a residually transcendental extension of $V$ to $K(X)$. Then there exists a minimal pair
$(\alpha,\delta)\in\overline K\times\Gamma_{\overline{v}}$ such that $W$ is the restriction to $K(X)$
of the monomial valuation $U_{\alpha,\delta}$ of the field $\overline{K}(X)$.
\end{Thm}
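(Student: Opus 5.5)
Here is how I would prove the theorem. First reduce to the algebraically closed case by extending $W$ to $\olK(X)$, then locate the center of the ball by a limiting argument over the roots of minimal-value polynomials.

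\textbf{Step 1: reduction to $\olK(X)$.} Extend $W$ to a valuation domain $W'$ of $\olK(X)$. Since $\olK(X)/K(X)$ is algebraic, $W'$ is still residually transcendental over $W'\cap\olK$. After composing with a $K$-automorphism of $\olK$ (conjugation of extensions), we may assume $W'\cap\olK=U$. Let $w$ denote the valuation of $W'$. Its value group is contained in the divisible hull $\Gamma_{\ov}$: the residue extension is transcendental of degree one, so the Abhyankar inequality forces the value group of $W'$ to be torsion over $\Gamma_v$.

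\textbf{Step 2: find the center and radius.} Consider the set of values $\{w(X-a)\mid a\in\olK\}$. Since $\olK$ is algebraically closed, every $f\in\olK[X]$ factors into linear factors. I claim the set $\{w(X-a)\}$ attains a maximum $\delta$ at some $\alpha$.

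If it did not, we would have a pseudo-Cauchy (pseudo-convergent) sequence $a_\nu$ with $w(X-a_\nu)$ strictly increasing. By Kaplansky's theory, $W'$ restricted to $\olK[X]$ would then be given by $f\mapsto\lim u(f(a_\nu))$, and such an extension is residually algebraic (immediate, in fact), contradicting the hypothesis. Likewise, if $w(X-a)$ were larger than every element of $\Gamma_{\ov}$, $W'$ would be residually algebraic, as in Remark~\ref{ra rt}.

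So we obtain $\alpha$ with $\delta=w(X-\alpha)\in\Gamma_{\ov}$ maximal. Then for every $a\in\olK$ we have $w(X-a)=\min(\delta,u(\alpha-a))$: the inequality $\geq$ follows from $X-a=(X-\alpha)+(\alpha-a)$, and the case where the two values are equal is handled by maximality of $\delta$.

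\textbf{Step 3: identify the valuation.} By multiplicativity over linear factors, $w(f)=u_{\alpha,\delta}(f)$ for all $f\in\olK[X]$, since $u_{\alpha,\delta}(X-a)=\min(\delta,u(\alpha-a))$ as well. Hence $W'=U_{\alpha,\delta}$ and $W=U_{\alpha,\delta}\cap K(X)$.

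\textbf{Step 4: minimality.} Finally, choose among all $\beta$ with $u(\beta-\alpha)\geq\delta$ one of minimal degree $[K(\beta):K]$. By \eqref{equality monomials} applied over $\olK$, $U_{\beta,\delta}=U_{\alpha,\delta}$, so $(\beta,\delta)$ is a minimal pair defining $W$.

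The main obstacle is Step 2: showing that the supremum of $w(X-a)$ is attained and lies in $\Gamma_{\ov}$. I would handle it via the pseudo-convergent sequence dichotomy, arguing that an unattained supremum yields a residually algebraic extension.
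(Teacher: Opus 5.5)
The paper gives no proof of this theorem; it is quoted from Alexandru--Popescu--Zaharescu \cite{AlePop,APZTheorem,APZAll}. Your argument is a correct, self-contained proof. Its outer frame is the one used in those references: extend $W$ to $\olK(X)$, conjugate so that the trace on $\olK$ is $U$, solve the algebraically closed case, then pass to a center of minimal degree inside the same ball.

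The difference lies in the algebraically closed step. The classical argument starts from an element of value $0$ with transcendental residue (cf.\ \cite[Proposition 1]{AlePop}, which the paper invokes later). It factors this element into linear factors and normalizes each factor by a constant of the same value; this is possible since $\Gamma_w=\Gamma_{\ov}$. This produces some $(X-a)/c$ whose residue $t$ is transcendental over $k_U$. Since the powers of $t$ are linearly independent over $k_U$, no cancellation can occur, and $w=u_{a,u(c)}$ follows at once.

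You instead maximize $w(X-a)$ over $a\in\olK$ and rule out a non-attained supremum. The classical route is shorter and uses the hypothesis in a single stroke. Yours costs the extra analysis of the supremum, but it yields the full trichotomy for extensions of $u$ to $\olK(X)$: balls with radius in $\Gamma_{\ov}$, value-transcendental monomial valuations, and immediate limits along nested balls. This matches the ball picture the paper develops later.

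A few details should be tightened.

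In Step 2, take $(a_\nu)$ well-ordered and cofinal in $\{w(X-a)\mid a\in\olK\}$. Cofinality is what excludes a pseudo-limit $b\in\olK$, since otherwise $w(X-b)\geq w(X-a_\nu)$ for all $\nu$. Hence the sequence is of transcendental type and Kaplansky applies.

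You can also avoid Kaplansky entirely. For $F=f/g$ with $w(F)=0$, choose $a$ with $w(X-a)>w(X-b)$ for the finitely many roots $b$ of $fg$. Then $u(a-b)=w(X-b)$, so each $(X-b)/(a-b)$ has residue $1$. Hence $\overline{F}=\overline{F(a)}\in k_U$, contradicting residual transcendence.

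The case where $w(X-a)$ exceeds every element of $\Gamma_{\ov}$ is vacuous after Step 1.

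In Step 4, the equality $U_{\beta,\delta}=U_{\alpha,\delta}$ follows directly from your Step 3 computation. Whenever $u(\beta-\alpha)\geq\delta$, both valuations give $X-a$ the value $\min(\delta,u(\alpha-a))$. By contrast, \eqref{equality monomials} as stated concerns centers in $K$ and intersections with $K[X]$.
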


Note that the valuation domain $U_{\alpha,\delta}$ of $\overline K(X)$ in Theorem \ref{description rt Popescu} is a residually transcendental extension of $U$ to $\overline{K}(X)$.
The next lemma is a generalization of \cite[Lemma 3.13]{PerPrufer} to valuation domains of arbitrary rank.

\begin{Lem}\label{rt overring of V[X]}
Let $U$ be an extension of $V$ to $\overline{K}$ and let $(\alpha,\delta)\in\overline{K}\times\Gamma_v$. Then
\begin{align*}
V[X]\subset U_{\alpha,\delta}\cap K(X) 
\Leftrightarrow u(\alpha)\geq0\;\text{ and }\delta\geq0.
\end{align*}
\end{Lem}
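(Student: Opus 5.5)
Since $V[X]$ is generated as a ring by $V$ and $X$, and $U_{\alpha,\delta}\cap K = U\cap K = V$, the inclusion $V[X]\subseteq U_{\alpha,\delta}\cap K(X)$ amounts to the single condition $u_{\alpha,\delta}(X)\geq 0$. The plan is therefore to compute $u_{\alpha,\delta}(X)$ explicitly and show that it is nonnegative exactly when $u(\alpha)\geq 0$ and $\delta\geq 0$. The $(X-\alpha)$-adic expansion of $X$ over $\overline{K}$ is simply $X=\alpha+1\cdot(X-\alpha)$, so by the definition of the monomial valuation (applied to $U$ on $\overline{K}$),
$$u_{\alpha,\delta}(X)=\min\{u(\alpha),\,u(1)+\delta\}=\min\{u(\alpha),\delta\}.$$
When $\alpha=0$ the first term is $u(0)=\infty$, and the formula still reads correctly as $u_{0,\delta}(X)=\delta$.

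The equivalence follows immediately from this computation. If $u(\alpha)\geq 0$ and $\delta\geq 0$, then $u_{\alpha,\delta}(X)\geq 0$, so $X\in U_{\alpha,\delta}$. Since $V\subseteq U\subseteq U_{\alpha,\delta}$ (constants have the same value), we get $V[X]\subseteq U_{\alpha,\delta}\cap K(X)$. Conversely, if $X$ lies in $U_{\alpha,\delta}\cap K(X)$, then $\min\{u(\alpha),\delta\}\geq 0$, which forces both $u(\alpha)\geq 0$ and $\delta\geq 0$.

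The inclusion is proper because $U_{\alpha,\delta}\cap K(X)$ is a valuation domain of $K(X)$ while $V[X]$ is not: for instance, neither $X$ nor $X^{-1}$ lies in $V[X]$.

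The only point needing care is that the monomial valuation is genuinely computed from expansions with coefficients in $\overline{K}$, which the paper explicitly allows. No minimality of the pair $(\alpha,\delta)$ and no rank hypothesis on $V$ is used, which explains why the statement generalizes \cite[Lemma 3.13]{PerPrufer} to arbitrary rank. I expect no real obstacle here.
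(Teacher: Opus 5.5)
Your argument is correct and is exactly the paper's: the inclusion reduces to $u_{\alpha,\delta}(X)\geq 0$, and the expansion $X=\alpha+(X-\alpha)$ gives $u_{\alpha,\delta}(X)=\min\{u(\alpha),\delta\}$. One small slip in your side remark on properness, which is not needed since the paper uses $\subset$ for plain inclusion: $X$ lies in $V[X]$, so it cannot witness that $V[X]$ is not a valuation domain; use instead $X/(X+1)$, since neither it nor its inverse lies in $V[X]$.
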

\begin{proof}
Note that the containment is equivalent to $u_{\alpha,\delta}(X)\geq0$ (and, in particular, equivalent to $U[X]\subset U_{\alpha,\delta}$). We have $u_{\alpha,\delta}(X)=\min\{\delta,u(\alpha)\}$ so that the claim is true.
\end{proof}

\subsubsection{Parametrization of residually transcendental extensions}\label{section parametrization}

In this subsection, we show how to parametrize $\mathcal W_{\rt}$ by means of ultrametric balls contained in $\overline{K}$ 
with respect to some fixed extension $U$ of $V$ to $\overline{K}$.

Let $G_K$ be the absolute Galois group $\text{Gal}(\overline K/K)$. Note that $G_K$ is also equal to the Galois group of $\overline K(X)/K(X)$.  
Given an extension $U$ of $V$ to $\overline{K}$, we denote by $\mathcal{D}(U)$ the decomposition group of $U$, 
namely: $\mathcal{D}(U)=\{\sigma\in G_K \mid \sigma(U)=U\}$.

\begin{Lem}\label{image rt}
Let $U$ be a fixed extension of $V$ to $\overline{K}$.
Let $(\alpha,\delta)\in\overline{K}\times\Gamma_{\overline{v}}$, 
with $\delta=u(c)$ for some $c\in \overline{K}$, and $\sigma\in G_K$.  Then we have
$$\sigma(U_{\alpha,\delta})=\sigma(U)_{\sigma(\alpha),\delta}.$$
In particular, if $\sigma\in \mathcal{D}(U)$, then $\sigma(U_{\alpha,\delta})=U_{\sigma(\alpha),\delta}$.
\end{Lem}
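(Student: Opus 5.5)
The plan is to work directly with the valuations. We show that the valuation $u_{\alpha,\delta}\circ\sigma^{-1}$, whose valuation domain is $\sigma(U_{\alpha,\delta})$, coincides with the monomial valuation built from $\sigma(U)$, centre $\sigma(\alpha)$ and radius $\delta$. The extension $\sigma$ of an element of $G_K$ to $\overline{K}(X)$ is the one fixing $X$ and acting on coefficients. A valuation domain determines its valuation up to equivalence, so equality of the associated valuations (with a common value group) gives equality of the rings. Write $u'=u\circ\sigma^{-1}$; it is the valuation of $\sigma(U)$, since $x\in\sigma(U)$ iff $\sigma^{-1}(x)\in U$. Its value group is the same $\Gamma_{\ov}$. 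Moreover $u'(\sigma(c))=u(c)=\delta$, so $\delta$ is a legitimate radius for $\sigma(U)$ as well. This is where the hypothesis $\delta=u(c)$ is used: it makes the radius meaningful in the value group of $\sigma(U)$ and ties it to the element $\sigma(c)$.

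Next we compute on polynomials. Let $f\in\overline{K}[X]$ and expand $f=\sum a_i(X-\alpha)^i$ with $a_i\in\overline{K}$. Applying $\sigma$ coefficientwise gives $\sigma(f)=\sum\sigma(a_i)(X-\sigma(\alpha))^i$. This is the $(X-\sigma(\alpha))$-adic expansion of $\sigma(f)$, because $\sigma$ is a ring automorphism of $\overline{K}[X]$ fixing $X$. Hence
$$u'_{\sigma(\alpha),\delta}(\sigma(f))=\min_i\{u'(\sigma(a_i))+i\delta\}=\min_i\{u(a_i)+i\delta\}=u_{\alpha,\delta}(f).$$
As $f$ ranges over $\overline{K}[X]$, so does $\sigma(f)$. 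We conclude that $u'_{\sigma(\alpha),\delta}=u_{\alpha,\delta}\circ\sigma^{-1}$ on $\overline{K}[X]$. Both sides are valuations on $\overline{K}(X)$, so the equality extends to quotients $f/g$. Taking valuation rings, $\sigma(U_{\alpha,\delta})=\{\sigma(\varphi)\mid u_{\alpha,\delta}(\varphi)\ge 0\}=\{\psi\mid u'_{\sigma(\alpha),\delta}(\psi)\ge0\}=\sigma(U)_{\sigma(\alpha),\delta}$.

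The particular case is then immediate. If $\sigma\in\mathcal{D}(U)$, then $\sigma(U)=U$. Since $u$ and $u\circ\sigma^{-1}$ have the same valuation ring, they are equivalent, and both take values in $\Gamma_{\ov}$, compatibly on $K$. Hence they are equal as valuations: the induced order-isomorphism of value groups is the identity on $\Gamma_v$, so it is the identity on its divisible hull. In particular $u(\sigma(c))=u(c)=\delta$, and the formula reads $\sigma(U_{\alpha,\delta})=U_{\sigma(\alpha),\delta}$.

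The only step needing care is this identification of value groups. The key point is that $\delta$ is interpreted for $\sigma(U)$ through $u'=u\circ\sigma^{-1}$. This convention is consistent because $\delta=u(c)=u'(\sigma(c))$. The rest is bookkeeping.
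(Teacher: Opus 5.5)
Your proof is correct, but it takes a different route from the paper's.

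The paper argues structurally. Since $\frac{X-\alpha}{c}$ is a unit of $U_{\alpha,\delta}$ whose residue is transcendental over $k_U$, its image $\sigma\bigl(\frac{X-\alpha}{c}\bigr)=\frac{X-\sigma(\alpha)}{\sigma(c)}$ is a unit of $\sigma(U_{\alpha,\delta})$ with residue transcendental over $k_{\sigma(U)}$. Because $u'=u\circ\sigma^{-1}$ gives $u'(\sigma(c))=\delta$, the uniqueness of the Gauss-type extension (Bourbaki, Ch.~VI, \S 10, Prop.~2) forces $\sigma(U_{\alpha,\delta})=\sigma(U)_{\sigma(\alpha),\delta}$.

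You instead transport the explicit formula. Applying $\sigma$ coefficientwise to the $(X-\alpha)$-adic expansion of $f$ gives the $(X-\sigma(\alpha))$-adic expansion of $\sigma(f)$, hence $u'_{\sigma(\alpha),\delta}=u_{\alpha,\delta}\circ\sigma^{-1}$.

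What each route buys:
\begin{itemize}
\item Your argument is self-contained and more general. It never uses residual transcendence or the element $c$, so it works verbatim for any $\delta$ in an ordered group containing $\Gamma_{\overline v}$, for instance $\delta=\infty$.
\item For the same reason, your remark that the hypothesis $\delta=u(c)$ ``is used'' overstates its role in your proof. All you need is $\delta\in\Gamma_{\overline v}=u'(\overline{K}^*)$. In the paper's proof, by contrast, $c$ is essential to produce the unit $\frac{X-\alpha}{c}$.
\item The paper's route is shorter once the citation is granted. It also emphasizes the intrinsic characterization of $U_{\alpha,\delta}$ as the extension of $U$ in which $\frac{X-\alpha}{c}$ is a unit with transcendental residue, which is the viewpoint used in the subsequent conjugation results.
\end{itemize}

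Your verification that $u\circ\sigma^{-1}=u$ when $\sigma\in\mathcal{D}(U)$ is a worthwhile addition. The point is that an order-automorphism of $\Gamma_{\overline v}$ fixing $\Gamma_v$ is the identity. This makes explicit the normalization of $\delta$ that the paper passes over with ``follows at once''.
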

\begin{proof}
Since $\frac{X-\alpha}{c}$ is a unit in $U_{\alpha,\delta}$ and  $U_{\alpha,\delta}$ is residually transcendental over the residue field $k_U$ of $U$, 
it follows that $\sigma\left(\frac{X-\alpha}{c}\right)=\frac{X-\sigma(\alpha)}{\sigma(c)}$ is a unit in $\sigma(U_{\alpha,\delta})$ 
and $\sigma(U_{\alpha,\delta})$ is residually transcendental over $k_{\sigma(U)}$ (note that $\sigma(U_{\alpha,\delta})$ is an extension of $\sigma(U)$ to $\overline K(X)$).
Recall that the valuation associated to $\sigma(U)=U'$ is equal to $u'=u\circ\sigma^{-1}$ (\cite[Proposition 3.2.16, (1)]{EngPre}), so $u'(\sigma(c))=\delta$.
Then the claim follows by \cite[Chapt. VI, \S 10, Proposition 2]{Bourb}. 

If $\sigma\in \mathcal{D}(U)$, then the last claim follows at once. 
\end{proof}

We give next a criterion for when two residually transcendental extensions of $U$ to $\overline{K}(X)$ contracts down to the same valuation domain of $K(X)$ 
(which is a residually transcendental extension of $V$ to $K(X)$).
This has been done in a similar way in \cite{NN}.
In particular, given a residually transcendental extension $V_{\alpha,\delta}$ of $V$ to $K(X)$,  where $(\alpha,\delta)\in \overline{K}\times\Gamma_{\overline v}$, 
we describe the set of extensions of $V_{\alpha,\delta}$  to $\overline{K}(X)$ lying above $U$  (see also \cite[Theorem 2.2]{APZMinimal}). 

\begin{Thm}\label{Conjugate Rt ext}
Let $U$ be a fixed extension of $V$ to $\overline{K}$ and
let $(\alpha,\delta),(\alpha',\delta')\in\overline{K}\times\Gamma_{\overline{v}}$.
Then the following conditions are equivalent.
\begin{itemize}
\item[{\em (1)}] $U_{\alpha,\delta}\cap K(X)=U_{\alpha',\delta'}\cap K(X)$.
\item[{\em (2)}] $\sigma(U_{\alpha,\delta})=U_{\alpha',\delta'}$ for some $\sigma\in\mathcal{D}(U)$.
\item[{\em (3)}] $\delta=\delta'$ and $ u(\sigma(\alpha)-\alpha')\geq\delta$, for some $\sigma\in \mathcal{D}(U)$.
\end{itemize}
In particular, the set of extensions of a residually transcendental extension $W=U_{\alpha,\delta}\cap K(X)$ to $\overline{K}(X)$ lying above $U$ is equal to $\{U_{\sigma(\alpha),\delta}\mid \sigma\in \mathcal{D}(U)\}$.
\end{Thm}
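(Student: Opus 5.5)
I will prove the cycle (2)$\Rightarrow$(1)$\Rightarrow$(2) and (2)$\Leftrightarrow$(3). The final assertion about the extensions of $W$ lying above $U$ then follows at once from (1)$\Leftrightarrow$(2) together with Lemma \ref{image rt}.

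\emph{(2)$\Rightarrow$(1).} Every $\sigma\in G_K$ fixes $K(X)$ pointwise. Hence if $\sigma(U_{\alpha,\delta})=U_{\alpha',\delta'}$, then
$$U_{\alpha',\delta'}\cap K(X)=\sigma(U_{\alpha,\delta}\cap K(X))=U_{\alpha,\delta}\cap K(X).$$

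\emph{(1)$\Rightarrow$(2).} Put $W=U_{\alpha,\delta}\cap K(X)$. The extension $\overline{K}(X)/K(X)$ is normal (algebraic and Galois, with group $G_K$). By the conjugacy theorem for extensions of a valuation in a normal algebraic extension, which holds for arbitrary rank and for infinite degree via a Zorn/compactness argument, any two extensions of $W$ to $\overline{K}(X)$ are conjugate under $G_K$. So $U_{\alpha',\delta'}=\tau(U_{\alpha,\delta})$ for some $\tau\in G_K$. By Lemma \ref{image rt}, $\tau(U_{\alpha,\delta})=\tau(U)_{\tau(\alpha),\delta}$. Contracting both sides to $\overline{K}$ gives
$$U=U_{\alpha',\delta'}\cap\overline{K}=\tau(U)_{\tau(\alpha),\delta}\cap\overline{K}=\tau(U).$$
Thus $\tau\in\mathcal{D}(U)$, which is (2).

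\emph{(2)$\Leftrightarrow$(3).} By Lemma \ref{image rt}, for $\sigma\in\mathcal{D}(U)$ we have $\sigma(U_{\alpha,\delta})=U_{\sigma(\alpha),\delta}$. So (2) is equivalent to $U_{\sigma(\alpha),\delta}=U_{\alpha',\delta'}$ for some $\sigma\in\mathcal{D}(U)$. It remains to show, over the algebraically closed field $\overline{K}$ with valuation $u$, that
$$U_{\beta,\delta}=U_{\alpha',\delta'}\iff \delta=\delta' \text{ and } u(\beta-\alpha')\ge\delta.$$

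For $\Leftarrow$, expand $f\in\overline{K}[X]$ in powers of $X-\alpha'=(X-\beta)+(\beta-\alpha')$. The ultrametric inequality together with $u(\beta-\alpha')\ge\delta$ gives $u_{\beta,\delta}(f)\ge u_{\alpha',\delta}(f)$, and the symmetric argument gives the reverse inequality. So the two valuations agree on $\overline{K}[X]$, hence on $\overline{K}(X)$.

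For $\Rightarrow$, the two valuations have the same value group, so the value of $X-\alpha'$ can be computed in each. Since $\overline{K}$ is algebraically closed, $\Gamma_{\bar v}$ is the value group of $u$, and $\delta,\delta'$ are values of elements of $\overline{K}$. We have
$$u_{\alpha',\delta'}(X-\alpha')=\delta', \qquad u_{\beta,\delta}(X-\alpha')=\min\{\delta,u(\beta-\alpha')\}.$$
Similarly,
$$u_{\beta,\delta}(X-\beta)=\delta, \qquad u_{\alpha',\delta'}(X-\beta)=\min\{\delta',u(\beta-\alpha')\}.$$
Equality of the valuations forces $\delta'=\min\{\delta,u(\beta-\alpha')\}\le\delta$ and $\delta=\min\{\delta',u(\beta-\alpha')\}\le\delta'$. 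Hence $\delta=\delta'$, and then $u(\beta-\alpha')\ge\delta$.

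The main obstacle is the step (1)$\Rightarrow$(2). It requires the conjugacy of extensions of valuations in an infinite normal extension in full generality, plus the observation via Lemma \ref{image rt} that the conjugating automorphism must lie in the decomposition group. The remaining steps are routine computations with monomial valuations.
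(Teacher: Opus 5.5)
Your proof is correct and follows the same route as the paper. For (1)$\Rightarrow$(2) you use the Conjugation Theorem for the normal extension $\overline{K}(X)/K(X)$ and then contract to $\overline{K}$ to see that the conjugating automorphism lies in $\mathcal{D}(U)$; for (2)$\Leftrightarrow$(3) you use Lemma \ref{image rt} to reduce to the equality criterion for monomial valuations over $\overline{K}$. The one difference is that you verify this criterion directly, by evaluating both valuations on $X-\alpha'$ and $X-\beta$ and re-expanding in powers of $X-\alpha'$, whereas the paper cites the corresponding results of Alexandru, Popescu and Zaharescu, so your version is self-contained.
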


\begin{proof}
(1) $\Rightarrow$ (2) By  \cite[Conjugation Theorem 3.2.15]{EngPre},   there exists $\sigma\in G_K$ such that $\sigma(U_{\alpha,\delta})=U_{\alpha',\delta'}$. We claim that $\sigma\in \mathcal D(U)$. In fact, let $x\in U\subset U_{\alpha,\delta}$. Then $\sigma(x)\in\sigma(U_{\alpha,\delta})\cap \overline K=U_{\alpha',\delta'}\cap \overline K=U$. Thus, (2) holds.

(2) $\Rightarrow$ (3) By Lemma \ref{image rt}, $U_{\sigma(\alpha),\delta}= \sigma(U_{\alpha,\delta})= U_{\alpha',\delta'}$. 
By \cite[Proposition 1.1]{APZAll},  $\delta'=\delta$ and $u(\sigma(\alpha)-\alpha')\geq\delta$, which is condition (3).

(3) $\Rightarrow$ (1) The inequality $u(\sigma(\alpha)-\alpha')\geq\delta$ is equivalent to $\alpha'\in B_u(\sigma(\alpha),\delta)=\{x\in\olK \mid u(\sigma(\alpha)-x)\geq \delta\}$, so that $(\alpha',\delta)$ is another pair of definition for $U_{\sigma(\alpha),\delta}$ by \cite[Proposition 3]{AlePop} and thus $U_{\alpha',\delta}=U_{\sigma(\alpha),\delta}$. By  Lemma \ref{image rt}, we also have $U_{\sigma(\alpha),\delta}=\sigma(U_{\alpha,\delta})$, so $U_{\alpha,\delta}$  
and $U_{\alpha',\delta}$ contracts down to $K(X)$ to the same valuation domain. Thus, (1) holds.

The last claim follows immediately from the previous part and Lemma \ref{image rt}.
\end{proof}

For the next theorem, we fix the following notation: given an extension $U$ of  $V$ to $\overline{K}$, let $\mathcal{B}_u$ be the set of ultrametric balls $B_u(\alpha,\delta)=\{x\in\overline{K}\mid u(x-\alpha)\geq\delta\}$ in $\overline K$ with respect to $U$, where $(\alpha,\delta)\in \overline{K}\times\Gamma_{\overline{v}}$.
 It is worthwhile to note that condition (3) of Theorem \ref{Conjugate Rt ext} is equivalent to the following: 
$\sigma(B_u(\alpha,\delta))=B_u(\sigma(\alpha),\delta)=B_u(\alpha',\delta)$ for some $\sigma\in \mathcal{D}(U)$.

By Lemma \ref{image rt}, $\mathcal{D}(U)$ acts on $\mathcal{B}_u$, since for $\sigma\in \mathcal{D}(U)$ we have $\sigma(B_u(\alpha,\delta))=B_u(\sigma(\alpha),\delta)$. 
In particular, $\mathcal{D}(U)$ does not change the radius of an ultrametric ball.
We denote by $\mathcal{B}_u/\mathcal{D}(U)$ the set of equivalence classes of $\mathcal{B}_u$ under the action of the decomposition group $\mathcal{D}(U)$. 
By Lemma \ref{image rt}, an element of $\mathcal{B}_u/\mathcal{D}(U)$ is equal to $[B_u(\alpha,\delta)]=\{B_u(\sigma(\alpha),\delta)\mid\sigma\in\mathcal{D}(U)\}$; 
in particular, note that such an equivalence class is formed by finitely many closed balls (i.e., at most as many as the number of conjugates of $\alpha$ over $K$).

\begin{Thm}\label{parametrization rt}
Let $U$ be a fixed extension of $V$ to $\overline{K}$. Then the map
$\Theta:\mathcal{B}_u/\mathcal{D}(U) \to \mathcal W_{\rt}$ given by 
$[B_u(\alpha,\delta)] \mapsto U_{\alpha,\delta}\cap K(X)$ is bijective. 
\end{Thm}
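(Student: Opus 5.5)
The plan is to check three things in turn: that $\Theta$ is well defined, that it is injective, and that it is surjective. Each reduces to an earlier result: Theorem \ref{Conjugate Rt ext}, the parametrization of monomial valuations by balls, and Theorem \ref{description rt Popescu}.

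\textbf{Well-definedness.} The map must not depend on the ball chosen to represent its class, nor on the centre and radius chosen to describe that ball.

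Suppose first that $B_u(\alpha,\delta)=B_u(\alpha',\delta')$ as subsets of $\overline{K}$. Since $u$ has value group $\Gamma_{\overline{v}}$ on $\overline{K}$ and radii lie in $\Gamma_{\overline{v}}$, equality of the balls forces $\delta=\delta'$. It also forces $u(\alpha-\alpha')\geq\delta$. Condition (3) of Theorem \ref{Conjugate Rt ext} then holds with $\sigma=\mathrm{id}$, so $U_{\alpha,\delta}\cap K(X)=U_{\alpha',\delta'}\cap K(X)$.

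Suppose next that we replace $B_u(\alpha,\delta)$ by $\sigma(B_u(\alpha,\delta))=B_u(\sigma(\alpha),\delta)$ for some $\sigma\in\mathcal{D}(U)$. Lemma \ref{image rt} gives $U_{\sigma(\alpha),\delta}=\sigma(U_{\alpha,\delta})$. Since $\sigma$ fixes $K(X)$ pointwise, both valuation rings contract to the same ring in $K(X)$.

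The image lies in $\mathcal W_{\rt}$ by Remark \ref{remark rt}(1), because $\delta\in\Gamma_{\overline{v}}$. Also $U_{\alpha,\delta}\cap K=U\cap K=V$, so the image is an extension of $V$.

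\textbf{Injectivity.} Assume $U_{\alpha,\delta}\cap K(X)=U_{\alpha',\delta'}\cap K(X)$. By (1)$\Rightarrow$(3) of Theorem \ref{Conjugate Rt ext}, we get $\delta=\delta'$ and $u(\sigma(\alpha)-\alpha')\geq\delta$ for some $\sigma\in\mathcal{D}(U)$. Hence $B_u(\alpha',\delta)=B_u(\sigma(\alpha),\delta)=\sigma(B_u(\alpha,\delta))$, so the two classes in $\mathcal{B}_u/\mathcal{D}(U)$ coincide.

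\textbf{Surjectivity.} Given $W\in\mathcal W_{\rt}$, Theorem \ref{description rt Popescu} provides a minimal pair $(\alpha,\delta)\in\overline{K}\times\Gamma_{\overline{v}}$ with $W=U_{\alpha,\delta}\cap K(X)$. Then $W=\Theta([B_u(\alpha,\delta)])$.

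The one delicate point is the well-definedness step. We must ensure that a ball determines its radius, which holds because $u(\overline{K}^*)=\Gamma_{\overline{v}}$. We must also ensure that any two centres of the same ball give the same monomial valuation; this is exactly the (3)$\Rightarrow$(1) direction of Theorem \ref{Conjugate Rt ext} with $\sigma=\mathrm{id}$. Apart from that, everything follows directly from Theorem \ref{Conjugate Rt ext}.
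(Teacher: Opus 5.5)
Your proposal is correct and follows the paper's proof. The paper likewise gets well-definedness and injectivity from Theorem \ref{Conjugate Rt ext} (with Lemma \ref{image rt}) and surjectivity from Theorem \ref{description rt Popescu}. Your extra check, that a ball determines its radius and that (3)$\Rightarrow$(1) with $\sigma=\mathrm{id}$ makes the map independent of the chosen centre, is a point the paper leaves implicit.
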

\begin{proof}
In order to show that the map $\Theta$ is well-defined, we observe first that given $(\alpha,\delta)\in \overline{K}\times\Gamma_{\overline{v}}$, we have  $U_{\alpha,\delta}\cap K(X)\in \mathcal W_{\rt}$ by Remark \ref{remark rt}. Moreover, if $\sigma(B_u(\alpha,\delta))=B_u(\sigma(\alpha),\delta)\in [B_u(\alpha,\delta)]$ for some $\sigma\in \mathcal{D}(U)$, then $U_{\alpha,\delta}\cap K(X)=U_{\sigma(\alpha),\delta}\cap K(X)$ by Theorem \ref{Conjugate Rt ext}.

The surjectivity of $\Theta$ follows by Theorem \ref{description rt Popescu}.
Finally, the injectivity follows again by Theorem \ref{Conjugate Rt ext}. In fact, suppose that $\Theta([B_u(\alpha,\delta)])=\Theta([B_u(\alpha',\delta')])$, that is, $U_{\alpha,\delta}\cap K(X)=U_{\alpha',\delta'}\cap K(X)$. Then, by Theorem \ref{Conjugate Rt ext}, $\delta=\delta'$ and $u(\sigma(\alpha)-\alpha')\geq\delta$, so that $B_u(\alpha',\delta')=B_u(\sigma(\alpha),\delta)$. Hence, $[B_u(\alpha,\delta)]=[B_u(\alpha',\delta')]$.
\end{proof}

\section{Polynomial ordering of torsion extensions}\label{torsion extensions}

We say that an extension $W$ of $V$ to $K(X)$ is torsion if $\Gamma_w\subseteq\Gamma_{\overline{v}}$; we denote by $\mathcal W_{\t}$  the set of all torsion extensions of $V$ to $K(X)$. Note that a residually transcendental extension of $V$ to $K(X)$ is torsion by Remark \ref{remark rt},  
so $\mathcal W_{\rt} \subseteq \mathcal W_{\t}$. We stress that not all the residually algebraic extensions of $V$ to $K(X)$ are torsion. For example, $\{\phi\in K(X)\mid \phi(0)\in V\}$  is a residually algebraic extension of $V$ (its residue field is the same as the residue field of $V$) which is not torsion (its one-dimensional valuation overring is $K[X]_{(X)}$; see Proposition \ref{subvaluation of DVR} and also Remark \ref{why alpha transcendental}). 

\subsection{A partial order on the set of torsion extensions of $V$ to $K(X)$}

Let $W_1,W_2$ be in $\mathcal W_{\t}$. In \cite[page 282]{APZAll}, the following definition is given only for residually transcendental extensions (which are torsion):
$$w_1\leq w_2\Leftrightarrow w_1(f)\leq w_2(f),\forall f\in K[X]$$
where the last inequality is considered in
$\Gamma_{\overline{v}}$. More generally, we give the same definition for torsion extensions. 
We remark that since any $f\in K[X]$ can be written as $\frac{g(X)}{c}$ for some $g\in V[X]$ and $c\in V$, $c\not=0$, 
and the valuations $w_1,w_2$ coincide on $K$, the previous condition is equivalent to:  $w_1(g)\leq w_2(g)$ for each $g\in V[X]$.
Note that clearly we have
$$w_1=w_2\Leftrightarrow w_1(f)=w_2(f),\;\forall f\in K[X]\Leftrightarrow W_1=W_2.$$
In particular, we also say that $w_1<w_2$ if and only if $w_1\leq w_2$ and $w_1\not=w_2$.

\begin{Lem}\label{ordering}
Let $W_i \in \mathcal W_{\t}$ and $M_i$ be the maximal ideal of $W_i$ for $i=1,2$.
\begin{enumerate}
\item[{\em (1)}] The following statements are equivalent.
\begin{enumerate}
\item[{\em (a)}] $w_1\leq w_2$.
\item[{\em (b)}] $W_1\cap K[X]\subseteq W_2\cap K[X].$ 
\item[{\em (c)}] $M_1\cap K[X]\subseteq M_2\cap K[X]$.
\end{enumerate}
\item[{\em (2)}] $W_1=W_2$ if and only if $W_1\cap K[X]=W_2\cap K[X]$, if and only if $M_1\cap K[X] = M_2\cap K[X]$. 
\item[{\em (3)}] $(\mathcal W_{\t}, \leq)$ is a partially ordered set.
\end{enumerate}
\end{Lem}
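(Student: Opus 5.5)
The plan is to prove each implication in (1) directly from the definition of $\leq$, using the torsion hypothesis to turn arbitrary values $w_i(f)$ into values of elements of $K$. The key observation is that if $W$ is torsion, then for every nonzero $f\in K[X]$ there are $n\geq 1$ and $c\in K^*$ with $n\,w(f)=v(c)$, because $w(f)\in\Gamma_{\overline v}=\Gamma_v\otimes\Q$.

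For (a) $\Rightarrow$ (b) and (a) $\Rightarrow$ (c): if $f\in K[X]$ satisfies $w_1(f)\geq 0$ (resp.\ $>0$), then $w_2(f)\geq w_1(f)$ gives the same for $w_2$.

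For (b) $\Rightarrow$ (a), fix a nonzero $f\in K[X]$ and choose $n,c$ with $n\,w_1(f)=v(c)$. Put $g=f^n/c\in K[X]$. Then $w_1(g)=0$, so $g\in W_1\cap K[X]\subseteq W_2$. Since $w_1$ and $w_2$ agree on $K$, this gives $n\,w_2(f)\geq v(c)=n\,w_1(f)$. As $\Gamma_{\overline v}$ is torsion-free, $w_2(f)\geq w_1(f)$.

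For (c) $\Rightarrow$ (a) we need an element of strictly positive value, and this is the only delicate step, since $V$ need not be archimedean. If $V=K$, every torsion extension is trivial on $K(X)$ and there is nothing to prove, so assume $V\neq K$. Suppose $d=w_1(f)-w_2(f)>0$ for some nonzero $f\in K[X]$. Because $W_1$ and $W_2$ are torsion, $d\in\Gamma_{\overline v}$, so:
\begin{itemize}
\item there are $n\geq1$ and $c\in K$ with $n\,w_1(f)=v(c)$;
\item there are $k\geq1$ and $b\in K$ with $k d=v(b)>0$.
\end{itemize}
Set $g=b\,f^{nk}/c^k\in K[X]$. Then $w_1(g)=v(b)>0$, so $g\in M_1\cap K[X]\subseteq M_2$. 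On the other hand,
$$w_2(g)=v(b)+nk\bigl(w_2(f)-w_1(f)\bigr)=v(b)-n\,v(b)\leq 0,$$
which is a contradiction. Hence $w_1\leq w_2$. Choosing the exponent $nk$ to absorb $d$ exactly is what avoids any archimedean approximation argument.

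For (2), suppose $W_1\cap K[X]=W_2\cap K[X]$ (or $M_1\cap K[X]=M_2\cap K[X]$). By (1), applied in both directions, $w_1\leq w_2$ and $w_2\leq w_1$. So $w_1=w_2$ on $K[X]$, hence on $K(X)$ by multiplicativity, and therefore $W_1=W_2$. The converse is trivial.

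For (3), reflexivity and transitivity of $\leq$ are immediate from the definition via pointwise inequalities on $K[X]$. Antisymmetry is exactly the content of (2).
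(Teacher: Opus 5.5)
Your proof is correct. Its key ingredient is the same as the paper's: the torsion hypothesis lets you rescale any value $w_i(f)$ by an element of $K$. Your arguments for (b)$\Rightarrow$(a), (2) and (3) are exactly the paper's.

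The difference is in how (1) is decomposed. The paper proves the cycle (a)$\Rightarrow$(c)$\Rightarrow$(b)$\Rightarrow$(a). In (c)$\Rightarrow$(b) it only has to treat $g\in W_1\cap K[X]$ with $w_1(g)=0$ and, for contradiction, $w_2(g)<0$. There a single normalization $w_2(g^nb)=0$ with $v(b)>0$ already gives $g^nb\in (M_1\cap K[X])\setminus M_2$. The reduction of an arbitrary polynomial to such a unit is then done only once, in (b)$\Rightarrow$(a).

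You instead prove (c)$\Rightarrow$(a) directly. You fold both normalizations, of $w_1(f)$ and of the gap $d$, into the single element $bf^{nk}/c^k$. This makes (a)$\Leftrightarrow$(c) independent of (b), at the cost of one extra implication and a slightly more intricate computation.

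Your preliminary case split $V\neq K$ is harmless but unnecessary. The existence of some $d>0$ in $\Gamma_{\overline{v}}$ already forces $\Gamma_v\neq 0$.
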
 

\begin{proof}
(1) (a) $\Rightarrow$ (c). Let $f\in M_1\cap K[X]$ be nonzero. Then $0 < w_1(f)\leq w_2(f)$, so $f\in M_2\cap K[X]$.
Thus $M_1\cap K[X]\subseteq M_2\cap K[X].$ 

(c) $\Rightarrow$ (b). Let $g\in W_1\cap K[X]$ be nonzero. If $g \in M_1$, then $g \in M_2 \cap K[X] \subseteq W_2 \cap K[X]$.
Next assume that $g \not\in M_1$ and $g \not\in W_2 \cap K[X]$. Since $W_2$ is a torsion extension of $V$,
there are $n \in \mathbb{N}$ and $b \in V$ such that $w_2(g^n) = nw_2(g) = -v(b) <0$ or $w_2(g^nb) =0$.
Note that $w_1(g^nb) = nw_1(g)+w_1(b) = v(b) >0$, so $g^nb \in M_1 \cap K[X] \subseteq M_2 \cap K[X]$
by assumption. Hence $w_2(g^nb) > 0$, a contradiction, which implies $g \in W_2 \cap K[X]$.
Thus $W_1\cap K[X]\subseteq W_2\cap K[X].$

(b) $\Rightarrow$ (a). Assume that $f\in K[X]$ is nonzero. Then either $f \in W_1$ or $\frac{1}{f} \in W_1$, and since
$W_1$ is torsion over $V$, there exist a positive integer  $n\in\N$ and $0 \neq c\in K$ such that $nw_1(f)=v(c)$. 
Hence, $w_1(\frac{f^n}{c}) = 0$. 
In particular, $\frac{f^n}{c}\in W_1\cap K[X]\subseteq W_2\cap K[X]$, so that we also have
$$0\leq w_2\left(\frac{f^n}{c}\right)\Leftrightarrow w_2(f)\geq\frac{v(c)}{n}=w_1(f),$$
so the inequality holds true. 

(2) It is clear that $W_1=W_2$ implies both of $W_1\cap K[X]=W_2\cap K[X]$ 
and $M_1\cap K[X]= M_2\cap K[X]$. Conversely, if $W_1\cap K[X]=W_2\cap K[X]$ or $M_1\cap K[X]= M_2\cap K[X]$,  
then $w_1(f)=w_2(f)$ for every $f\in K[X]$ by (1), which implies that $w_1=w_2$ on $K(X)$. Thus, $W_1=W_2$.

(3) This follows directly from (1) and (2) above.
\end{proof}

Let the notation be as in Lemma \ref{ordering}.
Then $w_1<w_2$ if and only if $W_1\cap K[X]\subset W_2\cap K[X]$,
if and only if $M_1\cap K[X]\subset M_2\cap K[X]$. The next corollary gives an alternative characterization of this strict inequality.

\begin{Cor}\label{w1<w2}
Let $W_i \in \mathcal W_{\t}$ and $M_i$ be the maximal ideal of $W_i$ for $i=1,2$. Then
the following statements are satisfied. 
\begin{enumerate}
\item[{\em (1)}] $w_1<w_2$ if and only if $M_1\cap K[X]\subset M_2\cap(W_1\cap K[X])$, and in this case, 
$M_1\cap K[X]$ is a prime non-maximal ideal of $W_1\cap K[X]$. 
\item[{\em (2)}] $W\in \mathcal{W}_{\t}$ is a maximal element of $(\mathcal W_{\t}, \leq)$ if and only if $W$ is residually algebraic.
\end{enumerate}
\end{Cor}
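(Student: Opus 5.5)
The plan is to derive both parts directly from Lemma \ref{ordering}, using the torsion hypothesis to normalize polynomials to $w_1$-value zero. The key device is this: since $W_1$ is torsion, for every nonzero $f\in K[X]$ there are $n\geq 1$ and $0\neq c\in K$ with $nw_1(f)=v(c)$. Then $g=f^n/c\in K[X]$ satisfies $w_1(g)=0$ and $w_2(g)=n(w_2(f)-w_1(f))$.

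\emph{Part (1).} Suppose first that $w_1<w_2$.
\begin{enumerate}
\item[(i)] By Lemma \ref{ordering}(1) we have $M_1\cap K[X]\subseteq M_2\cap K[X]$, and trivially $M_1\cap K[X]\subseteq W_1\cap K[X]$. Hence $M_1\cap K[X]\subseteq M_2\cap(W_1\cap K[X])$.
\item[(ii)] For strictness, pick $f\in K[X]$ with $w_1(f)<w_2(f)$, which exists since $w_1\neq w_2$. The normalized $g=f^n/c$ lies in $W_1\cap K[X]$, is not in $M_1$, and lies in $M_2$.
\end{enumerate}
Conversely, suppose $M_1\cap K[X]\subsetneq M_2\cap W_1\cap K[X]$.
\begin{enumerate}
\item[(i)] The containment $M_1\cap K[X]\subseteq M_2\cap K[X]$ gives $w_1\leq w_2$ by Lemma \ref{ordering}(1).
\item[(ii)] An element $g$ in the difference satisfies $w_1(g)=0<w_2(g)$, so $w_1\neq w_2$.
\end{enumerate}
Finally, $M_2\cap W_1\cap K[X]$ is a prime ideal of $W_1\cap K[X]$, and it is proper because it does not contain $1$. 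Since it strictly contains $M_1\cap K[X]$, the prime $M_1\cap K[X]$ is not maximal.

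\emph{Part (2), residually transcendental implies not maximal.} Let $W$ be residually transcendental. By Theorem \ref{description rt Popescu}, $W=U_{\alpha,\delta}\cap K(X)$ for some $(\alpha,\delta)\in\overline K\times\Gamma_{\overline v}$.
\begin{enumerate}
\item[(i)] Choose $\delta'\in\Gamma_{\overline v}$ with $\delta'>\delta$, and set $W'=U_{\alpha,\delta'}\cap K(X)$. By Remark \ref{remark rt}, $W'$ is residually transcendental, hence torsion.
\item[(ii)] Comparing the $(X-\alpha)$-adic expansions term by term gives $u_{\alpha,\delta}(f)\leq u_{\alpha,\delta'}(f)$ for all $f\in K[X]$, so $w\leq w'$.
\item[(iii)] Since $\delta\neq\delta'$, Theorem \ref{Conjugate Rt ext} gives $W\neq W'$. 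Hence $w<w'$ and $W$ is not maximal.
\end{enumerate}

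\emph{Part (2), residually algebraic implies maximal.} This is the step I expect to be the main point. Let $W$ be residually algebraic, and suppose $W<W'$ with $W'\in\mathcal W_{\t}$. By part (1), there is $f\in W\cap K[X]$ with $w(f)=0<w'(f)$.
\begin{enumerate}
\item[(i)] The residue $\bar f\neq 0$ in $W/M$ is algebraic over the residue field of $V$. Take its minimal polynomial; it is irreducible with nonzero constant term.
\item[(ii)] Lift it to a monic $h\in V[Y]$ with constant term $a_0$ a unit of $V$. Then $h(f)\in M\cap K[X]$.
\item[(iii)] Since $M\cap K[X]\subseteq M'\cap K[X]$, we get $w'(h(f))>0$.
\item[(iv)] On the other hand, $w'(f)>0$ forces $w'(h(f)-a_0)>0$, since $h(f)-a_0$ is a $V$-combination of positive powers of $f$.
\item[(v)] Combining (iii) and (iv) gives $w'(a_0)>0$, i.e. $v(a_0)>0$, contradicting that $a_0$ is a unit of $V$.
\end{enumerate}
Therefore $W$ is maximal in $(\mathcal W_{\t},\leq)$.
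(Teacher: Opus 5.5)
Your proof is correct and follows essentially the same route as the paper: part (1) via Lemma \ref{ordering} and the torsion normalization $f^n/c$, the residually transcendental case by enlarging the radius and invoking Theorem \ref{Conjugate Rt ext}, and the residually algebraic case by an element-wise unrolling of the paper's argument. There the paper observes that $(W\cap K[X])/(M\cap K[X])$ lies between the residue field of $V$ and the algebraic extension $W/M$, hence is a field, which contradicts the non-maximality from part (1); two small points are tacit in your write-up, namely that $M_2\cap(W_1\cap K[X])$ being a prime ideal of $W_1\cap K[X]$ uses $W_1\cap K[X]\subseteq W_2$ (which follows from $w_1\leq w_2$), and that choosing $\delta'>\delta$ requires $V$ to be nontrivial, an assumption the paper also makes implicitly.
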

\begin{proof}
(1) Assume that $w_1<w_2$. Then, by Lemma \ref{ordering}, there exists $f\in (W_2\cap K[X])\setminus (W_1\cap K[X])$, so 
$w_1(f)<0\leq w_2(f)$. Since $W_1$ is a torsion extension of $V$, 
there exist $n\in\N$ and $0 \neq a\in K$ such that $w_1(f^n)=v(a)$. Then, we have
$$0=w_1\left(\frac{f^n}{a}\right)<w_2\left(\frac{f^n}{a}\right),$$
so $\frac{f^n}{a}\in M_2\cap (W_1\cap K[X])$ but $\frac{f^n}{a}\notin M_1\cap K[X]$. Conversely, suppose that 
$M_1\cap K[X]\subset M_2\cap(W_1\cap K[X])$. Then $M_2\cap(W_1\cap K[X]) \subseteq M_2 \cap K[X]$
implies $M_1\cap K[X]\subset M_2 \cap K[X]$. Thus $w_1<w_2$ by Lemma \ref{ordering}.

(2) $(\Rightarrow)$ Suppose that $W$ is residually transcendental. By Theorem \ref{description rt Popescu}, 
there exists $(\alpha,\delta)\in \overline{K}\times\Gamma_{\overline v}$ such that $W=U_{\alpha,\delta}\cap K(X)$ 
(where $U$ is a fixed extension of $V$ to $\overline K$). If $\delta'\in\Gamma_v$ 
is strictly greater than $\delta$, we have 
 $W\cap K[X] \subsetneq W' \cap K[X]$, where $W'=U_{\alpha,\delta'}\cap K(X)$. In fact, $U_{\alpha,\delta}\cap \overline K[X]\subsetneq U_{\alpha,\delta'}\cap\overline{K}[X]$ because $\frac{X-\alpha}{c'}$ is in the latter ring but not in the former (where $c'\in K$ has value $\delta'$) and by Theorem \ref{Conjugate Rt ext}, $U_{\alpha,\delta}\cap K[X]\subsetneq U_{\alpha,\delta'}\cap K[X]$.
 
$(\Leftarrow)$ Suppose that $W\in \mathcal{W}_{\t}$ is residually algebraic. We have the following containments, 
 where $M$ is the maximal ideal of $W$:
$$\frac{V}{M \cap K} \subseteq\frac{W\cap K[X]}{M\cap K[X]}\subseteq\frac{W}{M}.$$
It follows that $(W\cap K[X])/(M\cap K[X])$ is a field and thus $M\cap K[X]$ is a maximal ideal of $W\cap K[X]$. 
By the first part of the Corollary, $W$ is a maximal element of $(\mathcal W_{\t}, \leq)$.
\end{proof}

It is worthwhile to note that if $K$ is algebraically closed and maximally complete (i.e., it does not admit any proper immediate extension), then it is straightforward to see that there are no residually algebraic torsion extensions of $V$ to $K(X)$.
Hence, in this case, $\mathcal W_{\t}$ has no maximal element by Corollary \ref{w1<w2}.


\subsection{Residually transcendental extensions over the completion}\label{res transc completion}


\bigskip

Let $\V$ (resp. $\K$) denote the $v$-adic completion of $V$ (resp. $K$), 
$\widetilde{\mathcal W}_{\t}$ be the set of all torsion extensions of $\V$  to $\K(X)$,
and $\widetilde{\mathcal W}_{\rt}$ be the set of all residually transcendental 
extensions of $\V$ to $\K(X)$. In this subsection,
we show that the map of $\widetilde{\mathcal W}_{\t}$ (resp. $\widetilde{\mathcal W}_{\rt}$)
into $\mathcal W_{\t}$ (resp. $\mathcal W_{\rt}$) given by $W \mapsto W \cap K(X)$ is bijective.


\begin{Lem}\label{contraction valuation}
Let $\wW$ be an extension of $\V$ to $\K(X)$ such that  $\tilde w(X)>v(a)$, for some $a\in K$. If $W=\wW\cap K(X)$, then we have
$$\frac{\wW\cap\K[X]}{\wM\cap\K[X]}=\frac{W\cap K[X]}{M\cap K[X]},$$
where $\wM,M$ are the maximal ideals of $\wW,W$, respectively.
\end{Lem}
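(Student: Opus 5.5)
Both quotients sit naturally inside the residue field of $\wW$, so the task is to compare two subrings of $\wW/\wM$. The inclusion $W\cap K[X]\subseteq \wW\cap\K[X]$, together with $(M\cap K[X]) = (\wM\cap\K[X])\cap K[X]$, induces an injective ring homomorphism
$$\iota:\frac{W\cap K[X]}{M\cap K[X]}\longrightarrow \frac{\wW\cap\K[X]}{\wM\cap\K[X]}.$$
The claimed equality therefore amounts to showing that $\iota$ is surjective. Concretely, for every $F\in \wW\cap\K[X]$ we must produce $f\in K[X]$ with $\tilde w(F-f)>0$. Such an $f$ automatically lies in $W$, since $\tilde w(f)\geq\min\{\tilde w(F),\tilde w(F-f)\}\geq 0$, and then $\iota(\bar f)=\bar F$.

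The approximation comes from density of $K$ in $\K$. Write $F=\sum_{i=0}^n A_iX^i$ with $A_i\in\K$. Set $\gamma=\min\{0,\tilde w(X)\}$; the hypothesis $\tilde w(X)>v(a)$ gives $\gamma\ge\min\{0,v(a)\}$, which is a value in $\Gamma_v$ and hence a finite lower bound. For each $i$, density lets us choose $a_i\in K$ with $v(A_i-a_i)>-i\gamma$, and we put $f=\sum a_iX^i$. Then
$$\tilde w(F-f)\geq\min_i\{v(A_i-a_i)+i\,\tilde w(X)\}>\min_i\{-i\gamma+i\gamma\}=0,$$
using $\tilde w(X)\ge \gamma$ in each term. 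The hypothesis is essential exactly here: it makes $\tilde w(X)$ bounded below by an element of $\Gamma_v$, so finitely many approximations of bounded precision suffice. Without it, for instance when $\tilde w(X)$ is infinitely negative relative to $\Gamma_v$, no finite precision would work.

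One subtlety must be settled before this argument applies: $\tilde w$ must extend the completed valuation $\hat v$ on $\K$, and $\K$ must be dense in a sense compatible with $\tilde w$. Both follow from $\wW\cap\K=\V$: the restriction of $\tilde w$ to $\K$ is $\hat v$, so $v(A_i-a_i)$ has its usual meaning and approximating in the $v$-adic topology is legitimate. This is the step to check carefully, particularly if $V$ has higher rank, where density of $K$ in $\K$ should be taken with respect to the value group $\Gamma_v$. Apart from this, the main obstacle is simply bookkeeping the lower bound $\gamma$, which is handled by the choice above.
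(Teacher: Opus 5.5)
Your proof is correct and is essentially the paper's argument. Both approximate each coefficient of $F$ by an element of $K$ using density of $K$ in $\K$, with precision set by the lower bound on $\tilde w(X)$ coming from the hypothesis, so that $\tilde w(F-f)>0$ and $f\in W\cap K[X]$. The only cosmetic difference is that you work with $\gamma=\min\{0,\tilde w(X)\}$, whose bound $\min\{0,v(a)\}\in\Gamma_v$ is what makes the density step legitimate, whereas the paper directly requires $v(A_i-a_i)+i\,v(a)>0$.
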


\begin{proof}
Clearly, we have the containment $(\supseteq)$. For the other containment, 
we need to show that for every $f\in \wW\cap\K[X]$ there exists $g\in W\cap K[X]$ such that $\tilde w(f-g)>0$. 
Suppose that $f(X)=\sum\limits_{i= 0}^d \alpha_i X^i$. We choose $g\in K[X]$, $g(X)=\sum\limits_{i\geq0}^d a_i X^i$,
such that $v(\alpha_i-a_i)+iv(a)>0$, for each $i = 0,\ldots,d$. In particular,
\begin{equation}\label{f-g}
\tilde w(f-g)\geq \min\{v(\alpha_i-a_i)+i\tilde w(X)\}>\min\{v(\alpha_i-a_i)+iv(a)\}>0
\end{equation}
so that $f-g\in\wM$. Note that $g\in W$, so we get the claim.
\end{proof}

Let $\V_{\infty}=\{\psi\in\K(X)\mid \psi(\infty)\in\V\}$, where $\psi(\infty)$ is defined as $\varphi(0)$ with $\varphi(X)=\psi(1/X)$ (see \cite{PerTransc}). Then
the assumption on $\wW$ of Lemma \ref{contraction valuation} is equivalent to say $\wW\not=\V_{\infty}$. 
Note that every $W\in\mathcal{W}_{\t}$ satisfies the assumption of Lemma \ref{contraction valuation}.

\begin{Lem}\label{overring polynomial domain}
Let $\widetilde R$ be an integral domain such that $\V[X/a]\subseteq \widetilde R\subseteq\K[X]$ 
for some nonzero $a\in K$ and let $R=\widetilde R\cap K[X]$. Let $\wW$ be a valuation domain 
of $\K(X)$ extending $\V$  and let $W=\wW\cap K(X)$. Then
$R\subset W$ if and only if $\widetilde R\subset \wW$.
\end{Lem}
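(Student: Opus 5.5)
The implication ``$\widetilde R\subset \wW\Rightarrow R\subset W$'' is immediate, since $R=\widetilde R\cap K[X]\subseteq \wW\cap K(X)=W$. The work lies in the converse. So we assume $R\subset W$ and aim to show that every $f\in\widetilde R$ satisfies $\tilde w(f)\geq 0$. The plan is to approximate $f$ by a polynomial with coefficients in $K$, in the spirit of the proof of Lemma \ref{contraction valuation}, using the hypothesis $\V[X/a]\subseteq\widetilde R$ to absorb the error term.

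First we record what $R\subset W$ gives. Since $V[X/a]\subseteq \V[X/a]\cap K[X]\subseteq R\subset W$, we get $w(X/a)\geq 0$, i.e.\ $\tilde w(X)\geq v(a)$. Now take $f=\sum_{i=0}^d\alpha_iX^i\in\widetilde R$ with $\alpha_i\in\K$, and write $f=\sum_i \beta_i (X/a)^i$ with $\beta_i=\alpha_ia^i\in\K$. Since $K$ is dense in $\K$, we can choose $b_i\in K$ with $\beta_i-b_i\in\V$. Then
$$g=\sum_i b_i(X/a)^i\in K[X],\qquad f-g=\sum_i(\beta_i-b_i)(X/a)^i\in\V[X/a]\subseteq\widetilde R.$$
Hence $g=f-(f-g)\in\widetilde R\cap K[X]=R\subset W\subseteq\wW$.

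It remains to see that $f-g\in\wW$. We have $\tilde w(X/a)\geq 0$ and $\V\subseteq\wW$, so $\V[X/a]\subseteq\wW$. Therefore $f=g+(f-g)\in\wW$, and so $\widetilde R\subset \wW$.

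The only delicate point is choosing the approximation so that the error lies in $\V[X/a]$ rather than merely having positive valuation. Expanding in the basis $(X/a)^i$ and approximating each coefficient modulo $\V$, which is possible because $K+\V=\K$ by density, handles this directly. If one needs strict containments, or cares whether $a\in K$ versus $a\in\K$, this affects nothing, since $a\in K$ is given.
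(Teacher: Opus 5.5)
Your proposal is correct and follows essentially the same route as the paper: approximate each coefficient of $f$ in the basis $(X/a)^i$ by an element of $K$ modulo $\V$, so that $f-g\in\V[X/a]\subseteq\widetilde R$ and hence $g\in R\subset W$. Then use $X/a\in R\subset W$ to get $\V[X/a]\subseteq\wW$, which gives $f=g+(f-g)\in\wW$.
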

\begin{proof}
The implication $(\Leftarrow)$ is clear. Conversely, suppose that $R\subset W$ and let $f\in \widetilde R$. In particular, $f\in\K[X]=\K[X/a]$, so $f(X)=\sum\limits_{i=0}^d \alpha_i (X/a)^i$, for some $\alpha_i\in\K$. Let $g(X)=\sum\limits_{i=0}^d a_i (X/a)^i\in K[X]$ be such that $v(\alpha_i-a_i)\geq0$ for each $i=0,\ldots,d$. Then $h:=f-g\in \V[X/a]\subseteq\widetilde{R}$ so that $g=f-h\in \widetilde R\cap K[X]=R$. By assumption, $g\in W$ and note that $h\in \wW$, because $V[X/a]\subset W$ implies that $\V[X/a]\subset \wW$. Hence,  $f=h+g\in \wW$, as claimed.
\end{proof}

The following corollary follows at once.

\begin{Cor}\label{contraction W polynomial}
Let $\wW_i$ be an extension of $\V$ to $\K(X)$ such that $\tilde w_i(X)>v(a)$ for some $a\in K$, for $i=1,2$. Let $W_i=\wW_i\cap K(X)$, for $i=1,2$. Then we have
\begin{equation}\label{contaiments}
W_1\cap K[X]\subseteq W_2\cap K[X]\Leftrightarrow \wW_1\cap \K[X]\subseteq \wW_2\cap \K[X].
\end{equation}
\end{Cor}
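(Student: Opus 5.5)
The equivalence \eqref{contaiments} follows by applying Lemma \ref{overring polynomial domain} with $\widetilde R=\wW_1\cap\K[X]$ and $\wW=\wW_2$. The main work is to check that the hypothesis $\V[X/a]\subseteq\widetilde R$ of that lemma holds for a suitable $a$, and that $\widetilde R\cap K[X]=W_1\cap K[X]$.

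First, the contraction. Since $W_1=\wW_1\cap K(X)$, we get
$$(\wW_1\cap\K[X])\cap K[X]=\wW_1\cap K[X]=W_1\cap K[X].$$
So with $\widetilde R=\wW_1\cap\K[X]$ we have $R=\widetilde R\cap K[X]=W_1\cap K[X]$. Trivially $\widetilde R\subseteq\K[X]$.

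Next, the containment $\V[X/a]\subseteq\widetilde R$. By hypothesis there is $a_1\in K$ with $\tilde w_1(X)>v(a_1)$, and $a_2\in K$ with $\tilde w_2(X)>v(a_2)$. Pick a single nonzero $a\in K$ with $v(a)\le\min\{v(a_1),v(a_2)\}$, for instance $a=a_1a_2/b$ with $b\in V$ of large enough value. This is possible as long as $v$ is nontrivial; if $V=K$ the statement is trivial, since then both rings are $\K[X]$. Then $\tilde w_i(X/a)>0$ for $i=1,2$, so $X/a\in\wW_i$. Since $\V\subseteq\wW_i$, we get $\V[X/a]\subseteq\wW_i\cap\K[X]$ for both $i$. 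In particular $\V[X/a]\subseteq\widetilde R$.

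For ($\Leftarrow$): assume $\wW_1\cap\K[X]\subseteq\wW_2\cap\K[X]$ and intersect both sides with $K[X]$. The left side becomes $W_1\cap K[X]$ and the right side becomes $\wW_2\cap K[X]=W_2\cap K[X]$. No lemma is needed for this direction.

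For ($\Rightarrow$): assume $W_1\cap K[X]\subseteq W_2\cap K[X]$. Then $R=W_1\cap K[X]\subseteq W_2$. Lemma \ref{overring polynomial domain}, applied with $\widetilde R=\wW_1\cap\K[X]$ and $\wW=\wW_2$, gives $\widetilde R\subseteq\wW_2$, that is, $\wW_1\cap\K[X]\subseteq\wW_2\cap\K[X]$.

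The only delicate point is choosing one $a$ that works for both valuations. The proof of Lemma \ref{overring polynomial domain} uses $\V[X/a]\subseteq\wW_2$, which is guaranteed by $\tilde w_2(X)\ge v(a)$. I also need $\V[X/a]\subseteq\widetilde R=\wW_1\cap\K[X]$, which uses $\tilde w_1(X)\ge v(a)$. Taking $v(a)$ at most the minimum of the two bounds handles both at once.
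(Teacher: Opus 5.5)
Your proof is correct and is essentially the paper's: the paper also observes that the hypothesis on $\tilde w_1$ gives $\V[X/a]\subseteq\wW_1\cap\K[X]$, and then applies Lemma~\ref{overring polynomial domain} with $\widetilde R=\wW_1\cap\K[X]$ and $\wW=\wW_2$. Choosing a common $a$ for both valuations is harmless but unnecessary: in the forward direction, $X/a\in W_1\cap K[X]\subseteq W_2$ already gives $\V[X/a]\subseteq\wW_2$, so the hypothesis on $\tilde w_2$ is never actually used.
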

\begin{proof}
It is sufficient to notice that the assumption of $
\tilde w_1$ implies that $\V[X/a]\subseteq\wW_1\cap\K[X]$ and then apply Lemma \ref{overring polynomial domain}.
\end{proof}

\begin{Lem}\label{extension rt ext}
Let $W$ be a residually transcendental extension of $V$ to $K(X)$
and $\widehat{K(X)}$ $($resp. $\W)$ be the completion of $K(X)$ $($resp. $W)$ with respect to $W$.
Then the following statements hold true:
\begin{itemize}
\item[{\em (1)}] $\K(X)$ is a subfield of $\widehat{K(X)}$.
\item[{\em (2)}] $\W$ is immediate over $\W\cap\K(X)$.
\item[{\em (3)}] $\W\cap\K(X)$ is a residually transcendental extension of $\V$.
\end{itemize} 
\end{Lem}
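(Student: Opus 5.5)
Write $W=U_{\alpha,\delta}\cap K(X)$ with $(\alpha,\delta)$ a minimal pair, by Theorem \ref{description rt Popescu}. Choose $n\ge1$ and $b\in K$ with $v(b)=n\delta$; this is possible because $\Gamma_w$ is torsion over $\Gamma_v$ by Remark \ref{remark rt}.

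For (1), I will show that the closure of $K$ inside $\widehat{K(X)}$ is a copy of $\K$. Since $W\cap K=V$, the restriction of $w$ to $K$ is $v$. So the $w$-topology on $K$ is the $v$-topology, and Cauchy sequences of $K$ for $v$ converge in $\widehat{K(X)}$. This embeds $\K$ as a subfield $\K\subseteq\widehat{K(X)}$, with $\hat w|_{\K}=\hat v$. Then $\K$ and $X$ generate a subfield $\K[X]\subseteq\widehat{K(X)}$. It remains to see that $X$ is transcendental over $\K$, so that this subring is really $\K[X]$ and its fraction field is $\K(X)$. I would argue as follows. If $f(X)=0$ for some nonzero $f\in\K[X]$, approximate the coefficients of $f$ by elements of $K$ as in the proof of Lemma \ref{contraction valuation}. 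This produces $g\in K[X]$ of the same degree with $w(g)$ arbitrarily large. To rule this out, I will use the monomial description of $w$: $w(g)=\min_i\{u(a_i)+i\delta\}$ in the $(X-\alpha)$-expansion. This quantity is controlled from below by the coefficients of $g$ together with $\alpha$, uniformly for bounded degree, and that bound gives the contradiction. This uniform lower bound is the technical point I expect to be the main obstacle. The cleanest route may be to pass to $u_{\alpha,\delta}$ on $\overline{K}(X)$ and use that $\{1,(X-\alpha),\dots\}$ is an orthogonal basis.

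For (2) and (3), set $W'=\W\cap\K(X)$. This is a valuation domain of $\K(X)$ lying over $\V$, since $\W\cap\K=\V$. To prove (2), it suffices to check two things: $W\subseteq W'\subseteq\W$, and $\W$ is immediate over $W$, as a completion always is. Then the value groups and residue fields satisfy $\Gamma_w\subseteq\Gamma_{w'}\subseteq\Gamma_{\hat w}=\Gamma_w$, and likewise $k_W\subseteq k_{W'}\subseteq k_{\W}=k_W$. So all three coincide and $\W$ is immediate over $W'$.

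For (3), the residue field of $W'$ equals $k_W$, which is transcendental over $k_V=k_{\V}$ because $W$ is residually transcendental. Hence $W'$ is a residually transcendental extension of $\V$ to $\K(X)$, as required.
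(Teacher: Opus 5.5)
Your arguments for (2) and (3) are exactly the paper's. The sandwich $W\subseteq \W\cap\K(X)\subseteq\W$, together with immediacy of the completion, forces the value groups and residue fields to coincide.

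\textbf{The gap is in (1).} It sits at the step you flag yourself: the transcendence of $X$ over $\K$. Approximating a relation $f(X)=0$ produces $g=\sum_{j\le d}a_jX^j\in K[X]$ with $\min_j v(a_j)$ fixed and $w(g)$ arbitrarily large. To contradict this you need an \emph{upper} bound $w(g)\le \min_j v(a_j)+C_d$, valid for all $g$ of degree at most $d$. What you describe is a lower bound on $w(g)$ in terms of the coefficients. Such a bound is compatible with $w(g)$ being huge, so as written the plan does not close.

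The correct bound does hold. If $g=\sum_i b_i(X-\alpha)^i$, then $a_j=\sum_{i\ge j}\binom{i}{j}(-\alpha)^{i-j}b_i$ and $u(b_i)\ge w(g)-i\delta$. Hence $v(a_j)\ge w(g)-C_d$ with $C_d=\max_{0\le j\le i\le d}\bigl(i\delta-(i-j)u(\alpha)\bigr)$, and cofinality of $\Gamma_v$ in $\Gamma_{\ov}$ finishes the argument.

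A related point: $w|_K=v$ alone does not make the $w$-topology on $K$ equal to the $v$-topology. You also need $\Gamma_v$ cofinal in $\Gamma_w$; otherwise the induced topology on $K$ is discrete. Torsion of $\Gamma_w/\Gamma_v$ gives this cofinality, and that is how the paper obtains $\K\subseteq\widehat{K(X)}$. You record torsion but use it only to pick $b$, which then plays no role.

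\textbf{The estimate is not actually needed.} The paper takes $L$ to be the subfield of $\widehat{K(X)}$ generated by $\K$ and $X$, before knowing whether $X$ is transcendental. Your sandwich argument applies verbatim to $\W\cap L$ and shows its residue field is $k_W$, which is transcendental over $k_V=k_{\V}$. If $X$ were algebraic over $\K$, then $L$ would be a finite extension of $\K$. By the fundamental inequality, every valuation of $L$ extending $\hat v$ would then have residue field finite over $k_{\V}$, a contradiction. So proving (2) and (3) first for $L$ gives the transcendence in (1) for free, without using the explicit monomial form of $w$.
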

\begin{proof}
Since $\Gamma_w/\Gamma_v$ is torsion, it follows that $\K$ is equal to the topological closure of $K$ in $\widehat{K(X)}$. 
Hence, we may embed $\K(X)$ into $\widehat{K(X)}$. 

Let $\widetilde W=\W\cap \K(X)$.
Since $V\subseteq\V$ and $W\subseteq\widehat{W}$ are immediate extensions, it follows that $\widehat{W}$ is immediate over $\widetilde W$ and $\W$ is residually transcendental over $\V$. 
In particular, since $K(X)\subseteq\K(X)\subseteq\widehat{K(X)}$, it follows that also $W\subseteq\widetilde{W}$ is 
an immediate extension and so $\widetilde W$ is also a residually transcendental extension of $\V$. If $X$ were algebraic over $\K$, 
then  $\K\subseteq\K(X)$ would be a finite extension and so $\widetilde W$ could not be residually transcendental over $\V$. 
Then $X$ is transcendental over $\K$. 
\end{proof}

\begin{Lem}\label{contraction rt ext}
Let $\wW$ be a residually transcendental extension of $\V$ to $\K(X)$. Then $\wW\cap K(X)$ is a residually transcendental extension of $V$.
\end{Lem}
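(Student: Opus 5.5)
Let $W=\wW\cap K(X)$, with maximal ideal $M$, and let $\wM$ be the maximal ideal of $\wW$. The plan is to show that the residue field of $W$ already contains an element transcendental over the residue field $k_V$ of $V$. Since $V\subseteq\V$ is immediate, $k_V=k_{\V}$. The natural tool is Lemma \ref{contraction valuation}, which identifies $\frac{\wW\cap\K[X]}{\wM\cap\K[X]}$ with $\frac{W\cap K[X]}{M\cap K[X]}$. To apply it I first need $\tilde w(X)>v(a)$ for some $a\in K$, i.e. $\wW\neq\V_{\infty}$.

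\textbf{Step 1.} I check the hypothesis of Lemma \ref{contraction valuation}. Since $\wW$ is residually transcendental, Theorem \ref{description rt Popescu} applies over $\K$ with a fixed extension $\widehat U$ of $\V$ to $\overline{\K}$. It gives $\wW=\widehat U_{\alpha,\delta}\cap\K(X)$ with $\delta\in\Gamma_{\ov}$. Then $\tilde w(X)\ge\min\{\delta,\widehat u(\alpha)\}$, which is a finite element of $\Gamma_{\ov}$. Any $a\in K$ of sufficiently negative value therefore works; the value group of $\V$ equals $\Gamma_v$, and it is cofinal in $\Gamma_{\ov}$. (Alternatively, $\V_\infty$ is residually algebraic, since its residue field is $k_V$, so $\wW\ne\V_\infty$.)

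\textbf{Step 2.} I produce a transcendental residue in the polynomial part. Take $c\in\overline{\K}$ with $\widehat u(c)=\delta$. The residue $t$ of $\frac{X-\alpha}{c}$ in $\widehat U_{\alpha,\delta}$ is transcendental over $k_{\widehat U}$, hence over $k_V$. The residue field of $\wW$ is therefore transcendental over $k_V$, so some element $\varphi=f/g\in\wW$ with $f,g\in\K[X]$ has residue transcendental over $k_V$. I reduce this to polynomials. Using that $\Gamma_{\tilde w}$ is torsion over $\Gamma_v$, choose $n$ and $b\in K$ with $\tilde w(g^n b)=0$. Then write $\varphi=(f g^{n-1}b)/(g^n b)$, where the denominator is a unit. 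So $\varphi$ is a quotient of two elements of $\wW\cap\K[X]$, the denominator having nonzero residue. Consequently the fraction field of $\frac{\wW\cap\K[X]}{\wM\cap\K[X]}$ is transcendental over $k_V$, and this ring itself contains an element $\bar h$ transcendental over $k_V$.

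\textbf{Step 3.} By Lemma \ref{contraction valuation}, $\bar h$ is also the residue of some $h\in W\cap K[X]$, so $W/M$ contains an element transcendental over $k_V$. Moreover $W\cap K=\wW\cap K=\V\cap K=V$, so $W$ is an extension of $V$. Hence $W$ is a residually transcendental extension of $V$.

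The main obstacle I expect is Step 2: passing from a transcendental residue of a rational function to one coming from a polynomial. The torsion trick above, relying on Remark \ref{remark rt} applied over $\V$, is what I would use there. If that fails, I would instead argue directly with $\frac{X-\alpha}{c}$ by taking the norm to $\K$ of a suitable power, which is more cumbersome.
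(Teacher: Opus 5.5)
Your proposal is correct and follows the paper's route. You exhibit a polynomial in $\wW\cap\K[X]$ whose residue is transcendental over $k_V$, then use Lemma \ref{contraction valuation} to replace it by a polynomial in $K[X]$ with the same residue. The only differences are that the paper takes that polynomial directly from \cite[Proposition 1]{AlePop}, whereas you derive it from the torsion of $\Gamma_{\tilde w}$ over $\Gamma_v$ by writing $\varphi=(fg^{n-1}b)/(g^nb)$ with unit denominator, and that you explicitly verify the hypothesis $\tilde w(X)>v(a)$, which the paper leaves implicit.
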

\begin{proof}
By \cite[Proposition 1]{AlePop}, there exists $f\in\K[X]\cap \wW$ such that its residue modulo $\widetilde M$ is transcendental over $\V/\M=V/M$. By Lemma \ref{contraction valuation}, 
if $g\in K[X]$ is a polynomial of the same degree as $f$ sufficiently $v$-adically close to $f$,  
then $g\in \wW\cap K(X)$ and $f$ and $g$ have the same residue modulo $\widetilde M$ so $g$ is residually transcendental over $V/M$. 
Thus, $\wW\cap K(X)$ is a residually transcendental extension of $V$.
\end{proof}

\begin{Thm}\label{contraction torsion exts}
If $\wW\in \widetilde{\mathcal W}_{\t}$ then $\wW\cap K(X)\in \mathcal W_{\t}$. Moreover,  if $\wW_i\in \widetilde{\mathcal W}_{\t}$ and $W_i=\wW_i\cap K(X)$ for $i=1,2$, then $w_1\leq w_2$ if and only if $\tilde w_1\leq \tilde w_2$. In particular, $W_1=W_2$ if and only if $\wW_1=\wW_2$.
\end{Thm}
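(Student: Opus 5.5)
First I check that contraction preserves torsion. The value group of $W=\wW\cap K(X)$ is a subgroup of $\Gamma_{\tilde w}$. Since $\wW$ is torsion over $\V$ and $\Gamma_{\hat v}=\Gamma_v$ (completion is an immediate extension), we get $\Gamma_w\subseteq\Gamma_{\tilde w}\subseteq\Gamma_{\overline v}$. Also $W\cap K=\wW\cap K=\V\cap K=V$, so $W$ lies over $V$, and therefore $W\in\mathcal W_{\t}$.

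Next, for the comparison of orders I use the hypothesis of Corollary \ref{contraction W polynomial}, namely that $\tilde w_i(X)>v(a)$ for some $a\in K$. Since $\wW_i$ is torsion, some $n$ and some $c\in \K$ satisfy $n\tilde w_i(X)=\hat v(c)\in\Gamma_v$, so $\tilde w_i(X)$ is a finite element of $\Gamma_{\overline v}$. I can then choose $a\in K$ with $v(a)<\tilde w_i(X)$; this is possible because $\Gamma_v$ is cofinal in its divisible hull, given $V\neq K$. Corollary \ref{contraction W polynomial} then gives
\[
W_1\cap K[X]\subseteq W_2\cap K[X]\iff \wW_1\cap\K[X]\subseteq \wW_2\cap\K[X].
\]
By Lemma \ref{ordering}(1), the left side is equivalent to $w_1\le w_2$. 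The same Lemma \ref{ordering}, applied to $\V$ and $\K$ in place of $V$ and $K$ (it only uses torsion-ness), shows the right side is equivalent to $\tilde w_1\le\tilde w_2$. This proves $w_1\le w_2\iff\tilde w_1\le\tilde w_2$.

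Finally, for the equality statement I apply the order equivalence in both directions. We have $W_1=W_2$ iff $w_1\le w_2$ and $w_2\le w_1$, iff $\tilde w_1\le\tilde w_2$ and $\tilde w_2\le\tilde w_1$, iff $\wW_1=\wW_2$. The last step uses antisymmetry, which holds by Lemma \ref{ordering}(2)--(3) over $\K$.

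The main obstacle is the first step, verifying the hypothesis $\tilde w_i(X)>v(a)$ with $a$ taken in $K$ rather than $\K$. This requires knowing that the value of $X$ is finite and torsion, and that the value groups of $K$ and $\K$ coincide. A subsidiary point is that Lemma \ref{ordering} holds verbatim over the complete field.
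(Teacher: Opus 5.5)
Your proposal is correct and follows essentially the same route as the paper: you get torsion-ness of the contraction from $\Gamma_{\hat v}=\Gamma_v$, then combine Lemma \ref{ordering} on both sides with Corollary \ref{contraction W polynomial}, and deduce the equality claim from the same equivalence together with Lemma \ref{ordering}(2). Your explicit check that $\tilde w_i(X)$ is a finite element of $\Gamma_{\overline v}$ (so some $a\in K$ with $v(a)<\tilde w_i(X)$ exists when $V\neq K$) fills in the hypothesis of Corollary \ref{contraction W polynomial}, which the paper uses without comment.
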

\begin{proof}
It is clear that if $\wW\in\widetilde{\mathcal W}_{\t}$, then $\wW\cap K(X)\in \mathcal W_{\t}$, 
because $\Gamma_v=\Gamma_{\hat v}$.
Hence, by Lemma \ref{ordering}, the claim $w_1\leq w_2\Leftrightarrow\tilde w_1\leq \tilde w_2$ 
is equivalent to \eqref{contaiments}, which holds by Corollary \ref{contraction W polynomial}.

For the final claim, $(\Leftarrow)$ is straightforward. Conversely, suppose $W_1=W_2$. 
Then $W_1\cap K[X]= W_2\cap K[X]$, so by \eqref{contaiments}, $\wW_1\cap \K[X]=\wW_2\cap \K[X]$. 
Thus, by Lemma \ref{ordering}, $\wW_1=\wW_2$.
\end{proof}


\begin{Cor}\label{bijection rt}
The map $\Phi:\widetilde{\mathcal W}_{\rt} \to \mathcal W_{\rt}$ given by 
$\wW \mapsto \wW\cap K(X)$ is bijective. 
\end{Cor}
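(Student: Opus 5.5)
Well-definedness comes from Lemma \ref{contraction rt ext}: if $\wW\in\widetilde{\mathcal W}_{\rt}$, then $\wW\cap K(X)$ is a residually transcendental extension of $V$, i.e. an element of $\mathcal W_{\rt}$.

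For injectivity, recall that residually transcendental extensions are torsion (Remark \ref{remark rt}, applied to $\V$ and $\K$). Hence $\widetilde{\mathcal W}_{\rt}\subseteq\widetilde{\mathcal W}_{\t}$, and Theorem \ref{contraction torsion exts} gives that $\wW_1\cap K(X)=\wW_2\cap K(X)$ implies $\wW_1=\wW_2$.

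The real content is surjectivity. Given $W\in\mathcal W_{\rt}$, I would take the completion $\W$ of $W$ inside the completion $\widehat{K(X)}$ of $K(X)$ with respect to $W$. By Lemma \ref{extension rt ext}, $\K(X)$ embeds in $\widehat{K(X)}$, and $\wW:=\W\cap\K(X)$ is a residually transcendental extension of $\V$, so $\wW\in\widetilde{\mathcal W}_{\rt}$. It remains to check that $\wW\cap K(X)=W$. This holds because $\W\cap K(X)=W$: the completion of a valued field is an immediate extension whose valuation restricts to the original one, so an element of $K(X)$ has nonnegative value in $\W$ if and only if it lies in $W$. Therefore $\wW\cap K(X)=\W\cap\K(X)\cap K(X)=W$.

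The main point to watch is that the embedding $\K(X)\hookrightarrow\widehat{K(X)}$ of Lemma \ref{extension rt ext}(1) is compatible with the given inclusion $K(X)\subseteq\K(X)$. It must restrict to the identity on $K(X)$ and send $\K$ onto the closure of $K$, with $X\mapsto X$, so that intersecting with $K(X)$ is meaningful. This follows from the construction in the proof of Lemma \ref{extension rt ext}, where $\K$ is identified with the topological closure of $K$ in $\widehat{K(X)}$ and $X$ is shown to remain transcendental over $\K$. With that identification the valuation on $\K$ induced by $\W$ is $\hat v$, since it extends $v$ continuously, so $\wW$ indeed lies over $\V$.
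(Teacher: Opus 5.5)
Your proposal is correct and follows the paper's proof: well-definedness from Lemma~\ref{contraction rt ext}, surjectivity from Lemma~\ref{extension rt ext}, and injectivity from Theorem~\ref{contraction torsion exts}. You also spell out two points the paper leaves implicit: that $\widetilde{\mathcal W}_{\rt}\subseteq\widetilde{\mathcal W}_{\t}$, so the theorem applies, and that $(\W\cap\K(X))\cap K(X)=\W\cap K(X)=W$, which is exactly what makes the extension from Lemma~\ref{extension rt ext} a preimage of $W$.
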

\begin{proof}
Note that if $\wW\in\widetilde{\mathcal W}_{\rt}$, then $W=\wW\cap K(X)\in \mathcal W_{\rt}$ by Lemma \ref{contraction rt ext}.
Hence, $\Phi$ is well-defined. Then $\Phi$ is  surjective by Lemma \ref{extension rt ext}
and $\Phi$ is injective by Theorem \ref{contraction torsion exts}.
\end{proof}

\begin{Rem}
Let $\Psi$ be the map from $\widetilde{\mathcal W}_{\t}$ to $\mathcal W_{\t}$ which sends $\wW$ to $\wW\cap K(X)$. Clearly, the restriction of $\Psi$ to $\widetilde{\mathcal W}_{\rt}$ is equal to $\Phi$.  As in the proof of Corollary \ref{bijection rt}, $\Psi$ is well-defined and injective by Theorem \ref{contraction torsion exts}. However, $\Psi$ is not surjective: if $\alpha\in\K$ is transcendental over $K$, then $V_{\alpha}=\{\phi\in K(X) \mid \phi(\alpha)\in\V\}$ is a torsion extension of $V$, while $\V_{\alpha}=\{\psi\in \K(X) \mid \psi(\alpha)\in\V\}$ is not, since $\V_{\alpha}$ is contained in the DVR $\K[X]_{(X-\alpha)}$.
\end{Rem}



\section{Complete list of DVRs of $\Q(X)$}\label{list DVRs}

In this section we give a complete description of the family of DVRs with quotient field $\Q(X)$.

Let $\PP$ be the set of prime numbers in $\Z$, so $\{p\Z \mid p \in \PP\}$ is the set of nonzero (principal) 
prime ideals of the ring $\Z$ of integers.
Let $W\subset\Q(X)$ be a DVR with maximal ideal $M$. We have one of the following two cases:
\begin{itemize}
\item $W\cap \Q=\Q$.
\item $W\cap \Q=\Z_{(p)}$, for some $p\in\PP$.
\end{itemize}
In the first case we say that $W$ is a \emph{non-unitary} valuation domain, and in the second case, that $W$ is a \emph{unitary} valuation domain. 
For each $p\in\PP$, let $\mathcal{W}_p$ be the set of DVRs $W$ of $\Q(X)$ such that $W \cap \Q = \Z_{(p)}$.


It is a classical result (see for example \cite[p. 3]{Chevalley}) that if $W$ is a non-unitary valuation domain,
then $W$ is equal to one of the following DVRs, according to whether $X\in W$ or $X\not\in W$:
\begin{itemize}
\item[-] $W=\Q[X]_{(g)}$, for some irreducible $g\in\Q[X]$.
\item[-] $W=\Q[1/X]_{(1/X)}.$
\end{itemize}

\noindent
If $W\in \mathcal{W}_p$ for some $p\in\PP$, 
we distinguish between these two subcases:
\begin{enumerate}
\item $W$ is residually algebraic over $\Z_{(p)}$.
\item $W$ is residually transcendental over $\Z_{(p)}$.
\end{enumerate}

\noindent
Throughout this paper, for each $p\in\PP$ we set:
\begin{itemize}
    \item[-] $\mathcal{W}_{p,\ra} = \{W \in \mathcal{W}_p \mid W$ is residually algebraic over $\Z_{(p)}\}$;
    \item[-] $\mathcal{W}_{p,\rt} = \{W \in \mathcal{W}_p \mid W$ is residually transcendental over $\Z_{(p)}\}$.
\end{itemize}
In particular, $\mathcal{W}_{p} = \mathcal{W}_{p,\ra} \cup \mathcal{W}_{p,\rt}$ and  $\mathcal{W}_{p,\ra} \cap \mathcal{W}_{p,\rt}= \emptyset$.
Moreover, let 
$$\mathcal{W} = \bigcup\limits_{p \in \PP}\mathcal{W}_p,\;\; \mathcal{W}_{\ra} = \bigcup\limits_{p \in \PP}\mathcal{W}_{p,\ra},\;\;\mathcal{W}_{\rt} = \bigcup\limits_{p \in \PP}\mathcal{W}_{p,\rt},$$
so
$\mathcal{W}$ is the set of DVRs of $\Q(X)$,
$\mathcal{W}_{\ra}$ is the set of DVRs of $\Q(X)$ that are residually algebraic extensions of $\Z_{(p)}$ for some $p\in\PP$, and
 $\mathcal{W}_{\rt}$ is the set of DVRs of $\Q(X)$ that are residually transcendental extensions of $\Z_{(p)}$ for some $p\in\PP$.

\subsection{Extensions of $\Z_{(p)}$ to $\Q(X)$}


Let $\Q_p,\Z_p$ be the field of $p$-adic numbers and the ring of $p$-adic integers, respectively, which are the completion of $\Q,\Z_{(p)}$ with respect to the topology induced by the $p$-adic valuation. Let $\oQp$ be a fixed algebraic closure of $\Q_p$ and $\oZp$ be the integral closure of $\Z_p$ in $\oQp$; since $\Q_p$ is Henselian, $\oZp$ is a one-dimensional non-discrete valuation domain. We denote by $v_p$ the unique extension of the $p$-adic valuation to $\oQp$. The ring $\oZp$ is the valuation domain of $v_p$ (i.e., $\oZp=\{x\in\oQp\mid v_p(x)\geq0\}$). Let $\C_p$ be the completion of $\oQp$ with respect to the topology induced by the valuation $v_p$ and $\O_p$ the completion of $\oZp$. It is well-known that $\C_p$ is algebraically closed (\cite[Proposition 5.7.8]{g97}).
Clearly, each element $\alpha\in\C_p\setminus\oQp$ is transcendental over $\Q_p$, and so, in particular, such an $\alpha$ is transcendental over $\Q$; on the contrary, an element  $\alpha\in\oQp$ can be either algebraic or transcendental over $\Q$. We denote by $\C_p^{\br}$ the subset of those $\alpha\in\C_p$ such that the ramification index of the extension $\Q_p(\alpha)/\Q_p$ is finite. We say that an element $\alpha\in\C_p^{\br}$ is unramified over $\Q_p$ if the ramification index of $\Q_p(\alpha)/\Q_p$ is $1$ (we stress that such an element may not be algebraic over $\Q_p$). Note that $\oQp\subset\C_p^{\br}\subset\C_p$. We also set $\O_p^{\br}=\O_p\cap\C_p^{\br}$.


\subsubsection{Residually algebraic extensions}

The following theorem characterizes the DVRs of $\Q(X)$ which are residually algebraic extensions of $\Z_{(p)}$, for some $p\in\PP$ (see \cite[Corollary 2.28]{PerStacked}).

\begin{Thm}\label{DVR ra}
Let $W$ be a DVR of $\Q(X)$ which is a residually algebraic extension of $\Z_{(p)}$ for some $p\in\PP$. 
Then there exists $\alpha\in\C_p^{\br}$, transcendental over $\Q$, such that
\begin{equation}\label{Zpalpha}
W=\Z_{(p),\alpha}=\{\phi\in\Q(X)\mid \phi(\alpha)\in \mathbb O_p\}.
\end{equation}
In particular, $\mathcal{W}_{p,\ra} = \{\Z_{(p), \alpha} \mid \alpha\in\C_p^{\br}$, $\alpha$ transcendental over $\Q\}$.
Furthermore,  $\alpha \in \oQp$ if and only if the extension of the residue fields $W/M\supseteq\Z/p\Z$ is finite.
\end{Thm}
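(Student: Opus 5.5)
Our plan is to pass to the completion and use the structure of valuations on $\Q_p(X)$. Given $W\in\mathcal W_{p,\ra}$, $W$ is a DVR, so its value group $\Gamma_w\cong\Z$ contains $\Gamma_{v_p}=\Z$ with finite index $e$; in particular $W$ is a torsion extension of $\Z_{(p)}$. First we show that $W$ extends uniquely to a valuation $\W$ on $\widehat{\Q(X)}$, the $W$-adic completion, and that $\Q_p$ embeds there as the closure of $\Q$, exactly as in the proof of Lemma \ref{extension rt ext}. This embedding uses only that $\Gamma_w/\Gamma_v$ is torsion. Then $\W\cap\Q_p(X)$ is an extension of $\Z_p$ which is still a rank one, residually algebraic, discrete valuation with ramification $e$.

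Next we produce $\alpha$. We take an extension $\widetilde U$ of this valuation to $\overline{\Q_p(X)}$, the algebraic closure. Since $\Q_p$ has the unique extension $v_p$ to $\oQp$, $\widetilde U$ restricted to $\oQp$ is $\oZp$. The element $X$ is then a limit of a pseudo-Cauchy (Cauchy) sequence of elements of $\oQp$ with respect to $\widetilde u$. The reason is that the value group is $\frac1{e'}\Z\subseteq\Q$, so $\widetilde u(X-\oQp)$ has no maximum bounded below by a nonmaximal radius. Indeed, if $\sup_{a\in\oQp}\widetilde u(X-a)$ were attained at some $a$ with finite value $\delta\in\Q$, then $\widetilde u$ would be the monomial valuation $u_{a,\delta}$, which is residually transcendental by Remark \ref{remark rt}. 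That contradicts residual algebraicity, since an algebraic extension of the valuation field preserves residual algebraicity. Hence $\sup=\infty$, because the values are rational and we are in rank one; the alternative of a bounded supremum not attained would give, after completion, again a monomial valuation centered at a point of $\C_p$, or a case we rule out using discreteness. So there is a sequence $a_n\in\oQp$ converging to $X$, and $\alpha=\lim a_n\in\C_p$. For $\phi\in\Q[X]$ we then get $w(\phi)=v_p(\phi(\alpha))$ by continuity. This holds provided $\phi(\alpha)\neq0$, which forces $\alpha$ to be transcendental over $\Q$: otherwise the minimal polynomial of $\alpha$ would have infinite value, contradicting that $w$ is a valuation on $\Q(X)$. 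It follows that $W=\Z_{(p),\alpha}$.

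Then we verify that $\alpha\in\C_p^{\br}$. The value group of $W$ equals $v_p(\Q(\alpha)^*)$, whose closure in $\Q_p(\alpha)$ shows that the ramification index of $\Q_p(\alpha)/\Q_p$ is $e<\infty$. Conversely, for any transcendental $\alpha\in\C_p^{\br}$ we check directly that $\Z_{(p),\alpha}$ is a valuation domain. It has value group $v_p(\Q_p(\alpha)^*)$, which is discrete of index $e$, and its residue field is contained in that of $\O_p$, which is algebraic over $\F_p$. This gives the description of $\mathcal W_{p,\ra}$.

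For the final claim we compare residue fields. The residue field of $W$ equals the residue field of $\Q_p(\alpha)$, by density of $\Q(\alpha)$ in the completion. If $\alpha\in\oQp$, then $\Q_p(\alpha)$ is a finite extension and its residue field is finite. Conversely, if the residue field is finite and the ramification is finite, we use the following fact: a complete discretely valued subfield of $\C_p$ with finite residue field and finite ramification is finite over $\Q_p$, because it is a finitely generated $\Z_p$-module after choosing a uniformizer and residue representatives. Hence $\alpha\in\oQp$. We expect the main obstacle to be the construction of $\alpha$, namely excluding the case of a non-attained bounded supremum. We would handle it via Theorem \ref{description rt Popescu} applied over $\C_p$, where every extension with a maximal defect distance is monomial, or else by citing \cite[Corollary 2.28]{PerStacked}.
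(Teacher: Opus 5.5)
Your proposal has a genuine gap at exactly the step you flag, and neither of your proposed fixes closes it.

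\textbf{The gap.} After excluding an attained finite maximum, you conclude $\sup_{a\in\oQp}\widetilde u(X-a)=\infty$ ``because the values are rational and we are in rank one''. This does not follow. The values $\widetilde u(X-a)$ lie in the value group of the algebraic closure, which is all of $\Q$; only the restriction to $\Q(X)$ is discrete. A strictly increasing bounded sequence of rationals is therefore no obstruction.

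Consider the remaining case $\widetilde u(X-a_n)\uparrow\delta_0<\infty$ with no maximum. Then $\{a_n\}$ is a pseudo-convergent sequence in $\oQp$ with no pseudo-limit even in $\C_p$. Indeed, a pseudo-limit $\beta\in\C_p$ could be approximated by some $a\in\oQp$ with $v_p(a-\beta)>\delta_0$, giving $\widetilde u(X-a)\geq\delta_0$, so the supremum would be attained. Such sequences exist because $\C_p$ is not spherically complete. Since $\oQp$ is algebraically closed, they induce immediate, hence residually algebraic, extensions to $\oQp(X)$.

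Consequently:
\begin{itemize}
\item residual algebraicity gives no contradiction;
\item there is no ``monomial valuation centered at a point of $\C_p$'' to fall back on;
\item Theorem \ref{description rt Popescu}, over $\C_p$ or not, only concerns the residually transcendental (attained) case.
\end{itemize}
What must exclude this case is the discreteness of $w$ on $\Q(X)$, and you give no argument for it. Falling back on \cite[Corollary 2.28]{PerStacked} is citing the statement itself; the paper offers nothing beyond that citation here. So as an independent proof, your proposal is incomplete at its central step.

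\textbf{A way to close it.} Work directly with the completion $L$ of $\Q(X)$ with respect to $w$ and embed it into $\C_p$, instead of approximating $X$ by pseudo-convergent sequences. The field $L$ is complete and discretely valued, contains $\Q_p$, has ramification index $e<\infty$, and has residue field $k$ algebraic over $\F_p$.
\begin{itemize}
\item By Hensel's lemma, the roots of unity of order prime to $p$ in $L$ generate an algebraic unramified extension $U$ of $\Q_p$ with residue field $k$.
\item The closure $L_0$ of $U$ is complete, and the standard structure of complete DVRs gives $L=L_0(\pi)$, with $\pi$ a uniformizer satisfying an Eisenstein polynomial of degree $e$ over $L_0$.
\item Embed $U$ into $\oQp$, pass to closures, and send $\pi$ to a root in $\C_p$ of the image polynomial. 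This gives a $\Q_p$-embedding $L\hookrightarrow\C_p$, which is isometric by uniqueness of extensions of a complete valuation to finite extensions.
\end{itemize}
Taking $\alpha$ to be the image of $X$ gives $w(\phi)=v_p(\phi(\alpha))$ for all $\phi\in\Q(X)$, and $\alpha$ is transcendental over $\Q$ by injectivity.

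With $\alpha$ in hand, the rest of your argument is sound: $\alpha\in\C_p^{\br}$ by density of $\Q(\alpha)$, the converse inclusion holds, and the residue-field criterion works via finiteness of complete subfields of $\C_p$ with finite $e$ and $f$.

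One further caveat: unlike in Lemma \ref{extension rt ext}, whose last step uses residual transcendence, $X$ need not be transcendental over $\Q_p$ inside the completion here. For $\alpha\in\Z_p$ transcendental over $\Q$, the completion is just $\Q_p$. So ``$\W\cap\Q_p(X)$'' is in general not a valuation on a rational function field.
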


\begin{Rem}\label{why alpha transcendental}
We remark that if $\alpha\in\oQp$, we may define in general the following valuation domain as above:
$$\Z_{(p),\alpha}=\{\phi\in\Q(X)\mid \phi(\alpha)\in \oZp\}$$
By \cite[Proposition 2.2]{PerTransc}, $\Z_{(p),\alpha}$ is a DVR precisely if $\alpha$ is transcendental over $\Q$, it has rank $2$ otherwise. In this last case, if $q\in\Q[X]$ is the minimal polynomial of $\alpha$, we have $\Z_{(p),\alpha}\subset\Q[X]_{(q)}$.
\end{Rem}


\subsubsection{Residually transcendental extensions}


Let $W$ be a DVR of $\Q(X)$ which is residually transcendental over $\Z_{(p)}$ 
for some $p\in\PP$. 
The next important theorem gives an alternative characterization of these valuations than Theorem \ref{description rt Popescu}, 
namely, we show that we can parametrize them by means of pairs of $\oQp\times\Q$ rather than pairs of $\overline{\Q}\times\Q$.   

\begin{Thm}\label{rt over completion}
Let $p\in\PP$ and let $W$ be a residually transcendental extension  of $\Z_{(p)}$ to $\Q(X)$. 
Then $W$ is a DVR such that $W=\Z_{p,\alpha,\delta}\cap\Q(X)$ for some $(\alpha,\delta)\in\oQp\times\Q$.
\end{Thm}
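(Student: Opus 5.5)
The plan is to transport the problem to the completion and then apply the Alexandru–Popescu–Zaharescu description there. Throughout, $V=\Z_{(p)}$, $K=\Q$, $\V=\Z_p$, $\K=\Q_p$, and $\Z_{p,\alpha,\delta}$ denotes the monomial valuation domain of $\Q_p(X)$ defined by the pair $(\alpha,\delta)$ with respect to the unique extension $v_p$ of the $p$-adic valuation to $\oQp$.

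The argument proceeds in four steps.

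\begin{enumerate}
\item \emph{DVR property.} Since $\Z_{(p)}$ is a DVR, Remark \ref{remark rt}(1) shows that every residually transcendental extension $W$ of $\Z_{(p)}$ to $\Q(X)$ has value group of finite index over $\Z$. Hence $W$ is a DVR.

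\item \emph{Lifting to the completion.} By Corollary \ref{bijection rt}, the contraction map $\Phi:\widetilde{\mathcal W}_{\rt}\to\mathcal W_{\rt}$ is bijective. So there is a unique residually transcendental extension $\wW$ of $\Z_p$ to $\Q_p(X)$ with $\wW\cap\Q(X)=W$. Concretely, by Lemma \ref{extension rt ext}, $\wW=\W\cap\Q_p(X)$, where $\W$ is the completion of $W$.

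\item \emph{Applying Theorem \ref{description rt Popescu} over $\Q_p$.} Apply that theorem to the valuation domain $\Z_p$ with quotient field $\Q_p$, taking as fixed extension to $\oQp$ the valuation domain $\oZp$ of $v_p$. It yields a minimal pair $(\alpha,\delta)\in\oQp\times\Gamma_{\overline{v}}$ such that $\wW$ is the restriction to $\Q_p(X)$ of the monomial valuation $(\oZp)_{\alpha,\delta}$ on $\oQp(X)$. In the notation of the statement, $\wW=\Z_{p,\alpha,\delta}$.

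\item \emph{Conclusion.} The divisible hull of $\Gamma_{v_p}=\Z$ is $\Q$, so $\delta\in\Q$. Intersecting with $\Q(X)$ gives $W=\wW\cap\Q(X)=\Z_{p,\alpha,\delta}\cap\Q(X)$, as claimed.
\end{enumerate}

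The main point needing care is step 2, specifically that Corollary \ref{bijection rt} really applies. We must check that $\Q_p(X)$ embeds in $\widehat{\Q(X)}$ with $X$ transcendental over $\Q_p$, and that $\W\cap\Q_p(X)$ is a residually transcendental extension of $\Z_p$. Both facts are exactly Lemma \ref{extension rt ext}; its proof uses that $\Gamma_w/\Gamma_v$ is torsion, which holds by Remark \ref{remark rt}(1). A minor point is that $v_p$ is the unique extension of the $p$-adic valuation to $\oQp$, because $\Q_p$ is Henselian. Therefore no choice of $U$ is involved, and the decomposition group is all of $\Gal(\oQp/\Q_p)$.
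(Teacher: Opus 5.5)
Your proposal is correct and follows the paper's proof essentially step for step: you get the DVR property from Remark \ref{remark rt}(1), lift $W$ to the unique residually transcendental extension $\wW$ of $\Z_p$ via Corollary \ref{bijection rt}, and apply Theorem \ref{description rt Popescu} over $\Q_p$, where $\delta\in\Q$ because $\Q$ is the divisible hull of $\Z$. The extra details you give are accurate and make explicit what the paper leaves implicit, namely $\wW=\W\cap\Q_p(X)$ and the fact that uniqueness of $v_p$ removes any choice of $U$.
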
 
\begin{proof}
We have already said that $W$ is a DVR in Remark \ref{remark rt}. By Corollary \ref{bijection rt}, $W$ is equal to the restriction to $\Q(X)$ of a (unique) residually transcendental extension $\wW$ of $\Z_p$ to $\Q_p(X)$. By Theorem \ref{description rt Popescu}, $\wW=\Z_{p,\alpha,\delta}$ for some $(\alpha,\delta)\in\oQp\times\Q$. Hence, $W=\Z_{p,\alpha,\delta}\cap\Q(X)$. 
\end{proof}

The advantage of working over the completion of $\Q$ with respect to the $p$-adic valuation $v_p$ is that there exists a unique extension of $v_p$ to $\oQp$ since $\Q_p$ is Henselian. Therefore we don't have to worry about fixing an extension of $v_p$ to $\oQp$ in advance (as in Theorem \ref{description rt Popescu}).


\subsubsection{Unitary DVRs of $\Q(X)$}\label{res tr}


In order to unify the aforementioned two cases, namely, residually algebraic 
and transcendental extensions of $\Z_{(p)}$ to $\Q(X)$, we consider  pairs $(\alpha,\delta)\in\C_p\times(\Q\cup\{\infty\})$. In the following we will always assume the following conditions for such a pair:
\begin{itemize}
    \item[-] if $\delta=\infty$, then $\alpha$ is transcendental over $\Q$. 
    \item[-] if $\delta\in\Q$, then $\alpha\in \oQp$.
\end{itemize}
The first assumption is due to Theorem \ref{DVR ra}. The second assumption is due to the following fact : in the case of a monomial valuation $v_{p,\alpha,\delta}$ as introduced in section \ref{monomial valuations}, this valuation can also be defined when $\alpha\in\C_p\setminus\oQp$, but by a density argument we may take $\alpha'\in \oQp$ such that $v_{p,\alpha,\delta}=v_{p,\alpha',\delta}$ (see Remark \ref{ball containment} (2) below). 
Consequently, given a pair $(\alpha,\delta)\in \C_p\times(\Q\cup\{\infty\})$ with $\alpha\in\C_p\setminus\oQp$, automatically $\delta=\infty$. The converse is not true: we can consider pairs $(\alpha,\infty)$, where $\alpha$ is either in $\oQp$ or in $\C_p\setminus\oQp$. 

\begin{Rem} \label{remark35}
Let $(\alpha,\delta)\in\C_p\times(\Q\cup\{\infty\})$ with the above assumptions. We recall that the value group of $\Z_{(p),\alpha,\delta}$ is of finite index over the value group of $\Z_{(p)}$ (by Remark \ref{remark rt} (1) if $\delta\not=\infty$ and by \cite[Proposition 2.2]{PerTransc}  if $\delta=\infty$). In particular, if $\delta\in\Q$, then $\Z_{(p),\alpha,\delta}$ is a DVR. Furthermore, if $\delta=\infty$, by \cite[Proposition 2.14, Remark 2.18, Corollary 2.28]{PerStacked}, $\Z_{(p),\alpha,\delta}$ is a DVR precisely when $\alpha\in\C_p^{\br}$ (we are assuming that $\alpha$ is transcendental over $\Q$). If $\alpha\in\C_p\setminus\C_p^{\br}$, then $\Z_{(p),\alpha,\infty}$ is a one-dimensional non-discrete valuation domain (by the same references).
\end{Rem}

Given $(\alpha,\delta)\in\C_p\times(\Q\cup\{\infty\})$, we denote by $\Z_{(p),\alpha,\delta}$ the valuation domain $\Z_{p,\alpha,\delta}\cap\Q(X)$. In particular,  $\Z_{(p),\alpha}=\Z_{(p),\alpha,\infty}$ by Remark \ref{ra rt}.
The next theorem summarizes the results discussed so far and give a complete description of the unitary DVRs of $\Q(X)$.

\begin{Thm}\label{unitary DVRs over Z(p)}
Let $W$ be a DVR of $\Q(X)$ lying above $\Z_{(p)}$, for some $p\in\PP$. Then
the following statements are satisfied.
\begin{enumerate}
\item[{\em (1)}] $W=\Z_{(p),\alpha,\delta}$ for some $(\alpha,\delta)\in\C_p^{\br}\times(\Q\cup\{\infty\})$. In this case, $X\in W$ if and only if $(\alpha,\delta)\in\O_p^{\br}\times(\Q_{\geq0}\cup\{\infty\})$.
\item[{\em (2)}] $W$ is residually transcendental (algebraic, respectively) over $\Z_{(p)}$ 
if and only if $\delta<\infty$ ( $\alpha$ is transcendental over $\Q$ and $\delta=\infty$, respectively).
\end{enumerate}
In particular, we have:
\begin{eqnarray*}
    \mathcal W_{p,\ra} &= &\{\Z_{(p),\alpha} \mid\alpha \in \C_{p}^{\br} \text{ and }\alpha \text{ is transcendental over }\Q\}\\
    \mathcal W_{p,\rt}  &= &\{\Z_{(p),\alpha,\delta}\mid (\alpha, \delta)\in\oQp\times\Q\}
\end{eqnarray*}
\end{Thm}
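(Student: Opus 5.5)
The plan is to split according to whether $W$ is residually algebraic or residually transcendental over $\Z_{(p)}$, and to assemble (1) and (2) from Theorem \ref{DVR ra} and Theorem \ref{rt over completion}.

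If $W\in\mathcal W_{p,\ra}$, Theorem \ref{DVR ra} gives $\alpha\in\C_p^{\br}$, transcendental over $\Q$, with $W=\Z_{(p),\alpha}$. By Remark \ref{ra rt} and the convention just introduced, this is $\Z_{(p),\alpha,\infty}$. If $W\in\mathcal W_{p,\rt}$, Theorem \ref{rt over completion} gives $(\alpha,\delta)\in\oQp\times\Q$ with $W=\Z_{(p),\alpha,\delta}$. Since $\oQp\subset\C_p^{\br}$, both cases give the form claimed in (1).

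For (2), we need the converse directions, i.e.\ that the type of the pair determines the type of $W$.
\begin{itemize}
\item If $\delta\in\Q$, then $\Z_{p,\alpha,\delta}$ is residually transcendental over $\Z_p$ by Remark \ref{remark rt} (1). Its contraction to $\Q(X)$ is residually transcendental over $\Z_{(p)}$ by Lemma \ref{contraction rt ext}.
\item If $\delta=\infty$, then $W=\Z_{(p),\alpha}$ with $\alpha$ transcendental over $\Q$. Its residue field embeds in the residue field of $\O_p$, namely $\overline{\F_p}$, via $\phi\mapsto\phi(\alpha)$. Hence $W$ is residually algebraic.
\end{itemize}
The two cases of the dichotomy therefore correspond exactly to $\delta<\infty$ and $\delta=\infty$. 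The displayed descriptions of $\mathcal W_{p,\ra}$ and $\mathcal W_{p,\rt}$ then follow. For the reverse inclusions, use that every such pair gives a DVR:
\begin{itemize}
\item for $\delta=\infty$ and $\alpha\in\C_p^{\br}$, by Remark \ref{remark35};
\item for $\delta\in\Q$, by Remark \ref{remark rt}.
\end{itemize}

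The remaining claim is that $X\in W$ if and only if $\alpha\in\O_p^{\br}$ and $\delta\ge0$ or $\delta=\infty$.
\begin{itemize}
\item For $\delta\in\Q$, apply Lemma \ref{rt overring of V[X]} over the completion. The only needed fact is $u_{\alpha,\delta}(X)=\min\{\delta,v_p(\alpha)\}$, so $X\in W$ iff $v_p(\alpha)\ge0$ and $\delta\ge0$.
\item For $\delta=\infty$, $X\in\Z_{(p),\alpha}$ iff $\alpha\in\O_p$. Since $\alpha\in\C_p^{\br}$, this means $\alpha\in\O_p^{\br}$.
\end{itemize}

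The main subtlety I expect is bookkeeping rather than substance. The pair in (1) is not unique, and the equivalence in (2) must not depend on the chosen representative. This needs a case distinction on the value of $\delta$ itself, since $\delta$ is an invariant of the valuation in each case: $\delta$ finite or infinite already decides the residual type, by the argument above. A second point is that for $\delta\in\Q$ the notation $\Z_{(p),\alpha,\delta}=\Z_{p,\alpha,\delta}\cap\Q(X)$ uses the unique extension $v_p$ to $\oQp$. So no choice of extension $U$ arises, unlike in Theorem \ref{description rt Popescu}.
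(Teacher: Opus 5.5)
Your proposal is correct and follows the paper's proof. You make the same split into the residually algebraic and residually transcendental cases via Theorem \ref{DVR ra} and Theorem \ref{rt over completion}, and you get the $X\in W$ criterion from Lemma \ref{rt overring of V[X]}. You also spell out what the paper leaves as immediate: for any representing pair, finiteness of $\delta$ determines the residual type, via Remark \ref{remark rt} (1) with Lemma \ref{contraction rt ext}, and via the embedding of the residue field of $\Z_{(p),\alpha}$ into $\overline{\F_p}$. You likewise make explicit the case $\delta=\infty$ of the $X\in W$ criterion.
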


\begin{proof}
If $W$ is residually transcendental over $\Z_{(p)}$, then there exists $(\alpha,\delta)\in\oQp\times\Q$ 
such that $W=\Z_{(p),\alpha,\delta}$ (Theorem \ref{rt over completion}).
Next if $W$ is residually algebraic over $\Z_{(p)}$, then there exists $\alpha\in\C_p^{\br}$ 
such that  $\alpha$ is transcendental over $\Q$ and  $W=\Z_{(p),\alpha}$ (Theorem \ref{DVR ra}). 

The second part of item (1) follows by Lemma \ref{rt overring of V[X]}. The final claim follows immediately.
\end{proof}




\subsection{Polynomial ordering of unitary  valuation domains and ultrametric balls in $\C_p$}\label{Polordering monomial}



For $p \in \PP$, we set 
$$\mathcal{S}_p = \{\Z_{(p),\alpha,\delta} \mid (\alpha,\delta)\in \C_p\times(\Q\cup\{\infty\})\}.$$ 
In this subsection, we study under which conditions we have $\Z_{(p),\alpha',\delta'}\cap\Q[X]\subseteq \Z_{(p),\alpha,\delta}$ for 
$(\alpha,\delta),(\alpha',\delta')\in\C_p\times(\Q\cup\{\infty\})$, and we show that $\mathcal{S}_p$ is a 
partially ordered set.

Let 
$$\mathcal{S} = (\bigcup\limits_{p \in \PP} \mathcal{S}_p) \cup \{\Q[X]_{(q)} \mid q \in \Q[X] \text{ is irreducible }\}.$$
It is noteworthy that if $W$ is a two-dimensional valuation overring of $\Z[X]$ with $W \subset \Z_{(p),0,0}$, then $W \neq \Z_{(p), \alpha, \delta}$ for all $(\alpha,\delta)\in\C_p \times(\Q\cup\{\infty\})$. Therefore $\mathcal{S}$ is not the set of all valuation rings of $\Q(X)$, while $\mathcal{S}$ contains the set of all DVRs of $\Q(X)$ by Theorem \ref{unitary DVRs over Z(p)}.

Now, in order to show the correspondence between the valuation domains of $\mathcal{S}_p$ and ultrametric balls  in $\C_p$, we need to consider ultrametric balls possibly having infinite radius (thus, a singleton). Let $(\alpha,\delta)\in\C_p\times(\Q\cup\{\infty\})$. The ball in $\C_p$ of center $\alpha$ and radius $\delta$ is $\bar B_p(\alpha,\delta)=\{x\in\C_p\mid v_p(x-\alpha)\geq\delta\}$. Note that $\bar B_p(\alpha,\delta)=\{\alpha\}$ precisely when $\delta=\infty$. If $\delta\in\Q$, 
then $\bar B_p(\alpha,\delta)$ has a non-empty intersection with $\oQp$, because $\oQp$ lies dense in $\C_p$, and so we may find an element in  $\bar B_p(\alpha,\delta)\cap\oQp$ which can be considered as the center of the ball.

We will show that the ordering on $\mathcal{S}_p$ reflects the containment among ultrametric balls in $\C_p$ under the action of the Galois group $G_p$ (Proposition \ref{overring intersection DVRs} and Remarks \ref{ball containment}).
We first need a couple of lemmas. The first lemma uses the notion of pseudo-stationary sequence introduced in \cite{ChabPolCloVal}, see also \cite{PerPrufer}.

\begin{Lem}\label{lem:ball containment}
Let $(\alpha,\delta)\in \C_p\times(\Q\cup\{\infty\})$ and suppose that $\bar B_p(\alpha,\delta)\subseteq \bigcup\limits_{j=1,\ldots,n}B_j$, 
where the $B_j$'s are polynomially closed subsets of $\C_p$. Then $\bar B_p(\alpha,\delta)\subseteq B_i$ for some $i\in\{1,\ldots,n\}$.
\end{Lem}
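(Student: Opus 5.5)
The case $\delta=\infty$ is trivial: the ball is the singleton $\{\alpha\}$, so it lies in whichever $B_j$ contains $\alpha$. Assume from now on that $\delta\in\Q$. The plan is to argue by contradiction with Chabert's theory of polynomially closed sets \cite{ChabPolCloVal}, as extended in \cite{PerPrufer}. We use two facts. First, polynomially closed subsets are topologically closed. Second, if $\{a_n\}$ is a pseudo-stationary sequence (roughly, $v_p(a_n-a_m)=\gamma$ for all $n\neq m$) contained in a polynomially closed set $S$, then the whole ball $\bar B_p(a_0,\gamma)$ lies in the polynomial closure of $\{a_n\}$, hence in $S$.

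Suppose that $\bar B_p(\alpha,\delta)\not\subseteq B_j$ for every $j$. We construct an infinite family of points of $\bar B_p(\alpha,\delta)$ at mutual distance exactly $\delta$. The residue field of $\O_p$ is $\overline{\F_p}$, which is infinite. Fix $c\in\C_p$ with $v_p(c)=\delta$ (possible since $\delta\in\Q$) and choose $t_n\in\O_p$ with pairwise distinct residues. Then $a_n=\alpha+ct_n$ satisfies $v_p(a_n-a_m)=\delta$ for $n\neq m$, so the $a_n$ form a pseudo-stationary sequence in $\bar B_p(\alpha,\delta)$ whose pairwise distances all equal the radius.

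By pigeonhole, infinitely many $a_n$ lie in a single $B_i$. A subsequence of a pseudo-stationary sequence is still pseudo-stationary with the same "radius" $\delta$. By Chabert's result (the polynomial closure of a pseudo-stationary sequence with gap $\delta$ is the full ball $\bar B_p(a_0,\delta)=\bar B_p(\alpha,\delta)$), we get $\bar B_p(\alpha,\delta)\subseteq \overline{\{a_n\}}\subseteq B_i$. This contradicts the assumption and proves the lemma. Note that the contradiction comes without ever using the hypothesis $\bar B_p(\alpha,\delta)\not\subseteq B_j$ for the other $j$; only the existence of infinitely many residue classes matters.

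The main obstacle is making sure Chabert's statement applies over $\C_p$, whose valuation is rank one but not discrete. The cited results are for rank-one valuations on a field $K$, and the relevant computation is the following. Let $f\in\C_p[X]$ have degree $d$ with $v_p(f(a_n))\geq 0$ for infinitely many $n$. Write $f(\alpha+cY)=\sum b_kY^k$. If $\min_k v_p(b_k)<0$, then the rescaled polynomial has a nonzero reduction of degree at most $d$, which vanishes at the residues of only finitely many $t_n$. Hence $v_p(f(a_n))=\min_k v_p(b_k)<0$ for all but finitely many $n$, a contradiction. So $\min_k v_p(b_k)\geq 0$, and then $f$ is integer-valued on the whole ball. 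This shows directly that a polynomial integer-valued at infinitely many $a_n$ is integer-valued on $\bar B_p(\alpha,\delta)$. Combined with the definition of polynomial closure (translating and scaling by constants $a\in\C_p$), this yields $\bar B_p(\alpha,\delta)$ inside the polynomial closure. I would include this argument explicitly, in case the cited results are not stated in exactly this form.
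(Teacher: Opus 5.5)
Your proof is correct and follows the paper's argument: take a pseudo-stationary sequence in $\bar B_p(\alpha,\delta)$ whose pseudo-limits form the whole ball, use pigeonhole to put a subsequence inside a single $B_i$, and conclude with Chabert's Proposition 4.8 (the contradiction framing is unnecessary, as you note). Your explicit residue-field computation is a correct, self-contained check of that cited step over the non-discrete valuation of $\C_p$, and it is precisely what the definition of polynomial closure requires.
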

\begin{proof}
If $\delta=\infty$, then $\bar B_p(\alpha,\delta)=\{\alpha\}$ and the claim clearly is true.

Suppose that $\delta\in\Q$. Let $E=\{s_n\}_{n\in\N}\subset \bar B_p(\alpha,\delta)$ be a pseudo-stationary sequence with set of pseudo-limits $\mathcal{L}_E=\bar B_p(\alpha,\delta)$.  
By the pigeonhole principle, a subsequence of $E$, say $E'=\{s_{n_k}\}_{k\in\N}$, is contained in $B_i$, for some $i\in\{1,\ldots,n\}$; 
note that $E'$ is still pseudo-stationary with $\mathcal{L}_{E'}=\bar B_p(\alpha,\delta)$. By \cite[Proposition 4.8]{ChabPolCloVal}, 
$\bar B_p(\alpha,\delta)\subseteq B_i$, as claimed.
\end{proof}

\begin{Rem} More generally, Lemma \ref{lem:ball containment} holds for any valuation domain $V$ having infinite residue field; 
we don't need that the polynomial closure is a topological closure, 
which in general is not true for valuation domains of arbitrary rank (\cite{PSpolclo}).
\end{Rem}

We denote by $G_p$ the Galois group of $\oQp/\Q_p$. We recall that $G_p$ is identified with 
the group of all continuous $\Q_p$-automorphism of $\C_p$ (see \cite[\S 3, 1.]{APZclosed}) and so $G_p$ acts on $\C_p$. 
Moreover, for each $\sigma\in G_p$, we have $v_p\circ\sigma=v_p$, because $\Q_p$ is Henselian.

The following remark contains important informations about the set of conjugates of an element $\alpha\in\C_p\setminus\Q_p$ under the action of the group $G_p$.

\begin{Rem}\label{conjugate transcendental element}
Let $\alpha\in\C_p$. By \cite[Remark 3.2]{APZclosed}, $G_p(\{\alpha\})=\{\sigma(\alpha)\mid \sigma\in G_p\}$ is 
an infinite subset of $\C_p$ precisely when $\alpha\notin\oQp$. In this case, by \cite[Theorem 3.5]{APZclosed}, $G_p(\{\alpha\})$ 
is totally disconnected, closed and compact (note that this is true also when $G_p(\{\alpha\})$ is a finite set). 
In particular, by \cite[Proposition 2.4]{ChabPolCloVal}, $G_p(\{\alpha\})$ is polynomially closed.
\end{Rem}

\begin{Lem}\label{union balls pol closed}
Let $(\alpha_i,\delta_i)\in\C_p\times(\Q\cup\{\infty\})$, for $i=1,\ldots,n$. Then, $G_p(\bigcup\limits_{i=1}^n \bar B_p(\alpha_i,\delta_i))$ is polynomially closed.
\end{Lem}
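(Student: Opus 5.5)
We will use Chabert's theorem \cite{ChabPolCloVal} that for a rank-one valuation domain the polynomially closed subsets of the field form the closed sets of a topology. Here we work with $\C_p$ and its valuation domain $\O_p$. In that topology finite unions of polynomially closed sets are polynomially closed. It therefore suffices to show that each single set $G_p(\bar B_p(\alpha_i,\delta_i))$ is polynomially closed, since $G_p(\bigcup_i \bar B_p(\alpha_i,\delta_i))=\bigcup_i G_p(\bar B_p(\alpha_i,\delta_i))$.

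Fix $(\alpha,\delta)$ and split into cases.

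\emph{Case $\delta=\infty$.} Then $\bar B_p(\alpha,\delta)=\{\alpha\}$, and $G_p(\{\alpha\})$ is polynomially closed by Remark \ref{conjugate transcendental element}. This holds whether or not $\alpha\in\oQp$.

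\emph{Case $\delta\in\Q$.} By the conventions above we may take $\alpha\in\oQp$, since $\oQp$ is dense and any point of the ball serves as a center. Each $\sigma\in G_p$ preserves $v_p$, so $\sigma(\bar B_p(\alpha,\delta))=\bar B_p(\sigma(\alpha),\delta)$. Because $\alpha$ is algebraic over $\Q_p$, its orbit $\{\sigma(\alpha)\}$ is finite. Hence $G_p(\bar B_p(\alpha,\delta))$ is a finite union of closed balls $\bar B_p(\alpha_k,\delta)$. Each closed ball $\bar B_p(\beta,\delta)$ with $\delta\in\Q$ is polynomially closed. One way to see this is that it is closed and compact-like for polynomial purposes: it is the set of pseudo-limits of a pseudo-stationary sequence, and \cite[Proposition 4.8]{ChabPolCloVal} shows that such sets of pseudo-limits are contained in the polynomial closure and are themselves polynomially closed. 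More directly, $\bar B_p(\beta,\delta)=\{x\in\C_p : v_p(\tfrac{x-\beta}{c})\geq 0\}$ with $c\in\oQp$, $v_p(c)=\delta$. Choose $n$ with $n\delta\in\Z$ and set $g(X)=(X-\beta)^n/p^{n\delta}$. Then the ball is exactly $\{x : g(x)\in\O_p\}$. Since $g$ is continuous, any $y$ in the polynomial closure satisfies $g(y)\in\O_p$. This is the one point that needs care: $g$ has coefficients in $\C_p$, not in $\Q_p$. Since we are computing polynomial closure in $\C_p$ with respect to $\O_p$, this is legitimate, provided that is the ambient setting of the earlier lemmas (as in Lemma \ref{lem:ball containment}). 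A finite union of these balls is then polynomially closed by the topology property.

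\emph{Main obstacle.} The main difficulty is making sure which notion of polynomial closure is meant: coefficients in $\C_p$ versus in $\Q$ or $\Q_p$. If coefficients are restricted to $\Q_p$, I would instead use the $G_p$-invariance of the whole orbit set. The polynomial $\prod_k g_k$, or the norm of $g$ to $\Q_p$, has $\Q_p$-coefficients. I would then argue that its integrality locus, intersected appropriately, recovers the union of conjugate balls. This needs a minimal-pair argument: for $(\alpha,\delta)$ minimal, $v_p(N(x))$ detects membership in some conjugate ball. I would try this norm argument first if needed, relying on Remark \ref{conjugate transcendental element} for the infinite-orbit singleton case.
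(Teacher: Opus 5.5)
Your proof is correct and follows the paper's route: write $G_p$ of the union as the union of the orbits $G_p(\bar B_p(\alpha_i,\delta_i))$, treat the $\delta_i=\infty$ orbits via Remark \ref{conjugate transcendental element}, reduce the $\delta_i\in\Q$ case to finitely many balls, and close under finite unions using Chabert's topology theorem; the only difference is that the paper cites \cite[Proposition 3.4]{ChabPolCloVal} for finite unions of balls, whereas you check each ball directly with $(X-\beta)^n/p^{n\delta}$. The ambient setting is indeed $\C_p[X]$ with $\O_p$ (as used in the proof of Proposition \ref{overring intersection DVRs}), so your norm-argument contingency is unnecessary; also, $g(y)\in\O_p$ for $y$ in the polynomial closure follows from the definition of polynomial closure, not from continuity of $g$.
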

\begin{proof}
We remark that
\begin{equation}\label{conjugates union balls}
G_p(\bigcup_{i=1}^n \bar B_p(\alpha_i,\delta_i))=\bigcup_{i=1}^n G_p(\bar B_p(\alpha_i,\delta_i))=\bigcup_{i=1}^n \bigcup_{\sigma\in G_p}\bar B_p(\sigma(\alpha_i),\delta_i).
\end{equation}

If $\alpha_i\in\oQp$, then $G_p(\bar B_p(\alpha_i,\delta_i))$ is equal to a finite union of balls $\bar B_p(\sigma(\alpha_i),\delta_i)$, 
for $\sigma\in G_p$ (which may not be necessarily disjoint); 
therefore, by \cite[Proposition 3.4]{ChabPolCloVal}, $G_p(\bar B_p(\alpha_i,\delta_i))$ is polynomially closed.
If $\alpha_i\in\C_p\setminus\oQp$ (so $\delta_i$ is necessarily equal to $\infty$ by our convention), 
then $G_p(\bar B_p(\alpha_i,\delta_i))=G_p(\{\alpha_i\})$ is polynomially closed by Remark \ref{conjugate transcendental element}.
Hence, $G_p(\bigcup\limits_{i=1}^n \bar B_p(\alpha_i,\delta_i))$ is polynomially closed, being a finite union of polynomially closed sets.
\end{proof}

We are now ready to prove the main result of this subsection by which we
can show that $\mathcal{S}_p$ is a partially ordered set. We need first the following remark, which gives the connection with rings of integer-valued polynomials.

\begin{Rem}\label{balls and intval rings}
  Let $(\alpha,\delta)\in\C_p\times(\Q\cup\{\infty\})$. If $\delta\not=\infty$, given $c\in\oQp$ such that $v_p(c)=\delta$ we have the following equalities:
$$
\O_{p,\alpha,\delta}\cap\C_p[X]=\O_p\left[\frac{X-\alpha}{c}\right]=\Int(\bar B_p(\alpha,\delta),\O_p).$$
The first equality follows from \cite[Remark 3.4]{PerPrufer} and the second one  
from \cite[Lemma 4.1]{PerPrufer}, since $\O_p$ is a rank one non-discrete valuation domain with infinite residue field. More generally, it is clear that the equality
\begin{equation}\label{restriction valuation domain on polynomial ring}
\O_{p,\alpha,\delta}\cap\C_p[X]=\Int(\bar B_p(\alpha,\delta),\O_p)
\end{equation}
is true also when $\delta=\infty$.
In particular, contracting down the previous equality to $\Q_p[X]$ (and similarly to $\Q[X]$), we have
\begin{equation}\label{IntQpB}
\Z_{p,\alpha,\delta}\cap\Q_p[X]=\Int_{\Q_p}(\bar B_p(\alpha,\delta),\O_p)
\end{equation}
Clearly, we have
\begin{equation}\label{IntQpB2}
\Int(\bar B_p(\alpha,\delta),\O_p)=\bigcap_{\alpha'\in \bar B_p(\alpha,\delta)}\O_{p,\alpha'}\cap\C_p[X]
\end{equation}
  
\end{Rem}

\begin{Prop}\label{overring intersection DVRs}
Let $p\in\PP$ and $(\alpha,\delta),(\alpha_i,\delta_i)\in\C_p\times(\Q\cup\{\infty\})$, for $i=1,\ldots,n$.
Then the following conditions are equivalent:
\begin{itemize}
\item[{\em (1)}] $\Z_{(p),\alpha_i,\delta_i}\cap\Q[X]\subset\Z_{(p),\alpha,\delta}$ for some $i\in\{1,\ldots,n\}$.
\item[{\em (2)}] $\bigcap\limits_{i=1}^n\Z_{(p),\alpha_i,\delta_i}\cap\Q[X] \subset\Z_{(p),\alpha,\delta}$.
\item[{\em (3)}] $\delta\geq\delta_i$ and $v_p(\alpha-\sigma(\alpha_i))\geq\delta_i$ for some $\sigma\in G_p$ and $i\in\{1,\ldots,n\}$.
\end{itemize}
\end{Prop}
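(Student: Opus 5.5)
The plan is to prove (1) $\Rightarrow$ (2) $\Rightarrow$ (3) $\Rightarrow$ (1). The first implication is trivial, since the intersection in (2) is contained in each $\Z_{(p),\alpha_i,\delta_i}\cap\Q[X]$. The substance lies in translating everything into integer-valued polynomial rings over subsets of $\C_p$ via Remark \ref{balls and intval rings}, and then using polynomial closure.

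For (2) $\Rightarrow$ (3), I would first pass from $\Q[X]$ to $\Q_p[X]$ by a density argument in the spirit of Lemma \ref{overring polynomial domain}. Set $\widetilde R=\bigcap_i \Z_{p,\alpha_i,\delta_i}\cap\Q_p[X]$. Choose $a\in\Q$ with $v_p(a)\le\min_i\min(\delta_i,v_p(\alpha_i))$; the case $v_p(\alpha_i)<0$ is handled similarly after a change of variable. Then $\Z_p[X/a]\subseteq\widetilde R$, and $\widetilde R\cap\Q[X]$ is exactly the ring in (2). Lemma \ref{overring polynomial domain} then upgrades the containment in (2) to $\widetilde R\subseteq \Z_{p,\alpha,\delta}$.

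Next, by \eqref{IntQpB}, $\widetilde R=\Int_{\Q_p}(\bigcup_i \bar B_p(\alpha_i,\delta_i),\O_p)$. Because the coefficients lie in $\Q_p$ and $v_p\circ\sigma=v_p$, this ring equals $\Int_{\Q_p}(S,\O_p)$ with $S=G_p(\bigcup_i\bar B_p(\alpha_i,\delta_i))$. By \eqref{IntQpB2}, the target ring $\Z_{p,\alpha,\delta}\cap\Q_p[X]$ is $\Int_{\Q_p}(\bar B_p(\alpha,\delta),\O_p)$.

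The key step, and the main obstacle, is to deduce $\bar B_p(\alpha,\delta)\subseteq S$ from $\Int_{\Q_p}(S,\O_p)\subseteq\Int_{\Q_p}(\bar B_p(\alpha,\delta),\O_p)$. The difficulty is that polynomial closedness (Lemma \ref{union balls pol closed}) refers to polynomials over $\C_p$, whereas here we only have coefficients in $\Q_p$. I would try to show that for a $G_p$-stable set $S$, $\Int_{\Q_p}(S,\O_p)$ determines $\Int(S,\O_p)$. Concretely, given $f\in\C_p[X]$ with $f(S)\subseteq\O_p$, approximate $f$ by some $g\in\oQp[X]$. Then take the norm-type products $\prod_\tau \tau(g)$ over conjugates, or better the coefficients of $\prod_\tau(Y-\tau(g))$, which lie in $\Q_p[X]$ and are integer-valued on $S$. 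Integrality of these coefficients at the points of $\bar B_p(\alpha,\delta)$ then forces $g$, and hence $f$, to be integral there, since $g(x)$ is a root of a monic polynomial with $\O_p$-coefficients. This yields $\Int(S,\O_p)\subseteq\Int(\bar B_p(\alpha,\delta),\O_p)$. Since $S$ is polynomially closed, we get $\bar B_p(\alpha,\delta)\subseteq S$.

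By \eqref{conjugates union balls}, $S$ is a finite union of the polynomially closed sets $G_p(\bar B_p(\alpha_i,\delta_i))$ (Remark \ref{conjugate transcendental element} and \cite[Proposition 3.4]{ChabPolCloVal}). Lemma \ref{lem:ball containment} therefore gives $\bar B_p(\alpha,\delta)\subseteq G_p(\bar B_p(\alpha_i,\delta_i))$ for some $i$. We then need to descend to a single conjugate ball. If $\delta<\infty$, $G_p(\bar B_p(\alpha_i,\delta_i))$ is either a finite union of balls (handled by Lemma \ref{lem:ball containment} again) or, when $\delta_i=\infty$ and $\alpha_i\notin\oQp$, the compact totally disconnected orbit $G_p(\{\alpha_i\})$. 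Such an orbit cannot contain a ball of finite radius, because that ball contains infinitely many elements of $\oQp$. If $\delta=\infty$, the inclusion reads $\alpha\in\bar B_p(\sigma(\alpha_i),\delta_i)$ directly. In all cases we obtain $\bar B_p(\alpha,\delta)\subseteq\bar B_p(\sigma(\alpha_i),\delta_i)$ for some $\sigma$, which is exactly (3), since ultrametric ball inclusion means $\delta\ge\delta_i$ and $v_p(\alpha-\sigma(\alpha_i))\ge\delta_i$.

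For (3) $\Rightarrow$ (1), (3) gives $\bar B_p(\alpha,\delta)\subseteq\bar B_p(\sigma(\alpha_i),\delta_i)=\sigma(\bar B_p(\alpha_i,\delta_i))$. Hence any $f\in\Q_p[X]$ integer-valued on $\bar B_p(\alpha_i,\delta_i)$ is integer-valued on $\bar B_p(\alpha,\delta)$, because $f(\sigma(x))=\sigma(f(x))$ and $\sigma$ preserves $\O_p$. By \eqref{IntQpB}, intersecting with $\Q[X]$ gives $\Z_{(p),\alpha_i,\delta_i}\cap\Q[X]\subseteq\Z_{(p),\alpha,\delta}$.
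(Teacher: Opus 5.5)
Your proposal is correct and follows the paper's route: lift the containment to $\Q_p$ with Lemma \ref{overring polynomial domain}, rewrite both sides as rings of integer-valued polynomials on balls, pass to the $G_p$-stable, polynomially closed set $G_p(B)$ over $\C_p$, and descend to a single conjugate ball with Lemma \ref{lem:ball containment}. The differences are only in how some steps are justified:
\begin{itemize}
\item You prove directly what the paper obtains by citing the integral-closure computation adapted from \cite[Lemma 2.20]{PerDedekind} and then reapplying Lemma \ref{overring polynomial domain}. You do this by approximating $f\in\C_p[X]$ with $g\in\oQp[X]$ and using the coefficients of $\prod_\tau(Y-\tau(g))$. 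The approximation is legitimate because $G_p(B)$ and $\bar B_p(\alpha,\delta)$ are bounded, which you should say explicitly.
\item You rule out a finite-radius ball inside an infinite orbit $G_p(\{\alpha_i\})$ by density of $\oQp$, since such an orbit contains no element of $\oQp$. The paper uses a compactness remark instead.
\item Your (3) $\Rightarrow$ (1) argues directly from \eqref{IntQpB} and Galois invariance, rather than through Theorem \ref{Conjugate Rt ext} and \eqref{equality monomials}.
\end{itemize}
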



\begin{proof}
 (1) $\Rightarrow$ (2) Clear. 
 
 (2) $\Rightarrow$ (3) By Lemma \ref{overring polynomial domain}, (2) is equivalent to the analogous containment in $\Q_p(X)$:
\begin{equation}\label{overring of intersection}
\bigcap_{i=1}^n(\Z_{p,\alpha_i,\delta_i}\cap\Q_p[X])\subset\Z_{p,\alpha,\delta}.
\end{equation}
By \eqref{IntQpB}, the left-hand side of \eqref{overring of intersection} is equal to
$$\bigcap_{i=1}^n\Int_{\Q_p}(\bar B_p(\alpha_i,\delta_i),\O_p)=\Int_{\Q_p}(B,\O_p),$$
where $B=\bigcup\limits_{i=1}^n \bar B_p(\alpha_i,\delta_i)$.

A straightforward adaptation of \cite[Lemma 2.20]{PerDedekind} shows that 
the integral closure of $\Int_{\Q_p}(B,\O_p)$ in $\oQp(X)$ is equal to $\Int_{\oQp}(G_p(B),\O_p)$. Hence, \eqref{overring of intersection} is actually equivalent to
$$\Int_{\oQp}(G_p(B),\O_p)\subset (\oZp)_{\alpha,\delta}.$$
By applying Lemma \ref{overring polynomial domain} again, we can transfer the previous containment over $\C_p$:
$$\Int(G_p(B),\O_p)\subset\O_{p,\alpha,\delta}\cap\C_p[X]=\Int(\bar B_p(\alpha,\delta),\O_p)$$
which amounts to say that $\bar B_p(\alpha,\delta)$ is contained in the polynomial closure of $G_p(B)$. By Lemma \ref{union balls pol closed}, $G_p(B)$ is polynomially closed, so $\bar B_p(\alpha,\delta)\subseteq G_p(B)$. By Lemma \ref{union balls pol closed} again, 
for each $i\in\{1,\ldots,n\}$, $G_p(\bar B_p(\alpha_i,\delta_i))$ is polynomially closed, 
so by Lemma \ref{lem:ball containment}, $\bar B_p(\alpha,\delta)\subseteq G_p(\bar B_p(\alpha_i,\delta_i))$ for some $i\in\{1,\ldots,n\}$. 
If $\delta_i=\infty$, then necessarily $\delta=\infty$ and $\alpha=\sigma(\alpha_i)$ for some $\sigma\in G_p$, 
because otherwise a similar argument to the proof of Lemma \ref{lem:ball containment} shows 
that $G_p(\{\alpha_i\})$ would contain a pseudo-stationary sequence which is not possible by compactness.  
If $\delta_i<\infty$, again Lemma \ref{lem:ball containment} implies that $\bar B_p(\alpha,\delta)$ is contained 
in $\sigma(\bar B_p(\alpha_i,\delta_i))=\bar B_p(\sigma(\alpha_i),\delta_i)$ for some $\sigma\in G_p$, 
which amounts to say that $\delta\geq\delta_i$ and $v_p(\alpha-\sigma(\alpha_i))\geq\delta_i$, which is precisely condition (3).

(3) $\Rightarrow$ (1) Note that $\Z_{(p),\alpha_i,\delta_i}=\Z_{(p),\sigma(\alpha_i),\delta_i}$ for each $\sigma\in G_p$  
by Theorem \ref{Conjugate Rt ext}. Note also that the inequality $v_p(\alpha-\sigma(\alpha_i))\geq\delta_i$ 
by \eqref{equality monomials} is equivalent to $\Z_{(p),\sigma(\alpha_i),\delta_i}=\Z_{(p),\alpha,\delta_i}$. Therefore,
$$\Z_{(p),\alpha_i,\delta_i}\cap\Q[X]=\Z_{(p),\alpha,\delta_i}\cap\Q[X]\subset\Z_{(p),\alpha,\delta}$$
where the last containment follows from the inequality $\delta\geq\delta_i$.
\end{proof}


\begin{Rems}\label{ball containment}
(1) Let $(\alpha_i,\delta_i)\in\C_p\times(\Q\cup\{\infty\})$, for $i=1,2$. By  Proposition \ref{overring intersection DVRs}, we have the following equivalence: 
$$\Z_{(p),\alpha_1,\delta_1}\cap\Q[X]\subset\Z_{(p),\alpha_2,\delta_2}\Leftrightarrow \delta_2\geq\delta_1 \;\text{ and } v_p(\alpha_2-\sigma(\alpha_1)) \geq \delta_1 \;\text{ for some }\sigma\in G_p.$$ 
This last condition is equivalent to saying that 
$$\bar B_p(\alpha_2,\delta_2)\subseteq\sigma(\bar B_p(\alpha_1,\delta_1))=\bar B_p(\sigma(\alpha_1),\delta_1).$$
In particular, the residually algebraic DVRs $\Z_{(p),\alpha}=\Z_{(p),\alpha,\infty}$ of $\mathcal{S}_p$ are precisely the maximal elements of $\mathcal{S}_p$ under this ordering (Corollary \ref{w1<w2} (2)) and each residually transcendental $\Z_{(p),\alpha,\delta}\in\mathcal{S}_p$, with $\delta\in\Q$, is strictly less than a maximal element of $\mathcal{S}_p$, namely, $\Z_{(p),\alpha',\infty}$ for $\alpha'\in\bar B_p(\alpha,\delta)$.

(2) Let $(\alpha_i,\delta_i)\in\C_p\times(\Q\cup\{\infty\})$ for $i=1,2$ be such that $\Z_{(p),\alpha_1,\delta_1}\cap\Q[X] = \Z_{(p),\alpha_2,\delta_2} \cap \Q[X]$. By Proposition \ref{overring intersection DVRs}, we have  
$\delta_1 = \delta_2$, $v_p(\alpha_2-\sigma_1(\alpha_1)) \geq \delta_1$, and 
$v_p(\alpha_1-\sigma_2(\alpha_2)) \geq \delta_2$ for some $\sigma_1, \sigma_2 \in G_p$.
If $\delta_1 = \infty$, then $\alpha_1 = \sigma(\alpha_2)$ for some $\sigma \in G_p$ (that is, $\alpha_1,\alpha_2$ are conjugate over $\Q_p$),
so, in particular, $\Z_{(p),\alpha_1,\delta_1} = \Z_{(p),\alpha_2,\delta_2}$. If instead we have $\delta_i < \infty$
for $i=1, 2$, then $\Z_{(p),\alpha_1,\delta_1} = \Z_{(p), \sigma(\alpha_1),\delta_1}$ by Theorem \ref{Conjugate Rt ext}
and $\Z_{(p), \sigma_1(\alpha_1),\delta_1} = \Z_{(p), \alpha_2,\delta_1}$ by (\ref{equality monomials}),
and hence again we get $\Z_{(p),\alpha_1,\delta_1} = \Z_{(p),\alpha_2,\delta_2}$.
Thus, $\mathcal{S}_p$ is a partially ordered set under $\Z_{(p),\alpha_1,\delta_1}\cap\Q[X] \subseteq \Z_{(p),\alpha_2,\delta_2} \cap \Q[X]$
for $\Z_{(p),\alpha_i,\delta_i} \in \mathcal{S}_p$.
\end{Rems}


 Let $\mathcal{B}_p$ be the set of ultrametric balls in $\C_p$  with center in $\C_p^{\br}$ and possibly infinite radius, namely:
 $$\mathcal{B}_p=\{\bar B_p(\alpha,\delta)\mid (\alpha,\delta)\in\C_p^{\br}\times(\Q\cup\{\infty\})\}$$
with the conventions that 
\begin{itemize}
    \item[-] if $\delta=\infty$, then $\alpha$ is transcendental over $\Q$.
    \item[-] if $\delta<\infty$, then $\alpha\in\oQp$.
\end{itemize}
Note that the Galois group $G_p=\Gal(\oQp/\Q_p)$ acts on $\mathcal{B}_p$ (Lemma \ref{image rt} and Theorem \ref{Conjugate Rt ext}), we denote by $\mathcal{B}_p/G_p$ the set of equivalence classes of $\mathcal{B}_p$ under this action.  Note that, by Remark \ref{conjugate transcendental element}, the cardinality of an equivalence class of a ball $\bar B_p(\alpha,\delta)$ under the action of $G_p$ is infinite precisely when $\delta=\infty$ and $\alpha\not\in\oQp$.  
\begin{Thm}\label{ultrametric balls modulo Galois are DVRs} 
The map
$$\begin{array}{lcc}
   \Theta_p:&\mathcal{B}_p/G_p\to & \mathcal{W}_p\\
   &{[} \bar{B}_p(\alpha,\delta){]}\mapsto &\Z_{(p),\alpha,\delta}
\end{array}$$
is a bijection.
\end{Thm}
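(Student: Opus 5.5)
The plan is to prove well-definedness, surjectivity and injectivity of $\Theta_p$ separately, relying on Theorem \ref{unitary DVRs over Z(p)}, Theorem \ref{Conjugate Rt ext}, Remark \ref{remark35} and Remarks \ref{ball containment}.

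\emph{Well-definedness.} First we check that $\Z_{(p),\alpha,\delta}$ lies in $\mathcal{W}_p$ for every ball $\bar B_p(\alpha,\delta)\in\mathcal{B}_p$.
\begin{itemize}
\item If $\delta\in\Q$, then by convention $\alpha\in\oQp$, and Remark \ref{remark35} (equivalently Remark \ref{remark rt} (1)) shows that $\Z_{(p),\alpha,\delta}$ is a residually transcendental DVR lying over $\Z_{(p)}$.
\item If $\delta=\infty$, then $\alpha\in\C_p^{\br}$ is transcendental over $\Q$, and Remark \ref{remark35} (via \cite{PerStacked}) gives that $\Z_{(p),\alpha}$ is a DVR.
\end{itemize}
Next we check that the value does not depend on the representative of the $G_p$-orbit, i.e.\ that $\Z_{(p),\sigma(\alpha),\delta}=\Z_{(p),\alpha,\delta}$ for every $\sigma\in G_p$. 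We also need that the same ball written with a different center gives the same ring, i.e.\ $\bar B_p(\alpha,\delta)=\bar B_p(\alpha',\delta)$ implies $\Z_{(p),\alpha,\delta}=\Z_{(p),\alpha',\delta}$.
\begin{itemize}
\item If $\delta<\infty$: Theorem \ref{Conjugate Rt ext} applied over $\Q_p$ gives $\Z_{p,\sigma(\alpha),\delta}=\sigma(\Z_{p,\alpha,\delta})$. Since $\Q_p$ is Henselian, $\mathcal{D}=G_p$, so these two rings have the same contraction to $\Q_p(X)$ and hence to $\Q(X)$. Independence of the center follows from \eqref{equality monomials}.
\item If $\delta=\infty$: for $\phi\in\Q(X)$ we have $\phi(\sigma(\alpha))=\sigma(\phi(\alpha))$, where $\sigma$ is extended to a continuous automorphism of $\C_p$. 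Since $v_p\circ\sigma=v_p$, the element $\phi(\sigma(\alpha))$ lies in $\O_p$ if and only if $\phi(\alpha)$ does.
\end{itemize}

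\emph{Surjectivity.} This is exactly Theorem \ref{unitary DVRs over Z(p)} (1)--(2). A residually transcendental $W$ equals $\Z_{(p),\alpha,\delta}$ with $(\alpha,\delta)\in\oQp\times\Q$. A residually algebraic $W$ equals $\Z_{(p),\alpha}$ with $\alpha\in\C_p^{\br}$ transcendental over $\Q$. In both cases the pair satisfies the conventions defining $\mathcal{B}_p$.

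\emph{Injectivity.} Suppose $\Z_{(p),\alpha_1,\delta_1}=\Z_{(p),\alpha_2,\delta_2}$. Then in particular the contractions to $\Q[X]$ coincide, so Remarks \ref{ball containment} (2), which rests on Proposition \ref{overring intersection DVRs}, gives three facts: $\delta_1=\delta_2$, $v_p(\alpha_2-\sigma_1(\alpha_1))\ge\delta_1$ for some $\sigma_1\in G_p$, and, when $\delta_1=\infty$, $\alpha_2=\sigma(\alpha_1)$ for some $\sigma\in G_p$.
\begin{itemize}
\item If $\delta_1<\infty$, then $\bar B_p(\alpha_2,\delta_1)=\bar B_p(\sigma_1(\alpha_1),\delta_1)=\sigma_1(\bar B_p(\alpha_1,\delta_1))$, because the balls are ultrametric.
\item If $\delta_1=\infty$, then $\{\alpha_2\}=\sigma(\{\alpha_1\})$.
\end{itemize}
In either case the two classes in $\mathcal{B}_p/G_p$ coincide.

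\emph{Main obstacle.} The delicate point is the $\delta=\infty$ case with $\alpha\notin\oQp$. There $G_p$ acts through its continuous extension to $\C_p$, and the orbit of $\alpha$ is infinite. The key input is the step in Proposition \ref{overring intersection DVRs} ruling out $\{\alpha\}\subseteq G_p(\bar B_p(\alpha_i,\delta_i))$ unless $\delta_i=\infty$ and $\alpha$ is a conjugate of $\alpha_i$, which used compactness of $G_p(\{\alpha_i\})$ (Remark \ref{conjugate transcendental element}). I would rely on that result as stated rather than reprove it.
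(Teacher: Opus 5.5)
Your proposal is correct and follows the paper's proof. Surjectivity comes from Theorem \ref{unitary DVRs over Z(p)}, and injectivity comes from Remarks \ref{ball containment}~(2), i.e.\ ultimately Proposition \ref{overring intersection DVRs}. The one difference is well-definedness: you check it directly (Theorem \ref{Conjugate Rt ext} for finite radius, Galois-equivariance of evaluation for $\delta=\infty$), while the paper routes it through the same Proposition and Remark; your $\delta=\infty$ argument is slightly cleaner, since Theorem \ref{Conjugate Rt ext} only covers finite radii.
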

\begin{proof}
   We show first that $\Theta_p$ is well-defined. If $[\bar{B}_p(\alpha,\delta)]=[\bar{B}_p(\alpha',\delta')]$ then there exists $\sigma\in G_p$ such that $\bar{B}_p(\alpha,\delta)=\sigma(\bar{B}_p(\alpha',\delta'))=\bar{B}_p(\sigma(\alpha'),\delta')$, so in particular we get $\delta=\delta'$ and $v_p(\alpha-\sigma(\alpha'))\geq\delta$. Hence, by Proposition \ref{overring intersection DVRs}, we have $\Z_{(p),\alpha,\delta}\cap\Q[X]=\Z_{(p),\alpha',\delta'}\cap\Q[X]$ and thus by Remark \ref{ball containment} (2) we get $\Z_{(p),\alpha,\delta}=\Z_{(p),\alpha',\delta'}$.   

  For the injectivity of $\Theta_p$, if the equality $\Z_{(p),\alpha,\delta}=\Z_{(p),\alpha',\delta'}$ holds, then in particular $\Z_{(p),\alpha,\delta}\cap\Q[X]=\Z_{(p),\alpha',\delta'}\cap\Q[X]$ and the same arguments as above show that $[\bar{B}_p(\alpha,\delta)]=[\bar{B}_p(\alpha',\delta')]$.

   The surjectivity of $\Theta_p$ follows from Theorem \ref{unitary DVRs over Z(p)}.
\end{proof}

\vskip1cm

\section{Polynomial Krull domains }

We are interested in characterizing polynomial Krull Domains, i.e., Krull domains $R$ such that $\Z[X]\subseteq R\subseteq\Q[X]$.
Note that since $\Z[X]$ is a $2$-dimensional Noetherian domain, by \cite[Theorem 9]{Heinzer} such a Krull domain $R$ is an integrally closed Noetherian domain of Krull dimension at most $2$. Conversely,
it is well-known that every integrally closed Noetherian domain is a Krull domain (for example, see \cite[Chapter VI, \S 13, (b), p. 82]{ZS2}). Hence, we are describing the family of integrally closed Noetherian domains between $\Z[X]$ and $\Q[X]$. Given a prime ideal $Q$ of an integral domain $R$ as above, we say that $Q$ is \emph{unitary} if $Q\cap\Z=p\Z$ for some $p\in\PP$ and that $Q$ is \emph{non-unitary} if $Q\cap\Z=(0)$.

\subsection{Representations of polynomial Krull domains}

We recall that we have the following representation of the PID $\Q[X]$ as an intersection of DVRs:
$$\Q[X]=\bigcap_{q\in\mathcal{P}^{\text{irr}}}\Q[X]_{(q)}$$
where $\mathcal{P}^{\text{irr}}$ is the set of irreducible polynomials of $\Q[X]$  and $\Q[X]_{(q)}$ is the localization of $\Q[X]$ at the prime ideal generated by $q(X)$.

In general, let $R$ be a polynomial Krull domain,
$\Lambda = \{p \in \PP \mid pR \subset R\}$, $X^1(R)$ be the set
of height one prime ideals of $R$, and $X^1_p(R) = \{Q \in X^1(R) \mid p \in Q\}$ for each $p \in \Lambda$.
Then $X^1_p(R)$ is finite, $R_Q$ is a DVR for all $Q \in X^1_p(R)$, and
$$R = \bigcap_{p \in \Lambda}(\bigcap_{Q \in X^1_p(R)}R_Q) \cap \Q[X].$$
Hence, for each $p \in \Lambda$, there exists a finite set $\{W_{p,1}, \dots , W_{p,m_p}\}$
of DVRs with quotient field $\Q(X)$, so that $W_{p,j} \cap \Q = \Z_{(p)}$ for $j =1, \dots , m_p$, and
\begin{equation}\label{representation Krull domain}
R = \bigcap_{p \in \Lambda}(\bigcap_{j=1}^{m_p}W_{p,j}) \cap \Q[X] = \bigcap_{p \in \Lambda}(\bigcap_{j=1}^{m_p}(W_{p,j} \cap \Q[X])).
\end{equation}
For each $p \in \Lambda$, let 
\begin{equation}\label{localization R at p}
R_p = \bigcap_{j=1}^{m_p}(W_{p,j} \cap \Q[X])    
\end{equation}
It is clear that
$R_p$ is a Krull domain between $\Z_{(p)}[X]$ and $\Q[X]$ and $R = \bigcap\limits_{p \in \Lambda}R_p$.
\vskip0.5cm


\begin{Lem} \label{lemma43}
Let $p\in\PP$ and $R$ be a ring between $\Z[X]$ and $\Q[X]$ such that
$R = \bigcap\limits_{Q \in X^1(R)}R_Q$ and $R_Q$ is a DVR for all $Q \in X^1(R)$, $X^1_p(R) = \{Q \in X^1(R) \mid p \in Q\}$ 
and $R_p = (\bigcap\limits_{Q \in X^1_p(R)}R_Q) \cap \Q[X]$. 
Then the following statements hold true. 
\begin{enumerate}
\item[{\em (1)}] $R = \bigcap\limits_{p \in \PP}R_p$.
\item[{\em (2)}] If $R_{(p)}=(\Z\setminus p\Z)^{-1}R$, then $R_{(p)}=R_p$.
\item[{\em (3)}] $R_{(p)} \cap \Q = \Z_{(p)}$.
\end{enumerate} 
\end{Lem}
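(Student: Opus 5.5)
We will split $X^1(R)$ according to contraction to $\Z$. Since $R\supseteq\Z[X]$ and $R\subseteq\Q[X]$, each $Q\in X^1(R)$ satisfies either $Q\cap\Z=(0)$ or $Q\cap\Z=p\Z$ for a unique prime $p$. In the second case $Q\in X^1_p(R)$. In the first case $R_Q$ contains $\Q$, and since $R_Q$ is a DVR of $\Q(X)$ containing $\Q[X]$ (because $R\subseteq R_Q$ and $\Q\subseteq R_Q$), we get $\Q[X]\subseteq R_Q$. Hence
$$R=\bigcap_{Q\in X^1(R)}R_Q=\Bigl(\bigcap_{p\in\PP}\bigcap_{Q\in X^1_p(R)}R_Q\Bigr)\cap\bigcap_{Q\cap\Z=0}R_Q\supseteq \bigcap_{p\in\PP}R_p .$$
Conversely, $R\subseteq R_Q$ for all $Q$ and $R\subseteq\Q[X]$, so $R\subseteq R_p$ for all $p$. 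This gives (1). If $X^1_p(R)=\emptyset$, we read $R_p=\Q[X]$, and the formula is unaffected.

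For (2), first we show $R_{(p)}\subseteq R_p$. Elements of $\Z\setminus p\Z$ are units in every $R_Q$ with $Q\in X^1_p(R)$, since $Q\cap\Z=p\Z$; they are also units in $\Q[X]$. For the reverse inclusion, take $f\in R_p$. For each prime $q\neq p$, the same style of argument applied to $Q\in X^1_q(R)$ is needed: we want some $n\in\Z\setminus p\Z$ with $nf\in R_Q$ for every $Q$ not in $X^1_p(R)$. Write $f=g/d$ with $g\in\Z[X]$ and $d\in\Z$. Then $df\in\Z[X]\subseteq R\subseteq R_Q$ for every $Q$. Write $d=p^k m$ with $p\nmid m$. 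For $Q\in X^1_q(R)$ with $q\ne p$, $p$ is a unit of $R_Q$, so $mf=p^{-k}(df)\in R_Q$. For $Q\in X^1_p(R)$ we have $mf\in R_Q$ because $f\in R_p\subseteq R_Q$. For $Q$ with $Q\cap\Z=0$, $mf\in\Q[X]\subseteq R_Q$. Hence $mf\in\bigcap_Q R_Q=R$, so $f=(mf)/m\in R_{(p)}$. This clearing of the denominator, using only primes other than $p$, is the key step; it replaces any need for finite character.

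For (3), we have $R_{(p)}\cap\Q\supseteq\Z_{(p)}$ trivially. For the reverse, suppose first that $X^1_p(R)\neq\emptyset$. Then $R_p\cap\Q\subseteq R_Q\cap\Q$ for some $Q\ni p$, and that ring is a valuation ring of $\Q$ not containing $1/p$, hence equal to $\Z_{(p)}$. The main obstacle is the case $X^1_p(R)=\emptyset$, where $R_p=\Q[X]$. We rule it out as follows. Since $\Z[X]\subseteq R$, we have $1/p\notin R$, because $1/p$ is not integral over $\Z[X]$. By (1), $1/p\notin R_q$ for some prime $q$. But $1/p\in R_q$ for every $q\ne p$, as $p$ is a unit of each $R_Q$ with $Q\cap\Z=q\Z$. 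Hence $1/p\notin R_p$, forcing $R_p\ne\Q[X]$ and so $X^1_p(R)\neq\emptyset$. One more point needs checking: that $1/p\notin R$. If $1/p\in R$, then $R=R[1/p]\supseteq\Z[1/p][X]$, and the argument must exclude this. Since the statement allows arbitrary $p\in\PP$, I would verify this carefully; if it fails, claim (3) should be read for $p$ with $pR\ne R$, that is, $p\in\Lambda$.
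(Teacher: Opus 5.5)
Your proofs of (1) and (2) are correct and match the paper's. The paper also splits $X^1(R)$ by contraction to $\Z$, uses $\Q[X]\subseteq R_Q$ for non-unitary $Q$, and for $R_p\subseteq R_{(p)}$ clears only the $p$-free part $m$ of the denominator to get $mf=g/p^k\in R$.

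\textbf{The gap is in (3).} You try to show $X^1_p(R)\neq\emptyset$ by arguing that $1/p\notin R$ because $1/p$ is not integral over $\Z[X]$. That inference is invalid. $R$ is an arbitrary ring between $\Z[X]$ and $\Q[X]$, not an integral extension of $\Z[X]$, so the non-integrality of $1/p$ says nothing about whether $1/p$ lies in $R$.

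In fact the claim is false. Take $R=\Z[1/p][X]$, which is a UFD and so satisfies the hypotheses, or take $R=\Q[X]$. In either case:
\begin{itemize}
\item $pR=R$, so $X^1_p(R)=\emptyset$;
\item $R_p=R_{(p)}=\Q[X]$;
\item $R_{(p)}\cap\Q=\Q\neq\Z_{(p)}$.
\end{itemize}
So no argument can close this case, and statement (3) as written holds only for primes with $pR\neq R$, that is, $p\in\Lambda$.

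Your fallback is therefore the right fix, and the paper's proof tacitly uses it too. The paper deduces (3) from $R_Q\cap\Q=\Z_{(p)}$ for $Q\in X^1_p(R)$ together with $R_{(p)}=R_p$. That gives the conclusion only when $X^1_p(R)$ is nonempty.

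Under the hypothesis $1/p\notin R$ your chain is correct, and it can be shortened. From $R=\bigcap_{Q}R_Q$ you get some $Q$ with $1/p\notin R_Q$, so $p\in Q$ and $X^1_p(R)\neq\emptyset$. Your argument that $R_Q\cap\Q$ is a valuation ring of $\Q$ omitting $1/p$, hence equal to $\Z_{(p)}$, then finishes the proof.

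You should delete the integrality argument and state the restriction $p\in\Lambda$ (equivalently $X^1_p(R)\neq\emptyset$) explicitly as a hypothesis of (3).
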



\begin{proof}
(1) If $Q$ is a prime ideal of $R$, either $Q \cap \Z = (0)$ or $Q \cap \Z = p\Z$ for some $p \in \PP$.
Moreover, if $Q \cap \Z = (0)$, then $\Q[X] \subseteq R_Q$. 
Hence, $R = R \cap \Q[X] = \bigcap\limits_{p \in \PP}((\bigcap\limits_{Q \in X^1_p(R)}R_Q) \cap \Q[X])
= \bigcap\limits_{p \in \PP}R_p$.

(2) $(\subseteq)$ Let $\frac{f}{n}\in R_{(p)}$, where $f\in R$ and $n\in\Z$ is not divisible by $p$. Then clearly $\frac{f}{n} \in R_Q$, 
for every $Q \in X^1_p(R)$ since the value of each of the corresponding valuation of $n$ is zero and $R\subseteq R_p$. 
Thus, $R_{(p)} \subseteq R_p$. 
$(\supseteq)$ Conversely, let $h\in R_p$. Then we may write $h=\frac{g}{mp^k}$, 
for some $g\in\Z[X]$, $m\in\Z$ not divisible by $p$ and $k\geq 0$. 
Then $mh=\frac{g}{p^k}$ is clearly in $R$ (because for each $q\in\PP$, $q\not=p$, $p^k$ is a unit of $R_Q$ for all $Q \in X^1_q(R)$). 
Thus, we also have $R_{(p)} \supseteq R_p$.

(3) The result follows from the following two observations: (i) $R_Q \cap \Q = \Z_{(p)}$ for all $Q \in X^1_p(R)$
and (ii) $R_{(p)} = R_p$ by (2).
\end{proof}

Let $R$ be as in Lemma \ref{lemma43} and $R_p' = \bigcap\limits_{Q \in X^1_p(R)}R_Q$ for all $p \in \PP$. Then $R$ is a Krull domain
if and only if $R_p$ is a Krull domain for all $p \in \PP$ and, for any $0 \neq f \in \Z[X]$, 
$fR_p' = R_p'$ for all but a finitely many $p$'s in $\PP$. 
Hence, in the remaining part of this paper, we first study when $R_p$ is a Krull domain and we then study the Krull domain $R$ by investigating the property that if $f \in \Z[X]$ is nonzero, then $fR_p' = R_p'$ for all but a finitely many $p$'s in $\PP$. The first case will be called ``Local case" and the second one ``Global case".

\normalcolor

\subsection{Local case}\label{section local case}
Let $p\in\PP$ be fixed. In this section we consider a Krull domain $R$ between $\Z_{(p)}[X]$ and $\Q[X]$; we show that $R$ can be represented as a ring of integer-valued polynomials over a finite union of ultrametric balls  in $\C_p$. Clearly, for every such $R$ we have
\begin{equation}\label{representation R}
R=\bigcap_{j=1}^m W_j\cap\Q[X]
\end{equation}
where $\{W_i\mid j=1,\ldots,m\}$ is a finite family (possibly empty) of DVRs of $\Q(X)$  lying above $\Z_{(p)}$.

\subsubsection{Irredundant representation}

We first investigate when a representation of a Krull domain between $\Z_{(p)}[X]$ and $\Q[X]$ as in \eqref{representation R} is irredundant. Let $D$ be an integral domain with quotient field $K$ and suppose that $D=\bigcap\limits_{i\in I} V_i$, where $\{V_i\}_{i\in I}$ is a family of valuation domains of $K$ (by Krull's Theorem this is equivalent to say that $D$ is integrally closed in $K$). As in \cite{gh68},
we say that an element $V=V_{i_0}$ of this family is superfluous if $D=\bigcap\limits_{i\in I,i\not=i_0}V_i$ and that the representation $D=\bigcap\limits_{i\in I} V_i$ is irredundant if no element $V_i$ of the family $\{V_i\}_{i\in I}$ is superfluous.

The following result is classical, see for example \cite[Remark on page 296]{n55} or \cite[Corollary 43.9]{Gilm}. 

\begin{Thm} \label{thm9}
Let $D$ be a Krull domain. Then $\{D_P \mid P \in X^1(D)\}$ is
the unique defining family for $D$, that is,
if $\{V_{\alpha}\}$ is a defining family for $D$, then
$\{D_P \mid P \in X^1(D)\} \subseteq \{V_{\alpha}\}$.
\end{Thm}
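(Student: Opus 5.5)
To prove Theorem \ref{thm9}, I would fix a defining family $\{V_\alpha\}$ for $D$ and a prime $P \in X^1(D)$, and show that $D_P = V_\alpha$ for some $\alpha$. The natural candidates are the $V_\alpha$ whose center $P_\alpha = M_\alpha \cap D$ is contained in $P$. Each $V_\alpha$ is a DVR with quotient field $K$ containing $D$, so its center $P_\alpha$ is a prime ideal of $D$. It is nonzero, because $V_\alpha \neq K$ and every element of $K$ is a quotient of elements of $D$. Moreover $D_{P_\alpha} \subseteq V_\alpha$.

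The first step is to show that some center lies inside $P$. Suppose not. Then for each $\alpha$ we can pick an element of $P_\alpha \setminus P$. The goal is a single $x \in K$ with $x \notin D_P$ but $x \in V_\alpha$ for all $\alpha$; this contradicts $D = \bigcap V_\alpha \subseteq D_P$.

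\begin{itemize}
\item Take $0 \neq a \in P$. By finite character, $a$ is a nonunit in only finitely many $V_{\alpha_1}, \dots, V_{\alpha_r}$.
\item Since $D_P$ is a DVR (property (b) of Krull domains), $P$ is the only nonzero prime of $D_P$. Hence $PD_P = \sqrt{aD_P}$, and some power $P^k$ satisfies $P^k \subseteq aD_P$.
\item For each $i$, choose $b_i \in P_{\alpha_i} \setminus P$, and set $b = \prod_i b_i^{N}$ with $N$ large, so that $v_{\alpha_i}(b) \geq v_{\alpha_i}(a)$ for every $i$. Then $b \notin P$.
\item Put $x = b/a$. For $\alpha \neq \alpha_i$, the element $a$ is a unit in $V_\alpha$, so $x \in V_\alpha$. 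For $\alpha = \alpha_i$, the choice of $N$ gives $x \in V_{\alpha_i}$.
\item However, $x \notin D_P$, because $v_P(b) = 0 < v_P(a)$.
\end{itemize}

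This contradiction yields an $\alpha$ with $0 \neq P_\alpha \subseteq P$.

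The second step uses the height-one hypothesis. Since $P$ has height one and $P_\alpha$ is a nonzero prime contained in it, $P_\alpha = P$. Then $D_P \subseteq V_\alpha$. Both rings are DVRs, hence rank-one valuation rings of $K$, and $D_P \subsetneq V_\alpha$ would force $V_\alpha = K$. So $D_P = V_\alpha$, and therefore $\{D_P \mid P \in X^1(D)\} \subseteq \{V_\alpha\}$.

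The final step is the uniqueness claim. That $\{D_P\}$ is itself a defining family follows from properties (a)–(c) of a Krull domain. It is minimal by the inclusion just proved, so it is the unique minimal (irredundant) defining family.

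I expect the main obstacle to be the construction of $x$ in the first step, specifically controlling the finitely many $V_{\alpha_i}$ where $a$ is a nonunit. Finite character is exactly what makes this work, since only finitely many exponents need to be dominated by a suitable power of the $b_i$.
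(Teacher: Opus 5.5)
Your proof is correct. The paper gives no argument of its own: it quotes the result as classical, citing Nagata (Remark on p.~296) and Gilmer (Corollary 43.9). Your proof is the standard one, with finite character used precisely to dominate the finitely many exponents $v_{\alpha_i}(a)$ by a power of an element outside $P$. The bullet deriving $P^k\subseteq aD_P$ is never used later and can be dropped.
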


Let $D$ be a Krull domain.
Theorem \ref{thm9} also says that if $\Lambda$ is a subset of $X^1(D)$,
then $D \subsetneq \bigcap\limits_{P \in \Lambda}D_P$ if and only if $\Lambda \subsetneq X^1(D)$.
In particular, for every $P\in X^1(D)$, $D_P$ is not superfluous for the defining family $\{D_P\mid P\in X^1(D)\}$, 
which therefore realizes the unique irredundant representation of $D$.

\begin{Cor} \label{cor10}
Let $D$ be a Krull domain, $\{V_{\alpha}\}$ be a defining family for $D$
and $V \in \{V_{\alpha}\}$ with maximal ideal $M$.


\begin{enumerate}
\item[{\em (1)}] $V$ is superfluous in $D = \bigcap\limits_{\alpha}V_{\alpha}$
if and only if ht$(M \cap D) \geq 2$.
\item[{\em (2)}] The intersection $D = \bigcap\limits_{\alpha}V_{\alpha}$ is irredundant if and only if ht$(M_{\alpha} \cap D)=1$ for all $\alpha$.
\end{enumerate}
\end{Cor}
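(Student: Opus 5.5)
The plan is to derive both parts directly from Theorem \ref{thm9} and the standard fact that in a Krull domain $D$ every prime $P$ of height one satisfies that $D_P$ is a DVR, together with the observation that if $V$ is a valuation overring of $D$ with center $M\cap D$ and $D_P\subseteq V$, then $V$ dominates or coincides with localizations as follows. First note that $D\subseteq V$ implies $D_{M\cap D}\subseteq V$. Also, $V$ is a DVR in the defining family, hence one-dimensional, so any valuation domain strictly between $V$ and its quotient field $K$ is $K$ itself.

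For (1), suppose first $\mathrm{ht}(M\cap D)=1$. Then $P=M\cap D\in X^1(D)$ and $D_P\subseteq V$. Since $D_P$ is a DVR and $V\neq K$, the inclusion of valuation domains $D_P\subseteq V$ forces $V=D_P$: $V$ is an overring of $D_P$, and the overrings of a DVR are only itself and $K$. By Theorem \ref{thm9}, $D_P$ belongs to every defining family. If $V$ were superfluous, the family with $V$ removed would still be a defining family, because finite character is inherited by a subfamily. That family would not contain $D_P=V$, which is a contradiction. Hence $V$ is not superfluous.

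Conversely, suppose $\mathrm{ht}(M\cap D)\geq 2$; note $M\cap D\neq 0$ because $V\neq K$ and $V$ is an overring of $D$. Then $V\neq D_P$ for every $P\in X^1(D)$. Indeed, $V=D_P$ would force the center $M\cap D=PD_P\cap D=P$, which has height one. By Theorem \ref{thm9}, $\{D_P\mid P\in X^1(D)\}\subseteq\{V_\alpha\}$, and $V$ is not among the $D_P$. Therefore
$$D\subseteq\bigcap_{V_\alpha\neq V}V_\alpha\subseteq\bigcap_{P\in X^1(D)}D_P=D,$$
so equality holds throughout and $V$ is superfluous.

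Part (2) follows at once from (1) applied to each member of the family. The main (mild) obstacle is the step $V=D_P$ when the center has height one, which rests on the fact that a DVR has no proper valuation overrings other than the field. Care is also needed to note that removing members preserves finite character, so Theorem \ref{thm9} applies to the reduced family.
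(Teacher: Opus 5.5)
Your proof is correct. The implication ``$\mathrm{ht}(M\cap D)=1\Rightarrow V$ not superfluous'' is the same as the paper's. There, $D_{M\cap D}\subseteq V$ with both rings DVRs forces $V=D_{M\cap D}$, and such a ring lies in every defining family. You spell out the appeal to Theorem \ref{thm9} and the fact that finite character passes to subfamilies, which the paper leaves tacit. You also check $M\cap D\neq 0$, which the paper uses silently when it passes from height $<2$ to height $1$.

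For the other implication your route differs. The paper proves the contrapositive: if $V$ is not superfluous, then $D=R\cap V$ with $R=\bigcap_{V_\alpha\neq V}V_\alpha\not\subseteq V$. A lemma of Heinzer--Ohm then gives $V=D_{M\cap D}$, so the center has height one. You argue directly instead. If the center has height at least two, $V$ is none of the $D_P$. By Theorem \ref{thm9}, the essential family $\{D_P\}_{P\in X^1(D)}$ survives the removal of $V$, and axiom (a) shows the remaining intersection is still $D$.

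Your version is self-contained, using only results already stated in the paper. It also shows slightly more: all members whose centers have height $\geq 2$ can be discarded at once, leaving exactly the essential family. The paper's version instead identifies a non-superfluous member as a localization of $D$ directly from the decomposition $D=R\cap V$, without going through the uniqueness of the defining family.
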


\begin{proof}
(1) Suppose that $V$ is not superfluous in $D = \bigcap\limits_{\alpha}V_{\alpha}$
and let $R = \bigcap\limits_{V_{\alpha} \neq V}V_{\alpha}$. Then $D = R \cap V$ and $R \nsubseteq V$.
Hence $V = D_{M \cap D}$ \cite[Lemma 1.3]{HeinzerOhm}, and since $V$ is a DVR,
ht$(M \cap D) = $ht$((M \cap D)D_{M \cap D}) = 1$. Conversely, assume that ht$(M \cap D) < 2$.
Then ht$(M \cap D)=1$, and hence $D_{M \cap D}$ is a DVR.
Note that $D_{M \cap D} \subseteq V$, so $D_{M \cap D} = V$.
Thus $V$ is not superfluous in $D = \bigcap\limits_{\alpha}V_{\alpha}$.

(2) This follows directly from (1).
\end{proof}

\begin{Cor}\label{center superfluous DVR}
Let $R$ be as in \eqref{representation R} and let $W=W_j$ for some $j\in\{1,\ldots,m\}$. If $W'$ is a DVR extending $\Z_{(p)}$ such that $w<w'$, then $R\subset W'$ and the center of $W'$ on $R$ has height two.

Moreover, if $W''$ is another DVR extending $\Z_{(p)}$ such that $w<w'\leq w''$, then $M\cap R\subset M'\cap R=M''\cap R$.
\end{Cor}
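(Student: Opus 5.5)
First, $R\subset W'$. Since $R\subseteq W\cap\Q[X]$ and $w<w'$, Lemma \ref{ordering} gives $W\cap\Q[X]\subseteq W'\cap\Q[X]$, hence $R\subseteq W'$. Both $W$ and $W'$ are DVRs over $\Z_{(p)}$, so they are torsion extensions; this justifies applying Lemma \ref{ordering}.

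Next, the height of the center $M'\cap R$. By Corollary \ref{w1<w2}(1), $w<w'$ gives
\[
M\cap\Q[X]\subsetneq M'\cap (W\cap\Q[X]).
\]
Intersecting with $R$ yields $M\cap R\subseteq M'\cap R$. To get strictness, I reuse the element built in the proof of Corollary \ref{w1<w2}(1). Take $f\in (W'\cap\Q[X])\setminus(W\cap\Q[X])$, choose $n$ and $a$ with $w(f^n)=v_p(a)$, and set $g=f^n/a$; then $w(g)=0<w'(g)$.

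This $g$ need not lie in $R$, and that is the step I expect to be the obstacle. To fix it, I multiply by a suitable power of $p$: the finitely many $W_i$ all contain $\Z_{(p)}[X]$, and $g$ has finitely many negative values among them. Replace $g$ by $p^k g\cdot h$, where $h\in R$ is chosen to be a unit in $W$ but to have large value in every $W_i$ with $w_i(g)<0$. If such an $h$ cannot be found directly, I will instead argue by contradiction. Suppose $M\cap R=M'\cap R$. Then $R_{M\cap R}=R_{M'\cap R}$. Since $R\subset W$ and $R\subset W'$, Corollary \ref{cor10} and Theorem \ref{thm9} then let me compare valuations.

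Concretely, I will use Corollary \ref{cor10} directly. The prime $P=M'\cap R$ contains $M\cap R$, which is a nonzero prime containing $p$. If $\mathrm{ht}(P)=1$, then $R_P$ is a DVR contained in $W'$, hence equal to $W'$. It would also follow that $P=M\cap R$, because $M\cap R\subseteq P$, both are nonzero, and $P$ has height one. Then $W=R_{M\cap R}=R_P=W'$ as well: $M\cap R$ has height one, so $R_{M\cap R}$ is a DVR inside $W$ and hence equal to $W$. This contradicts $w\neq w'$. So $\mathrm{ht}(M'\cap R)\geq 2$, and $\mathrm{ht}\le 2$ since $\dim R\le 2$ (Heinzer). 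This also gives the strict containment $M\cap R\subsetneq M'\cap R$. For $\mathrm{ht}(M\cap R)\geq 1$ I use $p\in M\cap R$, which holds because $\Z_{(p)}[X]\subseteq R$.

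For the "moreover" part, $w<w'\le w''$ gives $w<w''$. The first part applied to both $W'$ and $W''$ shows that $M'\cap R$ and $M''\cap R$ both have height two. Lemma \ref{ordering}(1)(c) gives $M'\cap\Q[X]\subseteq M''\cap\Q[X]$, so $M'\cap R\subseteq M''\cap R$. Both are height-two primes of a ring of dimension at most two, so $M'\cap R$ is maximal and the containment is an equality. Finally, $M\cap R\subsetneq M'\cap R$ was shown above.
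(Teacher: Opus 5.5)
Your final argument (the paragraph starting ``Concretely'') is correct. It reaches $\mathrm{ht}(M'\cap R)\geq 2$ by a different and more elementary route than the paper.

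The paper adjoins $W'$ to the defining family of $R$. It observes that $W'$ is superfluous because $R\subseteq W\cap\Q[X]\subseteq W'\cap\Q[X]$ with $W\neq W'$, and then invokes Corollary \ref{cor10}, whose proof rests on the uniqueness of the defining family of a Krull domain (Theorem \ref{thm9}).

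You use only Lemma \ref{ordering}(1), which places the nonzero prime $M\cap R\ni p$ inside $M'\cap R$. If $M'\cap R$ had height one, the two centers would coincide. Localizing at this height-one prime would then give $W=R_{M\cap R}=W'$, since a DVR has no overrings besides itself and its fraction field. This bypasses Theorem \ref{thm9} and shows exactly where $W\neq W'$ is used. Your preliminary attempt with $g$ and $h$, and the announced appeal to Corollary \ref{cor10}, play no role and should be deleted. The ``moreover'' part coincides with the paper's argument.

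One assertion is wrong. From $\mathrm{ht}(M'\cap R)=2$ and $\mathrm{ht}(M\cap R)\geq 1$ you cannot conclude $M\cap R\subsetneq M'\cap R$. You would need $\mathrm{ht}(M\cap R)=1$, i.e.\ that $W$ is not superfluous, but the representation \eqref{representation R} is not assumed irredundant.

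Here is a counterexample. Take $R=\Z_{(p),0,0}\cap\Z_{(p),0,1}\cap\Q[X]=\Z_{(p)}[X]$, $W=\Z_{(p),0,1}$ and $W'=\Z_{(p),0,2}$. Then $w<w'$, since $\Z_{(p)}[X/p]\subsetneq\Z_{(p)}[X/p^2]$. Yet both centers on $R$ equal $(p,X)$ by Proposition \ref{contraction maximal ideal}.

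So the $\subset$ in the statement must be read as $\subseteq$ (as in $R\subset W'$), which is all the paper's proof gives as well. Strictness does hold when $W$ is not superfluous, e.g.\ under the irredundancy hypothesis of Theorem \ref{characterization local Krull domain}, which is where the corollary is applied.
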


\begin{proof}
By assumption that $w<w'$, we have $R\subseteq W\cap\Q[X]\subseteq W'\cap\Q[X]$.
Hence $W'$ is superfluous in $R \cap W'$, and since $W \neq W'$, we have ht$(M' \cap R) \geq 2$ by Corollary \ref{cor10}. Note that dim$(R) \leq 2$.
Thus ht$(M' \cap R) = 2$.

For the last statement, we have $R\subset W'\cap\Q[X]\subseteq W''\cap\Q[X]$, so $M'\cap R\subseteq M''\cap R$. Since dim$(R) \leq 2$, equality holds in the previous containment.
\end{proof}

\begin{Cor}\label{nonunitary not superfluous}
Let $R$ be as in \eqref{representation R}. Then, for every $q\in\Q[X]$ irreducible, $\Q[X]_{(q)}$ is not superfluous in that representation of $R$. 
\end{Cor}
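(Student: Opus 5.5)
The plan is to use Corollary \ref{cor10}. The ring $R=\bigcap_{j=1}^m W_j\cap\Q[X]$ is a Krull domain, and its defining family is $\{W_j\}_j\cup\{\Q[X]_{(q)}\mid q\in\mathcal P^{\text{irr}}\}$. So it suffices to show that the center $Q=q\Q[X]_{(q)}\cap R$ of $\Q[X]_{(q)}$ on $R$ has height one. By Corollary \ref{cor10}(1), $\Q[X]_{(q)}$ is then not superfluous.

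First I check that $Q\cap\Z=(0)$. Every nonzero integer is a unit in $\Q[X]_{(q)}$, so no nonzero integer lies in $Q$. Moreover $Q\neq(0)$, because a suitable multiple $nq$ with $n\in\Z$ nonzero and $nq\in\Z[X]\subseteq R$ lies in $Q$.

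Next I localize at $S=\Z\setminus\{0\}$. Each $W_j$ lies over $\Z_{(p)}$, but $S^{-1}R$ contains $\Q$ and $\Q[X]$. Indeed, every $f\in\Q[X]$ can be written $f=g/n$ with $g\in\Z[X]\subseteq R$ and $n\in S$. Hence $S^{-1}R=\Q[X]$, since $R\subseteq\Q[X]$. Because $Q\cap S=\emptyset$, the prime $Q$ corresponds to the prime $S^{-1}Q=q\Q[X]$ of $\Q[X]$. Primes of $R$ contained in $Q$ also miss $S$, so they correspond to primes of $\Q[X]$ contained in $q\Q[X]$. That prime has height one in the PID $\Q[X]$, so $\operatorname{ht}Q=\operatorname{ht}(q\Q[X])=1$.

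Finally, Corollary \ref{cor10}(1) applied to $V=\Q[X]_{(q)}$, with maximal ideal $M$ satisfying $M\cap R=Q$, shows that $\Q[X]_{(q)}$ is not superfluous. As a consistency check, $R_Q=\Q[X]_{(q)}$.

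I expect no real obstacle. The only point to be careful about is that the $W_j$ do not enter the argument: localizing at $S$ kills all unitary data, so the height computation takes place entirely in $\Q[X]$.
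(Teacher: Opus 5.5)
Your proof is correct and follows the paper's argument. The center $Q$ of $\Q[X]_{(q)}$ on $R$ meets $\Z$ trivially, localizing at $S=\Z\setminus\{0\}$ gives $R_S=\Q[X]$, hence $\operatorname{ht}Q=\operatorname{ht}(QR_S)=1$, and Corollary~\ref{cor10} concludes; the only minor differences are that you verify $R_S=\Q[X]$ directly from $\Z[X]\subseteq R\subseteq\Q[X]$ instead of citing Gilmer's result on localizing intersections, and that you (rightly) invoke part (1) of Corollary~\ref{cor10}.
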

\begin{proof}
Let $Q$ be the center of $\Q[X]_{(q)}$ on $R$.
Then $Q \cap \Z = (0)$, and hence if $S = \Z \setminus \{0\}$, then $R_S = \bigcap\limits_{i=1}^n(W_i)_S \cap \Q[X] = \Q[X]$
\cite[Proposition 43.5]{Gilm} and $QR_S \subsetneq R_S$. Thus ht$Q=$ ht$(QR_S) = 1$,
so, by Corollary \ref{cor10}, 2., $\Q[X]_{(q)}$ is not superfluous.
\end{proof}

\begin{Rem}\label{irredundant represent balls}
(1) By Proposition \ref{overring intersection DVRs},  geometrically the representation of $R$ in \eqref{representation R} is irredundant if and only if the balls $\bar B_p(\alpha_i,\delta_i)$, $(\alpha_i,\delta_i)\in\O_p^{\br}\times(\Q_{\geq0}\cup\{\infty\})$, for $i=1,\ldots,n$, are disjoint under the action of the Galois group $G_p$ in the following sense: for each pair of balls $\bar B_p(\alpha_{i_1},\delta_{i_1}),\bar B_p(\alpha_{i_2},\delta_{i_2})$, there is no $\sigma\in G_p$ such that $\bar B_p(\alpha_{i_1},\delta_{i_1})$ is contained in $\sigma(\bar B_p(\alpha_{i_2},\delta_{i_2}))$ (see Remarks \ref{ball containment}). Equivalently,  the balls $\bar B_p(\sigma(\alpha_i),\delta_i)$, for $i=1,\ldots,n$ and $\sigma\in  G_p$, are disjoint. We stress that the number of such balls is finite if and only if there is no $j\in\{1,\ldots,n\}$ such that $\alpha_j\in\O_p^{\br}\setminus\oZp$ (see Remark \ref{conjugate transcendental element}).

(2) By Corollary \ref{cor10} and Corollary \ref{nonunitary not superfluous},   
the representation of $R$ in \eqref{representation R} is irredundant 
if and only if $M_j\cap R$ has height one for $j=1,\ldots,m$, where $M_j$ is the maximal ideal of $W_j$.
\end{Rem}

Now we study some basic ring-theoretic properties of $R = \bigcap\limits_{j=1}^m W_j\cap\Q[X]$, 
which are very useful for the study of polynomial Krull domains. In particular, we characterize when the prime unitary ideals of $R$ are maximal. If $W$ is a DVR of $\Q(X)$ extending $\Z_{(p)}$ for some $p\in\PP$, we denote by $e(W\mid \Z_{(p)})$ the ramification index of the extension $W\supset\Z_{(p)}$. If $\alpha\in\C_p^{\br}$, we denote by $e(\Q_p(\alpha)\mid\Q_p)$ the ramification index of the extension $\Q_p(\alpha)\supseteq\Q_p$ (we stress that this extension is algebraic precisely when $\alpha\in \oQp$).

\begin{Thm} \label{characterization local Krull domain}
Let $p\in\PP$. Suppose that $W_j=\Z_{(p),\alpha_j,\delta_j}$, for some $(\alpha_j,\delta_j)\in\C_p^{\br}\times(\Q\cup\{\infty\})$, 
is a DVR of $\Q(X)$ extending $\Z_{(p)}$ and $X \in W_j$ for $j = 1, \dots , m$. Now, let
 $M_j=M_{(p),\alpha_j,\delta_j}$ for each $j=1,\ldots,m$, and
\begin{equation}\label{representation R2}
R = \bigcap_{j=1}^m W_j\cap\Q[X].
\end{equation}
\noindent
If the representation of $R$ in \eqref{representation R2} is  irredundant (namely,  ht$(M_j \cap R)=1$ for $j=1, \dots , m$), then the following statements hold:
\begin{enumerate}
\item[{\em (1)}] $R$ is a Krull domain such that $\Z_{(p)}[X] \subseteq R \subseteq \Q[X]$.
\item[{\em (2)}] $X^1(R) = \{M_j \cap R \mid j =1, \dots , m\} \cup \{f\Q[X] \cap R \mid f \in \Q[X]$ is irreducible$\}$.
\item[{\em (3)}] $M_{j} \cap R$ is a maximal ideal of $R$ if and only if $W_j$
is residually algebraic over $\Z_{(p)}$.
\item[{\em (4)}] Suppose that $(\alpha_j,\delta_j)$ is a minimal pair if $\delta_j\in\Q$. Then we have
 $$e(W_j|\Z_{(p)}) = \left\{\begin{array}{cc}
e(\Q_p(\alpha_j)|\Q_p),&\text{ if } \delta_j = \infty\\
e(\Q_p(\alpha_j)|\Q_p)e(\gamma_j|\Q_p(\alpha_j)) ,&\text{ if } \delta_j < \infty
\end{array}\right.$$
where $\gamma_j=w_j(f_{\alpha_j})$, $f_{\alpha_j}$ is the minimal polynomial of $\alpha_j$ over $\Q_p$ and $e(\gamma_j|\Q_p(\alpha_j))$ 
is the smallest natural number $n$ such that $n\gamma_j$ is in the value group of $\Q_p(\alpha_j)$.

\item[{\em (5)}] If $e_j = e(W_j|\Z_{(p)})$ for $j=1, \dots , m$ and $d= gcd(e_1, \dots , e_m)$, then
Cl$(R) = \Z/d\Z \oplus \Z^{m-1}$.


\item[{\em (6)}] $R$ is Dedekind if and only if $\delta_j=\infty$  and $\alpha_i$ is transcendental over $\mathbb{Q}$ for each $j=1,\ldots,m$.
\end{enumerate}
\end{Thm}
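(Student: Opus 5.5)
I will prove the six items in order. Items (1), (2) and (6) come from general Krull-domain theory together with the results on superfluous valuations. Items (3)--(5) require a local computation at each $M_j\cap R$.

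\emph{Items (1) and (2).} $R$ is the intersection of the finitely many DVRs $W_j$ and the DVRs $\Q[X]_{(q)}$, $q$ irreducible. This family has finite character: a nonzero $f\in R$ is a nonunit in only finitely many $\Q[X]_{(q)}$ (the irreducible factors of $f$) and in at most $m$ of the $W_j$. Hence $R$ is Krull by \cite[Proposition 3]{n55}. Since $X\in W_j$, Theorem \ref{unitary DVRs over Z(p)} gives $\Z_{(p)}[X]\subseteq R$. For (2), Theorem \ref{thm9} shows $\{R_P\}$ is contained in the defining family. Irredundancy together with Corollary \ref{nonunitary not superfluous} and Corollary \ref{cor10} shows that every member of the family is centered on a height-one prime, and that $R_{M_j\cap R}=W_j$. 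The centers of $\Q[X]_{(q)}$ are exactly $q\Q[X]\cap R$.

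\emph{Item (3).} I will study the residue extension $\Z/p\Z\subseteq R/(M_j\cap R)\subseteq W_j/M_j$.
\begin{itemize}
\item If $W_j$ is residually algebraic, the middle ring is a domain algebraic over a field, hence a field, so $M_j\cap R$ is maximal.
\item If $W_j$ is residually transcendental, Remark \ref{ball containment}(1) provides a residually algebraic $W'=\Z_{(p),\alpha',\infty}$ with $w_j<w'$. By Corollary \ref{center superfluous DVR}, the center of $W'$ on $R$ has height two and contains $M_j\cap R$. Hence $M_j\cap R$ is not maximal.
\end{itemize}

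\emph{Item (4).} Work over the completion, using Corollary \ref{bijection rt}; ramification is unchanged by completion.
\begin{itemize}
\item For $\delta_j=\infty$, $W_j$ is the pullback of the valuation of $\Q_p(\alpha_j)$ (or of its completion), so the value group is that of $\Q_p(\alpha_j)$ and $e=e(\Q_p(\alpha_j)|\Q_p)$, by \cite{PerStacked}.
\item For $\delta_j<\infty$ with a minimal pair, I will invoke the Alexandru--Popescu--Zaharescu description of the value group of the restriction of $u_{\alpha,\delta}$: it is generated by $\Gamma_{\Q_p(\alpha_j)}$ and $\gamma_j=w_j(f_{\alpha_j})$ \cite{APZTheorem}. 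The index $[\langle\Gamma_{\Q_p(\alpha_j)},\gamma_j\rangle:\Z]$ factors as $e(\Q_p(\alpha_j)|\Q_p)\cdot e(\gamma_j|\Q_p(\alpha_j))$.
\end{itemize}
Citing this value-group theorem correctly is the main technical point.

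\emph{Item (5).} I will apply Nagata's theorem to the localization $R[1/p]=\Q[X]$, which is a PID. This gives an exact sequence
\[
0\to \langle [M_j\cap R]\rangle \to \mathrm{Cl}(R)\to \mathrm{Cl}(\Q[X])=0 .
\]
Hence $\mathrm{Cl}(R)$ is the quotient of the free group $\bigoplus_j\Z[M_j\cap R]$ by the principal divisors supported on the $M_j\cap R$. The subgroup of $K^*$ whose divisor is supported there is $\{ c\,u : c\in\Q^*,\ u \text{ a unit of }\Q[X]\}=\Q^*$, generated modulo units by $p$. Since $\mathrm{div}(p)=\sum_j e_j[M_j\cap R]$, we get $\mathrm{Cl}(R)\cong\Z^m/\Z(e_1,\dots,e_m)\cong \Z/d\Z\oplus\Z^{m-1}$ by Smith normal form. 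The main subtlety is that the divisor of $p$ involves no non-unitary prime, which holds because $p$ is a unit in $\Q[X]$.

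\emph{Item (6).} By (3), $R$ has dimension at most one exactly when every $M_j\cap R$ is maximal. This happens iff each $W_j$ is residually algebraic, i.e.\ $\delta_j=\infty$ with $\alpha_j$ transcendental over $\Q$ (Theorem \ref{unitary DVRs over Z(p)}(2)). A Krull domain of dimension at most one is Dedekind.
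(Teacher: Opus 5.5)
Your proposal is correct and follows the paper's proof item by item: finite character for (1), Theorem \ref{thm9} together with Corollaries \ref{cor10} and \ref{nonunitary not superfluous} for (2), the residue-field sandwich and Corollary \ref{center superfluous DVR} for (3), the APZ and Peruginelli value-group results for (4), and the dimension criterion for (6). The only real difference is in (5), where you derive $\mathrm{Cl}(R)\cong\Z^m/\Z(e_1,\dots,e_m)$ yourself via Nagata's theorem applied to $R[1/p]=\Q[X]$; this is exactly the $G/H$ presentation the paper imports from \cite[Theorem 2.4]{Chang}.

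One small fix in (3): choose $\alpha'\in\bar B_p(\alpha_j,\delta_j)\cap\oQp$ transcendental over $\Q$, which is possible because the ball meets $\oQp$ in an uncountable set. Only then is $W'=\Z_{(p),\alpha',\infty}$ a DVR, as Corollary \ref{center superfluous DVR} requires.
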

\begin{proof}
(1) Clear, since $R$ is a finite intersection of Krull domains with the same quotient field (\cite[Chapter VII, \S 1, 3. Exemples, 3)]{Bourb}).

(2) The containment $(\supseteq)$ follows by \cite[Corollary 2.3(3)]{Chang}
and the assumption that $M_j\cap R\in X^1(R)$ for all $j\in\{1,\ldots,m\}$.
The other containment follows easily by  Theorem  \ref{thm9}.

(3) If $W_j$ is residually transcendental over $\Z_{(p)}$, that is, $\delta_j\in\Q$ (Theorem \ref{unitary DVRs over Z(p)}), then the claim follows by Corollary \ref{center superfluous DVR}. Suppose instead that $W_j$ is residually algebraic over $\Z_{(p)}$, that is, $\delta=\infty$ (Theorem \ref{unitary DVRs over Z(p)}).  We have the following containments: $\Z/p\Z\subseteq R/(M_j\cap R)\subseteq W_j/M_j$; since $W_j/M_j$ is algebraic over $\Z/p\Z$, it follows that $M_j\cap R$ is a maximal ideal of $R$.

(4) If $\delta_j<\infty$, see \cite[Theorem 2.1, c)]{APZTheorem}, while if $\delta_j=\infty$, see \cite[Proposition 2.2]{PerTransc}.

(5) Let $G$ be a free abelian group of rank $m$ on the generators $g_1, \dots, g_m$
and $H$ be the subgroup of $G$ generated by $r = \sum\limits_{i=1}^me_ig_i$. Then Cl$(R) = G/H$ \cite[Theorem 2.4]{Chang},
and since $d= gcd(e_1, \dots , e_m)$, Cl$(R) = \Z/d\Z \oplus \Z^{m-1}$
\cite[Remark 1.6]{Chang}.


(6) A Krull domain is Dedekind if and only if it has (Krull) dimension one (\cite[ Theorem 28, Chapt. VI, \S 13]{ZS2}). The proof then follows from items (2), (3) and Theorem \ref{unitary DVRs over Z(p)}.
\end{proof}


\begin{Def}
An infinite family $\{W_i\}_{i\in I}$ of DVRs of $\Q(X)$ will be said to be
\emph{polynomially finite} if there exists a finite set $\{V_1, \dots , V_n\}$ of DVRs of $\Q(X)$ such that
$\bigcap\limits_{i\in I}W_i\cap \Q[X]=\bigcap\limits_{j=1}^n V_j \cap\Q[X]$.
\end{Def}

For example, putting together \eqref{restriction valuation domain on polynomial ring} and \eqref{IntQpB2}, we have
$$\bigcap_{\alpha'\in\bar B_p(\alpha,\delta)}\O_{p,\alpha',\infty}\cap\C_p[X]=\O_{p,\alpha,\delta}\cap\C_p[X].$$
Note that an element of the family $\{\O_{p,\alpha',\infty}\mid\alpha'\in \bar B_p(\alpha,\delta)\}$ is a DVR
if and only if $\alpha'$ is transcendental over $\Q$ (\cite[Proposition 2.2]{PerTransc}), so in particular, not every of the valuation overring $\O_{p,\alpha_p,\infty}$ above is a DVR.

The next theorem establishes when an infinite family $\{W_i\}_{i\in I}$ of DVRs of $\Q(X)$ extending $\Z_{(p)}$ is polynomially finite.


\begin{Thm}\label{polynomially finite}
Let $p\in\PP$. Suppose that $\Z_{(p),\alpha_i,\delta_i}$, for some $(\alpha_i,\delta_i)\in\C_p^{\br}\times(\Q\cup\{\infty\})$, 
is an infinite family of DVRs of $\Q(X)$ extending $\Z_{(p)}$, for $i\in I$, 
and let $R = \bigcap\limits_{i\in I}\Z_{(p),\alpha_i,\delta_i}\cap \Q[X]$.
Then the following statements are equivalent:
\begin{enumerate}
\item[{\em (1)}] $R$ is a Krull domain.
\item[{\em (2)}] The set $\{\Z_{(p),\alpha_i,\delta_i} \mid i \in I\}$ is polynomially finite.
\item[{\em (3)}] $R = \bigcap\limits_{j=1}^n \Z_{(p),\alpha_j,\delta_j}\cap\Q[X]$ for some $\{(\alpha_j,\delta_j) \mid j =1, \dots ,n\} \subseteq \C_p^{\br}\times(\Q\cup\{\infty\})$.
\item[{\em (4)}] There exists a finite subset $\{(\alpha_j,\delta_j) \mid j =1, \dots ,n\}$ of $\C_p^{\br}\times(\Q\cup\{\infty\})$, 
so that $R \subseteq \bigcap\limits_{j=1}^n\Z_{(p), \alpha_j, \delta_j}$
and for each $i\in I$, there exists $j\in\{1,\ldots,n\}$ such that $\delta_i\geq\delta_j$ and $v(\alpha_i-\sigma(\alpha_j))\geq \delta_j$,
for some $\sigma\in G_p$.
\end{enumerate}
\end{Thm}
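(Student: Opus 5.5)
I will prove the chain (3) $\Rightarrow$ (2) $\Rightarrow$ (1) $\Rightarrow$ (3), and then the equivalence (3) $\Leftrightarrow$ (4) separately.

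(3) $\Rightarrow$ (2) is immediate from the definition of polynomially finite, since each $\Z_{(p),\alpha_j,\delta_j}$ with $(\alpha_j,\delta_j)\in\C_p^{\br}\times(\Q\cup\{\infty\})$ is a DVR (Remark \ref{remark35}).

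(2) $\Rightarrow$ (1) holds because a finite intersection of DVRs with $\Q[X]$ is a finite intersection of Krull domains with the same quotient field, hence Krull, as in Theorem \ref{characterization local Krull domain}(1).

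For (1) $\Rightarrow$ (3), I would use the representation \eqref{representation R}. Since $R$ is a Krull domain between $\Z_{(p)}[X]$ and $\Q[X]$, Theorem \ref{thm9} shows that
$$R=\bigcap_{P\in X^1(R)}R_P\cap\Q[X].$$
Every unitary height-one prime contains $p$, so finite character forces the unitary members of $X^1(R)$ to be finitely many, say $P_1,\dots,P_n$. The non-unitary ones are absorbed into $\Q[X]$. Each $R_{P_j}$ is a DVR of $\Q(X)$ lying over $\Z_{(p)}$, so Theorem \ref{unitary DVRs over Z(p)} writes it as $\Z_{(p),\alpha_j,\delta_j}$ with $(\alpha_j,\delta_j)\in\C_p^{\br}\times(\Q\cup\{\infty\})$. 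This gives (3). The one point to check is that $R\neq\Q[X]$. Since $\{\Z_{(p),\alpha_i,\delta_i}\}$ is nonempty and $p^{-1}$ lies in none of them, $p$ is a nonunit of $R$, so $p$ lies in some unitary height-one prime and at least one such prime exists.

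For (3) $\Rightarrow$ (4), I take the finite family from (3). The containment $R\subseteq\bigcap_j\Z_{(p),\alpha_j,\delta_j}$ is clear. For each $i\in I$, we have
$$\bigcap_{j=1}^n\Z_{(p),\alpha_j,\delta_j}\cap\Q[X]=R\subseteq\Z_{(p),\alpha_i,\delta_i},$$
so Proposition \ref{overring intersection DVRs}, implication (2) $\Rightarrow$ (3), yields an index $j$ and some $\sigma\in G_p$ with $\delta_i\ge\delta_j$ and $v_p(\alpha_i-\sigma(\alpha_j))\ge\delta_j$.

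For (4) $\Rightarrow$ (3), Proposition \ref{overring intersection DVRs}, implication (3) $\Rightarrow$ (1), applied to each $i$, gives $\Z_{(p),\alpha_j,\delta_j}\cap\Q[X]\subseteq\Z_{(p),\alpha_i,\delta_i}$ for a suitable $j$. Hence
$$\bigcap_{j=1}^n\Z_{(p),\alpha_j,\delta_j}\cap\Q[X]\subseteq R.$$
The reverse inclusion is the hypothesis $R\subseteq\bigcap_j\Z_{(p),\alpha_j,\delta_j}$, together with $R\subseteq\Q[X]$. So equality holds, which is (3).

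The main obstacle is (1) $\Rightarrow$ (3). There one must check carefully that the unitary localizations $R_P$ are exactly DVRs over $\Z_{(p)}$ of the allowed form (which needs $P\cap\Z=p\Z$, so $R_P\cap\Q=\Z_{(p)}$), and that finite character really bounds their number. The other implications are formal consequences of Proposition \ref{overring intersection DVRs}.
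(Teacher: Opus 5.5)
Your proof is correct and follows the paper's argument. Your (1)$\Rightarrow$(3) is the paper's (1)$\Rightarrow$(2)$\Rightarrow$(3): finitely many unitary height-one primes, each $R_P$ identified as some $\Z_{(p),\alpha,\delta}$ via Theorem~\ref{unitary DVRs over Z(p)}. Krullness of a finite intersection uses the same standard fact, and (3)$\Leftrightarrow$(4) is Proposition~\ref{overring intersection DVRs} applied in each direction, exactly as in the paper.

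The only point to flag is your claim that ``$R$ lies between $\Z_{(p)}[X]$ and $\Q[X]$'', which you need to absorb the non-unitary primes into $\Q[X]$. It uses $X\in\Z_{(p),\alpha_i,\delta_i}$, which is the standing assumption of the local section rather than part of the statement, and the paper's proof relies on it in the same way.
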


\begin{proof}
(1) $\Rightarrow$ (2) Let $\mathcal{A} = X^1_p(R)$. Then $\mathcal{A}$ is finite,
$R = \bigcap\limits_{Q \in \mathcal{A}}R_Q \cap \Q[X]$, $R_Q$ is a DVR of $\Q(X)$ extending $\Z_{(p)}$.
Thus $\{\Z_{(p),\alpha_i,\delta_i} \mid i \in I\}$ is polynomially finite.

(2) $\Rightarrow$ (3) This follows from the fact that each DVR
of $\Q(X)$ extending $\Z_{(p)}$ is of the form $\Z_{(p),\alpha,\delta}$ for some $(\alpha, \delta) \in \C_p^{\br}\times(\Q\cup\{\infty\})$
by Theorem \ref{unitary DVRs over Z(p)}.

(3) $\Rightarrow$ (1) Clear, since $R$ is a finite intersection of Krull domains with the same quotient field (\cite[Chapter VII, \S 1, 3. Exemples, 3)]{Bourb}).

(3) $\Rightarrow$ (4) Each $\Z_{(p),\alpha_i,\delta_i}$ is an overring of $\bigcap\limits_{j=1}^n \Z_{(p),\alpha_j,\delta_j}\cap\Q[X]$, 
so by Proposition \ref{overring intersection DVRs}, $\delta_i\geq\delta_j$ and $v(\alpha_i-\sigma(\alpha_j))\geq \delta_j$, 
for some $j\in\{1,\ldots,m\}$ and $\sigma\in G_p$.

(4) $\Rightarrow$ (3) Suppose that for each $i\in I$, there exists $j\in\{1,\ldots,n\}$ such that $\delta_i\geq\delta_j$ 
and $v(\alpha_i-\sigma(\alpha_j))\geq \delta_j$, for some $\sigma\in G_p$. 
Then by Proposition \ref{overring intersection DVRs}, $\Z_{(p),\alpha_i,\delta_i}$ is an overring of $\bigcap\limits_{j=1}^n \Z_{(p),\alpha_j,\delta_j}\cap\Q[X]$, 
so the containment $(\supseteq)$ holds and the equality follows because $R \subseteq \bigcap\limits_{j=1}^n\Z_{(p), \alpha_j, \delta_j}$.
\end{proof}

\begin{Rem}
Geometrically, condition 4. of Theorem \ref{polynomially finite}
means that the union of the balls $\bar B_p(\alpha_i,\delta_i)$ in $\oQp$, for $i\in I$ (each of them corresponds to a valuation domain $\Z_{(p),\alpha_i,\delta_i}$),  is contained in finitely many of them up to the action of the Galois group $G_p$, namely:
$$\bigcup_{i\in I}\bar B_p(\alpha_i,\delta_i)\subseteq \bigcup_{j=1,\ldots,m}\bigcup_{\sigma\in G_p}\bar B_p(\sigma(\alpha_j),\delta_j).$$
\end{Rem}


We can now characterize Krull Domains $R$ between $\Z_{(p)}[X]$ and $\Q[X]$. By Theorem \ref{unitary DVRs over Z(p)}, we may restrict to consider in the defining family of DVRs of $R$  only those DVRs extending $\Z_{(p)}$ defined by  pairs $(\alpha,\delta)\in\O_p^{\br}\times(\Q_{\geq0}\cup\{\infty\})$.
We recall that  $\Z_{(p)}[X]$ is a Krull domain (for example see \cite[Proposition 1.6]{Fossum}).
If $\Z_{(p),0,0}$ is the valuation domain of the trivial Gaussian extension, then $\Z_{(p)}[X]=\Z_{(p),0,0}\cap\Q[X]=\IntQ(\O_p)$.


\begin{Thm} \label{thm4.23}
Let $p\in\PP$ and  $R$ be a Krull domain  between $\Z_{(p)}[X]$ and $\Q[X]$. Then there exist finitely many pairs $(\alpha_i,\delta_i)\in\O_p^{\br}\times(\Q_{\geq0}\cup\{\infty\})$,  $i=1,\ldots,n$, with  $\alpha_i$ transcendental over $\Q$ if $\delta_i=\infty$, such that
\begin{equation}\label{representation Krull local}
    R=\bigcap_{i=1}^n\Z_{(p),\alpha_i,\delta_i}\cap\Q[X]=\IntQ\left(E_p,\O_p\right),
\end{equation}
where $E_p = \bigcup\limits_{i=1}^n \bar B_p(\alpha_i,\delta_i)$. Moreover, 
if the representation in \eqref{representation Krull local} is irredundant (see Remark \ref{irredundant represent balls}),
then $R$ is Dedekind if and only if either $n=0$ (and then $R=\Q[X]$) or $n\geq1$ and  $\delta_i=\infty$ for each $i=1,\ldots,n$;  thus, in either case, $R$ is Dedekind if and only if $E_p$ contains finitely many elements.
\end{Thm}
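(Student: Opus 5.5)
The plan is to start from the representation \eqref{representation R} of a Krull domain between $\Z_{(p)}[X]$ and $\Q[X]$. Since $R$ is Krull, $X^1(R)$ splits into non-unitary and unitary height-one primes. The set $X^1_p(R)$ of primes containing $p$ is finite, because $p$ is a nonzero nonunit of $R$; otherwise $p$ is a unit and $R=\Q[X]$, which is the case $n=0$. The non-unitary primes localize to the rings $\Q[X]_{(q)}$. Hence $R=\bigcap_{Q\in X^1_p(R)}R_Q\cap\Q[X]$, with each $R_Q$ a DVR lying over $\Z_{(p)}$ and containing $X$.

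By Theorem \ref{unitary DVRs over Z(p)}(1), each such $R_Q$ equals $\Z_{(p),\alpha_i,\delta_i}$ for a pair $(\alpha_i,\delta_i)\in\O_p^{\br}\times(\Q_{\geq0}\cup\{\infty\})$. The restriction to $\O_p^{\br}$ and $\delta_i\geq0$ comes from $X\in R_Q$. When $\delta_i=\infty$, the element $\alpha_i$ is transcendental over $\Q$, because the residually algebraic DVRs are exactly the $\Z_{(p),\alpha}$ with $\alpha$ transcendental (Theorem \ref{DVR ra}). This gives the first equality in \eqref{representation Krull local}. The representation obtained from $X^1_p(R)$ is irredundant by Theorem \ref{thm9}.

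For the second equality I would argue one pair at a time, using \eqref{restriction valuation domain on polynomial ring} contracted to $\Q[X]$:
$$\Z_{(p),\alpha_i,\delta_i}\cap\Q[X]=\O_{p,\alpha_i,\delta_i}\cap\Q[X]=\{f\in\Q[X]\mid f(\bar B_p(\alpha_i,\delta_i))\subseteq\O_p\}.$$
This holds because $\O_{p,\alpha_i,\delta_i}\cap\C_p[X]=\Int(\bar B_p(\alpha_i,\delta_i),\O_p)$, and $\Z_{(p),\alpha_i,\delta_i}$ is by definition $\Z_{p,\alpha_i,\delta_i}\cap\Q(X)$, which is in turn the contraction of $\O_{p,\alpha_i,\delta_i}$.

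One subtlety needs checking: for $\delta_i=\infty$ with $\alpha_i\in\C_p^{\br}\setminus\oQp$, the ring $\O_{p,\alpha_i,\infty}$ is simply $\{\phi\mid\phi(\alpha_i)\in\O_p\}$, and its contraction to $\Q(X)$ is $\Z_{(p),\alpha_i}$ by the definition in \eqref{Zpalpha}. Taking the intersection over $i$ of the sets $\Int_\Q(\bar B_p(\alpha_i,\delta_i),\O_p)$ then gives $\IntQ(E_p,\O_p)$.

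For the Dedekind statement, assume the representation is irredundant. Then Theorem \ref{characterization local Krull domain}(6) applies directly: $R$ is Dedekind if and only if every $\delta_i=\infty$, and in that case the $\alpha_i$ are already transcendental by our convention. If $n=0$, then $R=\Q[X]$ is a PID. For the final geometric reformulation, note that $\bar B_p(\alpha,\delta)$ with $\delta\in\Q$ is infinite, since $\C_p$ has infinite residue field and a ball of finite radius contains infinitely many points. A ball with $\delta=\infty$ is a singleton. So $E_p$ is finite exactly when all $\delta_i=\infty$, or when $n=0$.

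The main obstacle I expect is a bookkeeping one: justifying that passing from $\Z_{(p),\alpha,\delta}$ to $\O_{p,\alpha,\delta}$ contracts correctly to $\Q[X]$ in the $\delta=\infty$, non-algebraic case. This should follow by unwinding the definitions together with Remark \ref{balls and intval rings}.
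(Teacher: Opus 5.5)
Your proposal is correct and follows essentially the same route as the paper: express $R$ through its finitely many unitary essential DVRs, parametrize them with Theorem \ref{unitary DVRs over Z(p)}, and read off the integer-valued description from Remark \ref{balls and intval rings}. The only differences are minor: you obtain the Dedekind criterion from Theorem \ref{characterization local Krull domain}(6), which applies because the representation is irredundant, rather than citing \cite[Theorem 3.14]{PerPrufer}, and you explicitly justify that $E_p$ is finite exactly when all radii are infinite, which the paper leaves implicit.
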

\begin{proof}
We remarked at the beginning of this section in \eqref{representation R} that $R=\bigcap\limits_{i=1}^n W_i\cap\Q[X]$, where $W_i$ is a DVR of $\Q(X)$ containing $\Z_{(p)}[X]$, for each $i=1,\ldots,n$.  By Theorem \ref{unitary DVRs over Z(p)},  there exist $(\alpha_i,\delta_i)\in\O_p^{\br}\times(\Q_{\geq0}\cup\{\infty\})$ such that $W_i=\Z_{(p),\alpha_i,\delta_i}$, for each $i=1,\ldots,n$. Note that, if $\delta_i=\infty$, then necessarily $\alpha$ must be transcendental over $\Q$ (\S \ref{res tr}). The representation of $R$ as a ring of integer-valued polynomials follows by \eqref{IntQpB}.

Clearly, $R\cap\Q$ is equal either to $\Q$ or to $\Z_{(p)}$ according to whether the family of DVRs $W_i$ is empty or not. In the former case, $R=\Q[X]$ which is a PID (thus, Dedekind and a UFD). In the latter case, by \cite[Theorem 3.14]{PerPrufer}, the Krull domain $R$ is Dedekind if and only if there is no $W_i$ which is residually transcendental over $\Z_{(p)}$. This corresponds to saying that $\delta_i=\infty$ for each $i=1,\ldots,n$, by Theorem \ref{unitary DVRs over Z(p)}.

\end{proof}

\subsubsection{Non-unitary prime ideals}\label{section: non-unitary prime ideals}

Let $R$ be a polynomial Krull domain between $\Z_{(p)}[X]$ and $\Q[X]$ as in \eqref{representation R}.
Let $q\in\Q[X]$ be irreducible and let $Q=q(X)\Q[X]_{(q)}\cap R$. By Theorem \ref{characterization local Krull domain} (2), $Q$ is an height 1 prime ideal of $R$ and $R_Q=\Q[X]_{(q)}$. In this section we characterize when $Q$ is a maximal ideal of $R$.

In general, we remark that if $W$ is a valuation domain of the field of rational functions $K(X)$ over a field $K$ and $W$ has dimension at least $2$, then $W\cap K=V$ is a valuation domain of $K$ which is not $K$ itself. In fact, $W\cap K=K\Leftrightarrow K\subset W$ and in this case $W$ would be a DVR (\cite[p. 3]{Chevalley}).

\begin{Prop}\label{subvaluation of DVR}
Let $W$ be a valuation domain of $K(X)$ such that $W\subset K[X]_{(q)}$ for some irreducible $q\in K[X]$. Let $V=W\cap K$. Then, there exist a root $\alpha\in\olK$ of $q(X)$ and an extension $U$ of $V$ to $K(\alpha)$ such that $W=U_{\alpha}=\{\phi\in K(X)\mid \phi(\alpha)\in U\}$.
\end{Prop}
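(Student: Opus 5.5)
The plan is to realize $W$ as the pullback of a valuation on the residue field of $K[X]_{(q)}$. Set $L=K[X]/(q)$, which is the residue field of the DVR $K[X]_{(q)}$. Fix a root $\alpha\in\olK$ of $q$, so that $L\cong K(\alpha)$ via $X\mapsto\alpha$. The residue map of $K[X]_{(q)}$ is then $\phi\mapsto\phi(\alpha)$ on $K[X]_{(q)}$.

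The standard composite-valuation correspondence does the work. Since $W\subset K[X]_{(q)}$ are valuation domains of $K(X)$, the prime $qK[X]_{(q)}$ is contained in $W$; in fact it is a prime ideal of $W$, because for valuation overrings the maximal ideal of the larger ring is a prime of the smaller one. Moreover, $U:=W/qK[X]_{(q)}$ is a valuation domain of $L$, and
$$W=\{\phi\in K[X]_{(q)}\mid \phi(\alpha)\in U\}.$$
This is the well-known description of a valuation ring contained in another one (see Gilmer or Bourbaki, Ch. VI). I will quote it and, if necessary, verify it directly: for $x\in L$ nonzero, lift $x$ to a unit $\phi$ of $K[X]_{(q)}$; either $\phi$ or $\phi^{-1}$ lies in $W$, so either $x$ or $x^{-1}$ lies in $U$.

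Transporting through $L\cong K(\alpha)$, $U$ becomes a valuation domain of $K(\alpha)$. Its contraction to $K$ is $U\cap K=W\cap K=V$, since $K\subset K[X]_{(q)}$ maps isomorphically onto its image. Hence $U$ is an extension of $V$ to $K(\alpha)$.

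It remains to show that $W=U_\alpha:=\{\phi\in K(X)\mid\phi(\alpha)\in U\}$. Here $\phi(\alpha)$ is understood as follows: write $\phi=q^k\psi$ with $\psi$ a unit of $K[X]_{(q)}$. If $k>0$, then $\phi(\alpha)=0\in U$; if $k<0$, then $\phi(\alpha)=\infty$ and $\phi\notin U_\alpha$. This is the convention in Remark~\ref{why alpha transcendental}. With this convention, $\phi\in U_\alpha$ exactly when $\phi\in K[X]_{(q)}$ and its residue lies in $U$, and that is precisely the description of $W$ above.

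The one point needing care is this convention for evaluation at a root of $q$, which ensures that elements outside $K[X]_{(q)}$ are excluded from $U_\alpha$. Once it is fixed, the proof is just the composite-valuation correspondence.
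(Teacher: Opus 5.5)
Your proposal is correct and follows essentially the same route as the paper: both set $U=W/qK[X]_{(q)}$ inside the residue field $K[X]/(q)\cong K(\alpha)$, check that $U$ is a valuation domain with $U\cap K=V$, and conclude that $W$ is the pullback of $U$ under the residue map $\phi\mapsto\phi(\alpha)$, i.e. $W=U_\alpha$. Your explicit convention that $\phi(\alpha)=\infty$ for $\phi\notin K[X]_{(q)}$ makes precise a point the paper leaves implicit when it writes $\pi^{-1}(U)=\{\phi\in K(X)\mid\phi(\alpha)\in U\}$.
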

\begin{proof}
 Let $\pi:K[X]_{(q)}\to K[X]/(q)\cong K(\alpha)$ be the canonical residue map, where $\alpha$ is the residue class of $X$ and can be identified to a root of $q$. Note that for $\phi\in K[X]_{(q)}$, $\pi(\phi)=\phi(\alpha)$ because $\pi$ is an homomorphism.

It is well-known that every valuation domain $W$ strictly contained in $K[X]_{(q)}$ is of the form $\pi^{-1}(U)$, where $U$ is a valuation domain of $K(\alpha)$; moreover, $U=\pi(W)=W/Q$, where $Q$ is the maximal ideal of $K[X]_{(q)}$.  Hence,
\begin{equation}\label{residue W}
U=W/Q=\{\phi(\alpha)\mid \phi\in W\}=\{\phi(\alpha)\in K(\alpha)\mid \phi(\alpha)\in U\}.    
\end{equation}
(Actually, every element of $K(\alpha)$ is of the form $g(\alpha)$ for some $g\in K[X]$). Moreover, the maximal ideal of $U$ is equal to $M_W/Q$, where $M_W$ is the maximal ideal of $W$. In particular, $U\cap K=V$. In fact, if $\beta\in V=W\cap K$, then $\pi(\beta)=\beta$ because $\pi_{\mid K}$ is the identity. Hence, $\beta=\pi(\beta)\in U=W/Q$. Similary, one can show that $M_V\subseteq M_U=M_W/Q$, the maximal ideal of $U$. Therefore, $V=U\cap K$.

Finally, by \eqref{residue W} and the above considerations, 
$$W=\pi^{-1}(U)=\{\phi\in K(X)\mid \phi(\alpha)\in U\}=U_{\alpha}.$$
\end{proof}

We remark that if we consider the completion of $K(\alpha)$ with respect to the valuation domain $U$, we may consider $\alpha$ as an element of $\overline{\widehat{K}}$, algebraic over $K$, so that we have the equality:
$$W=\{\phi\in K(X)\mid \phi(\alpha)\in \widehat{U}\}$$
where $\widehat{U}$ is the completion of $U$.

\begin{Thm}\label{nonunitary prime ideals}
  Let $p\in\PP$ and let $R=\bigcap\limits_{j=1}^m \Z_{(p),\alpha_j,\delta_j}\cap \Q[X]$ be a Krull domain, where $(\alpha_j,\delta_j)\in \C_p\times (\Q\cup\{\infty\})$ for $j=1,\ldots,m$.  Let $Q=q(X)\Q[X]\cap R$ be a non-unitary prime ideal of $R$, for some irreducible $q\in \Q[X]$. Then $Q$ is a maximal ideal of $R$ if and only if $q$ has no root in $\bigcup\limits_{j=1}^m \bar B_p(\alpha_j,\delta_j)$.  
\end{Thm}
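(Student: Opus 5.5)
The idea is to look at the valuation overrings of $R$ whose center contains $Q$, rather than at ideals directly. Since $R$ is a Krull domain of dimension at most $2$ between $\Z_{(p)}[X]$ and $\Q[X]$, and $Q$ has height one, $Q$ fails to be maximal exactly when there is a height-two (necessarily unitary) prime $N\supsetneq Q$. Then $pR\subseteq N$ and $R/N$ has characteristic $p$.

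For the direction ``$Q$ not maximal $\Rightarrow$ $q$ has a root in $E=\bigcup_j \bar B_p(\alpha_j,\delta_j)$'', I will build a valuation domain $W$ of $\Q(X)$ centered on $N$ with $W\subseteq R_Q=\Q[X]_{(q)}$. Localize at $N$, pass to the one-dimensional domain $R_N/QR_N$, and take a valuation domain $U$ of its fraction field $\Q[X]/(q)\cong\Q(\alpha)$ dominating it. Set $W=\pi^{-1}(U)$, as in the proof of Proposition \ref{subvaluation of DVR}. Then $W\supseteq R$, $W\subset \Q[X]_{(q)}$, and by that proposition $W=U_\alpha$ for a root $\alpha$ of $q$. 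Since $p$ lies in the maximal ideal of $W$, we get $W\cap\Q=\Z_{(p)}$. After completing as in the remark following Proposition \ref{subvaluation of DVR}, we may view $\alpha\in\oQp$ with $W=\Z_{(p),\alpha,\infty}=\{\phi\in\Q(X)\mid \phi(\alpha)\in\O_p\}$ (using $v_p$ and $G_p$-conjugation to fix the embedding). The containment $R\subseteq W$ means $R\subseteq \Z_{(p),\alpha,\infty}$. I then apply Proposition \ref{overring intersection DVRs}, with $(\alpha,\infty)$ in place of $(\alpha,\delta)$; I expect its proof to go through even though $\alpha$ is algebraic here, since it only uses $\Int$ and polynomial closure over $\C_p$. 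This gives some $\sigma\in G_p$ and $j$ with $v_p(\alpha-\sigma(\alpha_j))\geq\delta_j$. So $\sigma^{-1}(\alpha)$, again a root of $q$, lies in $\bar B_p(\alpha_j,\delta_j)$.

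For the converse, suppose $q(\beta)=0$ with $\beta\in\bar B_p(\alpha_j,\delta_j)$. Then every $f\in R$ satisfies $f(\beta)\in\O_p$, so $R\subseteq W:=\{\phi\in\Q(X)\mid\phi(\beta)\in\O_p\}\cap\Q(X)$. By Remark \ref{why alpha transcendental}, $W$ is a rank-two valuation domain contained in $\Q[X]_{(q)}$. Its center $N=M_W\cap R$ contains $p$ and contains $Q=q\Q[X]_{(q)}\cap R$. Moreover $N\neq Q$, because $p\in N\setminus Q$ (the ideal $Q$ is non-unitary). Hence $Q\subsetneq N$ and $Q$ is not maximal.

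The main obstacle is the first direction. I need the valuation domain lying over $Q\subset N$ to sit inside $\Q[X]_{(q)}$ and have the form $U_\alpha$ with $U$ extending $\Z_{(p)}$. I also need to justify applying the containment criterion of Proposition \ref{overring intersection DVRs} to the non-DVR $\Z_{(p),\alpha,\infty}$ with $\alpha$ algebraic. The natural route is to read that criterion as ``$\alpha$ lies in the polynomial closure $G_p(E)$'' via \eqref{IntQpB2} together with Lemma \ref{union balls pol closed}.
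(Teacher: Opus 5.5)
Your proposal is correct and follows the paper's proof essentially step for step. The paper also gets $R\subset\Z_{(p),\alpha'}\subset\Q[X]_{(q)}$ from a root $\alpha'$ lying in a ball. For the converse it produces a two-dimensional valuation overring over $Q\subset N$ (citing Gilmer's Corollary 19.7, where you build $W=\pi^{-1}(U)$ by hand), identifies it as $\Z_{(p),\alpha',\infty}$ via Proposition \ref{subvaluation of DVR}, and then applies Proposition \ref{overring intersection DVRs}. Your concern about using that proposition for algebraic $\alpha'$ with $\delta=\infty$ is legitimate, since the paper applies it there without comment, and your justification via the polynomial closedness of $G_p(E)$ (Lemma \ref{union balls pol closed}) is the right way to fill that point.
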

\begin{proof}
Note that when $R$ is Dedekind (that is, $\delta_j=\infty$ for $j=1,\ldots,m$ by Theorem \ref{characterization local Krull domain}, (7)), then $Q$ is automatically maximal. In this case each ball $\bar B_p(\alpha_j,\delta_j)$ is the singleton $\{\alpha_j\}$ which is necessarily transcendental over $\Q$ because otherwise $\Z_{(p),\alpha_j,\infty}$ would not be a DVR.

In general, suppose that $q(\alpha')=0$ for some $\alpha'\in\bigcup_{j=1}^m \bar B_p(\alpha_j,\delta_j)$. Then, we have these containments:
$$R\subset\Z_{(p),\alpha'}=\Z_{(p),\alpha',\infty}\subset\Q[X]_{(q)}$$
where the first containment follows by Proposition \ref{overring intersection DVRs} and the second one from the fact that $\Q[X]_{(q)}=\{\phi\in K(X)\mid \phi(\alpha')\in\oQp\}$. In particular, $Q\subset M_{(p),\alpha'}\cap R$, so $Q$ is not a maximal ideal.

Conversely, suppose that $Q\subset M\subset R$ for some height $2$ maximal ideal $M$ of $R$, which necessarily contains $p$.  By \cite[Corollary 19.7]{Gilm}, there exists a $2$-dimensional valuation overring $W$ of $R$ with non-zero prime ideals $Q'\subset M'$ such that $Q'\cap R=Q$ and $M'\cap R=M$. Note that $W\cap \Q=\Z_{(p)}$ and $W_{Q'}=R_Q=\Q[X]_{(q)}$. By Proposition \ref{subvaluation of DVR}, there exist an extension $U$ of $\Z_{(p)}$ to $\Q[X]/(q)$  and a root $\alpha'$ of $q(X)$ such that $W=\{\phi\in\Q(X)\mid \phi(\alpha')\in U\}$. By the remark after that Proposition, we may assume that $\alpha'\in\oQp$, necessarily algebraic over $\Q$, so that $W=\Z_{(p),\alpha'}=\Z_{(p),\alpha',\infty}$. Since $R\subset \Z_{(p),\alpha'}$, it follows by Proposition \ref{overring intersection DVRs} that $\alpha'$ (or possibly some of its conjugate) is in $\bigcup_{j=1}^m \bar B_p(\alpha_j,\delta_j)$, as claimed.
\end{proof}

\begin{Cor}
Let $R$ and $Q$ be as in Theorem \ref{nonunitary prime ideals} and suppose that $Q$ is not a maximal ideal. Then the maximal ideals of $R$ containing $Q$ are unitary and they are finitely many.
\end{Cor}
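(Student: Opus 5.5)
The argument has two parts: every maximal ideal containing $Q$ is unitary, and there are only finitely many of them.

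\emph{Unitarity.} Let $M$ be a maximal ideal of $R$ with $Q\subseteq M$. Since $Q$ is not maximal, the inclusion is strict, so $M\neq Q$. If $M$ were non-unitary, then $M\cap\Z_{(p)}=(0)$, and so $M$ would survive in the localization $R_S$ at $S=\Z\setminus\{0\}$. As in the proof of Corollary \ref{nonunitary not superfluous}, $R_S=\Q[X]$, which has dimension one. Then $QR_S\subsetneq MR_S$ would be a chain of two nonzero primes in $\Q[X]$, which is impossible. Hence $M\cap\Z=p\Z$, i.e.\ $M$ is unitary. Since $\dim R\le 2$ and $(0)\subset Q\subset M$, the ideal $M$ has height exactly two.

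\emph{Finiteness.} I would use the valuation-theoretic description from the proof of Theorem \ref{nonunitary prime ideals}. By \cite[Corollary 19.7]{Gilm}, each such $M$ is the center on $R$ of a two-dimensional valuation overring $W$ whose height-one prime contracts to $Q$. By Proposition \ref{subvaluation of DVR} and the remark after it, $W=\Z_{(p),\alpha'}$ for some root $\alpha'\in\oQp$ of $q$. Consequently $M=M_{(p),\alpha'}\cap R$, where $M_{(p),\alpha'}=\{\phi\mid\phi(\alpha')\in p\oZp\}$ is the maximal ideal of $W$. Now $q$ has at most $\deg q$ roots in $\oQp$, so at most $\deg q$ such rings $\Z_{(p),\alpha'}$ occur. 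Conjugate roots under $G_p$ even give the same ring, so the count is really the number of irreducible factors of $q$ over $\Q_p$. Since the map $\alpha'\mapsto M$ is surjective onto the set of maximal ideals containing $Q$, there are at most $\deg q$ of them.

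\emph{Main point to verify.} The step needing care is that every maximal $M\supset Q$ genuinely arises from some $\Z_{(p),\alpha'}$, with the center of the height-one prime of $W$ being $Q$. This is exactly what was established in the converse direction of Theorem \ref{nonunitary prime ideals}, so I would cite that argument verbatim rather than redo it.
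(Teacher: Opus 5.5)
Your proposal is correct and follows the paper's proof. As in the paper, finiteness comes from the converse direction of Theorem \ref{nonunitary prime ideals}, which identifies each such $M$ with $M_{(p),\alpha'}\cap R$ for a root $\alpha'$ of $q$; your localization at $S=\Z\setminus\{0\}$ (where $R_S=\Q[X]$) simply makes explicit the unitarity that the paper takes for granted. One harmless slip: the maximal ideal of $\Z_{(p),\alpha'}$ is $\{\phi\in\Z_{(p),\alpha'}\mid v_p(\phi(\alpha'))>0\}$, not $\{\phi\mid\phi(\alpha')\in p\oZp\}$ (the two differ when $\Q_p(\alpha')/\Q_p$ is ramified), but your argument only uses that $M'$ is the maximal ideal of $W$.
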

\begin{proof}
    If $Q\subset M\subset R$ where $M$ is a maximal ideal, the above proof of Theorem \ref{nonunitary prime ideals} shows that $M=M_{(p),\alpha'}\cap R$, where $\alpha'$ is a root of $q(X)$. Hence, such $M$'s are finitely many.
 \end{proof}
\vskip0.2cm

\subsubsection{Unique factorization domains}\label{section UFD local}

In this section we completely describe the UFDs between $\Z_{(p)}[X]$ and $\Q[X]$, where $p\in\Z$ is a fixed prime.
 It is known that an integral domain is a UFD if and only if it is a Krull domain with trivial divisor class group \cite[Proposition 6.1]{Fossum}.

The following lemma follows from \cite[Proposition 3.2 \& Proposition 3.7]{PS2}, but for the sake of the reader we give a self-contained proof.

\begin{Lem}\label{conjugates in the ball}
Suppose $\alpha\in\oQp$ is unramified over $\Q_p$ and let $q_{\alpha}\in\Q_p[X]$ be its minimal polynomial. Then, $v_{p,\alpha,\delta}(q_{\alpha})=k\delta+\gamma$, for some $\gamma\in\Z$ and $k=\#(\Omega_{\alpha}\cap\bar B_p(\alpha,\delta))$, where $\Omega_{\alpha}$ is the set of conjugates of $\alpha$ over $\Q_p$.
\end{Lem}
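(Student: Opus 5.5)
Write $q_\alpha(X)=\prod_{\beta\in\Omega_\alpha}(X-\beta)$ over $\oQp$. Since $v_{p,\alpha,\delta}$ is the restriction of the monomial valuation on $\oQp(X)$ (with respect to $v_p$, which is the unique extension by Henselianity), it is multiplicative. Hence
$$v_{p,\alpha,\delta}(q_\alpha)=\sum_{\beta\in\Omega_\alpha}v_{p,\alpha,\delta}(X-\beta).$$
So the plan is to compute each factor separately and then sum.

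For a single factor, write $X-\beta=(X-\alpha)+(\alpha-\beta)$. This is its $(X-\alpha)$-adic expansion, so by definition
$$v_{p,\alpha,\delta}(X-\beta)=\min\{\delta,\,v_p(\alpha-\beta)\}.$$
Now split the conjugates according to the ball $\bar B_p(\alpha,\delta)$.
\begin{enumerate}
\item[(i)] If $\beta\in\Omega_\alpha\cap\bar B_p(\alpha,\delta)$, then $v_p(\alpha-\beta)\geq\delta$, and the contribution is $\delta$. There are exactly $k$ such $\beta$, with $\beta=\alpha$ among them, which gives the term $k\delta$.
\item[(ii)] If $\beta\notin\bar B_p(\alpha,\delta)$, then $v_p(\alpha-\beta)<\delta$, and the contribution is $v_p(\alpha-\beta)$.
\end{enumerate}
Therefore
$$v_{p,\alpha,\delta}(q_\alpha)=k\delta+\gamma,\qquad \gamma:=\sum_{\beta\in\Omega_\alpha\setminus\bar B_p(\alpha,\delta)}v_p(\alpha-\beta).$$

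It remains to show $\gamma\in\Z$, and this is where the unramified hypothesis enters. Since $\alpha$ is unramified over $\Q_p$, the extension $\Q_p(\alpha)/\Q_p$ is unramified, so its normal closure $L$ is also unramified (a compositum of the unramified conjugate fields). Every $\beta\in\Omega_\alpha$ lies in $L$, so each $\alpha-\beta$ is a nonzero element of $L$. As the value group of $L$ is $\Z$, each $v_p(\alpha-\beta)\in\Z$, and hence $\gamma\in\Z$.

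The main point needing care is the unramifiedness of the normal closure. It holds because the maximal unramified extension of $\Q_p$ is Galois, so it contains every conjugate of $\alpha$. A more elementary alternative avoids this: each $\alpha-\beta=\alpha-\sigma(\alpha)$ lies in the unramified field $\Q_p(\alpha,\sigma(\alpha))$, which is the compositum of two unramified extensions. With that settled, the rest is direct.
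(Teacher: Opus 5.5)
Your proof is correct and follows the paper's argument. You factor $q_\alpha$ over its conjugates, compute $v_{p,\alpha,\delta}(X-\beta)=\min\{\delta,v_p(\alpha-\beta)\}$, split the sum along the ball $\bar B_p(\alpha,\delta)$, and use unramifiedness to get $\gamma\in\Z$. The only difference is cosmetic: the paper notes that $\Q_p(\alpha)/\Q_p$ is already Galois, since finite unramified extensions of $\Q_p$ are Galois, so $\Omega_\alpha\subset\Q_p(\alpha)$ and your normal closure $L$ is just $\Q_p(\alpha)$ itself.
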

\begin{proof}
We remark that $\Omega_{\alpha}\subset\Q_p(\alpha)$ because $\Q_p(\alpha)/\Q_p$ is a Galois extension. Note also that $q_{\alpha}(X)=\prod\limits_{\alpha'\in\Omega_{\alpha}}(X-\alpha')$ over $\Q_p(\alpha)$.  
 For each $\alpha'\in\Omega_{\alpha}$, we have
$$v_{p,\alpha,\delta}(X-\alpha')=\min\{\delta,v_p(\alpha-\alpha')\}$$
so that 
$$v_{p,\alpha,\delta}(q_{\alpha})=\sum_{\alpha'\in\Omega_{\alpha}\cap\bar B_p(\alpha,\delta)}\delta+\sum_{\alpha'\in\Omega_{\alpha}\setminus \bar B_p(\alpha,\delta)}v_p(\alpha-\alpha')=k\delta+\gamma$$
where $\gamma=\sum\limits_{\alpha'\in\Omega_{\alpha}\setminus \bar B_p(\alpha,\delta)}v_p(\alpha-\alpha')$. Let $O_{\Q_p(\alpha)}$ be
the ring of integers of $\Q_p(\alpha)$, so $O_{\Q_p(\alpha)}$ is a valuation domain. 
Then, as $\alpha$ is unramified,
$O_{\Q_p(\alpha)}$ and $\Z_p$ have the same value group, and thus we have $\gamma\in\Z$. More precisely, $\gamma=v_p(h(\alpha))$, where $h(X)=\prod\limits_{\alpha'\in\Omega_{\alpha}\setminus \bar B_p(\alpha,\delta)}(X-\alpha')$.
\end{proof}

Next, we show that the cardinality of the set of conjugates of $\alpha$ over $\Q_p$ in $\bar B_p(\alpha,\delta)$,  namely, the quantity $\#(\Omega_{\alpha}\cap\bar B_p(\alpha,\delta))$ is 1  when $\alpha$ is unramified over $\Q_p$ and $(\alpha,\delta)$ is a minimal pair.

\begin{Prop}\label{unramified implies k=1}
Let $(\alpha,\delta)\in\oQp\times\Q$ be a minimal pair such that $\alpha$ is unramified over $\Q_p$. Then $\#(\Omega_{\alpha}\cap\bar B_p(\alpha,\delta))=1$. In particular, $\gamma=v_{p,\alpha,\delta}(q_{\alpha})=\delta+\lambda$, for some $\lambda\in\Z$ and we therefore have:
$$e(\gamma\mid\Q_p(\alpha))=e(\delta\mid \Q_p)$$
\end{Prop}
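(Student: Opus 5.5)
The plan has three steps: use the minimality of $(\alpha,\delta)$ together with unramifiedness to show that no conjugate $\alpha'\neq\alpha$ lies in $\bar B_p(\alpha,\delta)$; read off $\gamma$ from Lemma \ref{conjugates in the ball}; then compare the denominators of $\delta$ and $\gamma$.

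\textbf{Step 1: no other conjugate lies in the ball.} Suppose, for a contradiction, that some conjugate $\alpha'\neq\alpha$ of $\alpha$ satisfies $v_p(\alpha-\alpha')\geq\delta$. Since $\alpha$ is unramified, $\Q_p(\alpha)/\Q_p$ is Galois, and $\alpha,\alpha'\in O_{\Q_p(\alpha)}$ after translating by an integer if needed. We want a $\beta\in\bar B_p(\alpha,\delta)$ with $[\Q_p(\beta):\Q_p]<[\Q_p(\alpha):\Q_p]$, which contradicts minimality.

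The key observation is that for a $p$-adic integer unramified element (we may reduce to $\alpha\in\oZp$), $v_p(\alpha-\alpha')\in\Z_{\geq0}$. Set $s=\min\{v_p(\alpha-\alpha'')\mid \alpha''\in\Omega_\alpha\setminus\{\alpha\}\}$. Then $\alpha\equiv\alpha''\pmod{p^s}$ for the minimizing conjugate $\alpha''$. Write $\alpha=\sum_{k\geq0} \zeta_k p^k$ with Teichmüller digits $\zeta_k$ in the unramified extension. Conjugation acts digitwise by Frobenius, so $v_p(\alpha-\sigma\alpha)\geq s$ for a nontrivial $\sigma$ means the digits $\zeta_0,\dots,\zeta_{s-1}$ are fixed by $\sigma$. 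Hence the truncation $\beta=\sum_{k<s}\zeta_k p^k$ lies in a proper subfield $\Q_p(\alpha)^{\langle\sigma\rangle}$. Moreover $v_p(\alpha-\beta)\geq s\geq\delta$, so $\beta\in\bar B_p(\alpha,\delta)$ and $[\Q_p(\beta):\Q_p]<[\Q_p(\alpha):\Q_p]$. This is the contradiction, so $\Omega_\alpha\cap\bar B_p(\alpha,\delta)=\{\alpha\}$.

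\textbf{Step 2: the value of $\gamma$.} With $k=1$, Lemma \ref{conjugates in the ball} gives $\gamma=\delta+\lambda$ with $\lambda\in\Z$.

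\textbf{Step 3: the ramification indices.} The value group of $\Q_p(\alpha)$ is $\Z$, since $\alpha$ is unramified. Therefore $n\gamma\in\Z$ if and only if $n\delta\in\Z$, which gives $e(\gamma\mid\Q_p(\alpha))=e(\delta\mid\Q_p)$.

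\textbf{Main obstacle.} The delicate step is Step 1, specifically the claim that the truncation is fixed by the relevant subgroup. I would first try the cleaner route: let $H$ be the stabilizer of the ball among $\mathrm{Gal}(\Q_p(\alpha)/\Q_p)$. If $H$ is nontrivial, the trace-averaged element $\beta=\frac{1}{|H|}\sum_{\tau\in H}\tau(\alpha)$ lies in the fixed field of $H$, which is a proper subfield. By the ultrametric inequality, $v_p(\beta-\alpha)\geq\delta$ as long as $p\nmid|H|$. In the unramified case $H$ is cyclic of order possibly divisible by $p$, so this averaging may fail. In that case I would fall back on the Teichmüller digit argument above, where the digitwise Frobenius action makes the truncation automatically $H$-invariant.
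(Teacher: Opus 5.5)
Your route differs from the paper's, and it works once one slip in Step 1 is fixed.

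\textbf{The slip.} You define $s$ as the \emph{minimum} of $v_p(\alpha-\alpha'')$ over $\alpha''\in\Omega_\alpha\setminus\{\alpha\}$ and then use $s\geq\delta$. That does not follow: the minimum can lie far below $\delta$. For instance, if $s=0$ your truncation is $\beta=0$, which gives nothing. Take instead $s=v_p(\alpha-\alpha')$ for the conjugate $\alpha'=\sigma(\alpha)$ that you assumed lies in $\bar B_p(\alpha,\delta)$. Then the argument goes through:
\begin{enumerate}
\item Because $\Q_p(\alpha)/\Q_p$ is unramified, $s\in\Z$ and $s\geq\delta$.
\item Teichm\"uller expansions are unique modulo $p^s$, and $\sigma$ permutes Teichm\"uller representatives and fixes $p$. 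Hence $\sigma(\zeta_k)=\zeta_k$ for $k<s$.
\item So $\beta=\sum_{k<s}\zeta_kp^k$ lies in the fixed field of the nontrivial automorphism $\sigma|_{\Q_p(\alpha)}$, which is a proper subfield of $\Q_p(\alpha)$.
\item Since $v_p(\alpha-\beta)\geq s\geq\delta$, this contradicts minimality.
\end{enumerate}

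\textbf{A smaller point.} The reduction to integral $\alpha$ is done by rescaling, not translating. The map $(\alpha,\delta)\mapsto(p^m\alpha,\delta+m)$ preserves minimality, degrees, and the number of conjugates in the ball. Alternatively, use expansions $\sum_{k\geq -m}\zeta_kp^k$ directly.

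\textbf{Comparison with the paper.} The paper uses the constants $\delta_{\Q_p}(\alpha)=\sup\{v_p(\alpha-c)\mid[\Q_p(c):\Q_p]<[\Q_p(\alpha):\Q_p]\}$ and $\omega_{\Q_p}(\alpha)=\sup\{v_p(\alpha-\alpha')\mid\alpha'\in\Omega_\alpha\setminus\{\alpha\}\}$. It notes that $(\alpha,\delta)$ is minimal exactly when $\delta>\delta_{\Q_p}(\alpha)$. It then quotes Khanduja's theorem that $\delta_{\Q_p}(\alpha)=\omega_{\Q_p}(\alpha)$ in the unramified case, so every $\beta$ in the ball satisfies $v_p(\alpha-\beta)>\omega_{\Q_p}(\alpha)$ and cannot be another conjugate.

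Only the inequality $\omega_{\Q_p}(\alpha)\leq\delta_{\Q_p}(\alpha)$ is actually used; the reverse inequality is Krasner's lemma. Your Teichm\"uller-digit argument is exactly a self-contained proof of that inequality here: from a conjugate at distance $s$ you build an element of smaller degree at distance at least $s$.

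The paper's proof is shorter but relies on an external theorem about Krasner's constant. Yours is elementary but uses the specific structure of unramified extensions of $\Q_p$: Witt-vector expansions and a Frobenius acting digitwise. You were also right to drop the averaging idea, since it fails when $p$ divides $|H|$. Your Steps 2 and 3 coincide with the paper's.
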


\begin{proof}
Suppose first that $\alpha\in\Q_p$, so that $q_{\alpha}(X)=X-\alpha$ and $\Omega_{\alpha}=\{\alpha\}$. Then $v_{p,\alpha,\delta}(q_{\alpha})=\delta$ and the statement is true in this case.

Suppose now that $\alpha\not\in\Q_p$. We consider the following constants:
\begin{eqnarray*}
\delta_{\Q_p}(\alpha)&=&\sup\{v_p(\alpha-c)\mid c\in\oQp, \ [\Q_p(c):\Q_p]<[\Q_p(\alpha):\Q_p]\}\\
\omega_{\Q_p}(\alpha)&=&\sup\{v_p(\alpha-\alpha')\mid\alpha'\in\Omega_{\alpha}\setminus\{\alpha\}\}
\end{eqnarray*}
which are both well-defined because $\alpha\not\in\Q_p$. It is known that $\delta_{\Q_p}(\alpha)$ is a maximum (see for example \cite[Lemma 1.2]{PerStacked}) and so in general we have the inequality $\delta_{\Q_p}(\alpha)\leq\omega_{\Q_p}(\alpha)$ (which follows by the classical Krasner's lemma). Since $\Q_p(\alpha)/\Q_p$ is unramified, by \cite[Theorem 1.3]{Khanduja} we have the equality $\delta_{\Q_p}(\alpha)=\omega_{\Q_p}(\alpha)$. Moreover, from the definitions it follows that $(\alpha,\delta)$ is a minimal pair if and only if $\delta>\delta_{\Q_p(\alpha)}$.  Now, if $\beta\in\bar B_p(\alpha,\delta)$, we have
$$v_p(\alpha-\beta)\geq\delta>\delta_{\Q_p(\alpha)}=\omega_{\Q_p}(\alpha)$$
so, the only conjugate of $\alpha$ in $\bar B_p(\alpha,\delta)$ is $\alpha$ itself, that is, $\#(\Omega_{\alpha}\cap \bar B_p(\alpha,\delta))=1$.

The last claim now follows by Lemma \ref{conjugates in the ball} and the displayed equation is clear from the fact that $\lambda\in\Z$.
\end{proof}

The next theorem characterizes when a DVR of $\Q(X)$ extending $\Z_{(p)}$ is unramified over $\Z_{(p)}$.

\begin{Thm}\label{unramified DVR over Zp}
Let $W$ be a DVR of $\Q(X)$ extending $\Z_{(p)}$ containing $X$. Then $W$ is unramified over $\Z_{(p)}$ if and only if there exists a minimal pair $(\alpha,\delta)\in\O_p^{\br}\times(\Z_{\geq0}\cup\{\infty\})$ such that $W= \Z_{(p), \alpha, \delta}$ and  $\Q_p(\alpha)/\Q_p$ is unramified.
\end{Thm}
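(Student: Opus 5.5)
The plan is to reduce everything to the ramification formula of Theorem \ref{characterization local Krull domain} (4), splitting into the residually algebraic and residually transcendental cases given by Theorem \ref{unitary DVRs over Z(p)}. First, by that theorem and Lemma \ref{rt overring of V[X]}, write $W=\Z_{(p),\alpha,\delta}$ with $(\alpha,\delta)\in\O_p^{\br}\times(\Q_{\geq0}\cup\{\infty\})$. When $\delta\in\Q$ we may take $(\alpha,\delta)$ to be a minimal pair, since among the centers of the ball $\bar B_p(\alpha,\delta)$ we can choose one of minimal degree over $\Q_p$. This replacement is harmless: the ball does not change, so by Theorem \ref{ultrametric balls modulo Galois are DVRs} neither does $W$.

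For the case $\delta=\infty$, item (4) gives $e(W\mid\Z_{(p)})=e(\Q_p(\alpha)\mid\Q_p)$. Hence $W$ is unramified if and only if $\Q_p(\alpha)/\Q_p$ is unramified, and minimality is vacuous here. This case is immediate.

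For the case $\delta<\infty$ with $(\alpha,\delta)$ minimal, item (4) gives $e(W\mid\Z_{(p)})=e(\Q_p(\alpha)\mid\Q_p)\,e(\gamma\mid\Q_p(\alpha))$ with $\gamma=w(q_\alpha)$. Therefore $W$ is unramified if and only if both factors equal $1$.

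For ($\Leftarrow$), assume $\Q_p(\alpha)/\Q_p$ is unramified and $\delta\in\Z_{\geq0}$. Proposition \ref{unramified implies k=1} gives $e(\gamma\mid\Q_p(\alpha))=e(\delta\mid\Q_p)$. This equals $1$ because $\delta\in\Z$ and the value group of $\Q_p(\alpha)$ is $\Z$, so $e(W\mid\Z_{(p)})=1$.

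For ($\Rightarrow$), unramifiedness forces $e(\Q_p(\alpha)\mid\Q_p)=1$, so $\alpha$ is unramified. Proposition \ref{unramified implies k=1} then applies and gives $e(\delta\mid\Q_p)=e(\gamma\mid\Q_p(\alpha))=1$, hence $\delta\in\Z$. Since $X\in W$ we already have $\delta\geq0$, so $\delta\in\Z_{\geq0}$. This yields the required minimal pair.

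The step I expect to need care is the passage to a minimal pair without changing $\delta$ or the valuation. I would justify it with the definition of minimal pair together with \eqref{equality monomials}: any $\beta$ in the ball yields the same monomial valuation over $\C_p$, hence the same $W$ after restriction. I would also check that the minimal center still lies in $\O_p^{\br}$; this follows from $\delta\geq0$ and $v_p(\alpha)\geq0$.

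A second subtle point is whether ``minimal pair'' is needed for the formula in item (4) in the $\Rightarrow$ direction. Since we always pass to a minimal pair first, the formula applies in both directions.
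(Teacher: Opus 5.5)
Your proposal is correct and follows essentially the same route as the paper. You split into the residually algebraic case ($\delta=\infty$, with $e(W\mid\Z_{(p)})=e(\Q_p(\alpha)\mid\Q_p)$) and the residually transcendental case, where the ramification index is computed from a minimal pair and Proposition~\ref{unramified implies k=1} reduces the condition to $\delta\in\Z$; the only differences are that the paper quotes \cite[Theorem 2.1]{APZTheorem} directly (which item (4) of Theorem~\ref{characterization local Krull domain} packages), and it leaves implicit the passage to a minimal pair of definition, which you justify explicitly.
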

\begin{proof}
By Theorem \ref{unitary DVRs over Z(p)}, there exists $(\alpha,\delta)\in\C_p^{\br}\times(\Q\cup\{\infty\})$ such that $W=\Z_{(p),\alpha,\delta}$. 

Suppose first that $W$ is residually algebraic over $\Z_{(p)}$, so, by the same Theorem $\delta=\infty$ and $\alpha$ is transcendental over $\Q$. In this case, if $\alpha\in\oQp$, then by \cite[Proposition 2.2]{PerTransc} we have $e(W\mid\Z_{(p)})=e(\Q_p(\alpha)\mid\Q_p)$. If instead $\alpha\in\C_p\setminus\oQp$, then  $e(W\mid\Z_{(p)})=e(\Z_{p,\alpha,\delta}\mid\Z_p)=e(\Q_p(\alpha)\mid\Q_p)$, where the first equality follows from  \cite[Proposition 2.24]{PerStacked} and the second equality from \cite[Proposition 2.14]{PerStacked}. In either case, the claim follows. 

Suppose now that $W$ is residually transcendental over $\Z_{(p)}$, so in particular, $\delta\in\Q$ by the aforementioned Theorem,  and we may assume that $\alpha\in\oQp$. By \cite[Theorem 2.1]{APZTheorem}, $e(W\mid\Z_{(p)})=1$ if and only if the following condition holds:
\begin{itemize}
\item[] there exists a minimal pair $(\alpha,\delta)\in\oQp\times\Q$ of definition of $W$ such that $\Q_p(\alpha)/\Q_p$ is unramified and $w(q_{\alpha})\in\Gamma_{\Q_p(\alpha)}$ where $q_{\alpha}$ is the minimal polynomial of $\alpha$ over $\Q_p$.
\end{itemize} 
By Lemma \ref{conjugates in the ball}, we have $w(q_{\alpha})=v_{p,\alpha,\delta}(q_{\alpha})=k\delta+\lambda$, where $k=\#(\Omega_{\alpha}\cap \bar B_p(\alpha,\delta))$ and $\lambda\in \Gamma_{\Q_p(\alpha)}=\Gamma_{\Q_p}=\Z$; in particular, we have $w(q_{\alpha})\in\Gamma_{\Q_p(\alpha)}\Leftrightarrow k\delta\in\Z$.

If $\alpha\in\Q_p$, then $q_{\alpha}(X)=X-\alpha$, so $v_{p,\alpha,\delta}(q_{\alpha})=\delta$ which is therefore an integer.
Suppose that $\alpha\not\in\Q_p$. Since $(\alpha,\delta)$ is  a minimal pair, $\delta>0$. By Proposition \ref{unramified implies k=1} we have $k=1$, so that above displayed condition is equivalent to the condition of the main statement of the theorem.
\end{proof}

\begin{Thm}\label{intersection DVR is UFD}
Let $R$ be a ring between $\Z_{(p)}[X]$ and $\Q[X]$. Then $R$ is a UFD if and only if either $R=\Q[X]$ or $R=W\cap\Q[X]$ where $W$ is a DVR of $\Q(X)$ extending $\Z_{(p)}$ unramified over $\Z_{(p)}$. More precisely, the last condition holds if and only if  there exists $(\alpha,\delta)\in\O_p^{\br}\times(\Z_{\geq0}\cup\{\infty\})$ such that $\alpha$ is unramified over $\Q_p$ and in this case $R=\IntQ(\bar B_p(\alpha,\delta),\O_p)$.    
\end{Thm}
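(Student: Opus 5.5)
The proof rests on two facts: a UFD is exactly a Krull domain with trivial divisor class group, and the local divisor class group has already been computed in Theorem \ref{characterization local Krull domain}(5).

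For the forward direction, let $R$ be a UFD with $\Z_{(p)}[X]\subseteq R\subseteq\Q[X]$. Then $R$ is a Krull domain, so Theorem \ref{thm4.23} gives a representation $R=\bigcap_{j=1}^m W_j\cap\Q[X]$. We discard superfluous members of this family, which Remark \ref{irredundant represent balls} and Corollary \ref{cor10} allow. The representation is then irredundant, and Theorem \ref{characterization local Krull domain}(5) gives $\mathrm{Cl}(R)=\Z/d\Z\oplus\Z^{m-1}$ with $d=\gcd(e_1,\dots,e_m)$.

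If $m=0$, then $R=\Q[X]$. Otherwise triviality of $\mathrm{Cl}(R)$ forces $m=1$ and $d=e_1=1$. Hence $R=W\cap\Q[X]$ with $W$ a DVR extending $\Z_{(p)}$ that is unramified over $\Z_{(p)}$.

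For the converse, $\Q[X]$ is a PID. If $R=W\cap\Q[X]$ with $W$ unramified, then $R$ is a Krull domain. The representation by the single valuation $W$ is irredundant, because the non-unitary DVRs $\Q[X]_{(q)}$ are never superfluous (Corollary \ref{nonunitary not superfluous}) and $W$ is the only unitary one. Theorem \ref{characterization local Krull domain}(5) with $m=1$ and $e_1=1$ then gives $\mathrm{Cl}(R)=\Z/1\Z=0$, so $R$ is a UFD.

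The one point to check here is that $W$ is really needed, i.e. $\mathrm{ht}(M\cap R)=1$. This holds because $W\cap\Q[X]\neq\Q[X]$, as $1/p\notin W$. By Theorem \ref{thm9}, $X^1(R)$ must contain a unitary prime, and $W$ is the only unitary candidate.

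For the ``more precisely'' part, note that $X\in W$ because $\Z_{(p)}[X]\subseteq R\subseteq W$. Theorem \ref{unramified DVR over Zp} then says $W$ is unramified exactly when $W=\Z_{(p),\alpha,\delta}$ for a minimal pair $(\alpha,\delta)\in\O_p^{\br}\times(\Z_{\geq0}\cup\{\infty\})$ with $\Q_p(\alpha)/\Q_p$ unramified. In that case \eqref{IntQpB}, contracted to $\Q[X]$ as in Theorem \ref{thm4.23}, gives $R=\IntQ(\bar B_p(\alpha,\delta),\O_p)$.

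The conventions on pairs must be respected. If $\delta=\infty$, then $\alpha$ must be transcendental over $\Q$, since otherwise $\Z_{(p),\alpha}$ is not a DVR (Remark \ref{why alpha transcendental}). If $\delta<\infty$, then $\alpha\in\oQp$.

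The main obstacle is the converse of this last equivalence. Starting from an arbitrary unramified $\alpha$ and an integer $\delta$, one needs $\Z_{(p),\alpha,\delta}$ to be unramified. Theorem \ref{unramified DVR over Zp} requires a minimal pair, so we first replace $(\alpha,\delta)$ by a minimal pair of definition $(\alpha',\delta)$ with $\alpha'\in\bar B_p(\alpha,\delta)$. Then we check that $\alpha'$ can still be taken unramified. The plan for this check is to reuse the computation in Lemma \ref{conjugates in the ball} for $\alpha$ itself. That computation gives $w(q_\alpha)=k\delta+\gamma\in\Z$ when $\delta\in\Z$, and then the ramification criterion of \cite[Theorem 2.1]{APZTheorem}, already quoted in the proof of Theorem \ref{unramified DVR over Zp}, is invoked directly.
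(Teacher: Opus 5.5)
Your proof of the main equivalence follows the paper's own proof. Theorem \ref{thm4.23} gives the representation, which you make irredundant. Theorem \ref{characterization local Krull domain}(5) then gives $\mathrm{Cl}(R)=\Z/d\Z\oplus\Z^{m-1}$, and triviality forces $m\le 1$ and $e_1=1$. The converse uses the same formula, and your check that $\mathrm{ht}(M\cap R)=1$ (left implicit in the paper) is correct. For the ``more precisely'' clause, the paper just cites Theorem \ref{unramified DVR over Zp}. In effect it tacitly takes $(\alpha,\delta)$ to be a minimal pair, and then both directions are immediate.

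You are right that the printed clause omits minimality, but your plan for removing it does not work. The Alexandru--Popescu--Zaharescu criterion used in the proof of Theorem \ref{unramified DVR over Zp} (equivalently, the formula in Theorem \ref{characterization local Krull domain}(4)) computes the ramification only from a \emph{minimal} pair $(\alpha',\delta)$, through $w(q_{\alpha'})$. Knowing $w(q_\alpha)=k\delta+\gamma\in\Z$ for a possibly non-minimal center $\alpha$ says nothing about $\Gamma_w$. It also does not show that the minimal center $\alpha'$ is unramified; you only assert that.

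The fix is to bypass minimal pairs:
\begin{itemize}
\item Let $L=\Q_p(\alpha)$, so $\Gamma_L=\Z$ because $\alpha$ is unramified.
\item For $f\in\Q[X]\subseteq L[X]$, the coefficients of the $(X-\alpha)$-adic expansion lie in $L$. Hence $v_{p,\alpha,\delta}(f)\in\Gamma_L+\Z\delta=\Z$ when $\delta\in\Z$.
\item Thus $\Z\subseteq\Gamma_w\subseteq\Z$, and $\Z_{(p),\alpha,\delta}$ is unramified.
\item For $\delta=\infty$, the equality $e(W\mid\Z_{(p)})=e(\Q_p(\alpha)\mid\Q_p)$ from the proof of Theorem \ref{unramified DVR over Zp} needs no minimality.
\end{itemize}
A posteriori, Theorem \ref{characterization local Krull domain}(4) then shows that every minimal center $\alpha'$ of the ball is unramified, which is the statement you wanted.
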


\begin{proof}
A UFD is just a Krull domain with trivial divisor class group \cite[Proposition 6.1]{Fossum}. So, if $R$ is a UFD, then,  by Theorem \ref{thm4.23} we have $R=\bigcap\limits_{i=1}^m\Z_{(p),\alpha_i,\delta_i}\cap\Q[X]$, for some $(\alpha_i,\delta_i)\in\O_p^{\br}\times(\Q_{\geq0}\cup\{\infty\})$, for $i=1,\ldots,n$. By items (4) and (5) of Theorem \ref{characterization local Krull domain}, either $R=\Q[X]$ or $R=\Z_{(p),\alpha,\delta}\cap\Q[X]$ and $\Z_{(p),\alpha,\delta}$ is unramified over $\Z_{(p),\alpha,\delta}$. Conversely, if $R$ has the stated form,  then by Theorem \ref{characterization local Krull domain}, $R$ is a Krull domain with trivial class group, hence, a Krull domain.

The proof of the last claim follows from Theorem \ref{unramified DVR over Zp}.
\end{proof}


\begin{Ex}
 Let $R$ be a UFD between $\Z_{(p)}[X]$ and $\Q[X]$. We remark that $R$ is equal to $\Q[X]$ precisely when $p$ is a unit of $R$, otherwise $R=W\cap \Q[X]$ for some DVR $W$ of $\Q(X)$ extending $\Z_{(p)}$, such that $W$ is unramified over $\Z_{(p)}$. Note that $R=\Z_{(p)}[X]$ if and only if $W=\Z_{(p),0,0}$. 
\end{Ex}

\subsection{Global case}\label{section global case}

In this section, we globalize the results of Section \ref{section local case}. Namely, we consider a subset $\Lambda \subseteq \PP$, $W_{p, j}$ a DVR
of $\mathbb{Q}(X)$ with $X \in W_{p,j}$ and $W_{p,j} \cap \mathbb{Q} = \mathbb{Z}_{(p)}$,
and $R= \bigcap\limits_{p \in \Lambda}(\bigcap\limits_{j=1}^{m_p}(W_{p,j} \cap \mathbb{Q}[X]))$. 

\subsubsection{When a polynomial Krull domain is Dedekind}

Let  $R$ be a polynomial Krull domain. The next proposition shows that if all the unitary DVRs of the defining family of $R$ are residually algebraic extensions of $\Z_{(p)}$, for $p\in\PP$, then $R$ is a Dedekind domain. We first recall the following definition given in \cite{PerDedekind,PerStacked}.

\begin{Def}
Let $\uE=\prod\limits_{p\in\PP}E_p\subseteq\O=\prod_{p\in\PP}\O_p$ be a subset.
We say that $\uE$ is \emph{polynomially factorizable} if, for each $g\in\Z[X]$ and $\alpha=(\alpha_p)\in\uE$, there exist $n,d\in\Z$, $n,d\geq 1$ such that $\frac{g(\alpha)^n}{d}$ is a unit of $\ohZ$, that is, $v_p(\frac{g(\alpha_p)^n}{d})=0$ for all $p\in\PP$.
\end{Def}

If every $\alpha_p\in E_p$ is transcendental over $\Q$ for every $p\in\PP$, \cite[Lemma 2.12]{PerDedekind} shows that a set $\uE$ as in the above definition is polynomially factorizable if and only if, for each $g\in\Z[X]$, the following set is finite:
$$\PP_{g,\uE}=\{p\in\PP\mid\exists \alpha_p\in E_p \text{ such that }v_p(g(\alpha_p))>0\}.$$
Clearly, we have
$$\PP_{g,\uE}=\{p\in\PP\mid\exists \alpha_p\in E_p \text{ such that }g\in\Z[X]\cap M_{(p),\alpha_p}\},$$
where $M_{(p),\alpha_p}=M_{(p),\alpha_p,\infty}$ is the maximal ideal of $\Z_{(p),\alpha_p}=\Z_{(p),\alpha_p,\infty}$.

\begin{Prop}\label{Krull Dedekind}
Let $R$ be a Krull domain between $\Z[X]$ and $\Q[X]$. Then the following statements 
are equivalent.
\begin{enumerate}
\item[{\em (1)}] $R$ is a Dedekind domain.
\item[{\em (2)}] Each unitary DVR $W$ of the defining family of $R$ is a residually algebraic extension of $W\cap\Q$. 
\item[{\em (3)}] $R = \IntQ(\uE,\O)$, for some subset $\uE=\prod\limits_{p\in\PP}E_p\subset\O=\prod\limits_{p\in\PP}\O_p$ such that $E_p\subset\O_p^{\br}$ is a finite set of transcendental elements over $\Q$ for each $p\in\PP$. 
\end{enumerate}
 In this case, $\uE$ is polynomially factorizable.
\end{Prop}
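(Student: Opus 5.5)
We argue via the representation \eqref{representation Krull domain} and the local rings $R_p=R_{(p)}$ of Lemma \ref{lemma43}: $R=\bigcap_{p\in\Lambda}R_p$, with $R_p=\bigcap_{j=1}^{m_p}W_{p,j}\cap\Q[X]$ a Krull domain between $\Z_{(p)}[X]$ and $\Q[X]$. By Theorem \ref{thm9} we take the $W_{p,j}$ to be exactly the localizations $R_Q$, $Q\in X^1_p(R)$, so each representation of $R_p$ is irredundant. For $p\notin\Lambda$ we set $E_p=\emptyset$, so the condition at $p$ is vacuous and $R_p=\Q[X]$.

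(1) $\Rightarrow$ (2). Let $W=R_Q$ with $Q\in X^1_p(R)$. Since $R$ is Dedekind, $\dim R\le 1$, so the height-one prime $Q$ is maximal, and hence $QR_{(p)}$ is a maximal ideal of $R_p$. By Theorem \ref{characterization local Krull domain}(3), applied to the irredundant representation of $R_p$, $W$ is residually algebraic.

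(2) $\Rightarrow$ (3). By Theorem \ref{unitary DVRs over Z(p)}, each $W_{p,j}=\Z_{(p),\alpha_{p,j}}$ with $\alpha_{p,j}\in\C_p^{\br}$ transcendental over $\Q$, and $\alpha_{p,j}\in\O_p^{\br}$ because $X\in W_{p,j}$. Put $E_p=\{\alpha_{p,1},\dots,\alpha_{p,m_p}\}$. Theorem \ref{thm4.23} then gives $R_p=\IntQ(E_p,\O_p)$, and intersecting over $p$ yields $R=\IntQ(\uE,\O)$, where integer-valuedness on $\uE$ is read componentwise.

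(3) $\Rightarrow$ (1). Each $R_p=\IntQ(E_p,\O_p)=\bigcap_{\alpha\in E_p}\Z_{(p),\alpha}\cap\Q[X]$ is Dedekind by Theorem \ref{characterization local Krull domain}(6). Suppose some maximal ideal $M$ of $R$ had height $2$. Then $M\cap\Z=p\Z$ for a prime $p$, because the non-unitary primes lie in $\Q[X]$ and have height at most $1$. Localizing at $\Z\setminus p\Z$ gives $MR_{(p)}$, a prime of height $2$ in $R_{(p)}=R_p$ by Lemma \ref{lemma43}(2), contradicting $\dim R_p\le 1$. Hence $\dim R\le 1$, and a Krull domain of dimension at most one is Dedekind.

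Polynomial factorizability. Let $g\in\Z[X]$ be nonzero. Since $R$ is Krull, $g$ is a unit in $R_Q$ for all but finitely many $Q\in X^1(R)$. Here $R_Q=\Z_{(p),\alpha_p}$ runs over $\alpha_p\in E_p$, so the set $\PP_{g,\uE}$ is finite. By \cite[Lemma 2.12]{PerDedekind}, applicable since every element of every $E_p$ is transcendental, $\uE$ is polynomially factorizable.

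The main obstacle is the height computation in (3) $\Rightarrow$ (1), namely that heights of unitary primes are preserved when passing from $R$ to $R_{(p)}$. The primes of the localization $R_{(p)}$ correspond to the primes of $R$ disjoint from $\Z\setminus p\Z$, and a chain below $M$ avoids $\Z\setminus p\Z$ because $M\cap\Z=p\Z$. So the chain survives localization, and the step goes through.
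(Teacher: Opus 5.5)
Your proposal is correct and follows essentially the same route as the paper. It proves (1)$\Rightarrow$(2) via Theorem \ref{characterization local Krull domain}(3) after passing to $R_{(p)}$, (2)$\Rightarrow$(3) by writing each unitary essential DVR as $\Z_{(p),\alpha_p}$ and using finiteness of $X^1_p(R)$, (3)$\Rightarrow$(1) by computing heights of unitary maximal ideals in $R_{(p)}$ via Lemma \ref{lemma43}, and polynomial factorizability from finite character together with \cite[Lemma 2.12]{PerDedekind}. The only differences are cosmetic: you cite item (6) of Theorem \ref{characterization local Krull domain} where the paper cites Theorem \ref{thm4.23}, and your componentwise reading of $\IntQ(\uE,\O)$ handles the primes with $E_p=\emptyset$, which the paper leaves implicit.
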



\begin{proof}
(1) $\Rightarrow$ (2) This is an immediate consequence of Theorem \ref{characterization local Krull domain}, (3).

(2) $\Rightarrow$ (3) Let $\{W_i\}_{i\in I}$ be the proper subset of the defining family of DVRs of $R$ which are extensions of $\Z_{(p)}$ for some $p\in\PP$. By Theorem \ref{DVR ra}, if $W_i$ is a residually algebraic extension of $\Z_{(p)}$, then there exists $\alpha_p\in\O_p^{\br}$, transcendental over $\Q$, such that $W_i=\Z_{(p),\alpha_p}=\Z_{(p),\alpha_p,\infty}$. For each $p\in\PP$, we denote by $E_p$ the subset formed by these $\alpha_p\in\O_p^{\br}$. Then   the defining family of DVRs of $R$ is equal to:
$$\{\Z_{(p),\alpha_p}\mid p\in\PP,\alpha_p\in E_p\}\cup\{\Q[X]_{(g)}\mid g\in\Q[X], \text{ irreducible}\}$$

In particular, we have
$$R=\bigcap_{i\in I}W_i\cap\Q[X]=\bigcap_{p\in\PP}\bigcap_{\alpha_p\in E_p}\Z_{(p),\alpha_p}\cap\Q[X]=\bigcap_{p\in\PP}\IntQ(E_p,\O_p)$$
where the last equality follows from Remark \ref{balls and intval rings}. Note that we also have
$$\bigcap_{p\in\PP}\IntQ(E_p,\O_p)=\IntQ(\uE,\O)=\{f\in\Q[X]\mid f(\alpha)\in\O,\forall\alpha\in \uE\}$$
where $\uE=\prod\limits_p E_p$, $\O=\prod\limits_p \O_p$ and $f(\alpha)=(f(\alpha_p))_p$ if $\alpha=(\alpha_p)_p\in\uE$.
 Finally, let $p\in\PP$. Then, since $p\in R$, $p$ is contained in finitely many centers of the defining family of DVRs of $R$. This implies that $E_p$ is finite.



(3) $\Rightarrow$ (1) Let $M$ be a maximal ideal of $R$. 
If $M \cap \Z = (0)$, then ht$M=1$, so assume that $M \cap \Z = p\Z$
for some $p \in \PP$. Note that
$R = \IntQ(\uE,\O) = \bigcap\limits_{p\in\PP}\bigcap\limits_{\alpha_p\in E_p}\Z_{(p),\alpha_p}\cap\Q[X]$, so if $S = \Z \setminus p\Z$, then
$R_S = \bigcap\limits_{\alpha_p\in E_p}\Z_{(p),\alpha_p}\cap\Q[X]$
by Lemma \ref{lemma43} and $\bigcap\limits_{\alpha_p\in E_p}\Z_{(p),\alpha_p}\cap\Q[X]$ is a Dedekind domain by Theorem \ref{thm4.23}. Hence, ht$M =$ ht$(MR_S) =1$.
Thus, $R$ is a one-dimensional Krull domain, therefore $R$ is a Dedekind domain.

Now for the proof of ``In this case", let $f\in\Z[X]\subset R$.  Since $R$ is a Krull domain, $f$ is contained in only finitely many centers of the valuation domains of the defining family for $R$ (clearly,  $f$ is contained in finitely many centers of the non-unitary valuation domains $\Q[X]_{(g)}$, which are the non-unitary prime ideals of $R$ ). This precisely means that the set $\PP_{g,\uE}=\{p\in\PP\mid \exists\alpha_p\in E_p, v_p(f(\alpha_p))>0\}$ is finite.  Hence, $\uE$ is polynomially factorizable by  \cite[Lemma 2.12]{PerDedekind}.
\end{proof}

In particular, a polynomial Krull domain which is not Dedekind must have an essential valuation
overring which is a residually transcendental extension of $\Z_{(p)}$, for some $p\in\PP$.


\begin{Prop}\label{Krull intersection}
Let $R$ be a polynomial Krull domain. Then
$R=R_1\cap R_2$, where $R_1  $ is a Dedekind domain such that $\Z[X]\subset R_1\subseteq\Q[X]$, and $R_2$ is either
a non-Pr\"ufer Krull domain such that $\Z[X]\subseteq R_2\subset\Q[X]$ 
and $Q$ is not maximal for all $Q \in X^1(R_2)$ with $Q \cap \Z \neq (0)$ 
or $R_2=\Q[X]$.
\end{Prop}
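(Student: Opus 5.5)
The plan is to split the defining family of $R$ according to the residual type of its unitary members. By \eqref{representation Krull domain} and Theorem~\ref{thm9}, the defining family of $R$ consists of the non-unitary DVRs $\Q[X]_{(q)}$ together with the unitary DVRs $R_Q$ for $Q\in X^1_p(R)$, $p\in\Lambda$. For each $p$ this unitary part is the finite set $\{W_{p,1},\dots,W_{p,m_p}\}$, and the representation is irredundant. Write $\mathcal F_{\ra}$ (resp.\ $\mathcal F_{\rt}$) for the subfamily of unitary members that are residually algebraic (resp.\ residually transcendental) over $\Z_{(p)}$. We then set
$$R_1=\bigcap_{W\in\mathcal F_{\ra}}W\cap\Q[X],\qquad R_2=\bigcap_{W\in\mathcal F_{\rt}}W\cap\Q[X].$$
With these definitions, $R=R_1\cap R_2$ holds trivially, and $\Z[X]\subseteq R\subseteq R_i\subseteq\Q[X]$. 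If $\mathcal F_{\rt}=\emptyset$, then $R_2=\Q[X]$.

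First we show that $R_1$ and $R_2$ are Krull. Each family $\mathcal F_{\ra}\cup\{\Q[X]_{(q)}\}$ and $\mathcal F_{\rt}\cup\{\Q[X]_{(q)}\}$ is a subfamily of the defining family of $R$, so it has finite character: a nonzero $f\in R_i$ can be written $f=g/n$ with $g\in\Z[X]\subseteq R$ and $n\in\Z$. Both $g$ and $n$ are units in all but finitely many members of the whole family, hence of each subfamily. Then \cite[Proposition 3]{n55} gives that $R_1$ and $R_2$ are Krull domains. For $R_1$, all of its unitary defining DVRs are residually algebraic, so the argument of (3)$\Rightarrow$(1) of Proposition~\ref{Krull Dedekind} applies directly. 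By Lemma~\ref{lemma43}, each localization $(R_1)_{(p)}$ is a finite intersection of residually algebraic DVRs with $\Q[X]$, which is Dedekind by Theorem~\ref{thm4.23}. Hence every maximal ideal of $R_1$ has height one, and $R_1$ is Dedekind. The containment $\Z[X]\subsetneq R_1$ is strict, since $\Z[X]$ is not Dedekind.

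Next, assuming $\mathcal F_{\rt}\neq\emptyset$, we treat $R_2$. It is not Pr\"ufer: by \cite[Theorem 3.15]{PerPrufer} (Remark~\ref{remark rt}(2)), it has a residually transcendental essential valuation overring, so a Krull domain with such a DVR in its defining family has dimension $2$ and cannot be Pr\"ufer. Also $R_2\subsetneq\Q[X]$, since $p\notin R_2^\times$. For the statement on unitary height-one primes, localize at $p$: $(R_2)_{(p)}=\bigcap_j W_{p,j}\cap\Q[X]$ over the residually transcendental $W_{p,j}$, by Lemma~\ref{lemma43}(2). The unitary height-one primes of $R_2$ containing $p$ correspond to the centers of these $W_{p,j}$, possibly after discarding superfluous ones; Theorem~\ref{characterization local Krull domain}(2) describes them once an irredundant subfamily is chosen. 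By Theorem~\ref{characterization local Krull domain}(3), the center of a residually transcendental DVR is not maximal in $(R_2)_{(p)}$, and therefore it is not maximal in $R_2$ either, since localization at $\Z\setminus p\Z$ preserves the chain of primes lying over $p$.

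The main obstacle is the irredundance bookkeeping for $R_2$. A residually transcendental $W$ that was essential for $R$ might become superfluous in $R_2$, and we must check that every height-one unitary prime of $R_2$ really is the center of some residually transcendental DVR rather than of something new. This follows from Theorem~\ref{thm9}: the essential DVRs of $R_2$ form a subset of any defining family, hence of $\mathcal F_{\rt}\cup\{\Q[X]_{(q)}\}$. That suffices for the claim.
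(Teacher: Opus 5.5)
Your proposal is correct and follows the paper's own argument. You split the unitary members of the defining family into residually algebraic and residually transcendental ones and take the two subintersections, which are Krull as subfamilies of a finite-character family. You get $R_1$ Dedekind via the argument of Proposition~\ref{Krull Dedekind}, and you get non-maximality of the unitary height-one primes of $R_2$ by localizing at $\Z\setminus p\Z$ and applying Theorem~\ref{characterization local Krull domain}(3). The only small variation is that you use Theorem~\ref{thm9} instead of the paper's citation of \cite[Corollary 2.3]{Chang} to identify $(R_2)_Q$ with a residually transcendental member. The irredundance worry in your last paragraph cannot actually arise: a member of $\mathcal F_{\rt}$ that were superfluous for $R_2$ would also be superfluous for $R=R_1\cap R_2$, contradicting Theorem~\ref{thm9}.
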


\begin{proof}
Let $\mathcal W(R)=\{W_i\}_{i\in I}$ be the subset of the defining family of DVRs for $R$ which are unitary valuation domains.
Note that $\mathcal W(R)=\emptyset$ is equivalent to saying that $R=\Q[X]$; in this case we have $R_1=R_2=\Q[X]$.
Suppose now that $\mathcal W(R)\not=\emptyset$.
We partition $\mathcal W(R)$ into the subset $\mathcal{W}_{\text{ra}}(R)$ of residually algebraic extensions of $\Z_{(p)}$ for some $p\in\PP$
and the subset $\mathcal{W}_{\text{rt}}(R)$ of residually transcendental extensions of $\Z_{(p)}$ for some $p\in\PP$.

We set:
\begin{align*}
R_1=\bigcap_{W_i\in \mathcal{W}_{\text{ra}}(R)}W_i\cap\Q[X] \ \ \text{ and } \ \
R_2=\bigcap_{W_i\in \mathcal{W}_{\text{rt}}(R)}W_i\cap\Q[X].
\end{align*}
Then $R=R_1\cap R_2$. Clearly, $R_1=\Q[X]$ if and only if $\mathcal{W}_{\text{ra}}(R)=\emptyset$ and similarly $R_2=\Q[X]$ if and only if $\mathcal{W}_{\text{rt}}(R)=\emptyset$.
Note that since $R_1$ and $R_2$ are both subintersections of the Krull domain $R$, they both are Krull domains.
Moreover, by Proposition \ref{Krull Dedekind},
$R_1$ is Dedekind, while if $\mathcal{W}_{\text{rt}}(R)\not=\emptyset$, then $R_2$ is not Pr\"ufer,
since every $W_i\in \mathcal{W}_{\text{rt}}(R)$ is residually transcendental over $W_i\cap\Q$ (see \cite[Theorem 3.14]{PerPrufer}).

Now, suppose that $R_2\subset\Q[X]$. Let $Q \in X^1(R_2)$ be such that $Q \cap \Z\neq (0)$,
so $Q \cap \Q = p\Z_{(p)}$ for some $p \in \PP$.
Then ${(R_2)}_{Q} = W_k$ for some $W_k \in \mathcal{W}_{\text{rt}}(R)$ by \cite[Corollary 2.3(i)]{Chang}.
It is clear that if $S$ is the multiplicative subset of $\Z$ generated by $\PP - \{p\}$, then
${(R_2)}_S$ is a Krull domain between $\Z_{(p)}[X]$ and $\Q[X]$.
Note that $(R_2)_S = \bigcap\limits_{W_i\in \mathcal{W}_{\text{rt}}(R)}(W_i)_S\cap\Q[X]$
and ${(W_k)}_S = W_k$, so $Q_S$ is not a maximal ideal of ${(R_2)}_S$ by Theorem \ref{characterization local Krull domain} (3).
Thus $Q$ is not a maximal ideal of $R_2$.
\end{proof}

In Proposition \ref{Krull intersection},
$R_1=\Q[X]$ if $R$ has no valuation overrings which are residually algebraic extensions of $\Z_{(p)}$ for all $p\in\PP$,
and $R_2= \Q[X]$ if $R$ is a Dedekind domain (or, equivalently, $R$ has no valuation overrings
which are residually transcendental  extensions of $\Z_{(p)}$ for all $p\in\PP$).
Therefore, the objective is now to  characterize when, for a given family $\{W_i\}_{i\in I}$ of residually transcendental extensions of $\Z_{(p)}$, $p\in\PP$, the ring $\bigcap\limits_{i\in I}W_i\cap\Q[X]$ has finite character, and thus is a Krull domain.



\subsubsection{Centers of monomial valuations on $\Z[X]$}\label{centers}
Let $p\in\PP$ be fixed. Given $(\alpha,\delta)\in\C_p \times(\Q\cup\{\infty\})$, we denote the maximal ideal of $\Z_{(p),\alpha,\delta}$ by $M_{(p),\alpha,\delta}$. By Theorem \ref{unitary DVRs over Z(p)}, $\Z_{(p),\alpha,\delta}$ is an overring of $\Z_{(p)}[X]$ if and only if $(\alpha,\delta)\in\O_p\times(\Q_{\geq0}\cup\{\infty\})$. In this section, we restrict to this case and describe the center of $\Z_{(p),\alpha,\delta}$ on $\Z_{(p)}[X]$, namely, $M_{(p),\alpha,\delta}\cap (\Z_{(p)}[X])$.

Let $\mathbb M_p$ be the maximal ideal of $\O_p$. We set $\Int(\bar B_p(\alpha,\delta),\mathbb M_p)=\{f\in\C_p[X]\mid f(\bar B(\alpha,\delta))\subseteq\mathbb M_p\}$. It is easy to see that $\Int(\bar B_p(\alpha,\delta),\mathbb M_p)$ is an ideal of $\Int(\bar B_p(\alpha,\delta),\O_p)$. The following lemma shows that this ideal is precisely the center of $\O_{p,\alpha,\delta}$ on $\O_{p,\alpha,\delta}\cap\C_p[X]=\Int(\bar B_p(\alpha,\delta),\O_p)$ (see \eqref{restriction valuation domain on polynomial ring}).

\begin{Lem}\label{IntBallmaximal}
Let $p\in\PP$, $(\alpha,\delta)\in\C_p\times(\Q\cup\{\infty\})$ and $\mathbb M_{p,\alpha,\delta}$ be 
the maximal ideal of $\O_{p,\alpha,\delta}$. Then,  we have
$$\mathbb M_{p,\alpha,\delta}\cap(\O_{p,\alpha,\delta}\cap\C_p[X])=\Int(\bar B_p(\alpha,\delta),\mathbb M_p).$$
\end{Lem}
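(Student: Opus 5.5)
The plan is to prove the two inclusions directly, using the description of $\O_{p,\alpha,\delta}\cap\C_p[X]$ as $\Int(\bar B_p(\alpha,\delta),\O_p)$ from \eqref{restriction valuation domain on polynomial ring}. The key observation is that for $f\in\C_p[X]$ the value $v_{p,\alpha,\delta}(f)$ equals $\inf\{v_p(f(x))\mid x\in\bar B_p(\alpha,\delta)\}$, and that this infimum is attained. Granting this, $f\in\mathbb M_{p,\alpha,\delta}$ means $v_{p,\alpha,\delta}(f)>0$, and this holds if and only if $v_p(f(x))>0$ for every $x$ in the ball, i.e.\ $f\in\Int(\bar B_p(\alpha,\delta),\mathbb M_p)$.

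If $\delta=\infty$ there is nothing to do. The ball is $\{\alpha\}$, and by Remark \ref{ra rt} we have $v_{p,\alpha,\infty}(f)=v_p(f(\alpha))$, so both sides equal $\{f\in\C_p[X]\mid v_p(f(\alpha))>0\}$.

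Now let $\delta\in\Q$ and pick $c\in\oQp$ with $v_p(c)=\delta$. Write $f=\sum_i a_i(X-\alpha)^i$ with $a_i\in\C_p$, so that $v_{p,\alpha,\delta}(f)=\min_i\{v_p(a_i)+i\delta\}$. For $x=\alpha+ct$ with $t\in\O_p$ we get $f(x)=\sum_i (a_ic^i)t^i$.

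1. The inequality $v_p(f(x))\geq v_{p,\alpha,\delta}(f)$ is immediate from the ultrametric inequality. This gives the inclusion $\subseteq$: if $v_{p,\alpha,\delta}(f)>0$, then $f$ maps the ball into $\mathbb M_p$.
2. For the reverse inclusion, let $\mu=v_{p,\alpha,\delta}(f)$ and suppose $\mu\leq 0$. Take $b\in\C_p$ with $v_p(b)=\mu$; this exists because the value group of $\C_p$ is $\Q$ and $\mu\in\Q$. The polynomial $g(t)=b^{-1}f(\alpha+ct)$ lies in $\O_p[t]$ and has nonzero reduction modulo $\mathbb M_p$.
3. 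The residue field of $\O_p$ is $\overline{\F_p}$, which is infinite. So some $\bar t$ is not a root of $\bar g$, and any lift $t\in\O_p$ gives $v_p(f(\alpha+ct))=\mu\leq 0$. Hence $f\notin\Int(\bar B_p(\alpha,\delta),\mathbb M_p)$, which proves $\supseteq$.

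Throughout, $\Int(\bar B_p(\alpha,\delta),\mathbb M_p)\subseteq\Int(\bar B_p(\alpha,\delta),\O_p)=\O_{p,\alpha,\delta}\cap\C_p[X]$, so intersecting with this ring changes nothing.

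The only delicate step is step 3, that the minimum is attained on the ball. It rests on $\O_p$ having infinite residue field and value group $\Q$, so that the reduction of $g$ cannot vanish at every point of $\overline{\F_p}$. An alternative is to cite \cite[Lemma 4.1]{PerPrufer}, but the direct argument above is short.
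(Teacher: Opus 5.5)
Your proof is correct. The case $\delta=\infty$ and the inclusion $\subseteq$ (expand $f$ in powers of $X-\alpha$ and apply the ultrametric inequality) are exactly what the paper does. For $\supseteq$ your route is different.

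The paper takes a pseudo-stationary sequence $E=\{\alpha_n\}\subset\bar B_p(\alpha,\delta)$ with breadth $\delta$. It then cites an external result identifying $v_{p,\alpha,\delta}$ with the valuation $v_{p,E}$ attached to $E$. A second external theorem says that $v_p(f(\alpha_n))>0$ for all $n$ forces $f$ into the maximal ideal of $\O_{p,E}$.

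You instead substitute $x=\alpha+ct$, rescale by an element of value $\mu=v_{p,\alpha,\delta}(f)$, and use that $\overline{\F_p}$ is infinite. This gives a point of the ball where $v_p(f(x))=\mu$.

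What your approach buys:
\begin{itemize}
\item It is self-contained.
\item It proves the sharper identity $v_{p,\alpha,\delta}(f)=\min_{x\in\bar B_p(\alpha,\delta)}v_p(f(x))$ for $f\in\C_p[X]$. This also gives $\O_{p,\alpha,\delta}\cap\C_p[X]=\Int(\bar B_p(\alpha,\delta),\O_p)$, so you do not need to invoke that equality.
\item You can even avoid choosing $b$ by dividing by the coefficient $a_{i_0}c^{i_0}$ that realizes the minimum.
\end{itemize}

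What the paper's approach buys: it only uses positivity of $f$ along a suitable sequence inside the ball, not on the whole ball. It also stays within the pseudo-stationary-sequence and polynomial-closure framework used elsewhere in the paper. The cost is that it relies on outside machinery.
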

\begin{proof}
The statement clearly holds if $\delta=\infty$. Suppose then that $\delta<\infty$.

$(\subseteq)$ Let $f\in \mathbb{M}_{p,\alpha,\delta}\cap(\O_{p,\alpha,\delta}\cap\C_p[X])$. Then, if $f(X)=\sum\limits_k a_k(X-\alpha)^k$, we have $v_p(a_k)+k\delta>0$ for each $k$. In particular, if $\alpha'\in\C_p$ is such that $v_p(\alpha-\alpha')\geq\delta$, we have $f(\alpha')=\sum\limits_k a_k(\alpha'-\alpha)^k$, so that $v_p(f(\alpha'))\geq\min_k\{v_p(a_k)+k\delta\}>0$. Hence, $f(\bar B(\alpha,\delta))\subseteq\mathbb M_p$.

$(\supseteq)$ Let $f\in \Int(\bar B_p(\alpha,\delta),\mathbb M_p)\subset\Int(\bar B_p(\alpha,\delta),\O_p)$. Let $E=\{\alpha_n\}_{n\in\N}\subset\bar B_p(\alpha,\delta)$ be a pseudo-stationary sequence such that $\delta_E=\delta$ and let $v_{p,E}$ be the associated valuation (see \cite{PS2}). By \cite[Proposition 3.7]{PS2}, $v_{p,\alpha,\delta}=v_{p,E}$ and by \cite[Theorem 3.4]{PS2}, $f(X)$ is in the maximal ideal of $\O_{p,E}=\O_{p,\alpha,\delta}$, because by assumption $v_p(f(\alpha_n))>0$ for every $n\in\N$; thus, $f\in \mathbb{M}_{p,\alpha,\delta}$.
\end{proof}

\vskip0.2cm

For each $p \in \PP$, let $\mathbb{F}_p$ be the finite field of $p$ elements, i.e., 
$\mathbb{F}_p = \mathbb{Z}/p\mathbb{Z}$, and $\overline{f}$ be the image of $f \in \mathbb{Z}[X]$ in $\mathbb{F}_p[X]$.
Let Spec$(\Z[X])$ (resp., Max$(\Z[X])$) be the set of prime ideals (resp., maximal ideals) of $\Z[X]$.
Note that dim$(\Z[X]) =2$ and each maximal ideal of $\Z[X]$ has height two,
so
\begin{center}
Spec$(\Z[X]) = \{0\} \cup X^1(\Z[X]) \cup$ Max$(\Z[X])$ and $X^1(\Z[X]) \cap$ Max$(\Z[X]) = \emptyset$.
\end{center}
It is well known and easy to see that
\begin{enumerate}
\item[(a)] $X^1(\Z[X]) = \{p\Z[X] \mid p \in \PP\} \cup \{f\Z[X] \mid f\in\Z[X]$ and $f$ is irreducible in $\Z[X]\}$
\item[(b)] Max$(\Z[X]) = \{(p, f) \mid p \in \PP, f\in\Z[X]$ and $\overline{f}\F_p[X]$ is a prime ideal$\}$.
\end{enumerate}
In particular, for each $p \in \PP$,
\begin{eqnarray*}
\text{Spec}(\Z_{(p)}[X]) &=& \{0\} \cup  \{p\Z_{(p)}[X]\}\\
                         &\cup& \{f\Z_{(p)}[X] \mid f\in\Z_{(p)}[X] \text{ and }f \text{ is irrducible in  }\Z_{(p)}[X]\} \\
&\cup&\{(p, f) \mid \overline{f}\F_p[X] \text{ is a prime ideal }\}.
\end{eqnarray*}

Recall that if $\alpha \in \mathbb{O}_p$, then $\overline{\alpha}$ is the image of $\alpha$
modulo $\mathbb{M}_p$ and $g_{\alpha}(X) \in \mathbb{Z}_{(p)}[X]$ denotes a monic polynomial
such that $\overline{g_{\alpha}(X)}$ is the minimal polynomial over $\overline{\alpha}$ over $\mathbb{F}_p$.

\begin{Prop}\label{contraction maximal ideal}
Let $p\in\PP$ and $(\alpha,\delta)\in\O_p\times(\Q_{\geq0}\cup\{\infty\})$. Then
$$\Z_{(p)}[X]\cap M_{(p),\alpha,\delta}=\left\{\begin{array}{lc}
(p),&\text{ if } \delta=0\\
(p,g_{\alpha}(X)),&\text{ if } \delta>0
\end{array}\right.$$
\end{Prop}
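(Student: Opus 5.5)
We will compute the prime $P=\Z_{(p)}[X]\cap M_{(p),\alpha,\delta}$ by using the list of primes of $\Z_{(p)}[X]$ recalled above. First, $p\in P$ because $W=\Z_{(p),\alpha,\delta}$ lies over $\Z_{(p)}$, and $P\neq$ the whole ring because $1\notin M_{(p),\alpha,\delta}$. Hence $P$ is either $p\Z_{(p)}[X]$ or a maximal ideal $(p,f)$ with $\overline f$ irreducible in $\F_p[X]$. To decide which, we pass to $\C_p$. By Lemma \ref{IntBallmaximal}, a polynomial $f\in\Z_{(p)}[X]$ lies in $M_{(p),\alpha,\delta}$ if and only if $f\in\mathbb M_{p,\alpha,\delta}$, i.e. if and only if $v_p(f(x))>0$ for every $x\in\bar B_p(\alpha,\delta)$. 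Here we use $M_{(p),\alpha,\delta}=\mathbb M_{p,\alpha,\delta}\cap\Q(X)$ and $\Z_{(p)}[X]\subseteq \Int(\bar B_p(\alpha,\delta),\O_p)$, the latter since $\alpha\in\O_p$ and $\delta\ge0$. Because $f$ has coefficients in $\Z_{(p)}$, the condition $v_p(f(x))>0$ says exactly that $\overline f(\overline x)=0$ in the residue field $\overline{\F_p}$ of $\O_p$. So
$$P=\{f\in\Z_{(p)}[X]\mid \overline f(\overline x)=0\ \text{for all } x\in\bar B_p(\alpha,\delta)\}.$$

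Case $\delta=0$. Then $\bar B_p(\alpha,0)=\O_p$, whose residues $\overline x$ run over all of $\overline{\F_p}$, which is infinite. So $\overline f$ must vanish on infinitely many points, hence $\overline f=0$. This means $f\in p\Z_{(p)}[X]$, and therefore $P=(p)$.

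Case $\delta>0$. Every $x\in\bar B_p(\alpha,\delta)$ satisfies $v_p(x-\alpha)\ge\delta>0$, so $\overline x=\overline\alpha$. The condition thus reduces to $\overline f(\overline\alpha)=0$. Since $\overline{\alpha}$ is algebraic over $\F_p$ (because $\overline{\F_p}$ is the residue field of $\O_p$), this holds if and only if the minimal polynomial $\overline{g_\alpha}$ divides $\overline f$ in $\F_p[X]$. That is equivalent to $f\in(p,g_\alpha(X))$: indeed, if $\overline f=\overline{g_\alpha}\,\overline h$, then $f-g_\alpha h\in p\Z_{(p)}[X]$, and the converse is immediate. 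Hence $P=(p,g_\alpha(X))$, which is maximal because $\overline{g_\alpha}$ is irreducible.

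The main point to check carefully is the first step, namely applying Lemma \ref{IntBallmaximal} to elements of $\Z_{(p)}[X]$ and justifying the identification of residues. This includes the fact that the residue field of $\O_p$ equals that of $\oZp$, namely $\overline{\F_p}$, so that in the case $\delta=0$ the ball really produces infinitely many residues. For $\delta=\infty$ the ball is $\{\alpha\}$; the same argument applies directly, using $M_{(p),\alpha,\infty}=\{\phi\mid \phi(\alpha)\in\mathbb M_p\}$.
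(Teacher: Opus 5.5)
Your proof is correct, but it is organized differently from the paper's.

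The paper treats the three cases separately and never writes the center down explicitly:
\begin{itemize}
\item For $\delta=0$ it notes that $\Z_{(p),\alpha,0}=\Z_{(p),0,0}=\Z_{(p)}[X]_{p\Z_{(p)}[X]}$ is the trivial Gaussian extension. The center $p\Z_{(p)}[X]$ is then read off with no appeal to residue fields.
\item For $\delta>0$ it only checks that $g_\alpha\in M_{(p),\alpha,\delta}$. This is done directly when $\delta=\infty$, and via Lemma~\ref{IntBallmaximal} together with $\overline{x}=\overline{\alpha}$ on the ball when $0<\delta<\infty$. It then concludes because $(p,g_\alpha(X))$ is maximal and the center is a proper ideal.
\end{itemize}

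You instead turn Lemma~\ref{IntBallmaximal} into the single description
$P=\{f\in\Z_{(p)}[X]\mid \overline f(\overline x)=0 \text{ for all } x\in\bar B_p(\alpha,\delta)\}$,
and read off both inclusions in all cases at once. This buys uniformity, and your divisibility step makes explicit the inclusion $P\subseteq(p,g_\alpha(X))$, which the paper gets for free from maximality. The price is that the case $\delta=0$ now depends on the residue field $\overline{\F_p}$ of $\O_p$ being infinite, which you correctly flag.

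In both approaches, the only nontrivial input for $0<\delta<\infty$ is $g_\alpha\in M_{(p),\alpha,\delta}$. This can be checked by hand, without pseudo-stationary sequences. In the $(X-\alpha)$-adic expansion of $g_\alpha$, the constant term $g_\alpha(\alpha)$ lies in $\mathbb M_p$ and the other coefficients lie in $\O_p$. Hence $v_{p,\alpha,\delta}(g_\alpha)>0$, because $\delta>0$.
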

\begin{proof} Suppose first that $\delta=0$. Then $\Z_{(p),\alpha,0}=\Z_{(p),0,0}= \Z_{(p)}[X]_{p\Z_{(p)}[X]}$ (i.e., the trivial Gaussian extension).
In particular, $\Z_{(p)}[X]=\Z_{(p),0,0}\cap\Q[X]$, so, the center of $M_{(p),\alpha,0}=M_{(p),0,0}$ on $\Z_{(p)}[X]$ is equal to $p\Z_{(p)}[X]$.

Suppose now that $\delta=\infty$. Since $(p,g_{\alpha}(X))$  is a maximal ideal of $\Z_{(p)}[X]$ and $g_{\alpha}\in M_{(p),\alpha,\infty}$,  $(p,g_{\alpha}(X))$ is the center of $\Z_{(p),\alpha,\infty}$ on $\Z_{(p)}[X]$.

Finally, suppose that $0<\delta<\infty$. By Lemma \ref{IntBallmaximal}, 
$$M_{(p),\alpha,\delta}\cap(\Z_{(p),\alpha,\delta}\cap\Q[X])=\IntQ(\bar B_p(\alpha,\delta),\mathbb M_p),$$ 
so $\Z_{(p)}[X]\cap M_{(p),\alpha,\delta}=\{f\in\Z_{(p)}[X]\mid f(\bar B_p(\alpha,\delta))\subseteq\mathbb M_p\}$. 
Hence, in order to show that this last intersection is equal to $(p,g_{\alpha}(X))$, it is sufficient to show that $g_{\alpha}$ maps $\bar B_p(\alpha,\delta)$ into $\mathbb M_p$. If $\alpha'\in\C_p$ is such that $v(\alpha'-\alpha)\geq\delta>0$, then $g_{\alpha}(\alpha')\equiv g_{\alpha}(\alpha)\equiv 0\pmod{\mathbb M_p}$, which is what we wanted to prove.
\end{proof}

\begin{Rem}\label{collapse centers DVRs}
Clearly, if $\bar B_p(\alpha,\delta)\subset\O_p$ with $\delta>0$, then for all $\alpha'\in \bar B_p(\alpha,\delta)$, 
we have $\overline{\alpha}=\overline{\alpha'}$, so $\overline{g_{\alpha}}=\overline{g_{\alpha'}}$. 
Hence, for $\delta>0$, the center of $\Z_{(p),\alpha,\delta}$ on $\Z_{(p)}[X]$ does not depend on the particular center of the ball $\bar B(\alpha,\delta)$.
Most importantly, by Lemma \ref{contraction maximal ideal}, given $\alpha\in\O_p$, 
the center of $\Z_{(p),\alpha,\delta}$ on $\Z_{(p)}[X]$ is the same maximal ideal for all $\delta\in\Q_{>0}\cup\{\infty\}$.  

Putting these two remarks together, for each $\delta\in\Q,\delta>0$, we have that
$$M_{(p),\alpha,\delta}\cap\Z[X]=M_{(p),\alpha',\infty}\cap\Z[X],\;\;\forall\alpha'\in\bar B_p(\alpha,\delta).$$
In particular, for $\delta>0$ and $g\in\Z[X]$, we have that $g\in M_{(p),\alpha,\delta}$ 
if and only if $g\in M_{(p),\alpha',\infty}$ for some  (hence, all) $\alpha'\in \bar B_p(\alpha,\delta)$.
\end{Rem}

\begin{Cor}\label{equality centers on Z[X]}
Let $p\in\PP$ and
$(\alpha_i,\delta_i)\in\O_p\times(\Q_ {\geq0}\cup\{\infty\})$, for $i=1,2$.\\ Then $\Z_{(p)}[X]\cap M_{(p),\alpha_1,\delta_1}=\Z_{(p)}[X]\cap M_{(p),\alpha_2,\delta_2}$ if and only if one of the following conditions holds:
\begin{enumerate}
\item[{\em (a)}]  $\delta_1=\delta_2=0$;
\item[{\em (b)}] $\delta_i\not=0$, for $i=1,2$,  and $\overline{\alpha_1},\overline{\alpha_2}$  are  conjugate over $\F_p$. 
Equivalently, $v_p(\sigma(\alpha_1)-\alpha_2)>0$ for some $\sigma\in G_p=\Gal(\oQp/\Q_p)$.
\end{enumerate}

\end{Cor}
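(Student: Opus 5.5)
The plan is to read everything off Proposition \ref{contraction maximal ideal}, which computes the center explicitly: it is $p\Z_{(p)}[X]$ when $\delta=0$, and the maximal ideal $(p,g_{\alpha}(X))$ when $\delta>0$ (including $\delta=\infty$). Since $p\Z_{(p)}[X]$ has height one and each $(p,g_\alpha)$ is maximal of height two, the two cases can never give the same center. So the centers agree exactly when either both radii are $0$, which is (a), or both radii are nonzero and $(p,g_{\alpha_1})=(p,g_{\alpha_2})$.

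For the second case, I reduce modulo $p$. The ideal $(p,g_{\alpha_1})$ equals $(p,g_{\alpha_2})$ if and only if their images in $\Z_{(p)}[X]/(p)=\F_p[X]$ coincide, i.e. $\overline{g_{\alpha_1}}\F_p[X]=\overline{g_{\alpha_2}}\F_p[X]$. Both polynomials are monic irreducible, being the minimal polynomials of $\overline{\alpha_1},\overline{\alpha_2}$ over $\F_p$. Hence this holds if and only if the two minimal polynomials coincide, that is, if and only if $\overline{\alpha_1}$ and $\overline{\alpha_2}$ are conjugate over $\F_p$.

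It remains to prove the equivalence with the Galois condition $v_p(\sigma(\alpha_1)-\alpha_2)>0$ for some $\sigma\in G_p$, which is the only non-formal step.

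\textbf{From the Galois condition to conjugacy.} First note that $\sigma\in G_p$ acts on $\C_p$ continuously and preserves $v_p$. Hence it preserves $\O_p$ and $\mathbb M_p$, and induces an automorphism $\bar\sigma$ of the residue field $\overline{\F_p}$ of $\O_p$ fixing $\F_p$. Then $\overline{\sigma(\alpha_1)}=\bar\sigma(\overline{\alpha_1})$ is a conjugate of $\overline{\alpha_1}$ over $\F_p$. If $v_p(\sigma(\alpha_1)-\alpha_2)>0$, this residue equals $\overline{\alpha_2}$, which gives one direction.

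\textbf{From conjugacy to the Galois condition.} Suppose $\overline{\alpha_2}=\overline{\alpha_1}^{p^k}$. Two facts are used:
\begin{itemize}
\item[(i)] The restriction map from $G_p$ to $\Gal(\overline{\F_p}/\F_p)$ is surjective; in fact already the maximal unramified extension of $\Q_p$ surjects onto it. So there is $\sigma\in G_p$ with $\bar\sigma=\mathrm{Frob}^k$.
\item[(ii)] The residue of $\sigma(\alpha_1)$ is $\bar\sigma(\overline{\alpha_1})$, as shown above.
\end{itemize}
Then $\overline{\sigma(\alpha_1)}=\overline{\alpha_2}$, i.e. $v_p(\sigma(\alpha_1)-\alpha_2)>0$.

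Alternatively, for (ii) one can approximate $\alpha_1$ by some $\beta\in\oZp$ with $v_p(\alpha_1-\beta)>0$, using the density of $\oQp$. This reduces to algebraic elements, where (ii) is standard.

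The main care point is the surjectivity onto the residue Galois group, and handling $\alpha_i$ lying in $\C_p$ rather than in $\oQp$; the density reduction handles the latter.
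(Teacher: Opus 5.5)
Your proposal is correct and follows essentially the same route as the paper. You read off the centers from Proposition \ref{contraction maximal ideal}, compare $(p,g_{\alpha_1})$ and $(p,g_{\alpha_2})$ modulo $p$ to get equality of the residual minimal polynomials, and then use the surjectivity of $G_p$ onto $\Gal(\overline{\F_p}/\F_p)$ (for which the paper cites Bourbaki) to turn conjugacy of $\overline{\alpha_1},\overline{\alpha_2}$ into $v_p(\sigma(\alpha_1)-\alpha_2)>0$. Beyond the paper, you also prove the converse direction of the ``equivalently'' clause and handle the passage from $\oQp$ to $\C_p$, both of which the paper leaves implicit.
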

\begin{proof}
$(\Rightarrow)$ Suppose first that $\Z_{(p),\alpha_1,\delta_1},\Z_{(p),\alpha_1,\delta_1}$ have the same center on $\Z_{(p)}[X]$.
 By Proposition \ref{contraction maximal ideal}, we have either $\delta_1=\delta_2=0$ or $\delta_i\not=0$ for $i=1,2$. In the last case, we have $\Z_{(p)}[X]\cap M_{(p),\alpha_i,\delta_i}=(p,g_{\alpha_i}(X))$, where $g_{\alpha_i}\in\Z_{(p)}[X]$ is a monic polynomial such that $\overline{g_{\alpha_i}}$ is the minimal polynomial of $\overline{\alpha_i}$ over $\F_p$, for $i=1,2$.  Then, these prime ideals are the same if and only if $\overline{g_{\alpha_1}}=\overline{g_{\alpha_2}}$, that is, $\overline{\alpha_1},\overline{\alpha_2}$ are conjugate over $\F_p$. Note that this last condition amounts 
 to saying that there exists some $\overline{\sigma}\in \Gal(\overline{\F_p}/\F_p)$ such that $\overline{\sigma}(\overline{\alpha_1})=\overline{\alpha_2}$. Since $\overline{\sigma}(\overline{\alpha_1})=\overline{\sigma(\alpha_1)}$ for some $\sigma\in G_p$  \cite[Chapt. V, §2.2, Proposition 6(ii)]{Bourb}), it follows that $v_p(\sigma(\alpha_1)-\alpha_2)>0$, as claimed.

$(\Leftarrow)$ If $\delta_1=\delta_2=0$, then $\Z_{(p)}[X]\cap M_{(p),\alpha_i,\delta_i}=(p)$ for $i=1,2$. 
If instead $\delta_i\not=0$ for $i=1,2$ and $\overline{\alpha_1},\overline{\alpha_2}$ are conjugate over $\F_p$, 
then by the above arguments we have $\Z_{(p)}[X]\cap M_{(p),\alpha_1,\delta_1}=\Z_{(p)}[X]\cap M_{(p),\alpha_2,\delta_2}$.
\end{proof}

In Corollary \ref{equality centers on Z[X]}, if $\delta_i\not=0$ for $i=1,2$, we have 
the following three possibilities: (i) $\alpha_{1},\alpha_{2}$ are conjugate by the action of $G_p$ and $\delta_1=\delta_2$ 
and so $\Z_{(p),\alpha_{1},\delta_1}=\Z_{(p),\alpha_{2},\delta_2}$ (and so in particular they have the same center on $\Z_{(p)}[X]$), 
(ii) $\alpha_{1},\alpha_{2}$ are not conjugate over $\Q_p$ or (iii)
$\delta_1\not=\delta_2$ so that $\Z_{(p),\alpha_{1},\delta_1}\not=\Z_{(p),\alpha_{2},\delta_2}$ but  
$\overline{\alpha_{1}},\overline{\alpha_{2}}$ are conjugate over $\F_p$ so the corresponding valuation domains have the same center on $\Z_{(p)}[X]$. 
For example, if $\alpha_1,\alpha_2\in \O_p$ are such that $v_p(\alpha_1-\alpha_2)>0$ (that is, they belong to the same ball contained in $\O_p$ of strictly positive radius), then $\overline{\alpha_1}=\overline{\alpha_2}$.

\subsubsection{Finite character}

A Polynomial Krull domain $R$ must be of the form $R = \bigcap\limits_{p \in \Lambda}(\bigcap\limits_{j=1}^{m_p}(W_{p,j} \cap \Q[X])),$ for some $\Lambda\subseteq\PP$ and DVRs $W_{p,j}$, for each $p\in\Lambda$ and $j\in\{1,\ldots,m_p\}$. But, in general, a ring of this form need not be a polynomial Krull domain (see, for example, \cite[Theorem 3.5]{Chang}
or Example \ref{ex430}). 

\begin{Ex} \label{ex430}
For each prime $p \in \PP$, let $\delta_p\in\Q$, $\delta_p>0$, and let
$$R = \bigcap_{p\in\PP}\Z_{(p),0,\delta_p}\cap\Q[X].$$
Then $R$ is not Krull, because $X$ is contained in each center of
the residually transcendental extensions $\Z_{(p),0,\delta_p}$ of $\Z_{(p)}$, for $p\in\PP$.
\end{Ex}

\noindent
In this section, we determine when such an intersection  is a Krull domain.


We consider the case of a subset $E=\prod\limits_{p\in\PP}E_p$ of $\O$ such that, for each $p\in\PP$, $E_p$ is a finite union of balls in $\O_p^{\br}$:
$$E_p=\bigcup_{j=1}^{m_p}\bar B_p(\alpha_{p,j},\delta_{p,j}),\;\;(\alpha_{p,j},\delta_{p,j})\in\O_p^{\br}\times(\Q_{\geq0}\cup\{\infty\}) \;\text{ for } j=1,\ldots,m_p,$$
with the assumption that $\alpha_{p,j}$ is transcendental over $\Q$ for each $p\in\PP$ and $j\in\{1,\ldots,m_p\}$. If $\delta_{p,j}\not=\infty$ this can always be assumed by a density argument and if $\delta_{p,j}=\infty$ this condition has to be imposed. For each $p\in\PP$, we consider the set of centers of the balls in $E_p$ of strictly positive radius   given by $C(E_p)=\{\alpha_{p,1},\ldots,\alpha_{p,m_p}\mid \delta_{p,j}>0,\forall j=1,\ldots,m_p\}$ and set
$$C(\uE)=\prod_{p\in\PP}C(E_p)$$
Note that, for each $p\in\PP$, the set of the centers of $\{\Z_{(p),\alpha_{p,j},\delta_{p,j}}\mid j=1,\ldots,m_p\}$ on $\Z[X]$, namely
$$\{\Z[X]\cap M_{(p),\alpha_{p,j},\delta_{p,j}}\mid j=1,\ldots,m_p\}$$
is finite (possibly, by Corollary \ref{equality centers on Z[X]} its cardinality  can be strictly smaller than $m_p$).

We remark that the choice of a center of an ultrametric ball $\bar B_p(\alpha,\delta)\subset \O_p$ is completely arbitrary (every element of the ball is a center of the ball itself). However, for our scopes, by Remark \ref{collapse centers DVRs}
 a possible different choice does not affect the next Theorem \ref{Krull domains finite character}, \textbf{provided} the center of each  ball is transcendental over $\Q$ 
  (because, in this case, \cite[Lemma 2.12]{PerDedekind} can be applied for the proof of Theorem \ref{Krull domains finite character}). This property can always be assumed if $\delta_{p,j}\in\Q_{\geq0}$ and it has to be imposed when $\delta_{p,j}=\infty$.
\vskip0.5cm
\begin{Thm}\label{Krull domains finite character}
 Let $\Lambda \subseteq \PP$ be a subset,  $E=\prod\limits_{p\in\Lambda}E_p\subset\O$ where for each $p\in\Lambda$, $E_p=\bigcup\limits_{j=1}^{m_p}\bar B_p(\alpha_{p,j},\delta_{p,j})$ with $(\alpha_{p,j},\delta_{p,j})\in\O_p^{\br}\times(\Q_{\geq0}\cup\{\infty\})$, $\alpha_{p,j}$ transcendental over $\Q$, for each 
$j\in\{1,\ldots,m_p\}$ and let $R=\IntQ(\uE,\O)$, where $\uE=\prod\limits_{p\in\Lambda}E_p$, namely:
$$R=\bigcap_{p\in\Lambda}\bigcap_{j=1}^{m_p}\Z_{(p),\alpha_{p,j},\delta_{p,j}}\cap\Q[X].$$
Assuming that the above representation of $R$ is irredundant, 
then the following conditions are equivalent:
\begin{itemize}
    \item[{\em (1)}] $R$ is a Krull domain.
    \item[{\em (2)}] For each (nonconstant and irreducible) $g\in\Z[X]$, $g$ is contained in finitely many heigth one unitary prime ideals of $R$.
   \item[{\em (3)}]  For each $g\in\Z[X]$, the following set is finite:
$$\mathbb D_{g,\uE}=\{p\in\Lambda \mid\exists j\in\{1,\ldots,m_p\} \text{ such that }g\in\Z[X]\cap\ M_{(p),\alpha_{p,j},\delta_{p,j}}\}.$$
\item[{\em (4)}] $C(\uE)=\prod\limits_{p\in\Lambda}\{\alpha_{p,1},\ldots,\alpha_{p,m_p}\mid \forall j=1,\ldots,m_p,\delta_{p,j}>0\}$ is polynomially factorizable.
\end{itemize}
 \end{Thm}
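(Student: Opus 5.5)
I would prove the cycle (1) $\Rightarrow$ (2) $\Rightarrow$ (3) $\Rightarrow$ (1), and then show (3) $\Leftrightarrow$ (4) separately.

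\emph{The cycle.} For (1) $\Rightarrow$ (2), finite character of a Krull domain gives directly that a nonzero element $g\in\Z[X]\subseteq R$ lies in only finitely many height one primes of $R$.

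For (2) $\Rightarrow$ (3), irredundance is the key input. By Remark \ref{irredundant represent balls} (2), each $M_{(p),\alpha_{p,j},\delta_{p,j}}\cap R$ is a height one unitary prime of $R$. If $p\in\mathbb D_{g,\uE}$, then $g$ lies in $M_{(p),\alpha_{p,j},\delta_{p,j}}\cap R$ for some $j$. Distinct primes $p$ give distinct such ideals, since they contract to different $p\Z$. So an infinite $\mathbb D_{g,\uE}$ would contradict (2). If $g$ is reducible, apply this to its irreducible factors, using that $\mathbb D_{gh,\uE}=\mathbb D_{g,\uE}\cup\mathbb D_{h,\uE}$ because the centers are prime. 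Constants are handled trivially: the integer $n$ only involves primes dividing $n$.

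For (3) $\Rightarrow$ (1), I use the defining family consisting of all $\Z_{(p),\alpha_{p,j},\delta_{p,j}}$ together with all $\Q[X]_{(q)}$. It suffices to check finite character on nonzero $f\in R$. Write $f=g/n$ with $g\in\Z[X]$ and $n\in\Z$. Then $f$ is a nonunit only in valuations where $g$ or $n$ is a nonunit. The element $n$ is a nonunit in $\Z_{(p),\dots}$ only for $p\mid n$, and for each such $p$ there are only $m_p$ rings. The element $g$ is a nonunit in $\Z_{(p),\alpha_{p,j},\delta_{p,j}}$ exactly when $g\in M_{(p),\alpha_{p,j},\delta_{p,j}}$ (since $X$, hence $g$, lies in $W$), which happens only for $p\in\mathbb D_{g,\uE}$. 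Finally, $f$ is a nonunit in only finitely many $\Q[X]_{(q)}$. So the family has finite character, and $R$ is Krull by \cite[Proposition 3]{n55}.

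\emph{The equivalence (3) $\Leftrightarrow$ (4).} I compute the centers on $\Z[X]$ via Proposition \ref{contraction maximal ideal} and Remark \ref{collapse centers DVRs}.
\begin{itemize}
\item If $\delta_{p,j}=0$, the center is $(p)$. Then $g\in M_{(p),\alpha_{p,j},0}$ exactly when $p$ divides the content of $g$, which happens for finitely many $p$ when $g\neq0$.
\item If $\delta_{p,j}>0$, Remark \ref{collapse centers DVRs} gives $g\in M_{(p),\alpha_{p,j},\delta_{p,j}}$ if and only if $g\in M_{(p),\alpha_{p,j},\infty}$, i.e.\ $v_p(g(\alpha_{p,j}))>0$.
\end{itemize}
Hence $\mathbb D_{g,\uE}$ differs from
$$\PP_{g,C(\uE)}=\{p\mid \exists \alpha\in C(E_p),\ v_p(g(\alpha))>0\}$$
only by a finite set of primes dividing the content of $g$. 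The centers $\alpha_{p,j}$ are transcendental over $\Q$, so \cite[Lemma 2.12]{PerDedekind} says $C(\uE)$ is polynomially factorizable iff $\PP_{g,C(\uE)}$ is finite for every $g$. This gives (3) $\Leftrightarrow$ (4).

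The main obstacle I expect is handling the degenerate factors $C(E_p)=\emptyset$, which occur when every ball at $p$ has radius $0$. Then $C(\uE)$ is an empty product, and the statement has to be read so that such $p$ are excluded, or the product is taken over the $p$ with $C(E_p)\neq\emptyset$. I would note this convention explicitly and check that \cite[Lemma 2.12]{PerDedekind} applies to that restricted index set.
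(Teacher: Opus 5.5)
Your proof is correct and follows essentially the paper's route: you reduce finite character from $f=g/d$ to $g\in\Z[X]$, use irredundance to get height-one centers, identify the centers on $\Z[X]$ via Proposition~\ref{contraction maximal ideal} and Remark~\ref{collapse centers DVRs}, and conclude with \cite[Lemma 2.12]{PerDedekind}. Arranging the argument as a cycle instead of pairwise equivalences, and absorbing the radius-$0$ primes into the primes dividing the content of $g$ instead of the paper's $\PP_0$ bookkeeping, are only cosmetic differences.

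Your caveat about empty factors $C(E_p)=\emptyset$ is well taken. The paper's step $\mathbb D_{g,\uE}\setminus\PP_0=\PP_{g,C(\uE)}$ silently uses the same convention. Without restricting the product to primes with $C(E_p)\neq\emptyset$, condition (4) holds vacuously while (1) can fail: take Example~\ref{ex430} with transcendental centers and change the radius at one prime to $0$.
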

We remark that if for some $p\in\Lambda$ we have $\delta_{p,\alpha_j}=0$ for $j=1,\ldots,m_p$, then $m_p=1$ by the assumption that the above  representation of $R$ is irredundant. For such a prime $p$, we have 
  $$R_{(p)}=\Z_{(p),\alpha_p,0}\cap\Q[X]=\IntQ(\O_p)=\Z_{(p)}[X]$$ (see for example \eqref{IntQpB}). 
\begin{proof}
(1) $\Leftrightarrow$ (2) Note that $R$ is a Krull domain if and only if, for every $f\in R$, $f\not=0$, 
$f$ is contained in finitely many centers of the defining family of DVR overrings of $R$, 
namely, $\Z_{(p),\alpha_{p,j},\delta_{p,j}}$, for $p\in\Lambda$ and $j=1,\ldots,m_p$ 
and $\Q[X]_{(q)}$, for $q\in\Q[X]$ irreducible.

 Since every $f\in R$ can be written as $\frac{g}{d}$, for some $g\in\Z[X]$    
 and $d\in\Z$, $d\not=0$, it is clear that $f$ is contained in finitely many centers of the defining family of DVR overrings of $R$ if and only if the same holds for $g$, 
 since the integer $d$ is divisible by finitely many $p\in\Lambda$ and for each $p\in\Lambda$ there are only finitely many DVRs overring of $R$ extending $\Z_{(p)}$. 
 Moreover, $g$ is clearly contained in finitely many nonunitary prime ideals of $R$ (whose height is one by an argument similar 
 to the one of  Corollary \ref{nonunitary not superfluous}), so $R$ is Krull if and only if condition (2) holds (by assumption and  by Corollary \ref{cor10}, the centers on $R$ of the DVRs $\Z_{(p),\alpha_{p,j},\delta_{p,j}}$ are of height one).

(2) $\Leftrightarrow$ (3)
 Since for each $p\in\Lambda$, we only have finitely many unitary DVR overrings of $R$ extending $\Z_{(p)}$ 
 (namely, $\Z_{(p),\alpha_{p,j},\delta_{p,j}}$, for $j=1,\ldots,m_p$) and their centers on $R$ are of height one by Corollary \ref{cor10}, it follows that (2) is equivalent to (3).

(3) $\Leftrightarrow$ (4)
 Let $\PP_0$ be the set of primes in $\Lambda$ for which there exists $j\in\{1,\ldots,m_p\}$ such that $\delta_{p,j}=0$. If $p \in \PP_0$, then $m_p =1$ 
 by Remark \ref{ball containment} and the assumption that the above  representation of $R$ is irredundant, and $M_{(p),\alpha_{p,1},\delta_{p,1}}\cap\Z_{(p)}[X]=(p)$ by Proposition \ref{contraction maximal ideal}. Hence, for each nonzero $g\in\Z[X]$, $\mathbb D_{g,\uE}\cap\PP_0$ is a finite set. By the same aforementioned Proposition, if $p\in\Lambda \setminus\PP_0$, we have $M_{(p),\alpha_{p,j},\delta_{p,j}}\cap\Z_{(p)}[X]=M_{(p),\alpha_{p,j},\infty}\cap\Z_{(p)}[X]$, so $\mathbb D_{g,\uE}\setminus\PP_0=\PP_{g,C(\uE)}$. Therefore, $\mathbb D_{g,\uE}$ is finite if and only if $\PP_{g,C(\uE)}$ is finite. Since each $\alpha_{p,j}$ is transcendental over $\Q$ by assumption,  \cite[Lemma 2.12]{PerDedekind} shows that  $C(\uE)$ is polynomially factorizable if and only if  for every $g\in\Z[X]$, $\PP_{g,C(\uE)}$ is finite. Hence, (3) and (4) are equivalent. 
\end{proof}

Let $\Lambda$ and $R$ be as in Theorem \ref{Krull domains finite character}. It is helpful to note that $\Lambda = \emptyset$ if and only if $R = \Q[X]$, and, if $\Lambda$ is finite, then $R$ is always a Krull domain. 
The next theorem is the global version of Theorem \ref{thm4.23} and describes the class of polynomial Krull Domains. 
By a result of Heinzer, this is precisely the class of integrally closed Noetherian domains between $\Z[X]$ and $\Q[X]$ (see \cite{Heinzer}).

\begin{Thm}\label{main thm polynomial krull domains}
Let $R$ be a polynomial Krull domain. Then  
\begin{equation}\label{Krull domain as int ring}
R=\bigcap_{p\in\Lambda}\bigcap_{j=1}^{m_p}\Z_{(p),\alpha_{p,j},\delta_{p,j}}\cap\Q[X]=\IntQ(\uE,\O)    
\end{equation}
for some subset $\Lambda\subseteq\PP$, where $(\alpha_{p,j},\delta_{p,j})\in\O_p^{\br}\times(\Q_{\geq0}\cup\{\infty\})$ 
with $\alpha_{p,j}$ transcendental over $\Q$ if $\delta_{p,j}=\infty$ for each $p\in\Lambda$ and $j=1,\ldots,m_p$; 
$\uE=\prod\limits_{p\in\Lambda} E_p$ with  $E_p=\bigcup\limits_{j=1}^{m_p}\bar B_p(\alpha_{p,j},\delta_{p,j})$; and $C(\uE)=\prod\limits_{p\in\Lambda}\{\alpha_{p,1},\ldots,\alpha_{p,m_p} \mid \forall j=1,\ldots,m_p,\delta_{p,j}>0\}$ is polynomially factorizable.

Moreover, if ht$(M_{(p),\alpha_{p,j},\delta_{p,j}} \cap R) =1$ for all $p \in \Lambda$ and
$j=1, \dots, m_p$, then $$\text{Cl}(R) = \bigoplus_{p \in \Lambda}(\Z/d_p\Z \oplus \Z^{m_p-1}),$$ where
$e_{p,j} = e(\Z_{(p),\alpha_{p,j},\delta_{p,j}}|\Z_{(p)})$ for $j=1, \dots , m_p$ and $d_p= gcd(e_{p,1}, \dots , e_{p,m_p})$
for all $p \in \Lambda$.
\end{Thm}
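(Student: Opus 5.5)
The plan is to assemble the statement from the representation \eqref{representation Krull domain}, the local classification (Theorem \ref{thm4.23}), the finite character criterion (Theorem \ref{Krull domains finite character}), and a computation of $\text{Cl}(R)$ that reduces to the local case.

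First, the representation. Since $R$ is a polynomial Krull domain, set $\Lambda=\{p\in\PP\mid pR\neq R\}$. For each $p\in\Lambda$ the set $X^1_p(R)$ is finite, and \eqref{representation Krull domain} writes $R=\bigcap_{p\in\Lambda}R_p$. Here $R_p=R_{(p)}$ by Lemma \ref{lemma43} (2), and it is a Krull domain between $\Z_{(p)}[X]$ and $\Q[X]$. Applying Theorem \ref{thm4.23} to each $R_p$ gives pairs $(\alpha_{p,j},\delta_{p,j})\in\O_p^{\br}\times(\Q_{\geq0}\cup\{\infty\})$, with $\alpha_{p,j}$ transcendental over $\Q$ when $\delta_{p,j}=\infty$, such that $R_p=\IntQ(E_p,\O_p)$. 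Intersecting over $p\in\Lambda$ yields $\IntQ(\uE,\O)$, exactly as in the proof of Proposition \ref{Krull Dedekind}.

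The representation can be taken irredundant: choose the $W_{p,j}$ to be the localizations $R_Q$ with $Q\in X^1_p(R)$. Centers of height one are then automatic, so Corollary \ref{cor10} applies. For centers of balls with finite radius, a density argument moves $\alpha_{p,j}$ to an element of $\O_p^{\br}$ transcendental over $\Q$ without changing $\Z_{(p),\alpha_{p,j},\delta_{p,j}}$, by \eqref{equality monomials} and Remark \ref{collapse centers DVRs}. The hypotheses of Theorem \ref{Krull domains finite character} are then met. Since $R$ is Krull, implication (1) $\Rightarrow$ (4) of that theorem gives that $C(\uE)$ is polynomially factorizable.

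Next, the class group, assuming every center has height one. I would use the standard presentation of the divisor class group of a Krull domain: $\text{Cl}(R)$ is the free abelian group on $X^1(R)$ modulo the divisors of nonzero elements of $\Q(X)$. Nagata's theorem for $S=\Z\setminus\{0\}$ gives $R_S=\Q[X]$, a UFD, so $\text{Cl}(R)$ is the free group $G$ on the unitary height one primes $Q_{p,j}=M_{(p),\alpha_{p,j},\delta_{p,j}}\cap R$ modulo the images of $\mathrm{div}(f)$ restricted to unitary primes. This is the same mechanism as \cite[Theorem 2.4]{Chang} in the local case.

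The subgroup $H$ of relations is generated by the divisors of the primes $p\in\Lambda$, namely $\mathrm{div}(p)=\sum_j e_{p,j}Q_{p,j}$. Indeed, any $f\in\Q(X)^*$ can be written as $c\prod q_i^{n_i}$ with $c\in\Q$ and $q_i\in\Z[X]$ primitive irreducible. The relations then come from $c$, which contributes integer combinations of the $\mathrm{div}(p)$, and from the unitary part of $\mathrm{div}(q_i)$. The main obstacle is this last point: the unitary part of $\mathrm{div}(q)$ for primitive $q$ is generally nonzero, since $w_{p,j}(q)>0$ whenever $q$ lies in a center.

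To handle it, I would repeat the argument of \cite[Theorem 2.4]{Chang} prime by prime, passing to $R_p$. Write $d_{p,j}(q)=w_{p,j}(q)\cdot e_{p,j}$, where $w_{p,j}$ is normalized so that $w_{p,j}(p)=1$. Then realize $\text{Cl}(R)$ as a direct limit or direct sum: $\mathrm{div}$ is a finite sum over primes, and the map $\text{Cl}(R)\to\bigoplus_p\text{Cl}(R_p)$ is well defined, surjective, and injective. Injectivity uses finite character together with the fact that the local class groups are computed with the same relation subgroup. The local computation is Theorem \ref{characterization local Krull domain} (5), which gives $\text{Cl}(R_p)=\Z/d_p\Z\oplus\Z^{m_p-1}$. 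Summing over $p\in\Lambda$ yields the stated formula.
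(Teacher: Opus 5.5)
Your treatment of the representation and of polynomial factorizability is fine and matches the paper. The paper applies Theorem \ref{unitary DVRs over Z(p)} directly to each $W_{p,j}$, whereas you go through Theorem \ref{thm4.23} for each $R_{(p)}$. You also take the $W_{p,j}$ to be the $R_Q$ with $Q\in X^1_p(R)$, so the representation is irredundant, and you choose transcendental centres for finite radii. That is exactly what is needed to invoke (1)$\Rightarrow$(4) of Theorem \ref{Krull domains finite character}.

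\textbf{The class group part has a genuine gap.} The presentation you state is not correct. By Nagata, $\mathrm{Cl}(R)\cong F_u/(F_u\cap\mathrm{Prin}(R))$, where $F_u$ is free on the unitary height-one primes $Q_{p,j}$. The relations are the principal divisors \emph{supported} on unitary primes, not the unitary \emph{parts} of arbitrary principal divisors. Killing the unitary part of $\mathrm{div}(q)$ for primitive $q$ computes $\mathrm{Cl}(R)$ modulo the classes of the nonunitary primes, which is in general strictly smaller.

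For example, take $R=\Z_{(p),0,1/2}\cap\Q[X]=\Z_{(p)}[X,X^2/p]$. Here $e=2$ and $\mathrm{Cl}(R)=\Z/2\Z$ by Theorem \ref{characterization local Krull domain} (5). But the unitary part of $\mathrm{div}(X)$ is the generator $Q$ itself, so your presentation yields $0$.

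The ``main obstacle'' you identify is therefore an artifact of the wrong presentation. The step meant to overcome it is the claim that $\mathrm{Cl}(R)\to\bigoplus_p\mathrm{Cl}(R_{(p)})$ is injective because ``the local class groups are computed with the same relation subgroup.'' That is asserted, not proved, and this injectivity is precisely the content that has to be established.

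\textbf{The missing idea is the computation of $F_u\cap\mathrm{Prin}(R)$.} If $\mathrm{div}(f)$ has no component at any $q\Q[X]\cap R$, then $f$ is a unit in every $\Q[X]_{(q)}$. Hence $f\in\Q[X]^\times=\Q^\times$, and so
\[
F_u\cap\mathrm{Prin}(R)=\{\mathrm{div}(c)\mid c\in\Q^\times\}=\bigoplus_{p\in\Lambda}\Z\, r_p,\qquad r_p=\mathrm{div}(p)=\sum_{j=1}^{m_p} e_{p,j}Q_{p,j}.
\]
Primitive $q$ contribute no relations. They only express $[q\Q[X]\cap R]$ as minus the unitary part of $\mathrm{div}(q)$, which is why $F_u$ generates $\mathrm{Cl}(R)$.

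The quotient then splits prime by prime, and $\Z^{m_p}/\Z(e_{p,1},\dots,e_{p,m_p})\cong\Z/d_p\Z\oplus\Z^{m_p-1}$ by Smith normal form. This is what the paper's appeal to the proof of \cite[Theorem 2.4(4)]{Chang} amounts to. With it, neither the comparison map to $\bigoplus_p\mathrm{Cl}(R_{(p)})$ nor finite character (beyond $R$ being Krull) is needed.
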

\begin{proof}
As we remarked in \eqref{representation Krull domain}, we have
$$R = \bigcap_{p \in \Lambda}(\bigcap_{j=1}^{m_p}W_{p,j}) \cap \Q[X],$$
for some subset $\Lambda\subseteq\PP$ where $W_{p,j}$ is a DVR of $\Q(X)$ lying over $\Z_{(p)}$ for each $j=1,\ldots,m_p$ and $p\in\PP$.

By  Theorem \ref{unitary DVRs over Z(p)}, for each $p\in\Lambda$, there exist  $(\alpha_{p,j},\delta_{p,j})\in\O_p^{\br}\times(\Q_{\geq0}\cup\{\infty\})$ such that $W_{p,j}=\Z_{(p),\alpha_{p,j},\delta_{p,j}}$, for each $j=1,\ldots,m_p$. In particular, $R$ has the stated form.

Finally, $C(\uE)=\prod\limits_{p\in\Lambda}\{\alpha_{p,1},\ldots,\alpha_{p,m_p} \mid \forall j=1,\ldots,m_p,\delta_{p,j}>0\}$ is polynomially factorizable by Theorem \ref{Krull domains finite character}
and Cl$(R) = \bigoplus\limits_{p \in \Lambda}(\Z/d_p\Z \oplus \Z^{m_p-1})$ by the proof of \cite[Theorem 2.4(4)]{Chang}.
\end{proof}

In the next corollary, we completely describe UFDs between $\Z[X]$ and $\Q[X]$.
\begin{Cor}\label{global UFD}
Let $R$ be a UFD between $\Z[X]$ and $\Q[X]$. Then there exist a subset 
$\Lambda\subseteq\PP$ and pairs $(\alpha_p,\delta_p)\in\O_p^{\br}\times(\Z_{\geq0}\cup\{\infty\})$ such that 
\begin{equation}\label{UFD int val}
R=\bigcap_{p\in\Lambda}\Z_{(p),\alpha_p,\delta_p}\cap\Q[X]=\bigcap_{p\in\Lambda}\IntQ(\bar B_p(\alpha_p,\delta_p),\O_p)
\end{equation}
where, for each $p\in\Lambda$, $\alpha_p$ is unramified over $\Q_p$ and is  transcendental over 
$\Q$ if $\delta_p=\infty$ and the set  
$C(\uE)=\prod\limits_{p\in\Lambda}\{\alpha_{p} \mid \delta_{p}>0\}$ is
polynomially factorizable.
\end{Cor}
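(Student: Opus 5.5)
The plan is to reduce the global statement to the local classification via localization at each prime, and then use the divisor class group formula of Theorem \ref{main thm polynomial krull domains} to force every local piece to be a single unramified DVR.

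First, since $R$ is a UFD, it is a Krull domain \cite[Proposition 6.1]{Fossum}. Theorem \ref{main thm polynomial krull domains} then gives a representation
\[
R=\bigcap_{p\in\Lambda}\bigcap_{j=1}^{m_p}\Z_{(p),\alpha_{p,j},\delta_{p,j}}\cap\Q[X].
\]
We may take this representation irredundant, using the defining family $\{R_Q\mid Q\in X^1(R)\}$ of Theorem \ref{thm9}. In that case every unitary center has height one, and $C(\uE)$ is polynomially factorizable.

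Next, I would use the class group formula $\mathrm{Cl}(R)=\bigoplus_{p\in\Lambda}(\Z/d_p\Z\oplus\Z^{m_p-1})$. Since $\mathrm{Cl}(R)=0$, we get $m_p=1$ and $d_p=e(\Z_{(p),\alpha_p,\delta_p}\mid\Z_{(p)})=1$ for every $p\in\Lambda$.

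A more self-contained route localizes at each $p$. By Lemma \ref{lemma43}, $R_{(p)}=R_p$. A localization of a UFD is again a UFD, so $R_p$ is a UFD between $\Z_{(p)}[X]$ and $\Q[X]$ that is different from $\Q[X]$ whenever $p\in\Lambda$. Theorem \ref{intersection DVR is UFD} then shows that $R_p=W_p\cap\Q[X]$ for a single DVR $W_p$ unramified over $\Z_{(p)}$.

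In either route, Theorem \ref{unramified DVR over Zp} supplies a minimal pair $(\alpha_p,\delta_p)\in\O_p^{\br}\times(\Z_{\geq0}\cup\{\infty\})$ with $W_p=\Z_{(p),\alpha_p,\delta_p}$ and $\alpha_p$ unramified over $\Q_p$. When $\delta_p=\infty$, $\alpha_p$ must be transcendental over $\Q$, by Theorem \ref{unitary DVRs over Z(p)}, since otherwise $W_p$ is not a DVR. By Lemma \ref{lemma43}(1), $R=\bigcap_{p}R_p$, which gives the first equality in \eqref{UFD int val}. The second equality is \eqref{IntQpB} contracted to $\Q[X]$, as in Theorem \ref{thm4.23}.

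Finally, polynomial factorizability of $C(\uE)=\prod_p\{\alpha_p\mid\delta_p>0\}$ follows from Theorem \ref{Krull domains finite character}, applied to the now irredundant one-ball-per-prime representation, since $R$ is Krull.

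The main obstacle is the hypothesis in Theorem \ref{Krull domains finite character} that the centers $\alpha_p$ are transcendental over $\Q$ even when $\delta_p<\infty$. Replacing $\alpha_p$ by a transcendental element of the same ball could destroy the unramifiedness of the center obtained from Theorem \ref{unramified DVR over Zp}. The fix is to keep the unramified minimal-pair center for the statement. For the factorizability claim, I would invoke Remark \ref{collapse centers DVRs}: the centers on $\Z[X]$, and hence the sets $\mathbb D_{g,\uE}$, depend only on the ball. So any transcendental center can be used in that step, and the unramified $\alpha_p$ recorded in the statement is unaffected.
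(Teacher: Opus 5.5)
Your outline is the paper's proof. It runs: UFD $\Rightarrow$ Krull; the representation of Theorem~\ref{main thm polynomial krull domains}; vanishing of $\bigoplus_{p\in\Lambda}(\Z/d_p\Z\oplus\Z^{m_p-1})$ forcing $m_p=1$ and $e_{p,1}=1$; and Theorem~\ref{intersection DVR is UFD}, which is how your localization route enters and which the paper also cites, to get an unramified pair with $\delta_p\in\Z_{\geq0}\cup\{\infty\}$. The paper's proof says nothing about transcendence of the centers, so you were right to flag the issue. However, your repair in the last paragraph does not prove the statement as written.

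Polynomial factorizability is a property of the \emph{points} of $C(\uE)$, not of the balls. Suppose a recorded center $\alpha_p$ with $\delta_p>0$ is algebraic over $\Q$, and take $0\neq g\in\Z[X]$ with $g(\alpha_p)=0$. Then $v_p(g(\alpha_p)^n/d)=\infty$ for all $n,d$, so $C(\uE)$ is not polynomially factorizable. This can happen: for $W_p=\Z_{(p),0,1}$, the pair $(0,1)$ is an unramified minimal pair your procedure could keep, and $g=X$ kills it. Remark~\ref{collapse centers DVRs} does show that the sets $\mathbb D_{g,\uE}$, and hence Krullness, are independent of the chosen center. But that gives factorizability only for the transcendental set you switch to, not for the set named in the statement.

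The missing step is to choose a single center with both properties. For $\delta_p\in\Z_{\geq0}$, put $\alpha_p'=\alpha_p+t$ with $t\in p^{\delta_p}\Z_p$ and $\alpha_p'\notin\overline{\Q}$. Such $t$ exists because $p^{\delta_p}\Z_p$ is uncountable and $\overline{\Q}$ is countable. Then:
\begin{itemize}
\item $\Q_p(\alpha_p')\subseteq\Q_p(\alpha_p)$, so $\alpha_p'$ is unramified over $\Q_p$;
\item $\bar B_p(\alpha_p',\delta_p)=\bar B_p(\alpha_p,\delta_p)$, so $\Z_{(p),\alpha_p',\delta_p}=\Z_{(p),\alpha_p,\delta_p}$;
\item the pair stays minimal, since the ball is unchanged and $[\Q_p(\alpha_p'):\Q_p]\leq[\Q_p(\alpha_p):\Q_p]$.
\end{itemize}
For $\delta_p=\infty$ the center is already transcendental and nothing changes. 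With these centers, Theorem~\ref{Krull domains finite character} applies verbatim and gives polynomial factorizability of exactly the set in the statement.
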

\begin{proof}
It is well-known that a UFD is a Krull domain with trivial class group (see for example \cite[Proposition 6.1]{Fossum}). By Theorem \ref{main thm polynomial krull domains}, $R$ is as in \eqref{Krull domain as int ring} (and so, in particular, it is a Noetherian domain by \cite{Heinzer}). By the characterization of the class group of $R$ in the aforementioned Theorem and also Theorem \ref{intersection DVR is UFD}, $R$ is as in the statement of the corollary. Note that the second equality of \eqref{UFD int val} follows from Remark \ref{balls and intval rings}.
\end{proof}

\begin{Cor}\label{coro433}
Let $\Lambda\subseteq\PP$ be a subset and  $(\alpha_{p,j},\delta_{p,j}), (\alpha_{p,j},\delta_{p,j}') \in\O_p^{\br}\times(\Q_{\geq0}\cup\{\infty\})$ 
with $\delta_{p,j} \leq \delta_{p,j}'$, and $\alpha_{p,j}$ transcendental over $\Q$ if $\delta_{p,j}=\infty$, 
for each $p\in\Lambda$ and $j=1,\ldots,m_p$. Let 
$$R = \bigcap_{p\in\Lambda}\bigcap_{j=1}^{m_p}\Z_{(p),\alpha_{p,j},\delta_{p,j}}\cap\Q[X] \text{ and }
R' = \bigcap_{p\in\Lambda}\bigcap_{j=1}^{m_p}\Z_{(p),\alpha_{p,j},\delta_{p,j}'}\cap\Q[X].$$
Let $\Delta = \{p \in \Lambda \mid 0 = \delta_{p, j} < \delta_{p,j}'$ for some $j=1, \dots, m_p\}$ and assume that $\Delta$ is a finite set. Then the following statements hold.
\begin{enumerate}
\item[\em (1)]  $R$ is a Krull domain if and only if $R'$ is a Krull domain. 
\item[\em (2)] Let $R$ be a Krull domain. Then Cl$(R')$ is a factor group of Cl$(R)$. Moreover, if $\delta_{p,j}' < \infty$ for all $\delta_{p,j}$
with $\delta_{p,j} < \infty$, then Cl$(R') =$ Cl$(R)$.
\item[\em (3)] If $R$ is a UFD, then $R'$ is also a UFD.
\end{enumerate}
\end{Cor}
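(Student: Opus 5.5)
The plan is to work prime by prime, since both $R$ and $R'$ are intersections of local pieces $R_{(p)}$ and $R'_{(p)}$ (Lemma \ref{lemma43}). The basic input is that replacing $\delta_{p,j}$ by $\delta'_{p,j}\geq\delta_{p,j}$ enlarges each valuation in the polynomial ordering: $\Z_{(p),\alpha_{p,j},\delta_{p,j}}\cap\Q[X]\subseteq \Z_{(p),\alpha_{p,j},\delta'_{p,j}}\cap\Q[X]$ by Proposition \ref{overring intersection DVRs}. Hence $R\subseteq R'$.

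For (1) I would use the finite character criterion of Theorem \ref{Krull domains finite character}, in the form (1)$\Leftrightarrow$(3): Krullness is equivalent to the finiteness of $\mathbb D_{g,\uE}$ for every $g\in\Z[X]$. The key observation comes from Proposition \ref{contraction maximal ideal} and Remark \ref{collapse centers DVRs}. The center of $\Z_{(p),\alpha,\delta}$ on $\Z_{(p)}[X]$ is $(p)$ if $\delta=0$, and $(p,g_\alpha)$ if $\delta>0$; in the latter case it does not depend on $\delta$. Therefore, for $p\notin\Delta$ the centers on $\Z[X]$ of the $j$-th valuation for $R$ and for $R'$ coincide. It follows that $\mathbb D_{g,\uE}$ and $\mathbb D_{g,\uE'}$ differ at most by a subset of the finite set $\Delta$, so one is finite iff the other is.

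Two technical points need care here. The first is that Theorem \ref{Krull domains finite character} assumes irredundant representations. I would handle this by first discarding superfluous members at each $p$, using Theorem \ref{polynomially finite} and Remarks \ref{ball containment}. A superfluous ball lies inside a $G_p$-conjugate of a retained ball of smaller or equal radius. If the retained radius is positive, then by Corollary \ref{equality centers on Z[X]} both have the same center on $\Z[X]$. If the retained radius is $0$, the center is $(p)$, which contains $g$ only for the finitely many $p$ dividing the content of $g$. So discarding superfluous members alters the sets $\mathbb D_{g,\cdot}$ only in finitely many primes. The second point is that the finite-character argument of the theorem really only uses finiteness of these sets for the given family, so the comparison above suffices.

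For (2), assuming $R$ (hence $R'$) is Krull, I would use the description $\mathrm{Cl}(R)=\bigoplus_p \mathrm{Cl}(R_{(p)})$ from Theorem \ref{main thm polynomial krull domains}. Each local group is the free group on the height-one unitary primes over $p$ modulo the single relation $\sum_j e_{p,j}g_j$, as in Theorem \ref{characterization local Krull domain}(5). The task is then to compare these data for $R$ and $R'$ at a fixed $p$. In passing to $R'$, some valuations may become superfluous, which only decreases $m_p$; the remaining ones change their ramification indices $e_{p,j}$ via Theorem \ref{characterization local Krull domain}(4). I would construct the surjection $\mathrm{Cl}(R)\to\mathrm{Cl}(R')$ locally. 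Send $g_j$ to the generator of the prime of $R'$ lying over the center of the $j$-th valuation (it is nonzero if that valuation is not superfluous), and check that the relation $\sum e_jg_j$ maps into the subgroup generated by $\sum e'_jg'_j$. The natural tool is the extension of divisors along $R\subseteq R'$, i.e.\ the map $\mathcal D(R)\to\mathcal D(R')$ given by $I\mapsto (IR')_v$, which sends principal divisors to principal divisors. For the second assertion, when all finite radii stay finite, no residually transcendental valuation becomes residually algebraic. I would then try to show that $m_p$ and $d_p$ are unchanged, and this is the main obstacle. The worry is that $e(\gamma_j\mid\Q_p(\alpha_j))$ genuinely depends on $\delta$. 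My first attempt would be to check, via Lemma \ref{conjugates in the ball} and Proposition \ref{unramified implies k=1}, that the gcd $d_p$ is governed by $e(\Q_p(\alpha_{p,j})\mid\Q_p)$ together with the $\delta$-contributions, and to compute it directly.

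Finally, (3) follows from (1) and (2). If $R$ is a UFD, then it is Krull with trivial class group, so $R'$ is Krull by (1), and $\mathrm{Cl}(R')$, being a quotient of $\mathrm{Cl}(R)=0$, is trivial. Hence $R'$ is a UFD.
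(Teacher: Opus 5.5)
\textbf{Part (1).} Your argument is the paper's: $R\subseteq R'$, and by Proposition~\ref{contraction maximal ideal} the $j$-th centers on $\Z[X]$ coincide for $p\notin\Delta$, so the sets $\mathbb D_{g,\cdot}$ differ only inside $\Delta$. You are right that Theorem~\ref{Krull domains finite character} needs irredundant families, which the paper's proof ignores.

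Your repair, however, fails when the retained ball has radius $0$. In that case the \emph{discarded} members have positive radius and centers $(p,g_\alpha)$, and these can contain $g$ for infinitely many $p$. For example, take $\Z[X]=\bigcap_p\bigl(\Z_{(p),0,0}\cap\Z_{(p),0,1}\bigr)\cap\Q[X]$ with $g=X$.

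The fix is to compare the irredundant families directly. If $p\notin\Delta$ and some $\delta_{p,j}=0$, that member is unchanged in $R'$, so $R_{(p)}=R'_{(p)}=\Z_{(p)}[X]$. At the remaining $p\notin\Delta$ all radii are positive, and your use of Corollary~\ref{equality centers on Z[X]} goes through.

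\textbf{Parts (2) and (3).} This is the genuine gap, and the obstacle you flagged cannot be overcome: (2) and (3) are false as stated. Take $\Lambda=\{p\}$, $m_p=1$ and center $0$ (or any transcendental element of $p\Z_p$, which gives the same valuations for radii $\le 1$). Then Theorem~\ref{characterization local Krull domain}(4),(5) gives $\mathrm{Cl}\cong\Z/b\Z$, where $b$ is the denominator of the radius.
\begin{itemize}
\item With $\delta=0<\delta'=\tfrac12$ (so $\Delta=\{p\}$ is finite), $R=\Z_{(p)}[X]$ is a UFD, but $\mathrm{Cl}(R')\cong\Z/2\Z$. Explicitly, in $R'=\Z_{(p),0,1/2}\cap\Q[X]$ one has $X\cdot X=p\cdot(X^2/p)$, with $X$, $p$, $X^2/p$ irreducible and pairwise non-associate. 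This refutes (3) and the first claim of (2).
\item With $\delta=\tfrac13<\delta'=\tfrac12$ one has $\Delta=\emptyset$, $\mathrm{Cl}(R)\cong\Z/3\Z$ and $\mathrm{Cl}(R')\cong\Z/2\Z$. This refutes the ``moreover'' as well.
\end{itemize}

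Your candidate maps fail accordingly.
\begin{itemize}
\item The assignment $g_j\mapsto g'_j$ respects the relation only if $(e_{p,j})_j$ is an integer multiple of $(e'_{p,j})_j$.
\item A strictly larger radius gives a center of height two on $R$ (Corollary~\ref{center superfluous DVR}), so $R\subseteq R'$ fails the no-blowing-up condition. Hence $I\mapsto(IR')_v$ cannot supply the surjection: for a UFD $R$ it sends every divisorial ideal to a principal one.
\end{itemize}
The paper's own proof breaks at the same step, where it asserts $d'_p\mid d_p$.

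\textbf{What survives.} Assume the finite-radius pairs $(\alpha_{p,j},\delta_{p,j})$ are minimal (the paper assumes this tacitly). Assume also that each $\delta'_{p,j}\in\{\delta_{p,j},\infty\}$, with $\alpha_{p,j}$ transcendental over $\Q$ when $\delta'_{p,j}=\infty$. Then for the changed indices Theorem~\ref{characterization local Krull domain}(4) gives $e'_{p,j}=e(\Q_p(\alpha_{p,j})\mid\Q_p)$, which divides $e_{p,j}$; hence $d'_p\mid d_p$. Shrinking the balls keeps their Galois conjugates disjoint (Remark~\ref{irredundant represent balls}), so no member becomes superfluous and $m_p$ is unchanged. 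Therefore $\mathrm{Cl}(R')$ is a quotient of $\mathrm{Cl}(R)$, and (3) follows.

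Minimality cannot be dropped. Write $\Z_{(p)}[X]=\Z_{(p),\alpha,0}\cap\Q[X]$ with $\alpha=\sqrt p\,t$ and $t\in\Z_p$ transcendental over $\Q$. Raising the radius to $\delta'=\infty$ gives $\mathrm{Cl}(R')\cong\Z/2\Z$, since $e(\Q_p(\alpha)\mid\Q_p)=2$.
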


\begin{proof}
(1) For each $g\in\Z[X]$, let
\begin{eqnarray*}
\mathbb D_{g,\uE}(R) &=& \{p\in\Lambda \mid\exists j\in\{1,\ldots,m_p\} \text{ such that }g\in\Z[X]\cap\ M_{(p),\alpha_{p,j},\delta_{p,j}}\}\\
\mathbb D_{g,\uE}(R') &=& \{p\in\Lambda \mid\exists j\in\{1,\ldots,m_p\} \text{ such that }g\in\Z[X]\cap\ M_{(p),\alpha_{p,j},\delta_{p,j}'}\}.
\end{eqnarray*}
Then, by Theorem \ref{Krull domains finite character}, it suffices to show that $\mathbb D_{g,\uE}(R)$ is finite if and only if $\mathbb D_{g,\uE}(R')$ is finite. 

It is clear that $\Z[X]\cap\ M_{(p),\alpha_{p,j},\delta_{p,j}} \subseteq \Z[X]\cap\ M_{(p),\alpha_{p,j},\delta_{p,j}'}$ for each $p\in\Lambda$ and $j=1,\ldots,m_p$. Hence, $\mathbb D_{g,\uE}(R) \subseteq \mathbb D_{g,\uE}(R')$. Moreover, if $\delta_{p,j} \neq 0$ for $j =1, \dots, m_p$, then 
$\Z[X]\cap\ M_{(p),\alpha_{p,j},\delta_{p,j}} = \Z[X]\cap\ M_{(p),\alpha_{p,j},\delta_{p,j}'}$ by Proposition \ref{contraction maximal ideal}. Hence, $\mathbb D_{g,\uE}(R') \setminus \mathbb D_{g,\uE}(R) \subseteq \Delta$, and since $\Delta$ is finite, $\mathbb D_{g,\uE}(R') \setminus \mathbb D_{g,\uE}(R)$ is finite. Thus $\mathbb D_{g,\uE}(R)$ is finite if and only if $\mathbb D_{g,\uE}(R')$ is also finite. 

(2) Let the notation be as in Theorems \ref{characterization local Krull domain} and \ref{main thm polynomial krull domains}. 
 We may assume that the representation of $R$ above is irredundant. Hence, by Proposition \ref{overring intersection DVRs}, the representation of $R'$ above is also irredundant. 
Note that $(\alpha_{p,j}, \delta_{p,j})$ is a minimal pair if $\delta_{p,j} \in \Q$, so if $\delta_{p,j}' < \infty$, then 
$(\alpha_{p,j}, \delta_{p,j}')$ is a minimal pair because 
$\bar B_p(\alpha_{p,j},\delta_{p,j}')\subseteq \bar B_p(\alpha_{p,j},\delta_{p,j})$. Hence 
Cl$(R') = \bigoplus\limits_{p \in \Lambda}(\Z/d'_p\Z \oplus \Z^{m_p-1})$ for some positive integer $d'_p$ with $d'_p|d_p$ for each $p \in \Lambda$ by Theorem \ref{characterization local Krull domain}. Hence, $\Z/d_p'\Z \cong (\Z/d_p\Z)/(d_p'\Z/d_p\Z)$, so if we let $H = \bigoplus\limits_{p \in \Lambda}(d_p'\Z/d_p\Z \oplus \{0\}^{m_p-1})$, then $H$ is a subgroup of Cl$(D)$ and Cl$(R)/H =$ Cl$(R')$. In particular, if $\delta_{p,j}' < \infty$ for all $\delta_{p,j}$
with $\delta_{p,j} < \infty$, then $d_p = d_p'$ by Theorem \ref{characterization local Krull domain} again. Thus Cl$(R)=$ Cl$(R')$.  

(3) This follows by (2) above because a UFD is a Krull domain with trivial class group.
\end{proof}

\begin{Ex}
Let $R = \bigcap\limits_{p \in \PP}\Z_{(p), 0, 0} \cap \Q[X]$. Then $R = \Z[X]$, and hence $R$ is a Krull domain (in fact, a UFD). However, if $R'= \bigcap\limits_{p \in \PP}\Z_{(p), 0, p} \cap \Q[X]$, then $R$ is not a Krull domain because $X \in X\Z_{(p), 0, p} = p\Z_{(p), 0, p}$ for all $p \in \PP$. Therefore the condition of $\Delta$ being finite is necessary for Corollary \ref{coro433}.  
\end{Ex}

\subsubsection{Construction of Krull domains with prescribed class group}\label{section: construction Krull}

For convenience, we say that a polynomial Krull domain $R$ is a \emph{pure} Krull domain if $Q$ is not maximal for all $Q \in X^1(R)$ with $Q \cap \Z \neq (0)$. By Proposition \ref{Krull intersection}, if $R$ is a polynomial Krull domain that is not Dedekind, then $R$ can be represented as an intersection of a Dedekind domain $R_1$ and a pure Krull domain $R_2$, i.e., $R = R_1 \cap R_2$. 
The divisor class group of a polynomial Krull domain is a direct sum of a countable family of finitely generated abelian groups by Theorem \ref{main thm polynomial krull domains}. Conversely, if $G$ is a direct sum of a countable family of finitely generated abelian groups, then there is a Dedekind domain between $\Z[X]$ and $\Q[X]$ with ideal class group $G$ \cite[Theorem 3.1]{PerDedekind}. 

We next construct a pure Krull domain with prescribed divisor class group.
 
\begin{Prop}\label{existence Krull local case}
Let $G$ be a finitely generated abelian group. Then there exists a pure Krull domain $R$ between $\Z[X]$ and $\Q[X]$ such that Cl$(R)=G$. 
\end{Prop}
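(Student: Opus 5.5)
Write $G\cong\Z^r\oplus\Z/n_1\Z\oplus\cdots\oplus\Z/n_s\Z$. I will use finitely many distinct primes $p_0,p_1,\ldots,p_s$, one for each summand, and intersect finitely many residually transcendental DVRs; then I read off the class group from Theorem \ref{main thm polynomial krull domains}. Since $\Lambda$ will be finite, finite character is automatic (remark after Theorem \ref{Krull domains finite character}), so $R$ is a polynomial Krull domain.

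The local building block uses a fixed prime $p$, integers $m\geq 1$ and $e_1,\ldots,e_m\geq1$. I want pairwise ``Galois-disjoint'' balls $\bar B_p(\alpha_j,\delta_j)\subseteq\O_p$ with $\delta_j\in\Q_{>0}$ and $\alpha_j\in\Z_p$, chosen so that $e(\Z_{(p),\alpha_j,\delta_j}\mid\Z_{(p)})=e_j$. The natural choice is $\alpha_j\in\Z_p$ with distinct residues $\alpha_j\equiv j \pmod p$, which needs $p>m$. Then $(\alpha_j,\delta_j)$ is a minimal pair, since $\Q_p(\alpha_j)=\Q_p$. By Theorem \ref{characterization local Krull domain}(4), the ramification index is $e(\delta_j\mid\Q_p)$, i.e. the denominator of $\delta_j$. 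So I take $\delta_j=1/e_j$; when $e_j=1$ I take $\delta_j=1$, which keeps $\delta_j>0$. The balls are disjoint because the residues differ, and $G_p$ fixes $\Z_p$. By Remark \ref{irredundant represent balls}(1) the representation is irredundant, so each $M_j\cap R$ has height one. Theorem \ref{characterization local Krull domain}(5) then gives local class group $\Z/d\Z\oplus\Z^{m-1}$ with $d=\gcd(e_j)$. Because all $\delta_j<\infty$, Theorem \ref{characterization local Krull domain}(3) shows that no unitary height-one prime is maximal, so the ring is pure.

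For the assembly, I handle the torsion summands first. For the summand $\Z/n_i\Z$ I use prime $p_i$ with $m=1$ and $\delta=1/n_i$, giving local class group $\Z/n_i\Z$. If $n_i=1$ the summand is trivial and can be dropped. For the free part I use prime $p_0>r+1$ with $m=r+1$ and all $\delta_j=1$ (so $d=1$), giving $\Z^r$. If $G=0$, I use a single prime with $\delta=1$, i.e. $\Z_{(p)}[X]$-type data, or more simply $R=\Z[X]$ itself, which is pure because the centers $(p)$ are not maximal. I set $R=\bigcap_{p\in\Lambda}\bigcap_j \Z_{(p),\alpha_{p,j},\delta_{p,j}}\cap\Q[X]$. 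Then Theorem \ref{main thm polynomial krull domains} yields $\mathrm{Cl}(R)=\bigoplus_{p\in\Lambda}(\Z/d_p\Z\oplus\Z^{m_p-1})\cong G$.

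Two points need checking. First, the height-one hypothesis in Theorem \ref{main thm polynomial krull domains} must hold globally, not only locally. I will localize at $\Z\setminus p\Z$ using Lemma \ref{lemma43}: heights are preserved and $R_{(p)}$ is exactly the local ring above, so irredundancy transfers. Second, purity of $R$ must hold globally. Any unitary $Q\in X^1(R)$ lying over $p$ localizes to a non-maximal prime of $R_{(p)}$, so $Q$ is not maximal, as in the proof of Proposition \ref{Krull intersection}. The main obstacle I expect is the ramification computation in Theorem \ref{characterization local Krull domain}(4) for $\delta=1/n$ with $\alpha\in\Z_p$. It should reduce to $\gamma=w(X-\alpha)=\delta$, which has order exactly $n$ modulo $\Z$; I would double-check this against Theorem \ref{unramified DVR over Zp}.
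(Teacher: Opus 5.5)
Your proposal is correct and follows essentially the same route as the paper. Both use finitely many primes and residually transcendental DVRs $\Z_{(p),\alpha,\delta}$ with $\alpha\in\Z_p$, so every pair is minimal and $e=e(\delta\mid\Q_p)$ is the denominator of $\delta$; both read off the class group from Theorem \ref{main thm polynomial krull domains} and obtain purity from Theorem \ref{characterization local Krull domain}(3).

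The differences are only bookkeeping. The paper attaches $\Z^m$ to the last torsion prime $p_n$ via $m+1$ balls of common radius $N+1/e_n$, which are disjoint because $N$ bounds the mutual distances of the centers. You instead give $\Z^r$ its own prime, with unramified balls separated by residue classes. You also verify explicitly, by localizing at $\Z\setminus p\Z$, the global height-one and purity conditions that the paper leaves implicit.
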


\begin{proof}
Suppose $G=\Z/e_1\Z \oplus \cdots \oplus \Z/e_n\Z \oplus \Z^m$
for some integers $e_1, \dots , e_n \in \N$ and $m \geq 0$. 
Let $p_1, \dots , p_n \in \PP$ be distinct. 

We pick $m+1$ distinct elements $\alpha_{p_n, 1},\ldots,\alpha_{p_n, m+1}$ in $\Z_{p_n}$ which are transcendental over $\Q$.  Let $N=\max\{v_{p_n}(\alpha_{p_n,i}-\alpha_{p_n, j})\mid i\not=j\in\{1,\ldots,m+1\}\}$. 
Let $W_{p_n, j} = \Z_{(p_n),\alpha_{p_n, j}, N+1/e_n}$, for $j\in\{1,\ldots,m +1\}$.  Note that the $W_{p_n,j}$'s are distinct (because $v_{p_n}(\alpha_{p_n, i}-\alpha_{p_n,j})<N+1/e_n$ for each 
$i\not=j$) and $(\alpha_{p_n,j}, N+ 1/e_n)$ is a minimal pair for each 
$j\in\{1,\ldots,m+1\}$. Then by Theorem \ref{characterization local Krull domain}, (4) and Proposition \ref{unramified implies k=1} we have 
 $$e_{p_n,j}=e(W_{p_n,j}\mid\Z_{(p_n)})=e(N+1/e_n\mid\Q_p)=e_n.$$
By the same argument, we can also choose $\alpha_{p_i}$ in $\Z_{p_i}$
which is transcendental over $\Q$, so that $(\alpha_{p_i}, 1/e_i)$ is minimal pair for $i=1, \dots, n-1$. Let $W_{p_i} = \Z_{(p_i), \alpha_{p_i}, 1/e_i}$ for $i =1, \dots, n-1$  so that, if we set $R= (\bigcap\limits_{i=1}^{n-1}W_{p_i}) \cap (\bigcap\limits_{j=1}^{m+1}W_{p_n,j}\cap\Q[X])$, 
we have that $R$ is a Krull domain such that Cl$(R)=G$
by Theorem \ref{main thm polynomial krull domains}
and $Q$ is not maximal for all $Q \in X^1(R)$ with $Q \cap \Z \neq (0)$
by Theorem \ref{characterization local Krull domain}(3).
\end{proof}

\begin{Thm}
Let $G$ be a direct sum of a countable family $\{G_i \mid i \in I\}$ of finitely generated abelian groups (which are not necessarily distinct).
Then there exists a pure Krull domain $R$ between $\Z[X]$ and $\Q[X]$ with Cl$(R)=G$. Moreover, for each $i \in I$, there is a multiplicative subset $S_i$ of $\Z$ such that $R_{S_i}$ is a pure Krull domain with Cl$(R_{S_i}) = G_i$.  
\end{Thm}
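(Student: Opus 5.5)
The plan is to realize each $G_i$ locally at its own finite set of primes, using pairwise disjoint sets of primes for different $i$, and then glue the local pieces.

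\textbf{Choice of primes and local rings.} Since $I$ is countable and $\PP$ is infinite, I choose pairwise disjoint finite sets $\Lambda_i\subset\PP$. I write $G_i=\Z/e_{i,1}\Z\oplus\cdots\oplus\Z/e_{i,n_i}\Z\oplus\Z^{m_i}$ and take $|\Lambda_i|=\max(n_i,1)$. On $\Lambda_i$ I perform the construction of Proposition \ref{existence Krull local case}. This gives finitely many residually transcendental DVRs $W_{p,j}=\Z_{(p),\alpha_{p,j},\delta_{p,j}}$ for $p\in\Lambda_i$, with $\delta_{p,j}>0$ and $\alpha_{p,j}\in\Z_p$ transcendental over $\Q$. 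The ramification indices are chosen so that
$$\bigoplus_{p\in\Lambda_i}(\Z/d_p\Z\oplus\Z^{m_p-1})\cong G_i.$$
If $G_i$ is trivial, I use a single unramified ball of integer radius, for instance $\Z_{(p),\alpha_p,1}$.

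\textbf{Finite character.} I set $\Lambda=\bigcup_i\Lambda_i$ and $R=\bigcap_{p\in\Lambda}\bigcap_j W_{p,j}\cap\Q[X]$. The main obstacle is to make $R$ a Krull domain when $\Lambda$ is infinite. By Theorem \ref{Krull domains finite character}, this amounts to $C(\uE)$ being polynomially factorizable: for each $g\in\Z[X]$, only finitely many $p\in\Lambda$ should admit some $j$ with $v_p(g(\alpha_{p,j}))>0$. Example \ref{ex430} shows this fails for a careless choice of centers.

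\textbf{Choice of centers.} The centers must therefore be chosen carefully. By Remark \ref{collapse centers DVRs}, the center of $W_{p,j}$ on $\Z[X]$ is $(p,g_{\alpha_{p,j}})$, so only the residues $\overline{\alpha_{p,j}}$ matter. I first try to take the $\alpha_{p,j}$ in $\Z_p$ (transcendental) with residues such that $g(\alpha_{p,j})$ is a $p$-adic unit for all but finitely many $p$.

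Residues in $\F_p$ alone cannot achieve this for all $g$, because $g$ has roots mod $p$ for infinitely many $p$. So instead I pick $\alpha_{p,j}$ to be unramified transcendental elements of $\O_p^{\br}$ whose residues have degree over $\F_p$ tending to infinity with $p$. Here I take $\alpha_{p,j}$ in the completion of the maximal unramified extension, which lies in $\O_p^{\br}$. Then:
\begin{itemize}
\item for a fixed $g$, the minimal polynomial of $\overline{\alpha_{p,j}}$ cannot divide $\bar g$ once its degree exceeds $\deg g$, so $g\notin M_{(p),\alpha_{p,j},\delta_{p,j}}$ for almost all $p$;
\item the residues of the $m_p$ centers can be taken distinct and nonconjugate, so $N$ and the ramification computation (via Proposition \ref{unramified implies k=1} and Theorem \ref{characterization local Krull domain}(4)) still give $e_{p,j}$ as required.
\end{itemize}

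\textbf{Remaining checks.} Irredundancy follows from Remark \ref{irredundant represent balls}, because the balls are Galois-disjoint (distinct residues). Purity follows from Theorem \ref{characterization local Krull domain}(3), since every $\delta_{p,j}<\infty$. Theorem \ref{main thm polynomial krull domains} then gives
$$\mathrm{Cl}(R)=\bigoplus_i\bigoplus_{p\in\Lambda_i}(\Z/d_p\Z\oplus\Z^{m_p-1})\cong\bigoplus_i G_i=G.$$
For the last claim I take $S_i$ to be the multiplicative set generated by $\PP\setminus\Lambda_i$. By Lemma \ref{lemma43}, $R_{S_i}=\bigcap_{p\in\Lambda_i}R_{(p)}$, which is a finite intersection. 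It is pure Krull with class group $G_i$ by the same theorems.
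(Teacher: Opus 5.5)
Your proposal is correct and has the same architecture as the paper's proof: disjoint finite blocks of primes, one local piece per $G_i$ built as in Proposition \ref{existence Krull local case}, finite character via Theorem \ref{Krull domains finite character}, the class group via Theorem \ref{main thm polynomial krull domains}, and $S_i$ generated by the primes outside the block. The difference is in how you force finite character.

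The paper keeps every center in $\Z_p$ and only moves it into the residue class $\lfloor\log p\rfloor+p\Z_p$. Then $g(\alpha_p)\equiv g(\lfloor\log p\rfloor)\pmod p$, and for large $p$ the integer $g(\lfloor\log p\rfloor)$ is nonzero with absolute value $O((\log p)^{\deg g})<p$, hence prime to $p$. So your remark that residues in $\F_p$ ``cannot achieve this'' is false. That $g$ has roots mod $p$ for infinitely many $p$ is irrelevant: the chosen residues only need to avoid the roots of each fixed $g$ for almost all $p$, and a slowly growing residue sequence does exactly that.

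Your alternative (residue degrees $f_p\to\infty$, so that $\overline{g_{\alpha_{p,j}}}$, of degree $f_p$, cannot divide a nonzero $\bar g$ of smaller degree) is also valid. It does cost a verification you only allude to: with non-$\Q_p$-rational centers you must still exhibit an unramified minimal pair, so that $e_{p,j}$ equals the denominator of $\delta_{p,j}$.
\begin{itemize}
\item The cleanest choice is $\alpha_{p,j}=\zeta+t$, where $\zeta$ is a root of unity of order prime to $p$ whose residue has degree $f_p$, and $t\in p^{\lceil\delta\rceil}\Z_p$ is transcendental over $\Q$. No completion of $\Q_p^{\mathrm{ur}}$ is needed.
\item Then $(\zeta,\delta)$ is a minimal pair, because every point of a ball of positive radius has the same residue, of degree $f_p$.
\item The conjugates of $\zeta$ have pairwise distinct residues, so $w(q_\zeta)=\delta$ exactly, and Theorem \ref{characterization local Krull domain}(4) gives $e_{p,j}$ equal to the denominator of $\delta$.
\item For Galois-disjointness you also need the residues at a fixed $p$ to be pairwise non-conjugate over $\F_p$, not merely distinct.
\end{itemize}

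The paper's choice avoids all of this, since with $\Q_p$-rational centers the ramification index is read off from $\delta$ directly. Yours replaces the growth estimate with a purely degree-theoretic argument. It also lets the centers at one prime have distinct residues, so that $N=0$.
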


\begin{proof}
For each $i\in I$, let $G_i=\Z/e_{i1}\Z \oplus \cdots \oplus \Z/e_{in_i}\Z \oplus \Z^{m_i}$ for some $e_{i1}, \dots, e_{in_i} \geq 1$, $n_i \geq 1$ and $m_i\geq 0$. Without loss of generality, we may suppose that we have a partition of $\PP$ into a family of finite subsets which is indexed by $I$, namely,  $\{\PP_i\}_{i \in I}$ is a partition of $\PP$ with $|\PP_i| = n_i$ for each $i\in I$. By Proposition \ref{existence Krull local case}, there exists a pure Krull domain $R_i$, $\Z[X]\subseteq R_i\subset \Q[X]$, such that Cl$(R_i)=G_i$. 

Now set $R=\bigcap\limits_{i\in I}R_i$. Note that if $S_i = \Z \setminus \bigcup\limits_{p \in \PP_i}p\Z$ for each $i \in I$, then $R_{S_i} = R_i$ by Lemma \ref{lemma43}, thus
$R_{S_i}$ is a pure Krull domain with Cl$(R_{S_i}) = G_i$.
However, it may be the case that $R$ is not a Krull domain, that is, it has not the finite character property.

Let $i\in I$ be such that $G_i$ is not trivial, and let $p$ be a fixed prime in the finite set $\PP_i$. Suppose that for the construction of $R_i$ we chose in $\Z_p$ $k_p\geq 1$ distinct elements, say for example $E_p=\{\alpha_{p,1},\ldots,\alpha_{p,k_p}\}$; let also $\delta_{p,j}$ be the radius of the ultrametric ball of center $\alpha_{p,j}$, for $j=1,\ldots,k_p$. We then   replace the elements in $E_p$ by possibly other  $k_p$ distinct $\alpha_p$'s in $\Z_p$ which lie in the same residue class $\lfloor\log(p)\rfloor+p\Z_{p}$, such that these $\alpha_p$'s are transcendental over $\Q$. We also modify the corresponding $\delta_{p,j}$'s accordingly so that the previous facts about the corresponding ring $R_i$ still hold. If $G_i$ is trivial, then we simply choose an element $\alpha_p\in\Z_{p}$. We claim that the set 
$$C(\underline E)=\prod_{p\in \PP}\{\alpha_{p,1},\ldots,\alpha_{p,k_p}\in \Z_p\mid \delta_{p,j}>0\}$$
is polynomially factorizable, so by Theorems \ref{Krull domains finite character} and \ref{characterization local Krull domain}(3), $R$ is a pure Krull domain. Let $g\in\Z[X]$ and let $\underline{\alpha}=(\alpha_p)\in C(\underline E)$. Then we have
$$g(\alpha_p)\equiv g(\lfloor\log(p)\rfloor)\pmod{p}$$
Since for $p$ sufficiently large, $g(\lfloor\log(p)\rfloor)$ is not divisible by $p$ we get that $\PP_{g,C(\underline{E})}$ is finite.
\end{proof}

 \vskip1cm

\subsubsection{Almost Dedekind domains  and its generalization} 

Let $R=\bigcap\limits_{p\in\Lambda}\bigcap\limits_{j=1}^{m_p}\Z_{(p),\alpha_{p,j},\delta_{p,j}}\cap\Q[X]$ be as in Theorem \ref{Krull domains finite character}. We know that $R$ need not be a Krull domain (see Example \ref{ex430}). We now investigate some ring-theoretic properties of $R$ even when it is not a Krull domain.

\begin{Cor} \label{thm3}
Let $\Lambda$ be a nonempty subset of $\PP$.
For each $p \in \Lambda$, suppose that $\Z_{(p),\alpha_{p,j},\delta_{p,j}}$,
for some $(\alpha_{p,j},\delta_{p,j})\in\O_p^{\br}\times(\Q_{\geq0}\cup\{\infty\})$ is a family of DVRs of $\Q(X)$ extending $\Z_{(p)}$
and $M_{p,j} = M_{(p),\alpha_{p,j},\delta_{p,j}}$ for $j = 1, \dots , m_p$. Now, let
\begin{equation}\label{representation R3}
R = \bigcap_{p \in \Lambda} (\bigcap_{j=1}^{m_p}\Z_{(p),\alpha_{p,j},\delta_{p,j}} \cap\Q[X]).
\end{equation}
Then $R_P$ is a DVR for all $P \in X^1(R)$, $R = \bigcap\limits_{P \in X^1(R)}R_P$ and $I_t = R$ for all ideals $I$ of $R$ with $I \nsubseteq P$ for all $P \in X^1(R)$. 
Moreover, if the representation in \eqref{representation R3} is irredundant, then we have the following.
\begin{enumerate}
\item[{\em (1)}] $X^1(R) = (\bigcup\limits_{p \in \Lambda}\{M_{p,j} \cap R \mid j=1, \dots , m_p\}) \cup \{f\Q[X] \cap R \mid f \in \Q[X]$ is irreducible over $\Q\}$. 


\item[{\em (2)}] $R = \IntQ(\uE,\O)$, for some subset $\uE=\prod_{p\in\Lambda}E_p\subset\O=\prod\limits_{p\in\Lambda}\O_p$ 
such that $E_p = \bigcup\limits_{j=1}^{m_p} \bar B_p(\alpha_{p,j},\delta_{p,j})$ $\subset\O_p^{\br}$, where $\alpha_{p,j}$ is transcendental over $\Q$ if $\delta_{p,j}=\infty$.
\end{enumerate}
\end{Cor}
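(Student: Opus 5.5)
The plan is to treat $R$ as the intersection of two kinds of rings: for each $p\in\Lambda$ the local ring $R_p=\bigcap_{j=1}^{m_p}\Z_{(p),\alpha_{p,j},\delta_{p,j}}\cap\Q[X]$, and the PID $\Q[X]$. Each $R_p$ is a Krull domain by Theorem \ref{characterization local Krull domain}(1) (a finite intersection of Krull domains). By Lemma \ref{lemma43}(2), applied to $R$, the localization $R_{(p)}=(\Z\setminus p\Z)^{-1}R$ equals $R_p$. The argument for this is the one given there: if $h\in R_p$, write $h=g/(mp^k)$ with $p\nmid m$; then $mh=g/p^k$ lies in every $\Z_{(q),\alpha,\delta}$ for $q\ne p$, so $mh\in R$.

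\textbf{The first assertions.} Let $P\in X^1(R)$. If $P\cap\Z=(0)$, then $R_P$ is a localization of $R_{\Z\setminus\{0\}}=\Q[X]$, hence a DVR. If instead $P\cap\Z=p\Z$, then $p\in\Lambda$ (otherwise $p$ is a unit of $R$). In this case $R_P=(R_{(p)})_{PR_{(p)}}=(R_p)_{PR_p}$, and $PR_p$ has height one, so $R_P$ is a DVR because $R_p$ is Krull.

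For the equality $R=\bigcap_{P\in X^1(R)}R_P$, we start from $R=\bigcap_{p\in\Lambda}R_p\cap\Q[X]$. Each $R_p$ is Krull, so $R_p=\bigcap_{Q\in X^1(R_p)}(R_p)_Q$. Every such $(R_p)_Q$ equals $R_{Q\cap R}$, and $Q\cap R$ has height one because height is preserved under localization. Similarly, $\Q[X]=\bigcap_q\Q[X]_{(q)}$ with $\Q[X]_{(q)}=R_{q\Q[X]\cap R}$. This gives $\bigcap_{P\in X^1(R)}R_P\subseteq R$, and the reverse inclusion is trivial.

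For the $t$-statement, let $I$ be an ideal contained in no height one prime, and let $x\in I^{-1}$. Since $R=\bigcap R_P$ and $IR_P=R_P$ for every $P\in X^1(R)$, we get $xR_P=xIR_P\subseteq R_P$. Hence $x\in R$, so $I^{-1}=R$ and therefore $I_t\subseteq I_v=R$.

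\textbf{The irredundant case, item (1).} Assume the representation is irredundant. For each $p$, Corollary \ref{cor10} applied to the Krull domain $R_p$ with the representation $R_p=\bigcap_j W_{p,j}\cap\Q[X]$ shows that $M_{p,j}\cap R_p$ has height one. Here one checks that irredundancy of the global representation forces irredundancy of the local one; this follows by localizing at $\Z\setminus p\Z$, since any superfluity in $R_p$ would lift to $R$. Contracting to $R$ gives $M_{p,j}\cap R\in X^1(R)$. The non-unitary primes $f\Q[X]\cap R$ have height one exactly as in Corollary \ref{nonunitary not superfluous}, since $R_{\Z\setminus\{0\}}=\Q[X]$.

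Conversely, take a unitary $P\in X^1(R)$ with $P\cap\Z=p\Z$. Then $PR_p\in X^1(R_p)$, and by Theorem \ref{characterization local Krull domain}(2) it is some $M_{p,j}\cap R_p$. Hence $P=M_{p,j}\cap R$.

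\textbf{Item (2).} This follows from Remark \ref{balls and intval rings}, specifically equation \eqref{IntQpB} contracted to $\Q[X]$, which identifies $\Z_{(p),\alpha,\delta}\cap\Q[X]$ with $\IntQ(\bar B_p(\alpha,\delta),\O_p)$. Intersecting over $j$ and over $p$ gives $\IntQ(\uE,\O)$ componentwise. The transcendence condition when $\delta_{p,j}=\infty$ is the standing convention that ensures these rings are DVRs (Theorem \ref{unitary DVRs over Z(p)}).

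\textbf{Main obstacle.} The delicate step is the transfer of irredundancy and heights between $R$ and $R_p$, i.e. the identification $R_{(p)}=R_p$. This requires the $q\ne p$ DVRs to be unaffected by multiplying by $m$, which holds because $m$ is a unit in all of them. Everything else is routine localization.
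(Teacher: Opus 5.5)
Your localization arguments for the first two assertions and for item (1) are correct and self-contained; the paper simply cites \cite[Proposition 2.2(3), Corollary 2.3]{Chang} for these. Item (2) is the same as the paper's argument.

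The gap is in the $t$-statement. What you actually prove is $I^{-1}=R$, i.e.\ $I_v=R$, and you then conclude $I_t\subseteq I_v=R$. But $I_t\subseteq R$ holds for every ideal $I\subseteq R$, so nothing has been shown. The content of $I_t=R$ is the reverse inclusion $1\in I_t$. By the definition of the $t$-operation, this requires a \emph{finitely generated} $J\subseteq I$ with $J_v=R$. Your inverse computation gives $J^{-1}=R$ only for a $J$ that is itself contained in no height-one prime.

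This is not a formality. Your argument uses nothing beyond $R=\bigcap_{P\in X^1(R)}R_P$, so it would apply verbatim to $\Int(\Z)$. There the height-two maximal ideal $\mathfrak M=\{f\in\Int(\Z)\mid f(0)\in p\Z\}$ lies in no height-one prime, hence $\mathfrak M^{-1}=\Int(\Z)$. Yet $\mathfrak M_t=\mathfrak M$, because $\Int(\Z)$ is Pr\"ufer: every finitely generated ideal is invertible, hence divisorial.

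The missing step needs the finiteness available in your situation. Proceed as follows.
\begin{enumerate}
\item The ideal $I\Q[X]=IR_{\Z\setminus\{0\}}$ is contained in no $f\Q[X]$, since these contract to height-one primes of $R$. Hence $I\Q[X]=\Q[X]$.
\item Write $1=\sum_i h_if_i$ with $f_i\in I$ and $h_i\in\Q[X]$. Clearing denominators and using $\Z[X]\subseteq R$, you get $0\neq d\in\Z$ with $d\in(f_1,\ldots,f_r)R$.
\item A height-one prime of $R$ containing $d$ lies over some $p\mid d$ with $p\in\Lambda$. It corresponds to a height-one prime of the Krull domain $R_p=R_{(p)}$ containing $p$. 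So there are only finitely many such primes, say $P_1,\ldots,P_t$.
\item Choose $b_k\in I\setminus P_k$. Then $J=(f_1,\ldots,f_r,b_1,\ldots,b_t)\subseteq I$ is finitely generated and contained in no height-one prime.
\item By your computation $J_v=R$, and therefore $I_t=R$.
\end{enumerate}
With this added the proof is complete.

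Note that this step, not the identification $R_{(p)}=R_p$, is the genuinely delicate point. It is exactly where the finiteness of the unitary DVRs over each $p$ is used.

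A smaller point: for $R_{(p)}=R_p$, it is your direct argument that does the work. Lemma \ref{lemma43} presupposes $R=\bigcap_{Q\in X^1(R)}R_Q$, which is one of the things you are proving, and it defines $R_p$ differently.
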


\begin{proof}
The first part follows directly from \cite[Proposition 2.2(3)]{Chang}
and (1) is from \cite[Corollary 2.3]{Chang}.
For (2), note that, by Theorem \ref{thm4.23}, $R = \bigcap\limits_{p \in \Lambda}\IntQ(E_p,\O_p)$,
 where $E_p$ is a finite union of balls $\bar B_p(\alpha_{p,j},\delta_{p,j})\subset\O_p^{\br}$, for $j=1,\ldots,m_p$, with  $\alpha_{p,j}$ transcendental over $\Q$ if $\delta_{p,j} =\infty$. Thus, $R = \IntQ(\uE,\O)$, where $\uE=\prod\limits_{p\in\Lambda} E_p$.
\end{proof}

 We recall that an integral domain $D$ is an almost Dedekind domain if $D_M$ is a DVR for each maximal ideal $M$ of $D$. The next corollary characterizes when an intersection of DVRs with $\Q[X]$ is an almost Dedekind domain.

\begin{Cor} \label{coro1}
Let $\Lambda$ be a nonempty subset of $\PP$.
For each $p \in \Lambda$ and $j=1, \dots, m_p$, suppose that $\Z_{(p),\alpha_{p,j},\delta_{p,j}}$,
for some $(\alpha_{p,j},\delta_{p,j})\in \O_p^{\br}\times(\Q_{\geq0}\cup\{\infty\})$, is a family of DVRs of $\Q(X)$ extending $\Z_{(p)}$.  
Let
\begin{equation}\label{representation R4}
R = \bigcap_{p \in \Lambda} (\bigcap_{j=1}^{m_p}\Z_{(p),\alpha_{p,j},\delta_{p,j}} \cap\Q[X]).
\end{equation}
Then the following statements are equivalent:
\begin{enumerate}
\item[{\em (1)}] $R$ is an almost Dedekind domain.
\item[{\em (2)}] $\Z_{(p),\alpha_{p,j},\delta_{p,j}}$
is residually algebraic over $\Z_{(p)}$ for all $p \in \Lambda$ and $j =1, \dots, m_p$.
\item[{\em (3)}] Each non-unitary prime ideal of $R$ is a maximal ideal. 
\end{enumerate}
\end{Cor}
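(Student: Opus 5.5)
The plan is to reduce everything to the local rings $R_{(p)}=(\Z\setminus p\Z)^{-1}R$ and then use the local results of \S\ref{section local case}. By Corollary \ref{thm3}, $R$ satisfies the hypotheses of Lemma \ref{lemma43}. Hence for each $p\in\Lambda$ we have
\[
R_{(p)}=\bigcap_{j=1}^{m_p}\Z_{(p),\alpha_{p,j},\delta_{p,j}}\cap\Q[X],
\]
which is a Krull domain between $\Z_{(p)}[X]$ and $\Q[X]$. For $p\notin\Lambda$ we get $R_{(p)}=\Q[X]$.

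Now let $M$ be a maximal ideal of $R$. If $M$ is unitary with $M\cap\Z=p\Z$, then $MR_{(p)}$ is a maximal ideal of $R_{(p)}$ and $R_M=(R_{(p)})_{MR_{(p)}}$. If $M$ is non-unitary, $M=q\Q[X]\cap R$ for an irreducible $q$, then $R_M=\Q[X]_{(q)}$, which is a DVR. Thus $R$ is almost Dedekind if and only if every unitary maximal ideal $M$ has $R_M$ a DVR. Since $R_M$ is a localization of the Krull domain $R_{(p)}$, this happens exactly when $\mathrm{ht}(MR_{(p)})=1$.

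(2) $\Rightarrow$ (1). Each $\Z_{(p),\alpha_{p,j},\delta_{p,j}}$ is residually algebraic. By Theorem \ref{DVR ra} and Theorem \ref{unitary DVRs over Z(p)}, this means $\delta_{p,j}=\infty$ and $\alpha_{p,j}$ is transcendental over $\Q$. Theorem \ref{thm4.23}, or Theorem \ref{characterization local Krull domain}(6) after discarding superfluous members, then shows that $R_{(p)}$ is Dedekind. So every unitary $R_M$ is a localization of a Dedekind domain, hence a DVR.

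(1) $\Rightarrow$ (3). Let $Q$ be a nonzero non-unitary prime of $R$ and let $M\supseteq Q$ be a maximal ideal. Since $R_M$ is a DVR, it has a unique nonzero prime, so $QR_M=MR_M$ and therefore $Q=M$.

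(3) $\Rightarrow$ (2) is the main step. Suppose some $\delta=\delta_{p,j}<\infty$, and put $\alpha=\alpha_{p,j}$. First I will find $\beta\in\bar B_p(\alpha,\delta)$ that is algebraic over $\Q$. This should follow from density of $\overline{\Q}$ in $\oQp$ (by Krasner's lemma) together with density of $\oQp$ in $\C_p$. Let $q\in\Q[X]$ be the minimal polynomial of $\beta$.

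By Proposition \ref{overring intersection DVRs}, $R\subseteq\Z_{(p),\alpha,\delta}\cap\Q[X]\subseteq\Z_{(p),\beta,\infty}$. By Remark \ref{why alpha transcendental}, $\Z_{(p),\beta,\infty}\subset\Q[X]_{(q)}$ because $\beta$ is algebraic. Hence the prime $Q=q\Q[X]\cap R$ satisfies
\[
Q\subseteq M_{(p),\beta,\infty}\cap R .
\]
This containment is proper, since $p$ lies in the right-hand side but not in $Q$. So $Q$ is a non-maximal non-unitary prime, contradicting (3).

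This is essentially the argument of Theorem \ref{nonunitary prime ideals}, but run directly on $R$, so no finite-character or irredundancy hypothesis is needed. The only delicate point is producing $\beta$ algebraic over $\Q$ inside a ball of finite radius, and density settles that.
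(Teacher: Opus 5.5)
Your proof is correct, but it is organized differently from the paper's.

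\textbf{The paper's route.} It runs the cycle (1)$\Rightarrow$(2)$\Rightarrow$(3)$\Rightarrow$(1).
For (1)$\Rightarrow$(2) it localizes at $\Z\setminus p\Z$, observes that an almost Dedekind Krull domain is Dedekind, and applies Theorem~\ref{characterization local Krull domain}(3).
For (2)$\Rightarrow$(3) it simply quotes a corollary from \cite{Chang}.
The hard step (3)$\Rightarrow$(1) is structural. If $R$ is not almost Dedekind, some $R_{(p)}$ is a two-dimensional Krull domain. A height-two maximal ideal of it is the union of the height-one primes it contains, and prime avoidance forces infinitely many of these. Only finitely many contain $p$, so infinitely many non-unitary primes are non-maximal.

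\textbf{Your route.} You run (2)$\Rightarrow$(1)$\Rightarrow$(3)$\Rightarrow$(2).
For (2)$\Rightarrow$(1) you replace the external citation by the local Dedekind criterion (Theorem~\ref{thm4.23}) applied to each $R_{(p)}$, and check maximal ideals locally.
The hard step you prove geometrically. An algebraic point $\beta$ in a ball of finite radius yields the explicit nonzero non-maximal non-unitary prime $q\Q[X]\cap R\subsetneq M_{(p),\beta,\infty}\cap R$. This is the easy half of Theorem~\ref{nonunitary prime ideals}, run directly on $R$.

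\textbf{Comparison.} Your argument avoids dimension theory and prime avoidance. It never uses finite character, irredundancy, or Theorem~\ref{characterization local Krull domain}(3), and it names the offending primes concretely as minimal polynomials of algebraic points of the balls. The paper's argument needs no information about the balls at that step. It uses only that $R_{(p)}$ is a two-dimensional Krull domain with finitely many height-one primes over $p$, and it shows that such non-maximal non-unitary primes are abundant inside every height-two maximal ideal.

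\textbf{One small point.} Proposition~\ref{overring intersection DVRs} is stated under the convention that $\delta=\infty$ forces the center to be transcendental over $\Q$. So for algebraic $\beta$ the containment $\Z_{(p),\alpha,\delta}\cap\Q[X]\subseteq\Z_{(p),\beta}$ is better justified directly. For $f=\sum_k a_k(X-\alpha)^k$ one has $v_p(f(\beta))\geq\min_k\{v_p(a_k)+k\delta\}\geq 0$. The paper itself uses the Proposition in this extended way in the proof of Theorem~\ref{nonunitary prime ideals}.
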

\begin{proof}
(1) $\Rightarrow$ (2) For a fixed $p \in \Lambda$, let $S = \Z \setminus p\Z$.
Then $R_S = \bigcap\limits_{j=1}^{m_p}\Z_{(p),\alpha_{p,j},\delta_{p,j}} \cap\Q[X]$ by Lemma \ref{lemma43}
and $R_S$ is an almost Dedekind domain which is also a Krull domain, so $R_S$ is a Dedekind domain, and hence the intersection
$R_S = \bigcap\limits_{j=1}^{m_p}\Z_{(p),\alpha_{p,j},\delta_{p,j}} \cap\Q[X]$ is irredundant.
Hence, by Theorem \ref{characterization local Krull domain}(3),  $\Z_{(p),\alpha_{p,j},\delta_{p,j}}$
is residually algebraic over $\Z_{(p)}$ for all $p \in \Lambda$ and $j =1, \dots, m_p$.

(2) $\Rightarrow$ (3) By \cite[Corollary 2.6(1)]{Chang},
$R$ is an almost Dedekind domain. Hence, each nonzero prime ideal of $R$ is maximal. Thus, each non-unitary prime ideal of $R$ is also maximal.

(3) $\Rightarrow$ (1) Assume to the contrary that $R$ is not an almost Dedekind domain. Then $R_{\mathbb{Z} \setminus p\mathbb{Z}}$ is not an almost Dedekind domain for some $p \in \Lambda$. Let $S = \Z \setminus p\Z$. Then $R_{S} = \bigcap\limits_{j=1}^{m_p}\Z_{(p),\alpha_{p,j},\delta_{p,j}} \cap\Q[X]$ by Lemma \ref{lemma43}, so $R_{S}$ is a Krull domain but not a Dedekind domain. Hence, dim$R_S =2$,
and thus there is a height two prime ideal $M$ of $R$ such that $MR_S \cap \Q = p\Z_{(p)}$. Note that ht$(MR_S) =$ ht$M =2$, so $MR_S$ is a union of
infinitely many height one prime ideals of $R_S$ by the prime avoidance theorem \cite[Proposition 4.9]{Gilm} or \cite[Proposition 2.1]{Heitmann}.
Note that the height one prime ideals of $R_S$ containing $p$ is finite, so $MR_S$ contains infinitely
many prime ideals $Q$ of $R$ with $p \not\in Q$; in particular, these prime ideals $Q$ are not maximal. 
Clearly such a prime ideal $Q$ must be $Q \cap \Z = (0)$. 
\end{proof}

Let $V$ be a rank-one nondiscrete valuation domain with value group $G \subsetneq \mathbb{R}$. Then $V$ is completely integrally closed,
Pic$(V) = \{0\}$ and Cl$(V) = \mathbb{R}/G$ \cite[Theorem 2.7]{afz08}.
Hence, in general, Pic$(D) \neq$ Cl$(D)$ for a completely integrally closed domain $D$. An almost Dedekind domain $D$ is completely integrally closed, so Cl$(D)$ is well defined and Pic$(D)$ is a subgroup of Cl$(D)$. In the next corollary we calculate the Picard of an almost Dedekind domain but we don't known how to calculate its divisor class group.   

\begin{Cor} \label{coro1-1}
Let the notation be as in Corollary \ref{coro1}.
If $R$ is an almost Dedekind domain, then
the following statements are satisfied. 
\begin{enumerate}
\item[{\em (1)}] $R = \IntQ(\uE,\O)$, for some subset $\uE=\prod\limits_{p\in\Lambda}E_p\subset\O=\prod\limits_{p\in\Lambda}\O_p$ 
such that $E_p = \{\alpha_{p,1}, \dots , \alpha_{p,m_p}\}$ $\subseteq \O_p^{\br}$, $\alpha_{p,j}$ is transcendental over $\Q$ and $\delta_{p,j}=\infty$ for $j=1,\ldots,m_p$.
\item[{\em (2)}] Pic$(R)= \bigoplus\limits_{p \in \Lambda}(\Z/d_p\Z \oplus \Z^{m_p-1})$, where
$e_{p,j} = e(\Z_{(p),\alpha_{p,j},\delta_{p,j}}|\Z_{(p)})$ for $j=1, \dots , m_p$ and $d_p= gcd(e_{p,1}, \dots , e_{p,m_p})$ for all $p \in \Lambda$.
\end{enumerate} 
\end{Cor}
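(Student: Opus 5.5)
Part (1) follows from Corollary \ref{coro1}, (1)$\Rightarrow$(2). Since $R$ is almost Dedekind, each DVR $\Z_{(p),\alpha_{p,j},\delta_{p,j}}$ is residually algebraic over $\Z_{(p)}$. By Theorem \ref{unitary DVRs over Z(p)} (2) this forces $\delta_{p,j}=\infty$ and $\alpha_{p,j}$ transcendental over $\Q$. The balls $\bar B_p(\alpha_{p,j},\infty)$ are then singletons, so Corollary \ref{thm3} (2) gives $R=\IntQ(\uE,\O)$ with $E_p=\{\alpha_{p,1},\dots,\alpha_{p,m_p}\}\subseteq\O_p^{\br}$. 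We may also discard superfluous DVRs to make the representation irredundant. This changes nothing except possibly $m_p$, and the statement should be read with an irredundant representation. In that case distinct $\alpha_{p,j}$ are non-conjugate over $\Q_p$ by Remark \ref{irredundant represent balls}.

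For (2), the plan is to compute Pic$(R)$ locally at each prime and then glue. First, for each $p\in\Lambda$ set $S_p=\Z\setminus p\Z$. By Lemma \ref{lemma43}, $R_{S_p}=\bigcap_j\Z_{(p),\alpha_{p,j}}\cap\Q[X]$. This is a Dedekind domain (Theorem \ref{thm4.23}), and Theorem \ref{characterization local Krull domain} (5) gives
\[
\text{Pic}(R_{S_p})=\text{Cl}(R_{S_p})=\Z/d_p\Z\oplus\Z^{m_p-1}.
\]
Second, $R$ is one-dimensional and every maximal ideal is either non-unitary or lies over a unique $p$. Also, $R[1/n]$ for a nonzero $n\in\Z$ equals $R_{S}$ for $S$ the multiplicative set generated by the prime factors of $n$. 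The unitary maximal ideals over $p$ are exactly the finitely many $M_{p,j}\cap R$, which are invertible since $R_{M}$ is a DVR and, I expect, they are finitely generated (e.g. by $p$ and a suitable polynomial).

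Next, consider the homomorphism
\[
\text{Pic}(R)\to\bigoplus_{p\in\Lambda}\text{Pic}(R_{S_p}),\qquad [I]\mapsto ([IR_{S_p}])_p .
\]
It lands in the direct sum because an invertible $I$ can be written as $cJ$ with $c\in\Q[X]$ and $J\cap\Z\neq 0$. Then $JR_{S_p}=R_{S_p}$ for almost all $p$, while $c$ is a unit in $R_{S_p}$ up to finitely many valuations. The finiteness step is the delicate point, and I would phrase it via the divisor map: an invertible ideal of an almost Dedekind domain is determined by its valuations at the maximal ideals. Surjectivity will come from the prime ideals $M_{p,j}\cap R$, each of which maps to a generator of the $p$-component and to the trivial class elsewhere. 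At $q\neq p$ the ideal becomes $R_{S_q}$, since $p$ is a unit there.

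The main obstacle is injectivity, together with the claim that the unitary-prime-supported part exhausts Pic$(R)$. Suppose $I$ is invertible with each $IR_{S_p}$ principal. The non-unitary part of $I$ is principal, generated by a polynomial $f\in\Q[X]$ since $\Q[X]$ is a PID. After dividing by $f$, we may assume $I$ is supported only at finitely many unitary maximal ideals, say over primes $p_1,\dots,p_r$. The issue is then to adjust by elements of $\Q[X]$ so that only finitely many primes are affected. I would take a generator $h_i$ of $IR_{S_{p_i}}$, clear its non-$p_i$ content, and argue that
\[
I=\Bigl(\prod_{i=1}^{r} h_i\Bigr)R
\]
up to units. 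That $\prod_i h_i$ has zero value at all but finitely many DVRs over other primes is the problem, since $R$ need not have finite character. I would try to choose each $h_i$ within $I R_{S_{p_i}}\cap\Q[X]$ with trivial image in $R_{S_q}$ for $q\neq p_i$. If this fails, I would instead compute Pic via the exact sequence relating Pic to the group of invertible divisors modulo principal ones, and invoke \cite[Theorem 2.4]{Chang} (as in Theorem \ref{main thm polynomial krull domains}), whose proof gives the same presentation generated by the $g_{p,j}$ with relations $\sum_j e_{p,j}g_{p,j}$.
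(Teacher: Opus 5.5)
\paragraph{Part (1).} This is the paper's argument. Corollary~\ref{coro1} forces every $\Z_{(p),\alpha_{p,j},\delta_{p,j}}$ to be residually algebraic, hence $\delta_{p,j}=\infty$ with $\alpha_{p,j}$ transcendental over $\Q$, and Corollary~\ref{thm3}(2) gives the $\IntQ$-description. Your caveat that the formula in (2) presupposes an irredundant representation is also correct.

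\paragraph{Part (2).} The paper gives no argument beyond appealing to the proof of \cite[Theorem 2.4(4)]{Chang}, so your fallback \emph{is} the paper's proof. Your primary local--global route, however, has a genuine gap exactly at injectivity. The obstruction you describe, that $\prod_i h_i$ might have nonzero value at infinitely many DVRs over other primes, disappears once $I$ is normalized by its $\Q[X]$-content.

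\paragraph{The normalization.} Write an invertible $I$ as $(f_1,\dots,f_k)$ with $f_i\in R$, let $f=\gcd(f_1,\dots,f_k)$ in $\Q[X]$, and put $J=f^{-1}I$.
\begin{itemize}
\item Then $J\Q[X]=\Q[X]$, and a B\'ezout relation in $\Q[X]$ puts some $0\neq n\in\Z$ into $J$.
\item Some $0\neq c\in\Z$ clearing the denominators of the $f_i/f$ gives $cJ\subseteq R$.
\item Hence $JR_{S_p}=R_{S_p}$ for all $p\nmid nc$.
\end{itemize}
Since $fR_{S_p}$ is principal, $[IR_{S_p}]=[JR_{S_p}]$, and this is the correct reason the map lands in the direct sum. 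No finiteness statement about $f$ is needed, and none holds in general: when $R$ is not Krull, a polynomial can be a nonunit of $R_{S_p}$ for infinitely many $p$.

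\paragraph{The missing observation.} Suppose $JR_{S_p}=hR_{S_p}$. Then $h\Q[X]=J\Q[X]=\Q[X]$, so $h\in\Q^\times$. Up to a unit of $\Z_{(p)}$ we may take $h=p^{k_p}$, with $k_p=0$ whenever $p\nmid nc$. The local generators are therefore constants, which are units at every DVR over every other prime, so finite character never enters.

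Put $c_0=\prod_p p^{k_p}$.
\begin{itemize}
\item If $M$ is maximal with $M\cap\Z=p\Z$, then $R_M$ is a localization of $R_{S_p}$, so $JR_M=p^{k_p}R_M=c_0R_M$.
\item If $M\cap\Z=(0)$, then $R_M\supseteq\Q[X]$, so $JR_M=R_M=c_0R_M$.
\end{itemize}
Hence $J=c_0R$ and $[I]=0$, which proves injectivity.

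\paragraph{Surjectivity.} The step you only conjectured also holds. Since $R_{S_p}$ is Dedekind, choose $h\in M_{p,j}\cap R$ with $(p,h)R_{S_p}=M_{p,j}\cap R_{S_p}$. The same local comparison gives $M_{p,j}\cap R=(p,h)R$. So this prime is finitely generated, hence invertible, and its image is the class of $M_{p,j}\cap R_{S_p}$ in the $p$-component and $0$ elsewhere.

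These classes generate $\mathrm{Pic}(R_{S_p})$. Indeed, let $Q'=q\Q[X]\cap R_{S_p}$ be a non-unitary prime with $q\in\Z[X]$ primitive and irreducible, and let $w_{p,j}$ be the valuation of $\Z_{(p),\alpha_{p,j}}$. Factoring $qR_{S_p}$ gives $[Q']=-\sum_j w_{p,j}(q)\,[M_{p,j}\cap R_{S_p}]$.

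\paragraph{What your route buys.} With this step supplied, your route is a complete proof that reduces (2) to the local computation of Theorem~\ref{characterization local Krull domain}(5), rather than to a re-reading of the proof of Chang's global theorem. It also makes transparent why $\mathrm{Pic}(R)$ does not see the failure of finite character. Every class is represented by some $J$ with $J\Q[X]=\Q[X]$, and such a $J$ is principal, with a constant generator, as soon as each $JR_{S_p}$ is principal.
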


\begin{proof}
(1)  By Corollary \ref{thm3}, $R = \IntQ(\uE,\O)$, where $\uE=\prod\limits_{p\in\Lambda} E_p$ and $E_p = \bigcup\limits_{j=1}^{m_p} \bar B_p(\alpha_{p,j},\delta_{p,j})$ $\subset\O_p^{\br}$.
Moreover, by Corollary \ref{coro1}, $\Z_{(p),\alpha_{p,j},\delta_{p,j}}$
is residually algebraic over $\Z_{(p)}$ for all $p \in \Lambda$ and $j =1, \dots, m_p$. Hence, $\delta_{p,j}=\infty$, $\bar B_p(\alpha_{p, j},\delta_{p,j}) = \{\alpha_{p, j}\}$, $\alpha_{p,j}$ is transcendental over $\Q$ for $j=1, \dots , m_p$ and $E_p = \{\alpha_{p,1}, \dots , \alpha_{p,m_p}\}$.

(2) This follows directly from the proof of \cite[Theorem 2.4(4)]{Chang}.
\end{proof}

In general, the almost Dedekind domain of (\ref{representation R4}) need not be a Dedekind domain 
(see \cite[Theorems 3.1 and 3.5]{Chang} for a concrete example).

\subsection*{Acknowledgments}
This research was completed while the first author visited Universita of Padova during January 2026. He thanks the Department of Mathematics for its hospitality. The first author was supported by Basic Science Research Program through the National Research Foundation of Korea (NRF) funded by the Ministry of Education (2017R1D1A1B06029867). The second author  is a member of the National Group for Algebraic and Geometric Structures and their Applications (GNSAGA) of the Italian Mathematics Research Institute (INdAM). 
 

\end{document}